\documentclass[12pt]{amsart}
\usepackage{fullpage,amssymb}
\usepackage{mathrsfs}
\usepackage{xcolor}
\usepackage{algorithmicx}
\usepackage{algpseudocode}
\usepackage{tikz-cd}
\usepackage{bm}
\usepackage{bbold}

\usepackage{xy}
\xyoption{all}
\usepackage{color}   %May be necessary if you want to color links
\usepackage{hyperref}
\hypersetup{
    %colorlinks=true, %set true if you want colored links
    linktoc=all,     %set to all if you want both sections and subsections linked
    linkcolor=black,  %choose some color if you want links to stand out
}

\newtheorem{theorem}{Theorem}[section]
\newtheorem{corollary}[theorem]{Corollary} 
\newtheorem{lemma}[theorem]{Lemma}
\newtheorem{proposition}[theorem]{Proposition}

\theoremstyle{definition}
\newtheorem{definition}[theorem]{Definition}
\newtheorem{remark}[theorem]{Remark}
\newtheorem{example}[theorem]{Example}
\newtheorem{problem}[theorem]{Problem}

\newtheorem{algorithm}[theorem]{Algorithm}

\newcommand{\kar} {{\rm char}}
\newcommand{\R}{{\mathbb R}}

\newcommand{\B}{{\mathcal B}}
\newcommand{\C}{{\mathbb C}}
\newcommand{\N}{{\mathbb N}}

\newcommand{\Q}{{\mathbb Q}}
\newcommand{\Z}{{\mathbb Z}}

\newcommand{\Mat}{\operatorname{Mat}}
\newcommand{\GL}{\operatorname{GL}}
\newcommand{\SL}{\operatorname{SL}}
\newcommand{\WP}{{\rm WP}}
\newcommand{\supp}{{\rm supp}}
\newcommand{\NP}{{\rm NP}}
\newcommand{\tsupp}{{\rm tsupp}}
\newcommand{\spa}{{\rm span}}
\newcommand{\ST}{{\rm ST}}

\newcommand{\VV}{{\mathbb V}}

\newcommand{\nullcone}{\mathcal{N}}
%\newcommand{\Sym}{\operatorname{Sym}}
%Harm: changed this to S
\newcommand{\Sym}{S}

\newcommand{\gitsymbol}{\mathbin{
  \mathchoice{/\mkern-6mu/}% \displaystyle
    {/\mkern-6mu/}% \textstyle
    {/\mkern-5mu/}% \scriptstyle
    {/\mkern-5mu/}}}% \scriptscriptstyle

\usepackage[toc]{appendix}

%\title{Degree lower bounds for invariant rings via Grosshans principle}
%Harm: I changed the title (I don't think we need to put "Grosshans principle" in the title, and cubic forms is more precise)

\title{Polystability in positive characteristic and degree lower bounds for invariant rings}
\author{Harm Derksen and Visu Makam}
\keywords{Kempf's optimal subgroups, closed orbits, exponential lower bounds, Grosshans principle}
\begin{document}

\maketitle
\begin{abstract}
We develop a representation theoretic technique for detecting closed orbits that is applicable in all characteristics. Our technique is based on Kempf's theory of optimal subgroups and we make some improvements and simplify the theory from a computational perspective. We exhibit our technique in many examples and in particular, give an algorithm to decide if a symmetric polynomial in $n$-variables has a closed $\SL_n$-orbit.

As an important application, we prove exponential lower bounds on the maximal degree of a system of generators of invariant rings for two actions that are important from the perspective of Geometric Complexity Theory (GCT). The first is the action of $\SL(V)$ on $\Sym^3(V)^{\oplus 3}$, the space of $3$-tuples of cubic forms, and the second is the action of $\SL(V) \times \SL(W) \times \SL(Z)$ on the tensor space $(V \otimes W \otimes Z)^{\oplus 5}$. In both these cases, we prove an exponential lower degree bound for a system of invariants that generate the invariant ring or that define the null cone.

%Using the Grosshans Principle, we develop a method for proving lower bounds for the maximal degree of a system of generators of an invariant ring.
%This method also gives lower bounds for the maximal degree of a set of invariants that define Hilbert's null cone. 
%We consider two actions: The first is the action of $\SL(V)$ on $\Sym^3(V)^{\oplus 4}$, the space of $4$-tuples of cubic forms, and the second is the action of $\SL(V) \times \SL(W) \times \SL(Z)$ on the tensor space $(V \otimes W \otimes Z)^{\oplus 9}$.
%In both these cases, we prove an exponential lower degree bound for a system of invariants that generate the invariant ring or that define the null cone.   
%\comm{Should we add something about the relevance of these two examples in the abstract itself? HD: The abstract is fine as it is. I replaced ``Grosshans Principle'' with ``the Grosshans Principle'', because otherwise it sounds funny to me.}
 %Harm: I modified the abstract a bit
\end{abstract}
 
% \comm{I will remove table of contents and write the subsection on organization at the end. HD: OK}

\setcounter{tocdepth}{1}
\tableofcontents 

%For this paper, we will assume the ground field to be $K$. 
 %Harm: I put this in the intro
\section{Motivation}
We choose our ground field $K$ to be an algebraically closed field of characteristic $p$.\footnote{Our results will be targeted towards the case of $p > 0$, but many of our results are new even in the case of $p = 0$.} In this paper, we focus on two important problems with particular emphasis on positive characteristic -- how to determine whether an orbit is closed (a.k.a. polystability) and how to prove exponential degree lower bounds for invariant rings. We briefly discuss the motivation behind these problems and give context to the main contributions of this paper before proceeding to the main content. 

To begin, let us consider the following result: 

\begin{theorem} \label{thm:complete-homog-3vars}
Consider the action of $\SL(V)$ on $\Sym^3(V)$, the space of cubic forms, where $V$ is a $3$-dimensional vector space over $K$ with basis $x,y,z$. Consider the complete homogeneous symmetric polynomial $h_3(x,y,z) \in \Sym^3(V)$, i.e., $h_3(x,y,z)$ is the sum of all monomials of degree $3$. Then $h_3(x,y,z)$ is polystable (i.e, its $\SL(V)$ orbit is Zariski-closed) unless $p \in \{2,5\}$.
\end{theorem}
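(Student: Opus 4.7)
The plan is to combine the paper's Kempf-optimality technique with the abundant $S_3$-symmetry of $h_3$ to severely restrict the 1-parameter subgroups that could witness non-polystability. First, observe that since $h_3$ is symmetric in $x,y,z$, the stabilizer $\operatorname{Stab}_{\SL(V)}(h_3)$ contains a copy of $S_3$, embedded via permutation matrices (adjusted by a central scalar so the determinant is $1$). By the refined Kempf theorem developed in this paper, if $h_3$ were not polystable then there would exist a 1-parameter subgroup $\lambda$ driving $h_3$ to a point in a strictly smaller orbit, whose associated parabolic $P(\lambda)$ contains $\operatorname{Stab}(h_3)$ and in particular $S_3$. The $S_3$-invariant subspaces of $V$ are exactly $0$, $L = \langle (1,1,1) \rangle$, $H = \{x+y+z = 0\}$, and $V$, so $P(\lambda)$ must stabilize a flag built from these.

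In characteristic $\neq 3$ we have $V = L \oplus H$ as $S_3$-modules, and an elementary check (using that $S_3$ admits no nontrivial character of the necessary type) shows that any $S_3$-normalized nontrivial 1-PS has the form $\lambda_\beta(t) = t^{-2\beta}|_L \oplus t^\beta|_H$ for some $\beta \in \Z$. The key computational step is to expand $h_3$ in coordinates adapted to this decomposition: setting $u = x+y+z$ and writing $(x,y,z) = (u/3,u/3,u/3) + (a,b,c)$ with $a+b+c = 0$ (valid since $3$ is invertible), the identity $h_3 = e_1^3 - 2e_1 e_2 + e_3$ gives
\[
h_3 \;=\; \tfrac{10}{27}\, u^{3} \;-\; \tfrac{5}{3}\, u\, q \;+\; r, \qquad q = ab+bc+ca,\quad r = abc.
\]
Under $\lambda_\beta$ the three summands have weights $6\beta$, $0$, and $-3\beta$ respectively. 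When $p \notin \{2,5\}$ both coefficients $10/27$ and $1$ are nonzero, so the existence of $\lim_{t\to 0}\lambda_\beta(t)\cdot h_3$ forces $\beta = 0$: no destabilization, so $h_3$ is polystable. In characteristic $2$ the $u^3$-term vanishes and $h_3 = uq + r$; taking $\beta = -1$ produces the limit $uq = (x+y+z)\cdot h_2$, a line times an irreducible conic, which lives in a strictly smaller orbit. In characteristic $5$ both the $u^3$- and $uq$-terms vanish and $h_3 = r$; the same family scales $h_3$ all the way to $0$, exhibiting $h_3$ as lying in the null cone. In both exceptional cases $h_3$ is not polystable.

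Characteristic $3$ requires a separate argument since the decomposition $V = L \oplus H$ collapses to $L \subset H$. The cleanest route is to verify directly that the plane cubic $V(h_3) \subset \mathbb{P}^2$ is smooth. Grouping possible singular points by their $S_3$-orbit type, one finds that at $(1{:}1{:}1)$ one has $\partial_x h_3 = 1 \neq 0$ in characteristic $3$; the candidate $(1{:}1{:}0)$ makes the partials vanish but satisfies $h_3(1,1,0) = 1 \neq 0$, so it lies off the curve; and the stratum of pairwise distinct coordinates yields no solution. Hence $V(h_3)$ is a smooth cubic curve, i.e., an elliptic curve, whose $\SL(V)$-stabilizer is finite and therefore geometrically reductive, so the $\SL(V)$-orbit is closed.

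The main technical obstacle is the first step: securing, in positive characteristic, the containment $\operatorname{Stab}(h_3) \subseteq P(\lambda)$ for a hypothetical destabilizing 1-PS $\lambda$. This is precisely the content of the Kempf-optimality machinery developed in this paper. Once that reduction is in hand, the rest is a short piece of linear algebra, and the appearance of the exceptional primes $2$ and $5$ is explained transparently by the numerator coefficients $10$ and $5$ in the adapted expansion of $h_3$.
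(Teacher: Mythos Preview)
Your approach for $p \neq 3$ is essentially the paper's (reduce via Kempf and the $S_3$-symmetry to the canonical one-parameter subgroup acting diagonally on $L \oplus H$), but your packaging via the centered coordinates $(a,b,c) = (x,y,z) - (u/3,u/3,u/3)$ and the identity $h_3 = \tfrac{10}{27}u^3 - \tfrac{5}{3}u\,q + r$ is cleaner than the paper's Newton-polytope computations and makes the primes $2$ and $5$ appear transparently. For $p=3$ you take a genuinely different route (smoothness of the plane cubic) whereas the paper stays within the Kempf/torus framework using a basis compatible with the flag $0 \subset L \subset M \subset K^3$.

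A few points deserve tightening. First, Kempf's theory does not assert that an optimal one-parameter subgroup is itself $S_3$-normalized; what you actually need (and what the paper proves in Lemma~\ref{lem:technical-symm} and Remark~\ref{rem:2stepflag}) is that for the $2$-step flag the decoration is forced and the splitting $(L,H)$ yields an optimal $\lambda$ of the form $\lambda_\beta$, so testing $\lambda_\beta$ suffices. Second, in characteristic $2$ the quadratic $q = a^2+ab+b^2$ is \emph{not} irreducible over the algebraically closed field $K$ (it splits as $(a-\omega b)(a-\omega^2 b)$); the argument still goes through because $uq$ is reducible while $h_3$ is irreducible, but you should verify the latter (the paper does so by Eisenstein). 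Third, in characteristic $3$ your smoothness check is incomplete as written: the stratum $\{x=y\neq z\}$ is a one-parameter family, not the single point $(1{:}1{:}0)$, and you have not actually treated the generic stratum. A short computation shows that $(\partial_x - \partial_y)h_3 = (y-x)(x+y-z)$ and its cyclic permutations, which reduces the singular locus to the $S_3$-orbit of $(1{:}1{:}0)$ and the point $(1{:}1{:}1)$, after which your checks finish the job. Finally, ``finite stabilizer $\Rightarrow$ closed orbit'' is not a valid inference in general; the correct statement (which the paper quotes) is that a hypersurface with non-vanishing discriminant is stable, and smoothness is equivalent to non-vanishing discriminant.
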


How does one go about proving such a result? What techniques do we have to determine whether an orbit is closed or not, especially in positive characteristic? Naively, one could try to get hold of the ideal of polynomials which vanish on the orbit and check if its zero locus contains a point outside the orbit. However, there seems to be no reasonable way to do this. One natural approach for this would be via generators of the invariant ring, but that is computationally infeasible even in the seemingly simple example of $h_3(x,y,z)$ above. 

In characteristic zero, one useful result is the Dadok-Kac criterion \cite{Dadok-Kac} (see \cite[Section~6]{DM-exp} for a generalization). Another approach used in literature is a criterion due to Kempf \cite[Corollary~4.5]{Kempf}, but this is again only applicable in characteristic zero.\footnote{For example, \cite{BI} uses \cite[Corollary~4.5]{Kempf} to prove that the matrix multiplication and unit tensors have closed orbits, which by the way also follows easily from the Dadok-Kac criterion.} But even in characteristic zero, the example of $h_3(x,y,z)$ above does not fall within the scope of either tool. Yet another tool at our disposal is the fact that any homogeneous polynomial with a non-vanishing discriminant has a closed orbit with a finite stabilizer (this holds in arbitrary characteristic). However, discriminants are very hard to compute, % (for e.g., the discriminant for cubic forms in three variables contains 2040 monomials!)
 and are very specific to the action of $\SL(V)$ on $\Sym^d(V)$ without much scope for generalizing to other actions. To summarize, while certain techniques for proving closedness of orbits exist in literature, they are quite limited in scope and severely lacking in their applicability in positive characteristic. 

Our motivation for investigating closed orbits in positive characteristic comes from the problem of degree bounds in invariant theory and in particular results on exponential degree lower bounds \cite{DM-exp}. The significance of degree bounds in invariant theory is best understood through the lens of computational complexity, and in particular the Geometric Complexity Theory (GCT) program.\footnote{ The GCT program is an algebro-geometric approach to the celebrated P vs NP problem.} An understanding of degree bounds is a first step in a large, extensive, and ambitious program put forth in \cite{GCTV} that aims to connect invariant theory and central problems in complexity at a fundamental level. In recent years, an alternate approach to algorithmic invariant theory using geodesic optimization techniques has emerged, see \cite{BFGOWW} and references therein. However, these new optimization techniques are manifestly a characteristic zero approach. With no promising alternative approach in positive characteristic, the algebraic approaches and in particular degree bounds find a renewed importance in positive characteristic.

In a previous paper \cite{DM-exp}, we proved exponential degree lower bounds for the generators of invariant rings for cubic forms and tensor actions in characteristic zero. The technique was based on the Grosshans principle and a major component was to prove that certain points (with significant symmetries) have closed orbits.\footnote{We had used (a generalization of) the Dadok-Kac criterion.} We wanted to extend those results to positive characteristic, which brings a few challenges. By far, the hardest challenge is the ability to prove closedness of an orbit. The points we need for our purposes are considerably complicated, for example:

\begin{problem} \label{prob:intro}
Let $V$ be a $3n$-dimensional vector space with basis $\{x_i,y_i,z_i\}_{1 \leq i \leq n}$. Consider the action of $\SL(V)$ on $W = \Sym^3(V)^{\oplus 2}$. Is $w = (\sum_{i=1}^n x_i^2 z_i, \sum_{i=1}^n y_i^2 z_i) \in W$ polystable? \end{problem}

In order to handle such cases, we develop a technique, also inspired by Kempf \cite{Kempf}, based on his theory of optimal 1-parameter subgroups. Our technique also has its limitations, for example, to be feasible, there needs to be significant symmetries for the point that is being investigated (which for example, the Dadok-Kac criterion does not require). In many situations, however, the points of interest often carry symmetries and moreover these symmetries are often the reason for their study. A high-level perspective of our approach can be summarized as follows -- search for optimal one-parameter subgroups (defined in Section~\ref{sec:kempf}) and if the search is unsuccessful, then the orbit is closed. The highly non-trivial part is to make the search for optimal 1-parameter subgroups feasible. A significant contribution of this paper is to develop the needed technical framework in order to utilize Kempf's theory to its full potential from a computational perspective. We succeed not just in our endeavor to extend exponential degree lower bounds for invariant rings to positive characteristic (we also improve the characteristic $0$ results), but also in exhibiting the usefulness of our technique in other contexts that are of interest to a wide mathematical audience, notably symmetric polynomials.

We now proceed to introducing the main results of this paper rigorously.

 %Among other things, it says that an orbit is closed if and only if a point in the orbit with minimal norm. Further, a point has minimal norm in its orbit if and only if the moment map vanishes. Hence, to show that an orbit is closed, one can exhibit a point in the orbit where the moment map vanishes. Based on the moment map, Dadok and Kac developed a simple criterion which is very easy to apply. Despite its limited applicability, the Dadok-Kac criterion can be quite useful. Indeed, in our earlier paper \cite{DM-exp} where we discussed a technique to prove exponential degree lower bounds on certain invariant rings in characteristic zero, the ability to prove that an orbit is closed was crucial and we used (a generalized form of) the Dadok-Kac criterion there. The main motivation of this paper is to obtain analogous degree lower bounds in positive characteristic. To adapt our technique to positive characteristic raises a few challenges, and by far, the most difficult one is the ability to prove that an orbit is closed. We also make some improvements on the ideas in \cite{DM-exp}.

\section{Introduction and main results} \label{sec:intro}
First, we recall invariant theory and in particular, the notions of degree bounds, null cones and separating invariants. Next, we briefly explain the method to prove degree lower bounds for invariant rings via Grosshans principle and present our results on exponential degree lower bounds. Following that, we discuss our approach to proving polystability and the various results we are able to prove. 

\subsection{Invariant theory}
The subject of invariant theory has had a computational nature to its side from its very beginnings in the 19th century. The nature of computational results has evolved over the course of time in tandem with the mathematical community's understanding of the notions of computation and efficiency. In this century, driven by the Geometric Complexity Theory (GCT) program, computational invariant theory has evolved to incorporate notions of efficiency as described rigorously in the subject of computational complexity. Moreover, fundamental connections between the computational efficiency of invariant theoretic algorithms and central problems in theoretical computer science such as VP vs VNP (an algebraic analog of the celebrated P vs NP) and the polynomial identity testing problem have been discovered and has led to some important advances in recent times, starting with \cite{GCTV, FS, GGOW, DM, IQS2} and followed by more works such as \cite{DM-oc, AZGLOW, BFGOWW}.\footnote{See also \cite{GIMOWW, MW} for some recent negative results.}

The basic setup is as follows. Recall that our ground field $K$ is algebraically closed. Let $G$ be an algebraic group over $K$. Let $V$ be a rational representation of $G$, i.e., $V$ is a finite dimensional vector space, with a homomorphism of algebraic groups $\rho: G \rightarrow \GL(V)$. We write $g \cdot v$ or $gv$ for $\rho(g) v$. Let $K[V]$ denote the ring of polynomial functions on $V$ (a.k.a. the coordinate ring). Note that $K[V] = \Sym(V^*) = \oplus_{d = 1}^{\infty} \Sym^d(V^*)$ is the symmetric algebra over the dual $V^*$ and in particular a graded $K$-algebra. The orbit $O_v$ of a point $v$ is defined as $O_v := \{g v\ | \ g\in G\}$. A polynomial $f \in K[V]$ is called invariant if it is constant along orbits, i.e., $f(gv) = f(v)$ for all $g \in G, v\in V$. The collection of all invariant polynomials forms a graded subalgebra of $K[V]$ which we denote by $K[V]^G$ and call the {\em invariant ring} or {\em ring of invariants}. 

A group $G$ is called reductive if its unipotent radical is trivial. For a rational representation of a reductive group, the invariant ring is finitely generated, see \cite{Hilb1, Hilb2, Nagata, Haboush}. A central question in computational invariant theory is to efficiently describe a set of generators (as a $K$-algebra) for the ring of invariants $K[V]^G$. The problem of {\em degree bounds} is often a first step:

\begin{problem} [Degree bounds]
For a rational representation $V$ of a reductive group $G$, find strong bounds for the maximal degree of a set of (minimal) generators for $K[V]^G$, i.e, bounds for
$$
\beta(G,V) = \min \{d\ |\ K[V]^G_{\leq d} \text{ is a system of generators}\}.
$$
\end{problem}

Degree bounds has been studied for several decades, see \cite{Popov, Popov2, Derksen} and references therein. Nevertheless, the aforementioned connections to complexity has given the problem a new significance. For example, {\em polynomial} degree bounds for matrix semi-invariants \cite{DM}\footnote{see \cite{DM-siq} for extensions to quivers and \cite{DM-arbchar} for extension to positive characteristic. For applications of these results, see e.g., \cite{Domokos, GGOW-BL, KPV, LQWD, MLE-MNM, MLE-TNM} and references therein.} were crucial in obtaining a polynomial time (algebraic) algorithm for the problem of non-commutative rational identity testing (RIT) \cite{IQS2}.\footnote{A polynomial time analytic algorithm for RIT precedes the algebraic algorithm and does not use degree bounds \cite{GGOW}. However, the analytic algorithm does not have an analog in positive characteristic, whereas the algebraic algorithm works in all characteristics.}

The zero set of a collection of polynomials $S\subseteq K[V]$ is
$$
\VV(S)=\{v\in V\mid f(v)=0\mbox{ for all $f\in S$}\}.
$$
Hilbert's null cone $\nullcone\subseteq V$ is defined by $\nullcone=\VV(\bigoplus_{d=1}^\infty K[V]_d^G)$.
\begin{definition}
We  define $\sigma(G,V)$ to be the smallest integer $D$ such that the non-constant homogeneous invariants of degree $\leq D$ define the null cone,
so 
$$
\textstyle\sigma(G,V)=\min\Big\{D\,\Big|\, \nullcone=\VV\big(\bigoplus_{d=1}^D K[V]^G_d\big)\Big\}.
$$
\end{definition}
General upper bounds for $\sigma_G(V)$ were first given by Popov (see~\cite{Popov,Popov2}), and improved by the first author in \cite{Derksen}. For any system of generating invariants, its zero locus is the null cone, so it is clear that
$$
\beta(G,V) \geq \sigma(G,V).
$$
In characteristic zero, the first author showed that $\beta(G,V)$ and $\sigma(G,V)$ are polynomially related \cite{Derksen}. A central problem in algorithmic invariant theory is the {\em orbit closure intersection} problem -- given $v,w \in V$, decide if $\overline{O_v} \cap \overline{O_w} = \emptyset$. This problem in various instances captures many important problems in mathematics, computer science and physics, see e.g., \cite{BFGOWW} and references therein. The following result is due to Mumford and captures why invariant polynomials are useful for the problem of orbit closure intersection.

\begin{theorem} \label{thm:Mumford}
Let $V$ be a rational representation of a reductive group $G$. Let $v,w \in V$. Then
$$
\overline{O_v} \cap \overline{O_w} \neq \emptyset \Leftrightarrow f(v) = f(w)\ \forall\ f \in K[V]^G.
$$
\end{theorem}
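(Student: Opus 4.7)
The plan is to split the equivalence into its two directions, handling the easy implication directly and reducing the harder one to a standard separation property for reductive group actions.

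For the forward direction, suppose $u \in \overline{O_v} \cap \overline{O_w}$ and let $f \in K[V]^G$. Since $f$ is $G$-invariant, it is constant on $O_v$ with value $f(v)$. As $f : V \to K$ is a polynomial map, it is continuous in the Zariski topology, so it is constant on the closure $\overline{O_v}$ as well. Therefore $f(u) = f(v)$. The same argument gives $f(u) = f(w)$, so $f(v) = f(w)$ for every invariant $f$.

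For the reverse direction, I would argue the contrapositive. Assume $\overline{O_v} \cap \overline{O_w} = \emptyset$; the goal is to exhibit $f \in K[V]^G$ with $f(v) \neq f(w)$. This reduces to the \emph{separation property}: if $Y_1, Y_2 \subseteq V$ are disjoint closed $G$-invariant subvarieties and $G$ is reductive, then there exists $f \in K[V]^G$ with $f|_{Y_1} = 0$ and $f|_{Y_2} = 1$. Applied to $Y_1 = \overline{O_v}$, $Y_2 = \overline{O_w}$, this immediately gives the required separating invariant.

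To establish the separation property, I would start from the Nullstellensatz: the ideals satisfy $I(Y_1) + I(Y_2) = K[V]$, so there exists $a \in I(Y_1)$ with $a \equiv 1 \pmod{I(Y_2)}$. Because the $G$-action on $K[V]$ is locally finite, $a$ lies in a finite-dimensional $G$-submodule $U \subseteq K[V]$, and the reduction modulo $I(Y_2)$ gives a $G$-equivariant surjection $\pi : U \to K$ onto the trivial representation with $\pi(a) = 1$. In characteristic zero, one applies the Reynolds operator to $a$ to obtain an invariant lift of $1$, yielding the desired $f$. In positive characteristic, one instead invokes Haboush's theorem (geometric reductivity of $G$) to produce a homogeneous $G$-invariant polynomial $F$ of some positive degree $d$ on $U^*$ that vanishes on the hyperplane $\ker(\pi)$ but satisfies $F(a) \neq 0$; evaluating $F$ via the inclusion $U \hookrightarrow K[V]$ produces an element of $K[V]^G$ which vanishes on $Y_1$ (as a polynomial in elements of $I(Y_1)$) while being a nonzero constant on $Y_2$, after which rescaling gives $f$.

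The main obstacle is therefore the separation property in positive characteristic: the Reynolds operator is no longer available, and the argument depends essentially on the deep fact that reductive groups are geometrically reductive (Haboush's theorem). Modulo this input, the remainder of the proof is a routine exercise in commutative algebra and Zariski continuity.
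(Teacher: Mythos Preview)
The paper does not prove Theorem~\ref{thm:Mumford}; it is stated as a classical result attributed to Mumford and used as background. Your argument follows the standard proof, and the forward direction is correct as written.

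In the reverse direction, the overall strategy is right, but two points need tightening. First, you should take $U$ to be the $G$-span of $a$, not an arbitrary finite-dimensional $G$-submodule containing $a$: this guarantees both that $U \subseteq I(Y_1)$ (so that products of elements of $U$ vanish on $Y_1$) and that the image of $U$ modulo $I(Y_2)$ is exactly $K$ (since $g\cdot a \equiv g\cdot 1 = 1 \pmod{I(Y_2)}$ for every $g\in G$, the linear span of the orbit maps into the constants). Second, your formulation of the Haboush step conflates $U$ and $U^*$: a polynomial ``on $U^*$'' is an element of $\Sym^d(U)$, but $\ker(\pi)$ and $a$ both live in $U$, so the phrases ``$F$ vanishes on $\ker(\pi)$'' and ``$F(a)\neq 0$'' are not type-correct. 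The clean version is: since $\pi\in (U^*)^G$ is nonzero, geometric reductivity applied to $U^*$ yields a $G$-invariant $F\in \Sym^d(U)^G$ with $(\Sym^d\pi)(F)\neq 0$; the multiplication map $\Sym^d(U)\to K[V]$ then sends $F$ to an invariant lying in $I(Y_1)^d\subseteq I(Y_1)$ and congruent to the nonzero constant $(\Sym^d\pi)(F)$ modulo $I(Y_2)$. With these corrections the argument is complete.
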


Clearly, a system of generating invariants are sufficient for detecting orbit closure intersection. However, this approach is rarely efficient due to familiar complexity theoretic barriers \cite{GIMOWW}. Yet, one can often get away with a smaller set of invariants. A subset $S \subseteq K[V]^G$ is called a {\em separating subset} if for every $v,w \in V$ such that $\overline{O_v} \cap \overline{O_w} = \emptyset$, there exists $f \in S$ such that $f(v) \neq f(w)$.

\begin{definition}
We define $\beta_{\rm sep}(G,V)$ to be the smallest integer $D$ such that the invariants of degree $\leq D$ form a separating subset.
\end{definition}

Clearly, we have:
\begin{equation} \label{eq:deg.bds.ineq}
\beta(G,V) \geq \beta_{\rm sep}(G,V) \geq \sigma(G,V).
\end{equation}

Separating subsets can be much better behaved than generating subsets in positive characteristic, see e.g., \cite{DKW}. One concrete instance in which $\beta(G,V)$ has been proven to be strictly larger than $\beta_{\rm sep}(G,V)$ is the case of matrix invariants \cite{DM-oc}.

We end this subsection by recalling the notions of stability:
\begin{definition}
Let $\rho: G \rightarrow \GL(V)$ be a rational representation of a reductive group. Let $v \in V$. We say $v$ is:
\begin{itemize}
\item unstable, if $0 \in \overline{O_v}$;
\item semistable, if $0 \notin \overline{O_v}$;
\item polystable, if $v \neq 0$ and $O_v$ is closed;
\item stable, if $v$ is polystable and $\dim(G_v) = \dim(\text{kernel of } \rho)$.

\end{itemize}
\end{definition}

Note in particular that Theorem~\ref{thm:Mumford} implies that the null cone is precisely the subset of unstable points.

\subsection{Grosshans principle and exponential degree lower bounds}
Constructing torus actions with exponential degree bounds is an excursion in linear algebra, see e.g., \cite[Section~3]{DM-exp}. Indeed, the invariant theory for torus actions is much better understood (see \cite{Wehlau}). Tori happen to be commutative reductive groups and in fact any connected commutative reductive group is a torus. The invariant theory for non-commutative groups is much harder and in general it is difficult to even write down invariants \cite{GIMOWW}. In characteristic zero, we gave a surjection from the invariant rings for cubic forms and tensor actions to the invariant ring for a torus action. This allows one to ``lift" lower bounds on invariant rings for tori to lower bounds on invariant rings for cubic forms and tensor actions. Such a result is known as a {\em lifting} theorem in complexity theory. In numerous areas of complexity, various {\em lifting} techniques and barriers to them have been studied, see e.g., \cite{Raz90, NW96, BPR97, RM99, EGOW, GMOW}.

In positive characteristic, the theory breaks down in a predictable way because of the existence of non-smooth reductive groups. Nevertheless, we are able to lift bounds for separating invariants, which we will explain below. First, we state Grosshans principle \cite{Grosshans}.
We let the group $H\times G$ act on $G$ by $(h,g)\cdot u=hug^{-1}$.
%\Hnote{I added the explanation of the action on $G$}
%\Vnote{Ok, looks good}

\begin{theorem} [Grosshans principle] \label{gross.princ}
Let $W$ be a representation of $G$, and let $H$ be a closed subgroup of $G$. Then we have an isomorphism
$$
\psi:(K[G]^H \otimes K [W])^G \longrightarrow K[W]^H.
$$
\end{theorem}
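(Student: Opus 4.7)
The plan is to construct $\psi$ as the restriction to the slice $\{e\}\times W$, and to produce its inverse by pulling back along the action map $\mu\colon G\times W\to W$, $(u,w)\mapsto uw$. First I would identify $K[G\times W]=K[G]\otimes K[W]$ as $K$-algebras; under this identification, $K[G]^H\otimes K[W]=K[G\times W]^H$, where $H$ acts on $G\times W$ by left multiplication on the first factor and trivially on the second (this is the restriction of $(h,g)\cdot u=hug^{-1}$ to $H$). The residual $G$-action on $G\times W$ is $g\cdot(u,w)=(ug^{-1},gw)$, so a regular function $F$ lies in $(K[G]^H\otimes K[W])^G$ exactly when $F(hu,w)=F(u,w)$ for all $h\in H$ and $F(ug^{-1},gw)=F(u,w)$ for all $g\in G$ (equivalently $F(u,w)=F(ug,g^{-1}w)$).

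Next, define $\psi(F)(w):=F(e,w)$. The crucial identity is obtained by specializing the $G$-invariance with $g=u^{-1}$, which gives $F(u,w)=F(e,uw)$; in particular $F$ is completely determined by its restriction to $\{e\}\times W$. To verify that $\psi(F)\in K[W]^H$, I would compute, for $h\in H$,
$$\psi(F)(hw)=F(e,hw)=F(h,w)=F(e,w)=\psi(F)(w),$$
where the second equality is the key identity applied to $u=h$ and the third uses $H$-invariance in the first slot. The map $\psi$ is visibly a $K$-algebra homomorphism.

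For the inverse, set $\varphi(\phi):=\mu^*\phi$, so $\varphi(\phi)(u,w)=\phi(uw)$. This is regular since $\mu$ is a morphism of varieties, hence $\varphi(\phi)\in K[G]\otimes K[W]$. It is $H$-invariant in the first slot because $\phi(huw)=\phi(uw)$ when $\phi\in K[W]^H$, and it is $G$-invariant because $\phi(ug^{-1}\cdot gw)=\phi(uw)$. Checking that $\psi$ and $\varphi$ are mutual inverses is then a one-line application of the formulas: $\psi(\varphi(\phi))(w)=\phi(ew)=\phi(w)$, and $\varphi(\psi(F))(u,w)=F(e,uw)=F(u,w)$ by the key identity.

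I do not expect any genuine obstacle: once the formulas are in place the argument is a formal bookkeeping exercise. The only point requiring care is tracking the dual actions on coordinate rings induced by the conventions $(h,g)\cdot u=hug^{-1}$ and the given $G$-action on $W$; a single misplaced inverse would identify $G$-invariants with the wrong slice and destroy the bijection with $K[W]^H$.
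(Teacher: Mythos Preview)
Your argument is correct: the restriction map $\psi(F)(w)=F(e,w)$ and the pullback $\varphi(\phi)(u,w)=\phi(uw)$ along the action morphism are mutually inverse $K$-algebra homomorphisms, and you have verified the relevant $H$- and $G$-equivariance cleanly. The ``key identity'' $F(u,w)=F(e,uw)$ coming from $G$-invariance is exactly what makes the inverse work, and you track the conventions $(h,g)\cdot u=hug^{-1}$ correctly.

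There is nothing to compare against, however: the paper does not supply a proof of this statement. Grosshans' principle is quoted from \cite{Grosshans} and used as a black box in the proof of Proposition~\ref{prop:septosep} (where the induced isomorphism $\iota^*\colon K[G/H\times W]^G\to K[W]^H$ is invoked). Your write-up is essentially the standard proof and would serve as a self-contained replacement for the citation.
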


From Grosshans principle, we will derive the following main technical result:

\begin{theorem} \label{thm:liftingbounds}
Let $V,W$ be rational representations of a reductive group $G$. Suppose $v \in V$ is such that its $G$-orbit is closed and let $H = G_v = \{g \in G \ |\ gv = v \}$. Then
$$
\beta(G,V \oplus W) \geq \beta_{\rm sep}(G,V \oplus W) \geq \beta_{\rm sep}(H,W) \geq \sigma(H,W).
$$

\end{theorem}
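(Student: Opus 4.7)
The outer two inequalities follow directly from~\eqref{eq:deg.bds.ineq} applied to $(G, V \oplus W)$ and $(H, W)$ respectively, so the content is the middle inequality $\beta_{\rm sep}(G, V \oplus W) \geq \beta_{\rm sep}(H, W)$. The plan is to construct a degree-non-increasing map $\varphi\colon K[V \oplus W]^G \to K[W]^H$ and show that it carries separating subsets to separating subsets. Concretely, for $F \in K[V \oplus W]^G$ one sets $\varphi(F)(w) := F(v, w)$; since $v$ is fixed by $H = G_v$ and $F$ is $G$-invariant, $\varphi(F)$ is $H$-invariant, with $\deg \varphi(F) \leq \deg F$. Writing $D := \beta_{\rm sep}(G, V \oplus W)$, it thus suffices to produce, for any $w_1, w_2 \in W$ with $\overline{H w_1} \cap \overline{H w_2} = \emptyset$, some $F$ of degree at most $D$ satisfying $F(v, w_1) \neq F(v, w_2)$.

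The key reduction is to the orbit-closure statement: $\overline{H w_1} \cap \overline{H w_2} = \emptyset$ in $W$ implies $\overline{G(v, w_1)} \cap \overline{G(v, w_2)} = \emptyset$ in $V \oplus W$. Granting this, the definition of $D$ supplies an $F$ of degree $\leq D$ separating $(v, w_1)$ and $(v, w_2)$, and then $\varphi(F)$ separates $w_1$ from $w_2$. To prove the orbit-closure statement, I would note that $O_v \times W$ is a closed $G$-invariant subvariety of $V \oplus W$ containing both $G$-orbits, so the intersection of orbit closures may be tested inside $O_v \times W$. Closedness of $O_v$ and $H = G_v$ identify $O_v$ with $G/H$ as a $G$-variety, so $K[O_v] \cong K[G]^H$, and Grosshans principle (Theorem~\ref{gross.princ}) yields
$$
K[O_v \times W]^G \;=\; (K[G]^H \otimes K[W])^G \;\isom\; K[W]^H,
$$
realized on elements by $F \mapsto F(v, \cdot)$. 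Mumford's theorem (Theorem~\ref{thm:Mumford}) applied to the affine $G$-variety $O_v \times W$ then gives the chain: the orbit closures of $(v, w_1), (v, w_2)$ are disjoint iff some $G$-invariant of $O_v \times W$ separates them, iff, via the displayed isomorphism, some $H$-invariant of $W$ separates $w_1$ from $w_2$, iff $\overline{H w_1} \cap \overline{H w_2} = \emptyset$.

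The technical heart of the argument is Grosshans principle together with the identification $O_v \cong G/H$; both hold in arbitrary characteristic, which is essential for the positive-characteristic applications motivating the theorem. The one explicit compatibility to verify at the outset is that the Grosshans isomorphism is implemented by the evaluation map $F \mapsto F(v, \cdot)$, so that it matches the definition of $\varphi$ after restricting from $V \oplus W$ to $O_v \times W$. Once this is in place the rest of the proof is formal, and I do not expect any further obstacles.
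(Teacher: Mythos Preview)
Your approach is essentially the same as the paper's: both specialize $G$-invariants on $V\oplus W$ along $w\mapsto (v,w)$, factor through $O_v\times W$ and $G/H\times W$, and invoke Grosshans' principle to identify $K[G/H\times W]^G\cong K[W]^H$, concluding that separating subsets transfer.

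There is one genuine gap in your write-up, however. You assert that ``closedness of $O_v$ and $H=G_v$ identify $O_v$ with $G/H$ as a $G$-variety, so $K[O_v]\cong K[G]^H$,'' and later that this identification ``holds in arbitrary characteristic.'' This is not true in general: the natural map $G/H\to O_v$ is always a bijective $G$-equivariant morphism, but it is an isomorphism of varieties only when the orbit map $G\to O_v$ is separable (see Lemma~\ref{lem:BHM}). In positive characteristic this can fail, and then $K[O_v]$ sits strictly inside $K[G]^H$, so your displayed equality $K[O_v\times W]^G=(K[G]^H\otimes K[W])^G$ is unjustified.

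The paper avoids this by never claiming a variety isomorphism: it uses only that $\pi\colon G/H\times W\to O_v\times W$ is a \emph{homeomorphism}, which suffices to match $G$-orbit closures on the two sides, and then applies Grosshans and Mumford on $G/H\times W$ rather than on $O_v\times W$. Your argument is easily repaired the same way: transport the orbit-closure question from $O_v\times W$ to $G/H\times W$ via the homeomorphism, and run the Grosshans/Mumford step there. Once you make that adjustment, your proof and the paper's coincide.
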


It is clear that to use this method in any meaningful way, one must be able to prove that an orbit is closed, which we discuss in the next subsection. For now, we state our results on exponential degree lower bounds. First, our result on cubic forms:

%\Vnote{Do we need characteristic not equal to 2 for the following theorem? Check.}
\begin{theorem} \label{thm:cubicbounds}
Assume $\kar(K) \neq 2$. Let $V$ be a $3n$-dimensional vector space, and consider the natural action of $\SL(V)$ on $\Sym^3(V)^{\oplus 3}$, the space of triplets of cubic forms. Then, 
$$
\beta(\SL(V), \Sym^3(V)^{\oplus 3}) \geq {\textstyle\frac{2}{3}} (4^n-1).
$$
\end{theorem}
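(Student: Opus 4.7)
The plan is to apply the lifting theorem (Theorem~\ref{thm:liftingbounds}) with the decomposition $\Sym^3(V)^{\oplus 3} = \Sym^3(V)^{\oplus 2} \oplus \Sym^3(V)$. What one needs is a point $v_0 \in \Sym^3(V)^{\oplus 2}$ whose $\SL(V)$-orbit is closed and whose stabilizer is large enough to force a strong lower bound on $\sigma(\SL(V)_{v_0}, \Sym^3(V))$. A natural candidate, and precisely the point whose polystability is raised as Problem~\ref{prob:intro}, is
$$
v_0 = \Big(\textstyle\sum_{i=1}^n x_i^2 z_i,\ \sum_{i=1}^n y_i^2 z_i\Big).
$$
A direct check shows that $v_0$ is fixed by the $n$-dimensional torus $T \subseteq \SL(V)$ which acts on the $i$-th block $(x_i, y_i, z_i)$ by $(t_i, t_i, t_i^{-2})$: each monomial $x_i^2 z_i$ and $y_i^2 z_i$ then carries $T$-weight $2 + (-2) = 0$, and the block determinant $t_i \cdot t_i \cdot t_i^{-2} = 1$ is automatic. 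Set $H := \SL(V)_{v_0}$, so that $T \subseteq H$.

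Granting that the orbit $\SL(V)\cdot v_0$ is Zariski-closed, Theorem~\ref{thm:liftingbounds} applied to the first summand $\Sym^3(V)^{\oplus 2}$ (playing the role of $V$) and the remaining summand $\Sym^3(V)$ (playing the role of $W$) yields
$$
\beta(\SL(V), \Sym^3(V)^{\oplus 3}) \;\geq\; \sigma(H, \Sym^3(V)) \;\geq\; \sigma(T, \Sym^3(V)).
$$
The $T$-action on $\Sym^3(V)$ is diagonal, and the $T$-weight of a monomial $\prod_i x_i^{a_i} y_i^{b_i} z_i^{c_i}$ of total degree $3$ is $(a_i + b_i - 2c_i)_{i=1}^n \in \Z^n$. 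Thus bounding $\sigma(T, \Sym^3(V))$ from below becomes a purely combinatorial problem about minimal non-negative integer syzygies among an explicit weight multiset in $\Z^n$. I would then reuse the block-wise ``doubling'' construction from \cite[Section~3]{DM-exp}: within each triple $(x_i, y_i, z_i)$ design a weight gadget that contributes a factor of $4$ to the length of any null-cone certificate, and combine the $n$ independent gadgets to produce an unstable point whose minimal separating invariant has degree $\tfrac{2}{3}(4^n - 1)$.

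The principal difficulty, and the reason the new machinery of this paper is required, is proving that $v_0$ has a Zariski-closed $\SL(V)$-orbit. In positive characteristic the standard techniques (Dadok--Kac, Kempf's Corollary~4.5, non-vanishing discriminant) are either inapplicable or infeasible for this $v_0$, as explained in the Motivation. Instead one uses the characteristic-free Kempf optimal one-parameter subgroup technology developed later in the paper: one searches for optimal destabilizing $1$-PS'es, exploiting the abundant wreath symmetry of $v_0$ (permutations of the index $i$, swaps $x_i \leftrightarrow y_i$) to reduce the search to a finite combinatorial problem, and then shows the search returns empty. The characteristic hypothesis $\kar(K)\neq 2$ enters precisely here, ruling out pathological destabilizations coming from the vanishing of the partial derivatives of $x_i^2$ and $y_i^2$ in characteristic $2$.
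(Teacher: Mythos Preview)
Your overall architecture matches the paper's proof: the same point $v_0=w=(\sum x_i^2 z_i,\sum y_i^2 z_i)$, the same decomposition $\Sym^3(V)^{\oplus 3}=\Sym^3(V)^{\oplus 2}\oplus\Sym^3(V)$, the same appeal to Theorem~\ref{thm:liftingbounds}, and the same reduction to a torus null-cone bound (the paper cites \cite[Corollary~7.4]{DM-exp} for the final $\tfrac{2}{3}(4^n-1)$).

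There is, however, a genuine gap at the step
\[
\sigma(H,\Sym^3(V))\ \geq\ \sigma(T,\Sym^3(V)).
\]
You justify this solely by the inclusion $T\subseteq H$, but for an arbitrary inclusion of reductive groups that inequality is \emph{false}. Indeed, if it held, one could lower-bound $\sigma(\SL(V),W)$ by $\sigma(T',W)$ for any torus $T'\subseteq\SL(V)$ directly, and the whole Grosshans/closed-orbit machinery of the paper would be unnecessary. What the paper actually does is compute the stabilizer and prove that its \emph{identity component} $H^\circ$ equals $T$ (this is Corollary to Lemma~6.5, relying on a uniqueness-of-decomposition argument from \cite[Corollary~7.11, Lemma~7.8]{DM-exp}). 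Only then does Proposition~\ref{prop:fingp.nullcone}, which shows $\sigma(G,V)\geq\sigma(G^\circ,V)$ via Hilbert--Mumford (since $G$ and $G^\circ$ share maximal tori and hence null cones), yield the desired inequality. You have omitted both the stabilizer computation and the passage through Proposition~\ref{prop:fingp.nullcone}.

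A related correction: the hypothesis $\kar(K)\neq 2$ is used precisely in that stabilizer computation (via \cite[Lemma~7.8]{DM-exp}), \emph{not} in the polystability of $v_0$. Proposition~\ref{cubic-polystable} is proved in the paper with no characteristic restriction at all; the Kempf optimal-subgroup argument for $v_0$ goes through in every characteristic.
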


Our result on tensor actions: 

\begin{theorem} \label{thm:tensorbounds}
Let $U,V,W$ be $3n$-dimensional vector spaces. Consider the natural action of $G = \SL(U) \times \SL(V) \times \SL(W)$ on $(U \otimes V \otimes W)^{\oplus 5}$. Then
$$
\beta(G, (U \otimes V \otimes W)^{\oplus 5}) \geq 4^n-1
$$
\end{theorem}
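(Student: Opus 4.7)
The plan is to apply Theorem~\ref{thm:liftingbounds} to a decomposition
\[
(U\otimes V\otimes W)^{\oplus 5}\ \cong\ (U\otimes V\otimes W)^{\oplus k}\ \oplus\ (U\otimes V\otimes W)^{\oplus(5-k)}
\]
for a suitable small $k$ (most likely $k=2$), by producing a polystable point $t^{\ast}\in (U\otimes V\otimes W)^{\oplus k}$ whose $G$-stabilizer $H$ has identity component a torus $T\subseteq G$ of rank $2n$, and then bounding $\sigma\bigl(T,(U\otimes V\otimes W)^{\oplus (5-k)}\bigr)\geq 4^n-1$ by the torus-level lower bound constructions of \cite[Section~3]{DM-exp}. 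Since $T$ is a normal finite-index subgroup of $H$, the $H$-null cone coincides with the $T$-null cone, so $\sigma(H,\cdot)\geq \sigma(T,\cdot)$; combining with Theorem~\ref{thm:liftingbounds} yields the claimed bound on $\beta(G,(U\otimes V\otimes W)^{\oplus 5})$.

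To construct $t^{\ast}$ I use a block-diagonal ansatz: split $U=\bigoplus_{i=1}^n U_i$, $V=\bigoplus_{i=1}^n V_i$, $W=\bigoplus_{i=1}^n W_i$ into $n$ three-dimensional blocks, and take each component of $t^{\ast}$ to lie in $\bigoplus_i U_i\otimes V_i\otimes W_i$, with per-block restriction $t^{\ast}_i\in (U_i\otimes V_i\otimes W_i)^{\oplus k}$ polystable under $\SL(U_i)\times\SL(V_i)\times\SL(W_i)$ with stabilizer a prescribed rank-$2$ torus $T_i$. Natural candidates for $t^{\ast}_i$ are tensor analogs of the cubic-form point of Problem~\ref{prob:intro}: combinations of a block unit tensor $\sum_{j=1}^3 u_i^{(j)}\otimes v_i^{(j)}\otimes w_i^{(j)}$ with a second tensor chosen to break all non-toric symmetry, together with a block-dependent twist that kills any $S_n$-action permuting blocks. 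Arranged carefully, the global stabilizer of $t^{\ast}$ has identity component exactly $T:=\prod_i T_i$, a torus of rank $2n$.

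The principal obstacle is proving polystability of $t^{\ast}$ for the full group $G$ in arbitrary characteristic; this is exactly the problem the Kempf-theoretic framework of Section~\ref{sec:kempf} is designed to resolve. The per-block torus symmetry of $t^{\ast}$ and its block-diagonal structure severely constrain any Kempf-optimal destabilizing $1$-parameter subgroup: such a $1$-PSG must respect the symmetries of $t^{\ast}$, and one argues it must then decompose as a sum of per-block $1$-PSGs. Polystability of $t^{\ast}$ thereby reduces to the per-block polystability assertions built into the construction, each a bounded-dimensional check in $\SL_3^3$ acting on $(K^3\otimes K^3\otimes K^3)^{\oplus k}$ handled by the algorithmic criteria developed earlier in the paper. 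The characteristic-free nature of this step is what is new over \cite{DM-exp}.

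Finally, the $T$-weights on $(U\otimes V\otimes W)^{\oplus(5-k)}$ decompose according to the full $n^3$-block decomposition of $U\otimes V\otimes W$; by embedding each $T_i\hookrightarrow \SL(U_i)\times\SL(V_i)\times\SL(W_i)$ so that a distinguished $1$-parameter subgroup of $T_i$ acts on a chosen weight vector of $U_i\otimes V_i\otimes W_i$ with weight $4^{i-1}$ (a geometric progression of ratio $4$), the resulting weight configuration fits the template of \cite[Section~3]{DM-exp} and delivers $\sigma\bigl(T,(U\otimes V\otimes W)^{\oplus(5-k)}\bigr)\geq 4^n-1$, completing the proof.
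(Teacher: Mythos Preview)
Your high-level architecture (Theorem~\ref{thm:liftingbounds} applied to a polystable point whose stabilizer has torus identity component, followed by Proposition~\ref{prop:fingp.nullcone} and a torus null-cone bound) is exactly what the paper does. However, several of the specific choices you commit to are wrong, and one of them undermines the polystability step.

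\textbf{The block-dependent twist is the wrong move.} You propose to ``kill any $S_n$-action permuting blocks'' so that the stabilizer is literally a torus. The paper does the \emph{opposite}: the point $F\in(U\otimes V\otimes W)^{\oplus 4}$ of Section~\ref{closedorbittensorsubsection} is deliberately $S_n$-symmetric. That $S_n$ symmetry is what makes the Kempf argument tractable: together with the torus it forces $U,V,W$ to be semisimple $H$-modules with only two isotypic components each, so the compatible bases are parametrized by a single scalar per factor (Lemma~\ref{lem:flag-basis-iso-component} and Remark~\ref{rem:I2}). Without $S_n$, the $n$ blocks are independent from the point of view of the stabilizer, flags of $H$-stable subspaces proliferate, and your claim that an optimal $1$-PSG ``must then decompose as a sum of per-block $1$-PSGs'' has no justification. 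The $S_n$ part is harmless downstream: it lands in the finite component group, and Proposition~\ref{prop:fingp.nullcone} lets you pass to the identity component.

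\textbf{The numerical guesses are off.} The paper needs $k=4$ components, not $k=2$, to cut the stabilizer identity component down to a torus (Kruskal's theorem is invoked to control it). That torus $L$ has rank $3n$, not $2n$. And the bound $4^n-1$ does not come from building a geometric progression $4^{i-1}$ into the embedding of $T_i$; the torus $L$ acts identically on each block. The exponential bound arises from the combinatorics of the $L$-weights on the full $(3n)^3$-dimensional tensor space $U\otimes V\otimes W$ (cross-block terms), as computed in \cite[Corollary~8.5]{DM-exp}, not from per-block weights. Finally, the paper replaces $G$ by the group $J=\{(g_1,g_2,g_3):\det g_1\det g_2\det g_3=1\}$, which has the same orbits on $(U\otimes V\otimes W)^{\oplus r}$ but a cleaner stabilizer.
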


The importance of the above two results is best understood in the context of GCT as has already been explained in \cite{DM-exp}.

\subsection{Closed orbits}
The following result captures essentially our strategy for proving closed orbits.

\begin{theorem}\label{thm:intro-closed-orbits}
Let $V$ be a rational representation of a reductive group $G$. Let $v \in V$ and let $G_v := \{g \in G\ |\ gv = v\}$ be its stabilizer. Let $H \subseteq G_v$ be a maximal torus of $G_v$. Let $\mathcal{T}$ be a subset of maximal tori of $G$ such that
\begin{enumerate}
\item For every parabolic $P \supseteq G_v$, there exists $T \in \mathcal{T}$ such that $T \subseteq P$;
\item For every $T \in \mathcal{T}$, there is an inclusion $kHk^{-1} \subseteq T$ for some $k \in G_v$ (that can depend on $T$).
\end{enumerate}
Then, the orbit $O_v$ is closed if and only if $T \cdot v$ is closed for all $T \in \mathcal{T}$
\end{theorem}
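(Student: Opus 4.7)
My plan is to handle both directions by contrapositive, leveraging Kempf's theorem on optimal one-parameter subgroups (discussed in Section~\ref{sec:kempf}).

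The main direction is $(\Leftarrow)$. Assume $O_v$ is not closed. Kempf's theorem furnishes an optimal 1-PS $\lambda$ with $v^* := \lim_{t\to 0}\lambda(t)v \notin O_v$ and canonical parabolic $P := P(\lambda)$ satisfying $G_v \subseteq P$; moreover, the set of optimal 1-PS is preserved by conjugation under both $U(P)$ and $G_v$. Because any maximal torus of $P$ containing $H$ can be conjugated by an element of $U(P)$ into the Levi $L(\lambda)$, I first replace $\lambda$ by a $U(P)$-conjugate (still optimal, still with parabolic $P$) so that $H \subseteq L(\lambda)$, i.e., $H$ centralizes $\lambda$. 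Condition~(1) then supplies $T \in \mathcal{T}$ with $T \subseteq P$, and condition~(2) gives $k \in G_v$ with $T' := k^{-1}Tk \supseteq H$. Since both $T'$ and a maximal torus $T^*$ of $P$ containing both $\lambda$ and $H$ are maximal tori of the parabolic $C_P(H)$ of the reductive group $C_G(H)$, they are conjugate by some $m = l u \in C_P(H)$ with $l \in C_{L(\lambda)}(H)$ and $u \in C_{U(P)}(H)$. Setting $\mu := k m\lambda m^{-1} k^{-1}$, a 1-PS of $T$, and using $kv = v$ together with the commutation $l\lambda(t) = \lambda(t)l$, I compute
\begin{align*}
\mu(t)v = k l u\lambda(t) u^{-1} l^{-1} v = k l u \bigl[\lambda(t)u^{-1}\lambda(t)^{-1}\bigr]\, l^{-1}\lambda(t)v.
\end{align*}
As $t\to 0$, the bracketed factor tends to the identity (because $u^{-1}\in U(P)$) and $\lambda(t)v\to v^*$, so $\lim_{t\to 0}\mu(t)v = klul^{-1}v^* \in O_{v^*}$, which is disjoint from $O_v$. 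Hence $T\cdot v$ is not closed.

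For $(\Rightarrow)$, assume $O_v$ is closed and pick $T \in \mathcal{T}$. Using condition~(2), replace $T$ by $k^{-1}Tk$ (which does not affect the closedness question since $kv = v$) to assume $H \subseteq T$. Since $G_v$ is reductive (Matsushima's theorem, valid in positive characteristic via Richardson), a standard consequence is that $C_G(H)\cdot v$ is closed in $V$. As $T$ is a maximal torus of the reductive group $C_G(H)$ containing the maximal torus $H$ of the stabilizer, the claim reduces to showing that for any reductive $G'$ with $G'\cdot v$ closed, a maximal torus $T$ of $G'$ containing a maximal torus of $G'_v$ has $T\cdot v$ closed. I plan to prove this sub-lemma directly: a hypothetical destabilizing $\mu\in X_*(T)$ with limit $v' = gv \notin T\cdot v$ gives $g^{-1}\mu g \subseteq G'_v$, which $G'_v$-conjugates into $H\subseteq T$; the standard lemma that two $G'$-conjugate 1-PS of $T$ are $N_{G'}(T)$-conjugate, combined with bookkeeping inside the centralizer of this 1-PS, should trace $g$ back into $T\cdot G'_v$, yielding the contradiction $v' \in T\cdot v$.

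The main obstacle is the step in $(\Leftarrow)$ of arranging the optimal $\lambda$ to commute with $H$; this crucially exploits the $G_v \cdot U(P)$-equivariance of the set of optimal 1-PS. Once achieved, the combined identities $l\lambda(t)=\lambda(t)l$ (from $l\in L(\lambda)$) and $\lambda(t)u^{-1}\lambda(t)^{-1}\to 1$ (from $u \in U(P)$) are what force $\lim_{t\to 0}\mu(t)v$ to land in $O_{v^*}\ne O_v$, producing the desired destabilization of $T\cdot v$.
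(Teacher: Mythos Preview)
Your argument for $(\Leftarrow)$ is correct but far more elaborate than needed. The paper's proof uses part~(3) of Theorem~\ref{thm:Kempf} directly: once condition~(1) produces $T \in \mathcal{T}$ with $T \subseteq P_{S,v}$, that $T$ is a maximal torus of $P_{S,v}$ and therefore already contains an optimal $1$-parameter subgroup, which drives $v$ out of $O_v$ in the limit. No arrangement of $\lambda$ relative to $H$, no Levi decomposition of $C_P(H)$, and no use of condition~(2) are needed for this direction.

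Your $(\Rightarrow)$ direction has a genuine gap. The detour through $C_G(H)\cdot v$ is unnecessary (your sub-lemma with $G'=G$ is already the full statement), but the real problem is the proof of the sub-lemma. Following your sketch one obtains $g = c\,n\,h$ with $c \in C_{G'}(\mu)$, $n \in N_{G'}(T)$, $h \in G'_v$, hence $v' = c\,n\,v$; but nothing forces $c\,n \in T\cdot G'_v$. The Weyl-group element $n$ can move $v$ arbitrarily within $O_v$, and $c$ ranges over a centralizer that need not interact well with $T\cdot v$. I do not see how the ``bookkeeping'' closes.

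The paper replaces all of this with a one-line dimension count. For any $w = gv \in O_v$ the $T$-stabilizer $T_w = T \cap gG_vg^{-1}$ is a diagonalizable subgroup of $gG_vg^{-1}$, so $\dim T_w \le \operatorname{rank}(G_v) = \dim H$. By condition~(2), after replacing $H$ by a $G_v$-conjugate we have $H \subseteq T_v$, so $\dim T_v \ge \dim H$. Hence $\dim(T\cdot v) \le \dim(T\cdot w)$ for every $w \in O_v$, making $T\cdot v$ a $T$-orbit of minimal dimension in the closed variety $O_v$; such an orbit is automatically closed. This argument uses neither Matsushima's criterion nor any structure of $C_G(H)$.
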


We state a few variants/generalizations of the above theorem in Section~\ref{sec:kempf} and picking the right variant/generalization can often make things much easier. In particular, we will mildly strengthen some of Kempf's statements. 

Let us briefly summarize what is required to be able to use the above theorem effectively. For any given torus $T$, checking whether the torus orbit $T \cdot v$ is closed is not so difficult, see Section~\ref{sec:torus-actions}\footnote{There is even a polynomial time algorithm for this \cite{BDMWW}.}. 
 Finding a collection of maximal tori that satisfy the first condition while at the same time having computational feasibility is much harder. For example, if $G_v$ is trivial, then one has to take $\mathcal{T}$ to be all maximal tori of $G$, which is computationally infeasible. A $G_v$ that severely restricts the potential parabolic groups containing it is needed to make the computation tractable.

%\Hnote{Which paper is [BDMWW]?}

In the case when $G = \SL(V)$ (or a product of $\SL$'s), we can be a bit more explicit. Parabolic subgroups can be seen as subgroups that fix a flag of subspaces in $V$. If a parabolic subgroup contains $G_v$, then the corresponding flag consists of $G_v$-stable subspaces of $V$. Hence, in the cases where there are very few $G_v$-subrepresentations of $V$, the technique is particularly useful. Perhaps more interestingly, even in some cases where we have an infinite number of $G_v$-stable subspaces, the technique can still be applied successfully and this is actually needed for our results on exponential degree bounds!

%One particularly nice and easy case is where $V$ is an irreducible representation of $G_v$. In that case, one only needs to check that the orbit with respect to any one maximal torus is closed. This is the case of the permanent and determinant polynomials as was determined in the very beginnings of the GCT program, see \cite{}. However, for our purposes, we will need a far more involved approach. For example, if one looks at Problem~\ref{prob:intro}, $V$ is not an irreducible representation of $G_v$.

\subsection{New results on polystability for polynomials}
In this paper, we take a more detailed look at polynomials, particularly those with symmetries. We prove several results with respect to polystability (and semistability, unstability, etc), some of which are new even in characteristic zero. Consider the defining action of the special linear group $\SL_n(K)$ on $K^n$. Let $x_1,\dots,x_n$ denote the standard basis for $K^n$, and consider the natural induced action of $\SL_n(K)$ on $\Sym^d(K^n) = K[x_1,\dots,x_n]_d$, the space of degree $d$ polynomials in $x_1,\dots,x_n$. 

\begin{definition}
We say $f \in K[x_1,\dots,x_n]_d$ is unstable (resp. semistable, polystable, stable)  if it is $\SL_n(K)$-unstable (resp. semistable, polystable, stable). 
\end{definition}

We say an exponent vector $e = (e_1,\dots,e_n)$ is {\em entirely even} if all the $e_i$s are even. For a polynomial $f = \sum_{e} c_e x^e \in K[x_1,\dots,x_n]$, we define the support $\supp(f) = \{e \in \N^n \ |\ c_e \neq 0\} \subseteq \N^n \subseteq \Q^n \subseteq \R^n$. The Newton polytope of a polynomial $f$ is the convex hull of its support and is denoted $\NP(f)$.

\begin{theorem} \label{thm:evenpoly}
Let $\kar(K) \neq 2$. Let $f = \sum_{e \text{ entirely even}} c_e x^e \in K[x_1,\dots,x_n]_d$. Then $f$ is
\begin{itemize}
\item semistable if and only if $(\frac{d}{n}, \frac{d}{n}, \dots,\frac{d}{n}) \in \NP(f)$;
\item polystable if and only if $(\frac{d}{n}, \frac{d}{n}, \dots,\frac{d}{n})$ is in the relative interior of $\NP(f)$;
%\item stable if and only if $(\frac{d}{n}, \frac{d}{n}, \dots,\frac{d}{n})$ is in the interior of $\NP(f)$. \Vnote{Is the stable one correct?}
\end{itemize}
\end{theorem}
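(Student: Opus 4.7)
The plan is to exploit the finite $2$-torsion subgroup $N\subset T\cap\SL_n$ consisting of diagonal matrices with entries $\pm 1$ and determinant $1$, which has order $2^{n-1}$ and is linearly reductive since $\kar K\neq 2$. Here $T\subset\SL_n$ is the standard diagonal torus. A monomial $x^e$ is $N$-fixed iff every $e_i$ is even, so every entirely even polynomial lies in $V^N$. The key structural observation, valid for $n\geq 3$, is that the characters $\chi_i:\epsilon\mapsto\epsilon_i$ of $N$ on the coordinate lines $Ke_i\subset K^n$ are pairwise distinct; consequently, the $N$-stable subspaces of $K^n$ are precisely the coordinate subspaces, and every parabolic $P\subseteq\SL_n$ containing $N$ is a standard parabolic and in particular contains $T$. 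Combined with Kempf's theorem, which forces the optimal destabilizing (or degenerating) one-parameter subgroup $\lambda$ to satisfy $G_f\subseteq P(\lambda)$, this reduces the $\SL_n$-stability question for entirely even $f$ to a $T$-stability question governed by $\NP(f)$ and the centre $(d/n,\dots,d/n)$. The cases $n=1$ (trivial) and $n=2$ (where $N=\{\pm I\}$ is central and does not restrict parabolics) are handled separately, the latter by the classical theory of binary forms.

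For semistability, the direction $\Rightarrow$ is immediate: $\SL_n$-semistability implies $T$-semistability, equivalent by the classical Hilbert--Mumford criterion for tori to $(d/n,\dots,d/n)\in\NP(f)$. For the converse, suppose $(d/n,\dots,d/n)\in\NP(f)$ but $f$ is $\SL_n$-unstable. By Kempf there is an optimal 1-PS $\lambda$ with $\lim_{t\to 0}\lambda(t)f=0$ and $G_f\subseteq P(\lambda)$. Since $N\subseteq G_f\subseteq P(\lambda)$ and $n\geq 3$, the structural observation above forces $P(\lambda)\supseteq T$, so $\lambda$ is a 1-PS of $T$, contradicting $T$-semistability. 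For the polystability analogue, Theorem~\ref{thm:intro-closed-orbits} (or a suitable variant from Section~\ref{sec:kempf}) provides, when $f$ is semistable but $\SL_n\cdot f$ is not closed, a 1-PS $\lambda$ with $G_f\subseteq P(\lambda)$ whose limit lies in a strictly smaller orbit in $\overline{\SL_n\cdot f}$; again $\lambda$ must be a 1-PS of $T$, and the condition $\lim\neq f$ produces a supporting hyperplane of $\NP(f)$ through $(d/n,\dots,d/n)$, showing $(d/n,\dots,d/n)\notin\operatorname{relint}\NP(f)$. This gives the implication ``in relative interior $\Rightarrow$ polystable''.

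The converse ``polystable $\Rightarrow$ in relative interior'' is the main obstacle, since $\SL_n$-polystability does not in general imply $T$-polystability: a $T$-limit of $f$ at a boundary hyperplane might a priori lie in $\SL_n\cdot f\setminus T\cdot f$. The plan is to use that $N_G(N)=T\rtimes S_n$ acts on $V^N$ and, using the reductivity of $G_f$ (guaranteed by polystability) together with the conjugacy of $N$-type subgroups inside $G_f$, to show that $\SL_n\cdot f\cap V^N=N_G(N)\cdot f$ is a finite union of $T$-orbits; closedness of $\SL_n\cdot f$ in $V$ then forces $T\cdot f$ to be closed in $V^N$, which by the classical torus computation is equivalent to $(d/n,\dots,d/n)\in\operatorname{relint}\NP(f)$. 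Finally, for $n=2$, writing $f=g(x^2,y^2)$ with $\deg g=d/2$, the nontrivial linear factors of $f$ come in matched pairs $(y\pm\sqrt{\alpha}\,x)$ of equal multiplicity (necessarily at most $d/2$), together with possible factors of $x$ and $y$; a direct case analysis matches the classical semi/polystability criteria for binary forms with the Newton-polytope conditions stated in the theorem.
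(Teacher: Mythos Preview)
Your strategy coincides with the paper's: use a finite $2$-group inside the stabilizer of $f$ to force every optimal parabolic to contain $\ST_n$, then invoke Corollary~\ref{cor:symm-torus}. The paper avoids your case split at $n\leq 2$ by passing to $\widetilde G=\{A:\det A=\pm1\}$ and taking the \emph{full} sign group $H\cong(\Z/2)^n\subseteq\widetilde G_f$ (rather than your $N\cong(\Z/2)^{n-1}\subseteq\SL_n$); since all $n$ characters of $H$ on $K^n$ are pairwise distinct for every $n$, the $H$-stable subspaces are exactly the coordinate subspaces and Theorem~\ref{thm:main3} applies with $\mathcal T=\{\ST_n\}$ uniformly in $n$. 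A minor slip in your write-up: when $n$ is even, $V^N$ also contains the entirely-odd monomials, so your ``iff'' description of $V^N$ is wrong; fortunately you only use the inclusion $f\in V^N$, which does hold.

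The genuine gap is in the implication ``polystable $\Rightarrow$ relative interior''. Your plan reduces to $\SL_n\!\cdot\! f\cap V^N=N_G(N)\!\cdot\! f$, which you justify by ``conjugacy of $N$-type subgroups inside $G_f$'': concretely, you need that whenever $g^{-1}Ng\subseteq G_f$, the subgroups $N$ and $g^{-1}Ng$ are conjugate \emph{in $G_f$}. That is not a general fact---two finite subgroups of a reductive group can be abstractly isomorphic and conjugate in the ambient $\SL_n$ yet fail to be conjugate in a given reductive subgroup---so the equality is unproven as stated. The paper is equally brief at exactly this point: it asserts that Theorem~\ref{thm:main3} gives ``$G$-polystable $\Leftrightarrow$ $\ST_n$-polystable'', whereas that theorem only yields ``$G$-polystable $\Leftrightarrow\overline{O_{\ST_n,f}}\subseteq O_{G,f}$''. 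A clean way to close the gap (for $n\geq 3$) is to verify hypothesis~(2) of Theorem~\ref{thm:intro-closed-orbits}, i.e.\ that some maximal torus of $G_f$ lies in $\ST_n$; since $Z_{\SL_n}(N)=\ST_n$, this amounts to showing that $N$ is contained in a maximal torus of $G_f$. Your conjugacy idea is morally aiming at this, but does not establish it.
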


\begin{remark}
The above result also works if we replace entirely even exponent vectors with entirely $0$ mod $d$ exponent vectors for any $d > 2$ (as long as $p \nmid d$). Also, observe that in characteristic $0$, such a result follows easily from the Dadok-Kac criterion \cite{Dadok-Kac}. The argument we use in positive characteristic is far more subtle.
\end{remark}

We now turn to symmetric polynomials. A polynomial $f \in K[x_1,\dots,x_n]$ is called symmetric if it is invariant under permutations of the $x_i$'s, i.e., $f \in K[x_1,\dots,x_n]^{S_n}$. Symmetric polynomials have been intensely studied for over a century with diverse motivations and serve to interconnect many disparate fields. We refer the interested reader to Macdonald's seminal text \cite{Macdonald}.  Yet, there seems to have been relatively little work on polystability. 

It turns out that Theorem~\ref{thm:intro-closed-orbits} or its variants are not quite sufficient for our purposes and we have to additionally leverage the relationship between optimal $1$-parameter subgroups and optimal parabolic subgroups. For example, let $p \nmid n$ with no restriction on $d$. Then, to decide polystability of a symmetric polynomial $f$ of degree $d$ in $n$ variables, we show that one only needs to understand the limits (at $0$ and $\infty$) of precisely one $1$-parameter subgroup, see Lemma~\ref{pnmidn-1-onepsg} for a precise statement. In particular, such results enable us to give an algorithm to decide polystability of symmetric polynomials. 

\begin{theorem} \label{thm:symm-algo}
Let $f$ be a homogeneous symmetric polynomial of degree $d$ in $n$ variables. Then Algorithms~\ref{algo:pnmidn} and~\ref{algo:pmidn} can decide if $f$ is unstable (resp. semistable, polystable, stable). 
\end{theorem}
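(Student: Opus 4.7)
The plan is to show that for a symmetric polynomial $f \in K[x_1,\dots,x_n]_d$, polystability (and the parallel questions of semistability, stability, unstability) can be read off from the limit behaviour of a very small explicit collection of $1$-parameter subgroups of $\SL_n$, so that the algorithms just compute and inspect these limits.

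First I invoke Theorem~\ref{thm:intro-closed-orbits}. Since $f$ is symmetric, $S_n \subseteq G_f$, so every parabolic $P$ of $\SL_n$ with $P \supseteq G_f$ is $S_n$-stable, namely it is the stabilizer of an $S_n$-stable partial flag of the standard module $K^n$. Choosing a maximal torus $H$ of $G_f$ and a small family $\mathcal{T}$ of maximal tori of $\SL_n$ indexed by the $S_n$-stable flags reduces polystability to checking closedness of $T \cdot f$ for $T \in \mathcal{T}$, and each torus-orbit question is a Newton-polytope computation as in Section~\ref{sec:torus-actions}.

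The heart of the argument is the classification of $S_n$-stable flags in $K^n$ and the extraction of a single $1$-PSG from each. When $p \nmid n$ the permutation module splits as $K\cdot\mathbf{1}\oplus\{\sum x_i=0\}$ with the reflection summand irreducible, so the only proper nontrivial $S_n$-stable subspaces are the line $L$ and the hyperplane $H_0$, and they are transverse. A single $S_n$-invariant $1$-PSG $\lambda$, with weights $n-1$ on $L$ and $-1$ on a complement, handles both associated parabolics (one via $t \to 0$, the other via $t \to \infty$); this is the content of Lemma~\ref{pnmidn-1-onepsg}. Algorithm~\ref{algo:pnmidn} then inspects $\lim_{t\to 0}\lambda(t)\cdot f$ and $\lim_{t\to\infty}\lambda(t)\cdot f$, with the Hilbert--Mumford criterion dictating how vanishing, non-vanishing, and scalar-limit outcomes translate into the four stability types.

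When $p \mid n$ we have $L \subset H_0$, giving a length-three composition series $0 \subset L \subset H_0 \subset K^n$. A short check using the simplicity of $H_0/L$ and the fact that $(K^n)^{S_n} = L$ shows that $L$ and $H_0$ are still the only proper $S_n$-stable subspaces, but more $S_n$-stable flags now exist, including the full three-step flag $0 \subset L \subset H_0 \subset K^n$. Its stabilizer is strictly smaller than either of the parabolics from the previous case, so a larger (but still bounded) family of $S_n$-invariant $1$-PSGs is required; Algorithm~\ref{algo:pmidn} enumerates them and inspects limits as before. The main obstacle is exactly this $p \mid n$ case: one must verify that a bounded family of $1$-PSGs per flag genuinely suffices, which relies on the refinement (alluded to just before the theorem statement) tying optimal $1$-parameter subgroups to optimal parabolics. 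Once this is settled, termination and correctness of both algorithms reduce to finite convex-polytope checks on the support of $f$.
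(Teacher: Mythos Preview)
Your high-level strategy---classify the $S_n$-stable flags of $K^n$ and reduce to torus/$1$-PSG checks---is exactly the paper's approach, and your treatment of the $p \nmid n$ case via the single canonical $1$-PSG $\lambda_{\rm can}$ matches Lemma~\ref{pnmidn-1-onepsg} and Theorem~\ref{theo:pnmidn}. Two genuine gaps remain, however.

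First, your description of the $p \mid n$ case is not what Algorithm~\ref{algo:pmidn} does, and your argument as written does not justify correctness. You assert that a ``bounded family of $S_n$-invariant $1$-PSGs'' suffices and that the algorithm ``enumerates them and inspects limits.'' But for the three-step flag $0 \subset L \subset M \subset K^n$, the decoration $\underline{c} = (c_1,c_2,c_3)$ in Lemma~\ref{lem:technical-symm} is constrained only by $c_1 + (n-2)c_2 + c_3 = 0$ and $c_1 > c_2 > c_3$; the actual optimal $\underline{c}$ depends on $f$ and is not known in advance, so there is no universal finite list of $1$-PSGs to enumerate. The paper circumvents this by observing (proof of Lemma~\ref{lem:pmidn}) that for every $S_n$-stable flag one can choose a splitting so that the associated $1$-PSG lands in a single fixed $2$-dimensional torus $T_2$. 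Algorithm~\ref{algo:pmidn} then never enumerates $1$-PSGs at all: it computes $w = {\rm ess}(f)$ with respect to $T_2$ and tests whether $w = 0$ (unstable) or compares stabilizer dimensions (polystable vs.\ strictly semistable).

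Second, in both cases you do not explain how to decide algorithmically whether a limit point lies in $O_{\SL_n,f}$, which is precisely the distinction between polystable and strictly semistable. Your phrase ``scalar-limit outcomes'' does not capture this. The paper handles it by comparing $\dim((\SL_n)_f)$ with $\dim((\SL_n)_{w})$ for the relevant limit $w$ (Step~5 of Algorithm~\ref{algo:pnmidn}, Step~2 of Algorithm~\ref{algo:pmidn}), using that $w \in \overline{O_{\SL_n,f}}$ always and that $w \in O_{\SL_n,f}$ iff the stabilizer dimensions agree. This step is essential and is not a Newton-polytope check. Finally, your opening invocation of Theorem~\ref{thm:intro-closed-orbits} is a red herring: that theorem requires a family of maximal tori satisfying two conditions you do not verify, and in any case the paper explicitly notes that the symmetric-polynomial argument goes beyond Section~\ref{sec:kempf}, working instead directly with the optimal-parabolic/optimal-$1$-PSG correspondence of Lemma~\ref{lem:technical-symm}.
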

%\Hnote{I am referring to both algorithms, the algorithm that was being referred to in the theorem did not exist.}
We refrain from giving a complexity-theoretic analysis of Algorithms~\ref{algo:pnmidn} and~\ref{algo:pmidn} as it digresses too far from the scope of this paper. However, the complexity of most of the individual steps in the algorithm are well known, and perhaps the non-trivial part is to establish what is the right way to input a symmetric polynomial, etc.

Independent of the complexity of the algorithms, we are also able to prove a number of results on polystability of symmetric polynomials. We state only here to avoid introducing too much notation in the introduction, see Section~\ref{sec:symm-polys} for more such results. For a partition $\lambda \vdash d$, let $s_\lambda(x_1,\dots,x_n)$ denote the Schur polynomial associated to $\lambda$ in $n$ variables (see Section~\ref{sec:symm-polys} for the definition).

\begin{theorem} \label{thm:Schur}
Let $\kar(K) = 0, d \geq 2$ and $\lambda \vdash d$. Then for any $n > d$, the Schur polynomial $s_\lambda(x_1,\dots,x_n)$ is polystable.
\end{theorem}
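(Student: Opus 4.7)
The plan is to apply Theorem~\ref{thm:intro-closed-orbits} to the action of $G = \SL_n$ on $V = \Sym^d(K^n)$ at $v = s_\lambda$. Since $s_\lambda$ is symmetric, the stabilizer $G_v$ contains the subgroup $A_n \hookrightarrow \SL_n$ of even permutation matrices; in characteristic $0$, the centralizer of $A_n$ in the diagonal torus is the finite scalar subgroup $\mu_n \cap \SL_n$, so the maximal torus $H$ of $G_v$ is trivial and condition~(2) of Theorem~\ref{thm:intro-closed-orbits} is automatic. To satisfy condition~(1), I would classify parabolics $P \supseteq A_n$. For $n \geq 4$ the standard $A_n$-representation on the sum-$0$ hyperplane is irreducible, so the lattice of $A_n$-invariant subspaces of $K^n$ is precisely $\{0, L, H, K^n\}$, where $L = K\cdot(1,\ldots,1)$ and $H = \{\sum x_i = 0\}$ are complementary in characteristic $0$. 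Hence the proper parabolics containing $A_n$ are only the stabilizer $P_L$ of $L$ and the stabilizer $P_H$ of $H$, and I take $\mathcal{T} = \{T_0, T_L, T_H\}$ with $T_0$ the standard diagonal torus and $T_L \subseteq P_L$, $T_H \subseteq P_H$ chosen maximal tori. The edge case $n = 3$ (which forces $d = 2$) is dispatched directly: $s_\lambda$ is then a nondegenerate quadratic form in three variables, whose $\SL_3$-orbit is cut out by the discriminant and hence is closed.

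For $T_0$-polystability: the support of $s_\lambda$ in the standard monomial basis contains the full $S_n$-orbit of $(1^d, 0^{n-d})$, because $(1^d) \leq \lambda$ in dominance order and the Kostka number $K_{\lambda, (1^d)} = f^\lambda$ (the number of standard Young tableaux of shape $\lambda$) is strictly positive in characteristic $0$. The convex hull of this orbit is the hypersimplex $\Delta(n,d)$, full-dimensional in $\{\sum e_i = d\}$ for $0 < d < n$ with barycenter $(d/n, \ldots, d/n)$; the torus criterion (Section~\ref{sec:torus-actions}) then gives closedness of $T_0 \cdot s_\lambda$. For $T_L$ (and $T_H$, dual via the outer automorphism $g \mapsto (g^{-1})^T$ of $\SL_n$ exchanging $P_L$ and $P_H$), I would switch to coordinates $(y_1, \ldots, y_n)$ with $y_1 = (x_1 + \cdots + x_n)/n$ dual to $(1,\ldots,1)$ and $y_2, \ldots, y_n$ dual to a basis of $H$. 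The specialization $y_2 = \cdots = y_n = 0$ gives $s_\lambda(y_1, \ldots, y_1) = s_\lambda(1,\ldots,1) \cdot y_1^d$, nonzero in characteristic $0$ by the Weyl dimension formula; meanwhile $s_\lambda|_{y_1 = 0}$ is a nonzero polynomial of degree $d$ in $y_2, \ldots, y_n$, since $s_\lambda$ is not divisible by $e_1 = \sum x_i$ for $d \geq 2$ (verified by checking $s_\lambda \not\equiv 0$ in the quotient $K[x_1,\ldots,x_n]^{S_n}/(e_1) \cong K[e_2, \ldots, e_n]$). The $A_n$-invariance of the restriction then populates the Newton polytope in the $y_2, \ldots, y_n$ directions, and combined with the $y_1^d$ contribution places $(d/n, \ldots, d/n)$ in the relative interior.

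The main obstacle is the last step: establishing, uniformly in $\lambda \vdash d$ and $n > d$, that the $y$-coordinate Newton polytope is full-dimensional with $(d/n, \ldots, d/n)$ strictly in the relative interior. The hypothesis $n > d$ is essential both for the initial hypersimplex argument (to guarantee $(1^d)$ fits in $n$ variables and $\Delta(n,d)$ is full-dimensional) and for ensuring the $y_1 = 0$ restriction carries enough nonvanishing monomials after the change of basis. If a uniform direct argument proves elusive, an alternative is to invoke one of the strengthenings of Theorem~\ref{thm:intro-closed-orbits} developed in Section~\ref{sec:kempf}, which leverages optimal $1$-parameter subgroups associated to $A_n$-stable parabolics and bypasses the explicit Newton polytope computation for $T_L$ and $T_H$.
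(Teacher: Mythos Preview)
Your proposal identifies the right structural ingredients—the $S_n$-stable (equivalently, $A_n$-stable for $n \geq 4$) flag structure, the nonvanishing of the $l^d$-coefficient via the Weyl dimension formula, and the non-divisibility $l \nmid s_\lambda$—but the attempt to verify full $T_L$- and $T_H$-polystability via the Newton polytope in $y$-coordinates is left genuinely incomplete, as you yourself acknowledge. The claim that ``$A_n$-invariance of the restriction populates the Newton polytope in the $y_2,\ldots,y_n$ directions'' is not justified: $A_n$ acts on $H = \{\sum x_i = 0\}$ by its standard representation, which mixes the basis vectors $y_2,\ldots,y_n$ non-diagonally, so invariance gives no direct control over the monomial support in those coordinates. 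Separately, your assertion that the maximal torus of $G_v$ is trivial is false in general (for $\lambda = (2)$ the stabilizer is a special orthogonal group of positive rank); fortunately only condition~(1) of Theorem~\ref{thm:intro-closed-orbits} is needed for the implication you want, so this is not fatal, but it should be corrected.

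The paper's proof takes precisely the alternative you sketch at the end. Rather than checking polystability for a full maximal torus of each parabolic, it invokes Lemma~\ref{lem:technical-symm} to show that, for each of the two possible optimal parabolics, the optimal one-parameter subgroup is uniquely $\lambda_{\rm can}$ (acting by $t^{n-1}$ on $L$ and $t^{-1}$ on $M$) or its inverse. Lemma~\ref{pnmidn-1-onepsg} then reduces polystability of any symmetric $f$ to checking whether the limits of $\lambda_{\rm can}(t) \cdot f$ at $0$ and $\infty$ escape the orbit; since $d < n$ forces $n \nmid d$, Theorem~\ref{theo:pnmidn} collapses this to the two conditions $l \nmid f$ and $Df \neq 0$ (Lemma~\ref{lem:interesting-0}). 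These are dispatched by a short Pieri-rule argument (Lemma~\ref{lem:l-schur}) and by the explicit formula $D s_\lambda = \sum_{\mu}(n + d_{\lambda\setminus\mu}) s_\mu$ (Proposition~\ref{prop:D-schur}), every coefficient of which is positive because $|d_{\lambda\setminus\mu}| < d < n$. Your two observations—that $p_0 \neq 0$ and $p_d \neq 0$ in the expansion $s_\lambda = \sum_i l^i p_i$—are in fact exactly what these two conditions amount to; the fix is not to push the Newton-polytope computation through, but to recognize that the optimal-subgroup machinery of Section~\ref{sec:symm-polys} renders it unnecessary.
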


\subsection{Organization}
In section~\ref{sec:torus-actions}, we recall the computational invariant theory for torus actions. Section~\ref{sec:kempf} is devoted to discussing Kempf's theory of optimal subgroups and the consequences of it for the purposes of determining polystability. We review the representation theory of the special linear group and focus on the computational aspects relevant for us and prove Theorem~\ref{thm:evenpoly} in section~\ref{sec:rep-theory-background}. In section~\ref{sec:closed-orbits-degree-bounds}, we prove that orbits of certain points (which are relevant for degree lower bounds) are closed. In section~\ref{sec:Grosshans}, we explain our technique using Grosshans principle, i.e., Theorem~\ref{thm:liftingbounds} and prove exponential degree lower bounds for cubic forms and tensor actions, i.e., Theorem~\ref{thm:cubicbounds} and Theorem~\ref{thm:tensorbounds}. Section~\ref{sec:symm-polys} discusses polystability of symmetric polynomials and in particular gives an algorithm for it, i.e, we prove Theorem~\ref{thm:symm-algo}. Finally, in section~\ref{sec:symm-poly-class}, we determine the polystability of certain classes of interesting symmetric polynomials.

\section{Invariant theory for torus actions} \label{sec:torus-actions}
The invariant theory for torus actions is well studied (see \cite{Wehlau} or \cite[Section~3]{DM-exp}). We will briefly recall the important statements:

 Let $T = (K^*)^m$ be an $m$-dimensional torus. Let $\mathcal{X}(T)$ denote the set of characters or weights (i.e., morphisms of algebraic groups $T \rightarrow K^*$). For each $\lambda \in \Z^m$, we associate a character, also denoted $\lambda$ by abuse of notation, defined by the formula $\lambda(t_1,\dots,t_m) = \prod_{i=1}^m t_i^{\lambda_i}$. This defines an isomorphism of abelian groups from $\Z^m$ to $\mathcal{X}(T)$. Now, suppose $V$ is a representation of $T$. A vector $v \in V$ is called a weight vector of weight $\lambda \in \Z^m = \mathcal{X}(T)$ if $t \cdot v = \lambda(t) v$ for all $t \in T$. We have a weight space decomposition
$$
V = \oplus_{\lambda \in \Z^m} V_\lambda,
$$
where $V_\lambda = \{v \in V \ |\ t \cdot v = \lambda(t) v\}$. In particular, we have a basis consisting of weight vectors. 

Let $\mathcal{E} = (e_1,\dots,e_n)$ be a basis of $V$ consisting of weight vectors. Suppose the weight of $e_i$ is $\lambda^{(i)}$. Using this basis, identify $V$ with $K^n$, which then allows us to identify $K[V]$ with the polynomial ring $K[z_1,\dots,z_n]$. A monomial $z_1^{c_1} z_2^{c_2} \dots z_n^{c_n} \in K[V]$ is invariant if and only if $\sum_i c_i \lambda^{(i)} = 0$. Moreover, the invariant ring $K[V]^T = K[z_1,\dots,z_n]^T$ is linearly spanned by such invariant monomials. We refer to \cite[Section~3]{DM-exp} for more details. For a vector $v$, consider its support ${\rm Supp}(v) = \{i \ |\ v_i \neq 0\}$. Then, we define its {\em weight polytope} $\WP(v)$ to be the convex hull of the points $\{\lambda^{(i)}\ |\ i \in {\rm Supp}(v)\}$ thought of as a subset of $\R^m$. Even without coordinates with respect to an explicit basis, one can define the weight polytope. For each $v \in V$, we can write $v = \sum_\lambda v_\lambda$ where $v_\lambda \in V_\lambda$. Then, the weight polytope $\WP(v)$ is the convex hull of the points $\{\lambda\ |\ v_\lambda \neq 0\}$. We call $\{\lambda\ |\ v_\lambda \neq 0\}$ the weight set of $v$.

\begin{lemma}
Let $V$ be an $n$-dimensional representation of an $m$-dimensional torus $T = (K^*)^m$, and let $\mathcal{E} = (e_1,\dots,e_n)$ be a weight basis such that the weight of $e_i$ is $\lambda^{(i)}$. Let $0 \neq v \in V$. Then,
\begin{itemize}
\item $v$ is semistable if and only if $0 \in \WP(v)$;
\item $v$ is polystable if and only if $0$ is in the relative interior of $\WP(v)$;
\item $v$ is stable if and only if $0$ is in the interior of $\WP(v)$.
\end{itemize}
\end{lemma}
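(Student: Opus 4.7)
The plan is to reduce all three equivalences to the explicit description of the $T$-invariants as monomials recalled just before the statement, combined with elementary convex-geometric duality and the Hilbert-Mumford criterion specialized to tori (whose one-parameter subgroups are precisely the cocharacters $a \in \Z^m$). Throughout, I would identify $\mathcal{X}(T) \cong \Z^m$ and work in the coordinates $z_1,\dots,z_n$ dual to the weight basis, so that the $T$-invariant monomials are exactly the $z^c = z_1^{c_1} \cdots z_n^{c_n}$ with $c \in \Z_{\geq 0}^n$ satisfying $\sum_i c_i \lambda^{(i)} = 0$, and such a monomial is non-zero at $v$ precisely when $\supp(c) \subseteq \supp(v)$.

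For semistability, the existence of a non-constant homogeneous invariant not vanishing at $v$ is, by the above, equivalent to the existence of some $c \in \Z_{\geq 0}^n \setminus \{0\}$ with $\supp(c) \subseteq \supp(v)$ and $\sum_i c_i \lambda^{(i)} = 0$. Rescaling so that $\sum_i c_i = 1$ (and passing from $\Z$ to $\Q$ to $\R$, which is harmless since the system is homogeneous linear over $\Q$), this is exactly the statement that $0$ is a convex combination of $\{\lambda^{(i)} : i \in \supp(v)\}$, i.e., $0 \in \WP(v)$.

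For polystability, the cocharacter $\mu_a : K^* \to T$ attached to $a \in \Z^m$ acts on $e_i$ with weight $\langle a, \lambda^{(i)} \rangle$, so the limit $v'_a := \lim_{t \to 0} \mu_a(t) \cdot v$ exists iff $\langle a, \lambda^{(i)} \rangle \geq 0$ for every $i \in \supp(v)$, and then $v'_a = \sum_{i : \langle a, \lambda^{(i)} \rangle = 0} v_i e_i$; its support is a proper subset of $\supp(v)$ as soon as some inequality is strict. Since $T$ preserves supports in the weight basis, such a $v'_a$ cannot lie in $T \cdot v$, and by the Hilbert-Mumford criterion applied to $T$ this is the only way $T \cdot v$ can fail to be closed. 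Hence $T \cdot v$ is closed iff no $a \in \Z^m$ satisfies $\langle a, \lambda^{(i)} \rangle \geq 0$ on all of $\supp(v)$ with some strict inequality, which by standard hyperplane separation is equivalent to $0$ lying in the relative interior of $\WP(v)$.

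For stability, polystability already places $0$ in the relative interior, and it remains to translate $\dim G_v = \dim \ker \rho$ into full-dimensionality of $\WP(v)$. This follows because $G_v = \bigcap_{i \in \supp(v)} \ker \lambda^{(i)}$ has codimension in $T$ equal to the rank of the $\Z$-span of $\{\lambda^{(i)}\}_{i \in \supp(v)}$, so matching the codimension of $\ker \rho$ amounts to the weights in $\supp(v)$ spanning the same subspace as all weights of $V$, which under the usual effectiveness convention means spanning $\R^m$; combined with $0$ being in the relative interior this is the topological interior condition. The main obstacle is the polystability direction invoking Hilbert-Mumford, namely guaranteeing a destabilizing cocharacter whenever the orbit fails to be closed; a self-contained alternative is to exhibit $\overline{T \cdot v}$ as a toric variety whose $T$-orbit stratification is indexed by the faces of $\WP(v)$, with a face $F$ carrying the orbit of $\sum_{\lambda^{(i)} \in F} v_i e_i$, and to read off closedness directly from the face containing $0$ in its relative interior.
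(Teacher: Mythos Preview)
The paper does not actually prove this lemma: it is presented in Section~\ref{sec:torus-actions} as one of several ``important statements'' being recalled from the literature (with a pointer to \cite{Wehlau} and \cite[Section~3]{DM-exp}), and no argument is supplied. So there is no in-paper proof to compare against.

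Your argument is correct and is the standard one. The semistability equivalence via invariant monomials and rational rescaling is fine (the passage $\R \to \Q$ uses that the polytope has integral vertices). The polystability equivalence is handled cleanly: the key observations that (i) $T$ preserves $\supp(v)$ in the weight basis, so a limit with strictly smaller support leaves the orbit, and (ii) the Hilbert--Mumford criterion for tori produces such a cocharacter whenever the orbit is not closed, together reduce the question to the supporting-hyperplane characterization of the relative boundary. For stability, your identification $G_v = \bigcap_{i \in \supp(v)} \ker \lambda^{(i)}$ and the resulting rank comparison are exactly right, and you are correct to flag the effectiveness convention: as literally stated (with ``interior'' meaning interior in $\R^m$), the equivalence requires that the weights of $V$ span $\R^m$, i.e., that $\ker\rho$ is finite. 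The paper is aware of this subtlety---note that immediately afterward, in passing to $\ST_n$ acting on $\Sym^d(K^n)$, it explicitly restricts the ambient space to the hyperplane $E_d$ ``for the last part'' of Corollary~\ref{cor:symm-torus}. Your toric-stratification alternative for the polystability step is also valid and is essentially the content of the subsequent Lemma~\ref{lem:polystable-wp} and the ${\rm ess}(v)$ discussion.
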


For concreteness, we discuss the action of $\ST_n$ the group of diagonal $n \times n$ matrices with determinant $1$ on $\Sym^d(K^n)$, the space of degree $d$ polynomials in $x_1,\dots,x_n$. For a polynomial $f \in \Sym^d(K^n)$, write $f = \sum_{e \in \N^n} c_e x^e$. We define the Newton polytope
$$
\NP(f) = \text{convex hull } \{e \in \N^n \ | c_e \neq 0\}.
$$

We think of $\NP(f)$ not as a subset of $\R^n$, but as a subset of $E_d = \{(v_1,\dots,v_n) \in \R^n \ |\ \sum_i v_i = d\}$. This is necessary for the last part of the following corollary. In this special case, the above lemma translates to the following:

\begin{corollary} \label{cor:symm-torus}
Consider the action of $\ST_n$ on $\Sym^d(K^n)$. Let $ 0 \neq f \in \Sym^d(K^n)$. Then
\begin{itemize}
\item $f$ is semistable if and only if $(\frac{d}{n},\frac{d}{n},\dots,\frac{d}{n}) \in \NP(f)$;
\item $f$ is polystable if and only if $(\frac{d}{n},\frac{d}{n},\dots,\frac{d}{n})$ is in the relative interior of $\NP(f)$;
\item $f$ is stable if and only if $(\frac{d}{n},\frac{d}{n},\dots,\frac{d}{n})$ is in the interior of $\NP(f)$.
\end{itemize}
\end{corollary}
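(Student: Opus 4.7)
The plan is to derive Corollary~\ref{cor:symm-torus} directly from the preceding lemma by identifying the character lattice of $\ST_n$ with the natural ambient space of the Newton polytope $\NP(f)$, restricted to the hyperplane $E_d$. This is essentially a bookkeeping exercise, with no real obstacle beyond matching conventions; the only subtle point is the distinction between ``interior'' and ``relative interior,'' which is exactly why the authors emphasize viewing $\NP(f)$ inside $E_d$.

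First, I would set up the character lattice. The full diagonal torus $T_n \cong (K^*)^n$ has character lattice $\Z^n$, and the monomial $x^e = x_1^{e_1} \cdots x_n^{e_n}$ has $T_n$-weight $e = (e_1,\dots,e_n)$. The subtorus $\ST_n \subseteq T_n$ is cut out by $t_1 \cdots t_n = 1$, so its character lattice is the quotient
\[
\mathcal{X}(\ST_n) = \Z^n/\Z\cdot(1,1,\dots,1),
\]
and the $\ST_n$-weight of $x^e$ is the class $[e]$. Consequently, the $\ST_n$-weight polytope of $f = \sum_e c_e x^e$ is the image of $\supp(f)$, hence of $\NP(f)$, in $\mathcal{X}(\ST_n) \otimes \R \cong \R^n/\R(1,\dots,1)$.

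Second, I would show that the natural map
\[
\pi: E_d \longrightarrow \R^n/\R(1,\dots,1)
\]
is an affine isomorphism. It is injective because two vectors in $E_d$ differing by $c(1,\dots,1)$ must have $nc = 0$, and surjective because any coset has a unique representative with coordinate-sum equal to $d$. Under $\pi$, the point $(d/n,\dots,d/n) \in E_d$ maps to $0$, and $\NP(f) \subseteq E_d$ maps bijectively onto $\WP(f) \subseteq \mathcal{X}(\ST_n) \otimes \R$. Since $\pi$ is an affine bijection, it preserves the notions of ``contained in,'' ``in the relative interior of,'' and ``in the interior of'' between $\NP(f)$ viewed inside $E_d$ and $\WP(f)$ viewed inside the $(n-1)$-dimensional space $\mathcal{X}(\ST_n) \otimes \R$.

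Finally, I would invoke the preceding lemma applied to the $\ST_n$-representation $\Sym^d(K^n)$: $f$ is semistable (resp.\ polystable, stable) iff $0$ lies in $\WP(f)$ (resp.\ its relative interior, its interior). Translating each condition back to $E_d$ via $\pi$ yields the three equivalences of the corollary. The reason for emphasizing that $\NP(f)$ is viewed inside $E_d$ rather than $\R^n$ is precisely that the ``interior'' in the stability criterion must match the dimension of $\mathcal{X}(\ST_n) \otimes \R$, which equals $n-1 = \dim E_d$.
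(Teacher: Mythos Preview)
Your proposal is correct and matches the paper's approach. The paper itself does not give a detailed proof; it simply states that ``in this special case, the above lemma translates to the following,'' and your argument is exactly the bookkeeping that makes this translation precise: identifying $\mathcal{X}(\ST_n)\otimes\R$ with $\R^n/\R(1,\dots,1)$, observing that $E_d$ maps isomorphically onto it with $(d/n,\dots,d/n)\mapsto 0$, and then reading off the three conditions from the weight-polytope lemma.
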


For torus actions, semistability, polystability and stability are all determined by the weight polytopes. For polystability we draw a connection to invariant monomials.

\begin{lemma} \label{lem:polystable-wp}
Let $V$ be an $n$-dimensional representation of an $m$-dimensional torus $T = (K^*)^m$, and let $\mathcal{E} = (e_1,\dots,e_n)$ be a weight basis such that the weight of $e_i$ is $\lambda^{(i)}$. Let $0 \neq v \in V$. Then, the following are equivalent:
\begin{itemize}
\item $v$ is polystable.
\item For every $i \in {\rm Supp}(v)$, there exists an invariant monomial $\prod\limits_{j \in {\rm Supp}(v)} x_j^{c_j}$ such that $c_i \geq 1$.
\end{itemize}
\end{lemma}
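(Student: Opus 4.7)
The plan is to reduce this lemma to the previous one (the criterion for polystability in terms of the weight polytope) and then to translate the polytope condition into the existence of invariant monomials using standard rationality arguments for polytopes with rational vertices. Write $v = \sum_{i \in \Supp(v)} v_i e_i$, so $\WP(v) = \mathrm{conv}\{\lambda^{(i)} : i \in \Supp(v)\}$. By the preceding lemma, $v$ is polystable if and only if $0$ lies in the relative interior of $\WP(v)$. The goal is therefore to show that this is equivalent to the stated monomial condition. Recall that monomials $\prod_j x_j^{c_j}$ (with $c_j \in \Z_{\geq 0}$) are $T$-invariant precisely when $\sum_j c_j \lambda^{(j)} = 0$.

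For the direction $(\Leftarrow)$, suppose the monomial condition holds. For each $i \in \Supp(v)$ pick an invariant monomial $m_i = \prod_{j \in \Supp(v)} x_j^{c_j^{(i)}}$ with $c_i^{(i)} \geq 1$. Multiplying all these together produces a single invariant monomial $\prod_{j \in \Supp(v)} x_j^{c_j}$ with $c_j \geq 1$ for every $j \in \Supp(v)$. Normalising by $\sum_j c_j > 0$, one obtains a strict convex combination $\sum_j a_j \lambda^{(j)} = 0$ with all $a_j > 0$, which by the standard characterisation of the relative interior of the convex hull of a finite set places $0$ in the relative interior of $\WP(v)$.

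For the direction $(\Rightarrow)$, suppose $0$ is in the relative interior of $\WP(v)$. Then there exist real numbers $a_j > 0$ with $\sum_j a_j = 1$ and $\sum_j a_j \lambda^{(j)} = 0$. This is a linear system with integer coefficients (since $\lambda^{(j)} \in \Z^m$), so its solution set $\{a \in \R^{\Supp(v)} : \sum a_j \lambda^{(j)} = 0,\ \sum a_j = 1,\ a_j \geq 0\}$ is a rational polytope. Since this polytope has a point with all $a_j > 0$, the face where all $a_j > 0$ is its relative interior and contains rational points; clearing denominators yields positive integers $c_j$ with $\sum_j c_j \lambda^{(j)} = 0$ and $c_j \geq 1$ for every $j \in \Supp(v)$. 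This single monomial serves simultaneously as the required invariant monomial for each index $i$.

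The only subtle step is the rationality argument in $(\Rightarrow)$: going from a real strict convex combination to one with strictly positive integer (equivalently rational) coefficients. This is where one uses crucially that the weights $\lambda^{(i)}$ are integral, so that the affine-linear relations cutting out the solution set are defined over $\Q$ and the rational points are dense in the relative interior; any density argument or elementary rational linear algebra argument suffices. Everything else is either a direct application of the weight polytope criterion already proved, or the trivial operation of multiplying monomials.
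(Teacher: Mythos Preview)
The paper states this lemma without proof, so there is nothing to compare against directly. Your argument is correct: reducing to the weight-polytope criterion and then converting between ``$0$ in the relative interior'' and ``a strictly positive integral relation among the weights'' via a standard rationality/density argument is exactly the expected route, and both implications are handled cleanly.
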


We recall the Hilbert--Mumford criterion, for which we need to understand $1$-parameter subgroups. A $1$-parameter subgroup of $T$ is a morphism of algebraic groups $\lambda: K^* \rightarrow T$. We denote by $\Gamma(T)$, the set of $1$-parameter subgroups of $T$. If we identity $T$ with $(K^*)^m$, then any $1$-parameter subgroup is of the form $t \mapsto (t^{a_1}, t^{a_2},\dots,t^{a_m})$ where $a_i \in \Z$. There is an abelian group structure on $\Gamma(T)$, if we take $1$-parameter subgroups $ t \mapsto (t^{a_1}, t^{a_2},\dots,t^{a_m})$ and $t \mapsto (t^{b_1}, t^{b_2},\dots,t^{b_m})$, we can multiply them to get another $1$-parameter subgroup $t \mapsto (t^{a_1+b_1}, t^{a_2+b_2},\dots,t^{a_m+b_m})$. This allows us to also identify $\Gamma(T)$ with $\Z^m$ as an abelian group.

\begin{theorem} [Hilbert--Mumford criterion]
Let $V$ be an $n$-dimensional representation of an $m$-dimensional torus $T = (K^*)^m$, and let $\mathcal{E} = (e_1,\dots,e_n)$ be a weight basis such that the weight of $e^{(i)}$ is $\lambda^{(i)}$. Let $v \in V$ and consider its orbit $O_v$. Let $S$ be another closed $T$-stable subset of $V$. Then $S \cap \overline{O}_v \neq \emptyset$ if and only if there exists a $1$-parameter subgroup $\lambda$ such that $\lim_{t \to 0} \lambda(t) \cdot v \in S$.
\end{theorem}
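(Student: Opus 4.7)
The plan is to prove the two directions separately; the ``if'' direction is essentially automatic, while the ``only if'' direction rests on the valuative criterion for affine orbit closures together with a careful reading of $s$-valuations against weight data.

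The ``if'' direction is immediate: if $w:=\lim_{t\to 0}\lambda(t)\cdot v$ exists and lies in $S$, then $w$ is a limit of points in $O_v$, hence $w\in\overline{O_v}$, so $w\in S\cap\overline{O_v}$ (closedness and $T$-stability of $S$ are not needed here).

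For the ``only if'' direction, fix $w\in S\cap\overline{O_v}$. Since $\overline{O_v}\subseteq V$ is affine, a standard curve-selection/valuative argument produces $f(s)=(f_1(s),\ldots,f_m(s))\in T(K((s)))$ such that $f(s)\cdot v$ extends to a $K[[s]]$-point of $V$ and specializes to $w$ at $s=0$. Factor $f_i(s)=s^{a_i}g_i(s)$ with $a_i\in\Z$ and $g_i\in K[[s]]^*$, and set $a=(a_1,\ldots,a_m)\in\Z^m$. Writing $v=\sum_i v_i e_i$ and using $t\cdot e_i=\lambda^{(i)}(t)e_i$, one computes
$$f(s)\cdot v=\sum_{i\in\Supp(v)}v_i\,s^{\langle a,\lambda^{(i)}\rangle}\,\lambda^{(i)}(g(s))\,e_i,$$
and extending to $V(K[[s]])$ forces $\langle a,\lambda^{(i)}\rangle\geq 0$ on $\Supp(v)$, so $\lambda_a(t):=(t^{a_1},\ldots,t^{a_m})$ is a $1$-PSG along which $\lim_{t\to 0}\lambda_a(t)\cdot v$ exists. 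Set $c:=\min_{i\in\Supp(v)}\langle a,\lambda^{(i)}\rangle\geq 0$ and $F:=\{i\in\Supp(v):\langle a,\lambda^{(i)}\rangle=c\}$. If $w\neq 0$, then $c=0$ (otherwise every term above has strictly positive $s$-valuation and $w$ would vanish), and specializing at $s=0$ yields $w=g(0)\cdot v_F$ with $v_F:=\sum_{i\in F}v_ie_i$. By the $T$-stability of $S$, $v_F=g(0)^{-1}\cdot w\in S$, and $\lim_{t\to 0}\lambda_a(t)\cdot v=v_F\in S$, as desired. If instead $w=0\in S$, then $v$ is unstable, so $0\notin\WP(v)$ by the semistability criterion recalled above; a rational separating hyperplane then produces $b\in\Z^m$ with $\langle b,\lambda^{(i)}\rangle>0$ on $\Supp(v)$, and the associated $1$-PSG has limit $0\in S$.

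The main obstacle is the valuative step: extracting a formal arc in $T(K((s)))$ whose image specializes to $w$. This is routine for affine orbit closures via the curve-selection lemma and the smoothness of $T$ (allowing a lift of the arc on $O_v$ back to $T$); once that input is in hand, the rest is pure $s$-valuation bookkeeping against the rational polytope $\WP(v)$.
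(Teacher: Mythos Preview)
The paper does not supply a proof of this statement; it is recalled in Section~\ref{sec:torus-actions} as a standard fact about torus actions. Your argument is essentially the classical proof of the Hilbert--Mumford criterion via the valuative criterion (as in Mumford's GIT or Kempf's paper), specialized to the torus case, where the Cartan/Iwahori decomposition collapses to the trivial factorization $f_i(s)=s^{a_i}g_i(s)$ in $K((s))^*$. The valuation bookkeeping you carry out after that point is correct.

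Two small comments. First, the separate treatment of $w=0$ is unnecessary and mildly circular, since the semistability lemma you invoke is usually deduced \emph{from} the Hilbert--Mumford criterion rather than used as an input to it. In your own setup the case $w=0$ falls out directly: if $c=0$ then $F\neq\emptyset$ and the specialization $g(0)\cdot v_F$ is nonzero, so $w=0$ forces $c>0$, whence $\lim_{t\to 0}\lambda_a(t)\cdot v=0\in S$ immediately.

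Second, ``smoothness of $T$'' is not quite the right justification for lifting the arc from $O_v$ back to $T$. What is actually needed is surjectivity of $T(K((s)))\to (T/T_v)(K((s)))$, and this can fail when the stabilizer $T_v$ has a nontrivial finite part: if the quotient map involves $t\mapsto t^n$ on some factor, then $s\in K((s))^*$ has no preimage. The standard remedy is to allow a finite base change $s\mapsto s^{1/N}$ before lifting; since $K((s^{1/N}))\cong K((s'))$, the rest of your valuation argument goes through unchanged.
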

%\Hnote{I changed $T\cap\overline{O_v}$ to $S\cap\overline{O}_v$. Do we need that $S$ is closed? (this is necessary for 1-PSG's in reductive $G$, but maybe not for tori.}
%\Vnote{This is correct, we don't need $S$ closed, but should we might as well impose it to avoid confusing people.}

\begin{definition}
Let $V$ be an $n$-dimensional representation of an $m$-dimensional torus $T = (K^*)^m$, and let $\mathcal{E} = (e_1,\dots,e_n)$ be a weight basis such that the weight of $e_i$ is $\lambda^{(i)}$. Let $v = (v_1,\dots,v_n) \in V$, where $v_i$ are coordinates in the basis $\mathcal{E}$. We say $i \in {\rm Supp}(v)$ is {\em essential} if there exists a non-negative linear combination $\sum_{j \in {\rm Supp}(v)} c_j \lambda^{(j)} = 0$ with $c_i > 0$. We define
$$
{\rm eSupp}(v) = \{i \in [n] \ | \ i \text{ is essential}\},
$$
and we define ${\rm ess}(v) = v|_{{\rm eSupp}(v)}$ by 
$$
{\rm ess}(v)_i = \begin{cases} v_i & \text{ if $i \in {\rm eSupp}(v)$} \\ 0 & \text{ otherwise.} \end{cases}.
$$
\end{definition}

The following lemma is well-known, see e.g. \cite[Example~1.3]{Popov-occ}

\begin{lemma}
Let $V$ be an $n$-dimensional representation of an $m$-dimensional torus $T = (K^*)^m$, and let $\mathcal{E} = (e_1,\dots,e_n)$ be a weight basis such that the weight of $e_i$ is $\lambda^{(i)}$. Let $v \in V$. Then ${\rm ess}(v)$ is a point in the unique closed $T$-orbit inside $\overline{O_{T,v}}$.
\end{lemma}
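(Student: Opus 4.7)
The plan is to verify three things: (i) ${\rm ess}(v) \in \overline{O_{T,v}}$, (ii) the $T$-orbit of ${\rm ess}(v)$ is closed, and (iii) any orbit closure contains a unique closed $T$-orbit. Item (iii) is a standard consequence of the fact that for reductive group actions disjoint closed invariant subsets are separated by invariants (Theorem~\ref{thm:Mumford}), so I concentrate on (i) and (ii).

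For (ii), I apply Lemma~\ref{lem:polystable-wp} to ${\rm ess}(v)$, whose support is exactly ${\rm eSupp}(v)$. I must show that every $i \in {\rm eSupp}(v)$ admits an invariant monomial $\prod_{j \in {\rm eSupp}(v)} x_j^{c_j}$ with $c_i \geq 1$, equivalently a relation $\sum_{j \in {\rm eSupp}(v)} c_j \lambda^{(j)} = 0$ with $c_j \geq 0$ and $c_i \geq 1$. By hypothesis $i$ is essential for $v$, so such a relation exists a priori with $j$ ranging over ${\rm Supp}(v)$. The key observation is that every index $j$ with $c_j > 0$ in this relation is itself essential (the same relation witnesses it), so the relation is automatically supported on ${\rm eSupp}(v)$.

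For (i), I construct a $1$-parameter subgroup $\mu \in \Gamma(T) = \Z^m$ satisfying $\langle \mu, \lambda^{(i)} \rangle = 0$ for $i \in {\rm eSupp}(v)$ and $\langle \mu, \lambda^{(i)} \rangle > 0$ for $i \in {\rm Supp}(v) \setminus {\rm eSupp}(v)$. Let $P$ denote the rational convex cone generated by $\{\lambda^{(j)} : j \in {\rm Supp}(v)\}$. Unwinding definitions, $i \in {\rm Supp}(v)$ is non-essential precisely when $-\lambda^{(i)} \notin P$; by Farkas' lemma, for each such $i$ there exists $\mu_i \in \Q^m$ with $\langle \mu_i, \lambda^{(j)} \rangle \geq 0$ for all $j \in {\rm Supp}(v)$ and $\langle \mu_i, \lambda^{(i)} \rangle > 0$. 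Summing the $\mu_i$ over all non-essential $i$ and clearing denominators yields $\mu \in \Z^m$ with $\langle \mu, \lambda^{(j)} \rangle \geq 0$ for every $j \in {\rm Supp}(v)$ and strict inequality at every non-essential $i$. For an essential $i$ with witnessing relation $\sum_j c_j \lambda^{(j)} = 0$, pairing with $\mu$ gives $\sum_j c_j \langle \mu, \lambda^{(j)} \rangle = 0$; since all summands are non-negative, each vanishes, and in particular $c_i \langle \mu, \lambda^{(i)} \rangle = 0$, forcing $\langle \mu, \lambda^{(i)} \rangle = 0$.

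With $\mu$ in hand, $\mu(t) \cdot v = \sum_{i \in {\rm Supp}(v)} v_i \, t^{\langle \mu, \lambda^{(i)} \rangle} e_i$, whose limit as $t \to 0$ equals $\sum_{i \in {\rm eSupp}(v)} v_i e_i = {\rm ess}(v)$. Hence ${\rm ess}(v) \in \overline{O_{T,v}}$ by the Hilbert--Mumford criterion, completing the proof. The main technical step is the convex-geometric construction of $\mu$; once Farkas' lemma is invoked the remaining pieces are bookkeeping.
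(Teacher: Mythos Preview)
Your proof is correct. The paper does not actually supply a proof of this lemma; it simply records it as well-known with a reference to \cite[Example~1.3]{Popov-occ}. Your argument via Farkas' lemma and the characterization in Lemma~\ref{lem:polystable-wp} is a clean, self-contained way to establish the result, and the key observation that any witnessing relation for an essential index is automatically supported on ${\rm eSupp}(v)$ is exactly what makes part (ii) go through. One very minor remark: at the end you don't really need to invoke the Hilbert--Mumford criterion, since you have explicitly exhibited ${\rm ess}(v)$ as a limit of points in $O_{T,v}$; but this is harmless.
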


\section{Kempf's theory of optimal subgroups} \label{sec:kempf}
Let $G$ be a reductive algebraic group over $K$. Let $V$ be a rational representation of $G$. We will recall some technical notions that we need to be able to state the known results on optimal subgroups in a coherent fashion. We are content to only briefly recall these notions, referring the interested reader to \cite{Kempf} for more details. Much of these technical notions are only required to prove our main results in this section, but are not needed in their statements or anywhere else in this paper.
%In any case, the conclusions we draw and use throughout the paper will not need these.

A $1$-parameter subgroup of $G$ is a morphism of algebraic groups $\lambda: K^* \rightarrow G$. Let $\Gamma(G)$ denote the set of $1$-parameter subgroups of $G$.  A length function $||\ ||$ on $\Gamma(G)$ is a non-negative real valued function such that 
\begin{itemize}
\item $||g \cdot \lambda|| = ||\lambda||$ for any $g \in G$ and $\lambda \in \Gamma(G)$;
\item For any maximal torus $T$ of $G$, there is a positive-definite integral valued bilinear form $(\ ,\ )$ on $\Gamma(T)$ such that $(\lambda,\lambda) = ||\lambda||^2$ for any $\lambda \in \Gamma(T)$.
\end{itemize}

Recall that $\Gamma(T)$ represents the set of $1$-parameter subgroups of $T$. It is a free abelian group of rank equal to the dimension of $T$. The existence of a length function is not obvious but not very involved either. Pick a maximal torus $T$, pick a positive-definite integral valued bilinear form $(\ ,\ )$ on $\Gamma(T)$ which is invariant under the Weyl group. Thus, for any $\lambda \in \Gamma(G)$, we view it in $\Gamma(gTg^{-1})$ for some $g \in G$ (since all maximal tori are conjugate and any $1$-parameter subgroup lies in a maximal torus) and we define $|| \lambda|| = (g^{-1}\lambda g ,g^{-1}\lambda g)$. Note that $g^{-1} \lambda g \in \Gamma(T)$. That such a length function is well-defined is a consequence of the invariance of the bilinear form under the Weyl group.

Let $v \in V$. Let $S \subseteq V$ be a closed $G$-subscheme. Let $|S,v|$ denote the set of all $1$-parameter subgroups $\lambda$ of $G$ such that the $\lim_{t \to 0} \lambda(t) \cdot v$ exists in $S$. The set $|S,v|$ is non-empty if and only if $S \cap \overline{O_v} \neq \emptyset$, see \cite[Theorem~1.4]{Kempf}. For $\lambda \in |S,v|$, let $M(\lambda): \mathbb{A}^1 \rightarrow V$ be the unique morphism defined by $M(\lambda)(t) = \lambda(t) \cdot v$ for $t \neq 0$ (where $\mathbb{A}^1$ denote the $1$-dimensional affine space). Let $S$ denote a subscheme of $V$ that is closed under the action of $G$. Then, let $a_{S,v}(\lambda)$ denote the degree of the divisor $M(\lambda)^{-1} S$ (which is an effective divisor on $\mathbb{A}^1$). If $|S,v| \neq \emptyset$, the function $\lambda \mapsto a_{S,v}(\lambda)/||\lambda||$ takes a maximum value $B_{S,v}$ on $|S,v|$. 

A 1-parameter subgroup $\lambda$ is called divisible if there exists another 1-parameter subgroup $\mu$ and a postive integer $r \geq 2$ such that $\lambda(t) = \mu(t)^r$ for all $t \in K^*$. A 1-parameter subgroup that is not divisible is called indivisible. When $|S,v| \neq \emptyset$, an indivisible 1-parameter subgroup $\lambda \in |S,v|$ is called optimal if $a_{S,v}(\lambda)/||\lambda|| = B_{S,v}$. We denote by $\Lambda(S,v)$ the set of optimal 1-parameter subgroups.

For a 1-parameter subgroup $\lambda$, we define the associated parabolic subgroup $P(\lambda) = \{g \in G \ | \lim_{t \to 0} \lambda(t) g \lambda(t^{-1}) \in G\}$.

We summarize the main technical results from \cite[Section~3]{Kempf}).

\begin{theorem} \label{thm:Kempf}
Let $V$ be a rational representation of a reductive group $G$. Let $v \in V$ such that $O_v$ is not closed. Let $S$ be a closed $G$-stable subscheme such that $O_v \cap S = \emptyset$ and $\overline{O_v} \cap S \neq \emptyset$. Fix a choice of length function $|| \ ||$ on $\Gamma(G)$. Let $G_v$ denote the stabilizer of $v$.
\begin{enumerate}
\item The set $\Lambda(S,v)$ of optimal 1-parameter subgroups is non-empty.
\item There is a parabolic subgroup $P_{S.v}$ such that $P(\lambda) = P_{S,v}$ for all $\lambda \in \Lambda(S,v)$. We call $P_{S,v}$ the optimal parabolic subgroup;
\item Any maximal torus of $P_{S,v}$ contains a unique member of $\Lambda(S,v)$;
\item $G_v \subseteq P_{S,v}$.

\end{enumerate}
\end{theorem}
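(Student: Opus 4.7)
The plan is to follow Kempf's arguments from \cite[Section~3]{Kempf}, handling the four assertions in order. The unifying idea is that although $|S,v|$ lives inside the lattices $\Gamma(T)$ for maximal tori $T$ and is hence discrete, one can extend the optimization problem for $a_{S,v}/\|\cdot\|$ to the real vector spaces $\Gamma(T)\otimes_{\Z}\R$, where the length function is a genuine norm coming from a positive-definite form and $a_{S,v}$ extends as a piecewise-linear homogeneous function. Convex-analytic arguments on these real spaces, together with $G$-equivariance, do most of the work.

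For part~(1), the set $|S,v|$ is non-empty by \cite[Theorem~1.4]{Kempf} applied to the hypothesis $\overline{O_v}\cap S\neq\emptyset$. Fixing a maximal torus $T$, the ratio $a_{S,v}/\|\cdot\|$ is scale-invariant and rational with bounded denominators on $|S,v|\cap\Gamma(T)$, so a compactness argument on the real unit sphere in $\Gamma(T)\otimes\R$ attains its supremum $B_T$ along some rational ray, which can then be rescaled to an indivisible integer $1$-parameter subgroup. To upgrade this to a global optimum over all maximal tori I would use $G$-conjugacy of tori together with a finiteness argument (the ``type'' of a $1$-parameter subgroup modulo $G$-conjugacy is suitably constrained) to show that the supremum over $|S,v|$ is attained, producing the desired element of $\Lambda(S,v)$.

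For parts~(2) and~(3), the crux, and in my view the main technical obstacle, is a convexity estimate: when $\lambda_1,\lambda_2$ lie in a common maximal torus $T$, one has $a_{S,v}(\lambda_1+\lambda_2)\geq a_{S,v}(\lambda_1)+a_{S,v}(\lambda_2)$, because the combined limit sits at least as deep in $S$ as either individual limit. Establishing this cleanly requires careful bookkeeping with the divisor $M(\lambda)^{-1}S$ and the weight filtration on $V$ induced by $\lambda$. Given two optima $\lambda_1,\lambda_2\in\Lambda(S,v)$, the plan is to find $g\in G$ conjugating $\lambda_1$ into a common torus with $\lambda_2$, combine the convexity estimate with strict convexity of $\|\cdot\|^2$ (via the equality case of Cauchy--Schwarz) to force $g\lambda_1 g^{-1}=\lambda_2$, and then argue that $g$ can be chosen inside $P(\lambda_2)$ to conclude $P(\lambda_1)=P(\lambda_2)$. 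This yields the well-defined $P_{S,v}$ of~(2), and the same argument restricted to a fixed maximal torus of $P_{S,v}$ gives the uniqueness in~(3).

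For part~(4), let $g\in G_v$. Since $g$ fixes $v$, conjugation by $g$ preserves $|S,v|$, the function $a_{S,v}$, and the length function, so $g\Lambda(S,v)g^{-1}=\Lambda(S,v)$. Applying~(2) to $\lambda\in\Lambda(S,v)$ and to $g\lambda g^{-1}$ yields $gP_{S,v}g^{-1}=P(g\lambda g^{-1})=P_{S,v}$, so $g$ normalizes $P_{S,v}$. Since parabolic subgroups are self-normalizing, $g\in P_{S,v}$.
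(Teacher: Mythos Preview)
The paper does not give its own proof of this statement: it is stated as a summary of \cite[Section~3]{Kempf} and used as a black box throughout. So there is nothing to compare your argument against in the paper itself.

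Your sketch is a fair outline of Kempf's original argument, and part~(4) is exactly how Kempf obtains it (conjugation by $G_v$ preserves $\Lambda(S,v)$, hence normalizes $P_{S,v}$, and parabolics are self-normalizing). For parts~(1)--(3) your roadmap is correct in spirit, but a couple of steps are doing more work than your prose indicates. In~(1), the passage from ``optimum on a fixed torus'' to ``global optimum over $|S,v|$'' is not just $G$-conjugacy of tori: $a_{S,v}$ is not $G$-equivariant in the relevant sense (it is $v$ that is fixed, not the torus), and Kempf's actual mechanism is the Bruhat/Iwahori-type decomposition that lets one move any $1$-parameter subgroup into a fixed maximal torus of a fixed parabolic while controlling the numerical function. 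In~(2)--(3), the convexity estimate you state is the right ingredient, but the step ``find $g$ conjugating $\lambda_1$ into a common torus with $\lambda_2$, and then argue $g$ can be chosen in $P(\lambda_2)$'' hides the real content: one needs the structure theory of $P(\lambda)$ (its Levi and unipotent radical, and how they interact with $\Gamma(T)$) to know that such a $g$ exists inside $P(\lambda_2)$ rather than merely in $G$. These are precisely the points where Kempf's careful setup (states, the numerical function $m$, and the parabolic decomposition) is essential, so if you intend this as more than a pointer to \cite{Kempf} those steps would need to be filled in.
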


\subsection{Results on polystability}
Using Theorem~\ref{thm:Kempf}, we give a proof of Theorem~\ref{thm:intro-closed-orbits}.

\begin{proof} [Proof of Theorem~\ref{thm:intro-closed-orbits}]
Suppose the orbit $O_v$ is closed. Then, we claim that for any torus $T \supseteq H$, the $T$-orbit $O_{T,v}$ is closed. To see this, consider the action of $T$ on $O_v$. For any $g \in G$, the $T$-stabilizer at $w = gv \in O_v$ is given by $T \cap g G_v g^{-1}$. Thus, the $\dim(T_w) \leq $ rank of any maximal torus in $gG_vg^{-1}$ = rank of $H$. Thus, $\dim(T \cdot v) \leq \dim(T \cdot w)$ for any $w \in O_v$. For the action of any reductive group on a variety, an orbit of the smallest possible dimension must always be closed (since the boundary of an orbit, if non-trivial, contains orbits of smaller dimension). In particular, for the action of $T$ on $O_v$, this means that $T \cdot v$ is closed.

Conversely, suppose $O_v$ is not closed. Then let $S = \overline{O_v} \setminus O_v$. Then, let $P := P_{S,v}$ be the optimal parabolic subgroup as in Theorem~\ref{thm:Kempf}. Since $P \supseteq G_v$ by Theorem~\ref{thm:Kempf}, there exists $T \in \mathcal{T}$ such that $T$ is a maximal torus of $P$. Further, Theorem~\ref{thm:Kempf} says that there is an optimal $1$-parameter subgroup in $T$, which in the limit drives $v$ out of its $G$-orbit (and hence out of its $T$-orbit). Thus, the $T$-orbit of $v$ is not closed for this particular $T$.
\end{proof}

%\Hnote{Do we need this footnote? We don't need $G_v$ reductive for the maximal tori to be conjugate and of the same rank.}\footnote{Since $O_v$ is closed, $G_v$ is reductive by Matsushima's criterion. In particular, all maximal tori of $G_v$ have the same rank. Moreover $gG_v g^{-1}$ is isomorphic to $G_v$, so any maximal torus of any $gG_vg^{-1}$ has the same rank as $H$.} 
%\Vnote{Yes, I removed the footnote}

To use Theorem~\ref{thm:intro-closed-orbits}, one must be able to compute $G_v$ or at least a maximal torus of it. Such a computation may not always be possible. In that case, one can use:
\begin{theorem} \label{thm:main2}
Let $V$ be a rational representation of a reductive group $G$. Let $v \in V$ and let $G_v = \{g \in G \ |\  gv = v\}$. Let $\mathcal{T}$ be a subset of maximal tori of $G$ such that for every parabolic $P \supseteq G_v$, there exists $T \in \mathcal{T}$ such that $T \subseteq P$.
Then, 
$$
O_{G,v} \text{ is closed } \Longleftrightarrow \overline{O_{T,v}} \subseteq O_{G,v}\  \forall T \in \mathcal{T}.
$$
\end{theorem}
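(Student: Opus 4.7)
The statement is very close to Theorem~\ref{thm:intro-closed-orbits}, except that we ask only that each torus orbit closure $\overline{O_{T,v}}$ land inside $O_{G,v}$, rather than the stronger property that $T\cdot v$ itself be closed. Correspondingly, the hypothesis drops condition (2) of Theorem~\ref{thm:intro-closed-orbits} (the compatibility with a maximal torus of $G_v$), and the proof becomes a clean application of Kempf's Theorem~\ref{thm:Kempf} in the contrapositive.

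The plan is to prove both implications directly. The forward direction is immediate: assume $O_{G,v}$ is closed and fix any $T\in\mathcal{T}$. Then $O_{T,v}\subseteq O_{G,v}$, and since $O_{G,v}$ is closed in $V$ we have $\overline{O_{T,v}}\subseteq\overline{O_{G,v}}=O_{G,v}$. No input from Kempf's theory is needed here.

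For the reverse direction I argue the contrapositive. Assume $O_{G,v}$ is not closed, and set $S=\overline{O_{G,v}}\setminus O_{G,v}$. Since a $G$-orbit is open in its closure, $S$ is a closed $G$-stable subscheme of $V$ with $O_{G,v}\cap S=\emptyset$ and $\overline{O_{G,v}}\cap S\neq\emptyset$. Thus Theorem~\ref{thm:Kempf} applies: there is a non-empty set $\Lambda(S,v)$ of optimal $1$-parameter subgroups, and an associated optimal parabolic $P_{S,v}$ containing $G_v$. By the hypothesis on $\mathcal{T}$, we can choose $T\in\mathcal{T}$ with $T\subseteq P_{S,v}$; enlarging if necessary we may assume $T$ is a maximal torus of $P_{S,v}$ (any maximal torus of $G$ contained in $P_{S,v}$ is automatically a maximal torus of $P_{S,v}$). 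By Theorem~\ref{thm:Kempf}(3), there is a (unique) optimal $\lambda\in\Lambda(S,v)$ lying in $T$. Then $\lim_{t\to 0}\lambda(t)\cdot v$ exists and lies in $S$, so it is a point of $\overline{O_{T,v}}$ that is not in $O_{G,v}$. This contradicts the right-hand side of the claimed equivalence.

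I do not anticipate a serious obstacle: the proof is almost identical in structure to that of Theorem~\ref{thm:intro-closed-orbits}, with the simplification that we do not need to compare the $T$-stabilizer dimension at $v$ with that at a general point of $O_{G,v}$ (which was where condition (2) of Theorem~\ref{thm:intro-closed-orbits} was used, to show that $T\cdot v$ has minimal orbit dimension among $T$-orbits in $O_{G,v}$). The only small check worth being careful about is the claim that a maximal torus of $G$ contained in $P_{S,v}$ is a maximal torus of $P_{S,v}$, which follows because a Levi subgroup of $P_{S,v}$ contains a maximal torus of $G$ and all maximal tori of $P_{S,v}$ are conjugate.
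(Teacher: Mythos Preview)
Your proposal is correct and follows essentially the same route as the paper's proof: both handle the forward direction trivially and, for the converse, argue the contrapositive by applying Kempf's theorem with $S=\overline{O_{G,v}}\setminus O_{G,v}$, using part~(4) to get $G_v\subseteq P_{S,v}$, picking $T\in\mathcal{T}$ inside $P_{S,v}$, and then invoking part~(3) to find an optimal $\lambda$ in $T$ whose limit lands in $S\subseteq \overline{O_{T,v}}\setminus O_{G,v}$. Your added remark that a maximal torus of $G$ contained in $P_{S,v}$ is automatically a maximal torus of $P_{S,v}$ is the one small point the paper leaves implicit.
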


\begin{proof}
If $O_{G,v}$ is closed, then clearly $\overline{O_{T,v}} \subseteq O_{G,v}\  \forall T \in \mathcal{T}$. Now, suppose $O_{G,v}$ is not closed. Then, consider $P_{S,v}$ where $S = \overline{O_v} \setminus O_v$. Then $P_{S,v} \supseteq G_v$ by part $(4)$ of Theorem~\ref{thm:Kempf}. Thus, there exists $T \in \mathcal{T}$ such that $T \subseteq P_{S,v}$. Hence, by part~(3) of Theorem~\ref{thm:Kempf}, there must be a $1$-parameter subgroup $\lambda$ of $T$ which is optimal, so $\lim_{t \to 0} \lambda(t) v = w \notin O_{G,v}$. Hence $w \in \overline{O_{T,v}} \setminus O_{G,v}$ as required.
\end{proof}

The obvious issue here is that we have to be able to tell when the closure of $O_{T,v}$ is contained in $O_{G,v}$. It would be simplest if $O_{T,v}$ is itself closed. If that is not the case, then the following is one way to test whether $\overline{O_{T,v}} \subseteq O_{G,v}$

\begin{lemma}
Let $V$ be a representation of $G$ and let $T$ be a maximal torus. Let $\mathcal{E} = (e_1,\dots,e_n)$ be a weight basis for the action of $T$ on $V$ such that $T$ acts on $e_i$ by a weight $\lambda^{(i)}$. Then, let $v = (v_1,\dots,v_n)$ be the coordinates of $V$ in the basis $\mathcal{E}$. Let $w = {\rm ess}(v)$. Then,
$$
\overline{O_{T,v}} \subseteq O_{G,v} \Longleftrightarrow \dim(G_w) = \dim(G_v).
$$
\end{lemma}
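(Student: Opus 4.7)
My plan is to sandwich $\dim G_u$ for an arbitrary $u \in \overline{O_{T,v}}$ between $\dim G_v$ and $\dim G_{{\rm ess}(u)}$, exploiting the previously stated fact that $w = {\rm ess}(v)$ lies in the unique closed $T$-orbit inside $\overline{O_{T,v}}$. The main tool is upper semicontinuity of stabilizer dimension: if $y \in \overline{O_{G,x}}$, then $\dim G_y \geq \dim G_x$, because $O_{G,y}$ is a locally closed subvariety of the irreducible variety $\overline{O_{G,x}}$ of dimension $\dim O_{G,x}$.

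The forward direction is immediate: if $\overline{O_{T,v}} \subseteq O_{G,v}$ then $w \in O_{G,v}$, so $w = gv$ for some $g \in G$, giving $G_w = gG_vg^{-1}$ and hence the required dimension equality. For the converse, I first promote the hypothesis $\dim G_w = \dim G_v$ to the equality $O_{G,w} = O_{G,v}$. Indeed $w \in \overline{O_{T,v}} \subseteq \overline{O_{G,v}}$, and $\dim O_{G,w} = \dim G - \dim G_w = \dim O_{G,v}$, so $O_{G,w}$ is a locally closed subvariety of the irreducible $\overline{O_{G,v}}$ of maximal dimension, which forces $O_{G,w} = O_{G,v}$ and in particular $w \in O_{G,v}$.

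To extend this to an arbitrary $u \in \overline{O_{T,v}}$, I would observe that ${\rm ess}(u)$ lies in the unique closed $T$-orbit of $\overline{O_{T,u}}$, which is in particular a closed $T$-orbit sitting inside $\overline{O_{T,v}}$. By the uniqueness of the closed orbit in $\overline{O_{T,v}}$, this forces $O_{T,{\rm ess}(u)} = O_{T,w}$, so ${\rm ess}(u) = tw$ for some $t \in T$ and therefore $\dim G_{{\rm ess}(u)} = \dim G_w = \dim G_v$. Two applications of upper semicontinuity -- first to $v$ and $u$ (using $u \in \overline{O_{T,v}} \subseteq \overline{O_{G,v}}$), and then to $u$ and ${\rm ess}(u)$ (using ${\rm ess}(u) \in \overline{O_{T,u}} \subseteq \overline{O_{G,u}}$) -- yield
\[\dim G_v \leq \dim G_u \leq \dim G_{{\rm ess}(u)} = \dim G_v,\]
so equality holds throughout. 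Then $\dim O_{G,u} = \dim O_{G,v}$, and the same maximal-dimension argument used for $w$ shows $u \in O_{G,v}$, as desired.

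The main obstacle is the identification $O_{T,{\rm ess}(u)} = O_{T,w}$; this is where the uniqueness of the closed orbit in an orbit closure under a reductive action (applied to the torus $T$) is doing essential work. Once this $T$-conjugacy is established, everything reduces to a clean dimension sandwich, and the two directions of the equivalence fall out of the same circle of ideas.
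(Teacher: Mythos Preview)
Your proof is correct but takes a somewhat different route from the paper. The paper argues the backward direction by contrapositive: assuming $\overline{O_{T,v}} \nsubseteq O_{G,v}$, the set $\overline{O_{T,v}} \cap \big(\overline{O_{G,v}} \setminus O_{G,v}\big)$ is a nonempty closed $T$-stable subset of $\overline{O_{T,v}}$, hence contains the unique closed $T$-orbit and in particular $w$; then $w \notin O_{G,v}$ forces $\dim G_w > \dim G_v$. This single observation dispenses with the need to examine general $u \in \overline{O_{T,v}}$.

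Your argument instead treats every point $u$ directly via the dimension sandwich $\dim G_v \leq \dim G_u \leq \dim G_{{\rm ess}(u)} = \dim G_w$. This is valid, and the identification $O_{T,{\rm ess}(u)} = O_{T,w}$ that you flag as the ``main obstacle'' is exactly the uniqueness of the closed $T$-orbit in $\overline{O_{T,v}}$ --- the same fact the paper uses. The trade-off: your approach makes explicit that the hypothesis pins down the $G$-orbit dimension along all of $\overline{O_{T,v}}$, while the paper's contrapositive is shorter because it only needs to locate $w$ itself in the boundary, not analyze arbitrary $u$.
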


\begin{proof}
Suppose $\overline{O_{T,v}} \subseteq O_{G,v}$, then clearly $w \in O_{G,v}$, so $\dim(G_w) = \dim(G_v)$ since $G_w$ and $G_v$ are conjugate subgroups. On the other hand, suppose $\overline{O_{T,v}} \nsubseteq O_{G,v}$. Let $Y = \overline{O_{G,v}}$. Now $A = \overline{O_{T,v}}$ is a Zariski-closed subset of $Y$ and $B = O_{G,v}$ is a Zariski-open subset of $Y$ and its complement $B^c = Y \setminus B$ is a Zariski-closed subset. Thus $A \cap B^{c}$ is a Zariski-closed subset of $Y$ which is $T$-stable (i.e., a union of $T$-orbits). In particular, this means that $w = {\rm ess}(v) \in A \cap B^c$ since any Zariski-closed $T$-stable subset of $\overline{O_{T,v}}$ contains $w$. Since $w \in  \overline{O_{G,v}} \setminus O_{G,v}$, we get that $\dim(G_w) > \dim(G_v)$.

\end{proof}
  
It is another matter that it may be quite hard to compute $G_w$ or $G_v$ completely. Yet, one can decide if $\dim(G_w) = \dim(G_v)$. In characteristic $0$, a Lie algebra computation will suffice and in characteristic $p >0$, one can use Gr\"obner basis techniques, see for example \cite[Chapter~9]{Cox-Little-Oshea}.

\subsection{Results on semistability}
To detect polystability, we used Theorem~\ref{thm:Kempf} with $S = \overline{O_v} \setminus O_v$. To detect semistability, one has to take $S = \{ 0 \}$ instead. Unlike the case of polystability, there is no need for variants, we state the most general version possible. The proofs are very similar to the previous subsection, so we leave the details to the reader.

\begin{theorem} \label{thm:main.nullcone}
Let $V$ be a rational representation of a reductive group $G$. Let $v \in V$ and define $G_v := \{g \in G\ |\ gv = v\}$. Let $\mathcal{T}$ be a subset of maximal tori of $G$ such that for every parabolic $P \supseteq G_v$, there exists $T \in \mathcal{T}$ such that $T \subseteq P$.
Then we have
$$
\text{$v$ is $G$-semistable} \Longleftrightarrow \text{ $v$ is $T$-semistable} \ \mbox{ for all } T \in \mathcal{T}.
$$
\end{theorem}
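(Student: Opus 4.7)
The plan is to adapt the proof of Theorem~\ref{thm:main2} essentially verbatim, with one substitution: the closed $G$-stable subscheme $\overline{O_v}\setminus O_v$ (used to detect non-closure of the orbit) is replaced by the closed $G$-stable subscheme $S=\{0\}$ (which detects instability). The critical input is still Theorem~\ref{thm:Kempf}.

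First I would dispense with the trivial case $v=0$, where both sides of the equivalence are automatic: $v$ is $G$-unstable since $0\in\overline{O_{G,v}}=\{0\}$, and for every maximal torus $T$ the orbit $O_{T,v}=\{0\}$ shows $v$ is also $T$-unstable. So assume $v\neq 0$ from now on. The forward direction is then immediate and does not use the hypothesis on $\mathcal{T}$: for any maximal torus $T$ one has $\overline{O_{T,v}}\subseteq\overline{O_{G,v}}$, so if $v$ is $G$-semistable (i.e.\ $0\notin\overline{O_{G,v}}$) then $0\notin\overline{O_{T,v}}$ and $v$ is $T$-semistable.

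For the reverse direction I would argue by contrapositive. Assume $v$ is $G$-unstable, so $0\in\overline{O_{G,v}}$. Set $S=\{0\}$: it is closed and $G$-stable, $S\cap \overline{O_v}\neq\emptyset$, and $S\cap O_v=\emptyset$ since $v\neq 0$. Hence $|S,v|\neq\emptyset$ and Theorem~\ref{thm:Kempf} applies, producing an optimal parabolic subgroup $P_{S,v}$ with $G_v\subseteq P_{S,v}$. The hypothesis on $\mathcal{T}$ now supplies a $T\in\mathcal{T}$ with $T\subseteq P_{S,v}$. Since $T$ is a maximal torus of $G$ contained in the parabolic $P_{S,v}$, it is in particular a maximal torus of $P_{S,v}$; by part (3) of Theorem~\ref{thm:Kempf} there is then an optimal 1-parameter subgroup $\lambda$ of $G$ lying in $T$. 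By membership in $|S,v|$ we have $\lim_{t\to 0}\lambda(t)v\in S=\{0\}$, and since $\lambda$ factors through $T$, this limit lies in $\overline{O_{T,v}}$. Thus $0\in\overline{O_{T,v}}$, i.e., $v$ is $T$-unstable, as desired.

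I do not anticipate any real obstacle: once one has Theorem~\ref{thm:Kempf} in hand, the argument is more bookkeeping than substance. The only points requiring a moment's attention are checking the hypotheses $S\cap O_v=\emptyset$ and $S\cap\overline{O_v}\neq\emptyset$ needed to invoke Kempf's theorem (which is why the trivial case $v=0$ must be separated), and observing that a maximal torus of $G$ contained in a parabolic is automatically a maximal torus of that parabolic, so part (3) of Theorem~\ref{thm:Kempf} applies to it.
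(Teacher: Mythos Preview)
Your proposal is correct and follows exactly the approach the paper indicates: replace $S=\overline{O_v}\setminus O_v$ by $S=\{0\}$ in the proof of Theorem~\ref{thm:main2} and invoke Theorem~\ref{thm:Kempf} in the same way. Your separation of the trivial case $v=0$ and the observation that a maximal torus of $G$ contained in $P_{S,v}$ is automatically maximal there are the only details the paper leaves implicit, and you handle them correctly.
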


\subsection{Improvements}
In the case when the action of $G$ on $V$ extends to a larger group $\widetilde{G}$ containing $G$ as a normal subgroup, we can make certain improvements to Theorem~\ref{thm:main2} and Theorem~\ref{thm:main.nullcone}. These improvements are very handy in computations, especially in the case where $G$ is the special linear group and $\widetilde{G}$ is the general linear group.

First, we prove the following result that generalizes part $(4)$ of Theorem~\ref{thm:Kempf}.

\begin{proposition} \label{prop:improve-kempf}
Let $\widetilde{G}$ be an algebraic group and let $G$ be a reductive normal subgroup. Let $V$ be a rational representation of $\widetilde{G}$ and hence of $G$ as well. Suppose that  $v \in V$ and let $S=O_{G,\widehat{v}}$ be the unique closed $G$-orbit in $\overline{O}_{G,v}$. Fix a choice of length function $|| \ ||$ on $\Gamma(G)$ and let $P_{S,v}$ be the optimal parabolic subgroup. Then
$$
h P_{S,v} h^{-1} = P_{S,v} \text{ for all } h \in \widetilde{G}_v.
$$
\end{proposition}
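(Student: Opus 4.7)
The plan is to adapt the proof of part (4) of Theorem~\ref{thm:Kempf}, replacing the conjugation action of $G_v$ on $\Gamma(G)$ by that of the larger group $\widetilde{G}_v$. The key structural input is normality of $G$ in $\widetilde{G}$, which ensures $h\lambda h^{-1}\in\Gamma(G)$ whenever $h\in\widetilde{G}$ and $\lambda\in\Gamma(G)$. The overall strategy is to show that conjugation by $\widetilde{G}_v$ preserves the set $\Lambda(S,v)$ of optimal indivisible $1$-parameter subgroups, and then conclude using the fact that every member of $\Lambda(S,v)$ has the same associated parabolic $P_{S,v}$ together with the tautology $P(h\lambda h^{-1}) = hP(\lambda)h^{-1}$.

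First I would verify that $S$ itself is $\widetilde{G}_v$-stable. For $h\in\widetilde{G}_v$, normality gives $hO_{G,v} = (hGh^{-1})(hv) = Gv$, so $h\overline{O_{G,v}} = \overline{O_{G,v}}$ and $hS$ is a closed $G$-orbit inside $\overline{O_{G,v}}$; the uniqueness hypothesis on $S$ then forces $hS = S$. Next I would check that the three ingredients defining $\Lambda(S,v)$---membership in $|S,v|$, the multiplicity $a_{S,v}$, and indivisibility---are invariant under $\lambda\mapsto h\lambda h^{-1}$. Membership is immediate from $\lim_{t\to 0}(h\lambda h^{-1})(t)v = h\lim_{t\to 0}\lambda(t)v\in hS = S$. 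Since $M(h\lambda h^{-1}) = h\circ M(\lambda)$ and $h^{-1}S = S$, the divisor pullbacks of $S$ coincide, giving $a_{S,v}(h\lambda h^{-1}) = a_{S,v}(\lambda)$. Indivisibility is trivially preserved since $\lambda = \mu^r$ iff $h\lambda h^{-1} = (h\mu h^{-1})^r$.

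The main technical obstacle is preserving the length function, which in the excerpt is only demanded to be $G$-invariant. I would resolve this by choosing the length function $\widetilde{G}$-conjugation invariant from the start: on a maximal torus $T$ of $G$, the conjugation action of $\widetilde{G}$ on $\Gamma(T)$ factors through the finite quotient $N_{\widetilde{G}}(T)/C_{\widetilde{G}}(T)$, which contains the ordinary Weyl group $N_G(T)/T$. Averaging any Weyl-invariant positive definite integral bilinear form over this larger finite group produces one invariant under the extended action, and its extension to $\Gamma(G)$ by $G$-conjugation then satisfies $\|h\lambda h^{-1}\| = \|\lambda\|$ for all $h\in\widetilde{G}$ and $\lambda\in\Gamma(G)$: writing $h\lambda h^{-1}\in hTh^{-1} = gTg^{-1}$ for some $g\in G$, the element $g^{-1}h$ normalizes $T$ in $\widetilde{G}$, and the chosen invariance together with $G$-invariance gives the claim.

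With all three ingredients $\widetilde{G}_v$-invariant, conjugation by $\widetilde{G}_v$ preserves $\Lambda(S,v)$. Picking any $\lambda\in\Lambda(S,v)$, Theorem~\ref{thm:Kempf}(2) gives $P(\lambda) = P_{S,v}$; for $h\in\widetilde{G}_v$ we then have $h\lambda h^{-1}\in\Lambda(S,v)$, so $P(h\lambda h^{-1}) = P_{S,v}$ as well. Combining with the identity $P(h\lambda h^{-1}) = hP(\lambda)h^{-1} = hP_{S,v}h^{-1}$, which is direct from the definition of the associated parabolic, yields $hP_{S,v}h^{-1} = P_{S,v}$, as required.
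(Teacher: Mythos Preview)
Your argument is correct and follows the same strategy as the paper: establish $hS=S$ from normality of $G$ and uniqueness of the closed $G$-orbit in $\overline{O_{G,v}}$, then conclude that conjugation by $h\in\widetilde{G}_v$ fixes the optimal parabolic. The paper's proof simply asserts that $|S,v|=h\ast|S,v|$ ``implies immediately'' $hP_{S,v}h^{-1}=P_{S,v}$ without addressing the length function, whereas you supply precisely this detail by constructing a $\widetilde{G}$-invariant $\|\cdot\|$ and checking that $a_{S,v}$ and indivisibility are preserved---so your version is more complete, with the caveat that both arguments really establish the claim for a suitably chosen length function rather than an arbitrary fixed one (which is all that is used downstream).
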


\begin{proof}
For $h \in \widetilde{G}$ and $\lambda \in \Gamma(G)$, we define $h \ast \lambda$ by $h \ast \lambda (t) = h \lambda(t) h^{-1}$ which is a $1$-parameter subgroup of $G$ because $G$ is normal in $\widetilde{G}$. Clearly, $|hS, hv| = h \ast |S,v|$. 

Now, suppose $h \in \widetilde{G}_v$. Then we get $|hS,v| = |hS, hv| = h \ast |S,v| \neq \emptyset$. 
Moreover, observe that $hS = h O_{G,\widehat{v}} = O_{G, h \widehat{v}}$ since $G$ is normal in $\widetilde{G}$. In particular, this means that $hS$ is a closed $G$-orbit. Since $|hS,v|$ is not empty, $hS$ must be the unique closed orbit in $\overline{O}_{G,v}$, so $hS = S$. This means $|S,v| = h \ast |S,v|$ which implies immediately that $h P_{S,v} h^{-1} = P_{S,v}$. 
\end{proof}

By using the above proposition instead of part $(4)$ of Theorem~\ref{thm:Kempf}, we get the following improvement of Theorem~\ref{thm:main2} and Theorem~\ref{thm:main.nullcone}:

\begin{theorem} \label{thm:main3}
Let $\widetilde{G}$ be an algebraic group and let $G$ be a reductive normal subgroup. Let $V$ be a rational representation of $\widetilde{G}$ and hence of $G$ as well. Let $v \in V$ and define $\widetilde{G}_v := \{g \in \widetilde{G}\ |\ gv = v\}$. Let $\mathcal{T}$ be a set of maximal tori of $G$ such that for every parabolic $P$ with $gPg^{-1} = P$ for all $g \in \widetilde{G}$, there exists $T \in \mathcal{T}$ such that $T \subseteq P$.
Then we have
\begin{enumerate}
\item $v$ is $G$-polystable $\Longleftrightarrow \overline{O_{T,v}} \subseteq O_{G,v}\  \mbox{ for all }T \in \mathcal{T}$;
\item $v$ is $G$-semistable $\Longleftrightarrow v$ is $T$-semistable for all $T \in \mathcal{T}$.
\end{enumerate}
\end{theorem}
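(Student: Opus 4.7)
The plan is to follow the same skeletons used for Theorem~\ref{thm:main2} and Theorem~\ref{thm:main.nullcone}, with the single upgrade that the appeal to part~(4) of Theorem~\ref{thm:Kempf} (which only gives $G_v \subseteq P_{S,v}$) is replaced by Proposition~\ref{prop:improve-kempf} (which additionally gives $h P_{S,v} h^{-1} = P_{S,v}$ for all $h \in \widetilde{G}_v$). Once this is in place, the hypothesis on $\mathcal{T}$ --- which only asks for a torus in each $\widetilde{G}_v$-normalized parabolic --- is exactly strong enough to reach the parabolic that Kempf's theory outputs.

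For part~(1), the forward direction is immediate: $G$-polystability of $v$ gives $\overline{O_{G,v}} = O_{G,v}$, and $\overline{O_{T,v}} \subseteq \overline{O_{G,v}}$ always. For the converse I will assume $O_{G,v}$ is not closed and let $S$ denote the unique closed $G$-orbit in $\overline{O_{G,v}}$; since $O_{G,v}$ itself is not closed, $S$ lies in the boundary and is disjoint from $O_{G,v}$. Theorem~\ref{thm:Kempf} then provides the optimal parabolic $P_{S,v}$, and Proposition~\ref{prop:improve-kempf} (applied to this $S$, which is exactly the one the proposition is phrased for) shows that $P_{S,v}$ is normalized by $\widetilde{G}_v$. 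The hypothesis on $\mathcal{T}$ now produces some $T \in \mathcal{T}$ with $T \subseteq P_{S,v}$, and part~(3) of Theorem~\ref{thm:Kempf} yields an optimal $1$-parameter subgroup $\lambda$ of $T$ with $\lim_{t \to 0} \lambda(t) v \in S$. This limit lies in $\overline{O_{T,v}}$ but outside $O_{G,v}$, contradicting the right-hand side of the stated equivalence.

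For part~(2), the forward direction is again immediate from $\overline{O_{T,v}} \subseteq \overline{O_{G,v}}$. For the converse I assume $v$ is $G$-unstable, so $0 \in \overline{O_{G,v}}$; in this situation $\{0\}$ is automatically the unique closed $G$-orbit contained in $\overline{O_{G,v}}$, so Proposition~\ref{prop:improve-kempf} applies with $S = \{0\}$ and delivers a $\widetilde{G}_v$-normalized optimal parabolic $P_{\{0\},v}$. Running the same argument as in part~(1) produces $T \in \mathcal{T}$ and an optimal $\lambda \in \Gamma(T)$ with $\lim_{t\to 0}\lambda(t) v = 0$, witnessing $T$-instability.

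The only subtle point I expect is verifying that Proposition~\ref{prop:improve-kempf} is indeed being used in its intended regime in both cases: it is stated for $S$ equal to the \emph{unique} closed $G$-orbit in $\overline{O_{G,v}}$, and for both polystability (the boundary orbit) and semistability ($\{0\}$) the natural choice of $S$ is exactly of this form. Once that alignment is pointed out, each direction reduces to a few lines chaining Theorem~\ref{thm:Kempf}, Proposition~\ref{prop:improve-kempf}, and the defining property of $\mathcal{T}$, so no further technical obstacle is expected.
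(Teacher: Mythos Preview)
Your proposal is correct and follows essentially the same argument as the paper: both directions of each equivalence are handled exactly as you describe, with the key point being the replacement of part~(4) of Theorem~\ref{thm:Kempf} by Proposition~\ref{prop:improve-kempf} so that the optimal parabolic $P_{S,v}$ is $\widetilde{G}_v$-normalized and hence hit by some $T \in \mathcal{T}$. Your observation that the choice of $S$ (the unique closed orbit, respectively $\{0\}$) is precisely what Proposition~\ref{prop:improve-kempf} requires is the only subtlety, and you have addressed it.
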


\begin{proof}
Let us first prove part $(1)$. The $\implies$ implication is clear. We now prove the backwards implication by proving the contrapositive. Suppose $v$ is not $G$-polystable. Let $S = O_{G,w}$ be the unique closed orbit in $\overline{O_{G,v}}$. Then consider $P_{S,v}$, which satisfies $g P_{S,v} g^{-1} = P_{S,v}$ for all $g \in \widetilde{G}_v$ by Proposition~\ref{prop:improve-kempf}. Thus, there exists $T \in \mathcal{T}$ such that $T \subseteq P_{S,v}$. Hence, for some one-parameter subgroup $\lambda$ of $T$ we have  $\lim_{t \to 0} \lambda(t)v \notin O_{G,v}$. Thus, $\lim_{t \to 0} \lambda(t)v \in \overline{O_{T,v}} \setminus O_{G,v}$, so we get $\overline{O_{T,v}} \nsubseteq O_{G,v}$. 

The second part is analogous, where you take instead $S = \{0\}$ which is indeed the unique closed orbit in $\overline{O_{G,v}}$ if $v$ is not semistable.

\end{proof}

\section{Representations of (products) of special linear groups} \label{sec:rep-theory-background}
In this section, we collect a few results that will help with explicit computations when the acting group is a special linear group or a product of special linear groups. We first recall briefly the connection between parabolic subgroups of $\SL(V)$  and flags in $V$. Then, we discuss flags of $H$-stable subspaces of $V$ for a special class of subgroups $H \subseteq \SL(V)$ for which $V$ is a semisimple $H$-module. This will be very useful for computations. Finally, we give a quick proof of Theorem~\ref{thm:evenpoly} on the semistability/polystability of entirely even polynomials.

\subsection{Parabolic subgroups of $\SL(V)$}
The results in the above section warrant a brief discussion about parabolic subgroups and their maximal tori so that one can use them for computational purposes. We state results without proof referring the reader to standard texts \cite{Fulton, Fulton-Harris, Weyman} for details.

Let $V$ be an $n$-dimensional vector space. A flag $\mathcal{F}$ is a sequence of subspaces $0 = F_0 \subseteq F_1 \subseteq F_2 \subseteq \dots \subseteq F_k = V$. We do not restrict the dimensions of $F_i$, the inclusions force them to be an increasing sequence. Associated to a flag is a parabolic subgroup $P_\mathcal{F}$ of $\SL(V)$ defined by 
$$
P_\mathcal{F} = \{g \in \SL(V) \ |\ g F_i = F_i \ \forall \ i \in [k]\}.
$$

To each basis $\mathcal{B} = (b_1,\dots,b_n)$ of $V$, we define a maximal torus $T_\mathcal{B}$ of $V$ consisting of all $g \in \SL(V)$ such that each $b_i$ is an eigenvector when viewing $g$ as a linear transformation from $V$ to $V$. Clearly, permuting the basis does not change $T_\mathcal{B}$. Using the basis $\mathcal{B}$, one can identify $V$ with $K^n$ and consequently $\SL(V)$ with $\SL_n$. Under this identification, $T_\mathcal{B}$ is just the standard diagonal torus, i.e., the subgroup of all diagonal $n \times n$ matrices (with determinant $1$).

A basis $\mathcal{B} = (b_1,\dots,b_n)$ is called {\em compatible} with the parabolic $P_\mathcal{F}$ if each $F_i$ is a coordinate subspace in the basis $\mathcal{B}$, i.e., it is spanned by a subset of the basis. In this case, we will also say $\mathcal{B}$ is compatible with the flag $\mathcal{F}$. For a basis $\mathcal{B}$ that is compatible with a parabolic $P_\mathcal{F}$, the maximal torus $T_\mathcal{B} \subseteq P_{\mathcal{F}}$.  Further, all maximal tori of $P_\mathcal{F}$ arise from a compatible basis. 

If $G = \SL(V_1) \times \SL(V_2) \times \dots \times \SL(V_k)$, then any parabolic subgroup $P$ of $G$ is of the form $P = P^{(1)} \times P^{(2)} \times \dots \times P^{(k)}$ where each $P^{(i)}$ is a parabolic subgroup of $\SL(V_i)$. Thus, a collection of flags $\bm{\mathcal{F}} = (\mathcal{F}^{(1)},\dots,\mathcal{F}^{(k)})$ defines a parabolic subgroup for $G$. A maximal torus $T$ of $P_{\bm{\mathcal{F}}}$ is a product of maximal tori $T = T^{(1)} \times \dots \times T^{(k)}$ where each $T^{(i)}$ is a maximal torus for $V_i$. Thus a collection of compatible basis $\bm{\mathcal{B}} = (\mathcal{B}^{(1)},\dots,\mathcal{B}^{(k)})$, where each $\mathcal{B}^{(i)}$ is a basis of $V_i$, defines a maximal torus of $P_{\bm{\mathcal{F}}}$.

\subsection{Complete reducibility}
In order to use the results in the previous section, we often want to investigate parabolic subgroups containing the isotropy subgroup of a point. If the group acting is $\GL(V)$ or $\SL(V)$, this amounts to investigating flags of subspaces stable under the isotropy subgroup. Hence, in this section, we collect a few results on flags of $H$-stable subspaces in $V$ for some subgroup $H \subseteq \GL(V)$ in the special case where $V$ is a semisimple $H$-module, i.e., $V$ is completely reducible as a $H$-module. 

\begin{remark} \label{Rem:G-cr}
Then notion of $G$-complete reducibility was introduced by Serre \cite{Serre}. For a reductive group $G$, a (closed) subgroup $H$ is called $G$-completely reducible ($G$-cr for short) if for every parabolic subgroup $P$ of $G$ that contains $H$, there exists a Levi subgroup of $P$ that contains $H$. For $G = \GL(V)$, a subgroup $H$ is $G$-cr if and only if $V$ is a semisimple $H$-module. In particular, if $H$ is linearly reductive (for e.g., a torus or a finite group whose order is not a multiple of the characteristic), then it is automatically $G$-cr for $G = \GL(V)$. 
\end{remark}

For this section, let $G = \GL(V)$ and let $H$ be a $G$-cr subgroup of $G$, i.e., $V$ is a semisimple $H$-module. We have a decomposition into isotypic components
$$
V = E_1 \oplus E_2 \oplus \dots \oplus E_r.
$$

where each $E_i \cong V_i^{m_i}$ for some irreducible representation $V_i$ of $H$ (where $V_i$ and $V_j$ are non-isomorphic for $i \neq j$). Let $\dim(V_i) = n_i$.
For each $E_i$, fix an isomorphism that allows us to identify $E_i = V_i \otimes K^{m_i}$.

\begin{definition}
Let $W_1,\dots,W_r$ be vector spaces and let $U = \oplus_i W_i$. For a collections of flags $\mathcal{F}^{(i)}$ of $W_i$, we define their direct sum $\mathcal{F} = \oplus \mathcal{F}^{(i)}$, a flag of $U$ by setting $F_j = \oplus_i F^{(i)}_j$ for all $j$.
\end{definition}

\begin{lemma}
Let $\mathcal{F} = 0 \subseteq F_1 \subseteq F_2 \subseteq \dots \subseteq F_t = V$ be a flag of $H$-stable subspaces. For each $i$, the restricted flag $\mathcal{F}|_{E_i} :=  0 \subseteq F_1\cap E_i \subseteq F_2 \cap E_i \subseteq \dots \subseteq F_t \cap E_i = E_i$ is a flag of $H$-stable subspaces of $E_i$. Further, $\mathcal{F} = \oplus_i \mathcal{F}|_{E_i}$.
\end{lemma}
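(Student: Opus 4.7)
The plan is to reduce both parts of the statement to the single general fact that every $H$-submodule of a semisimple $H$-module decomposes along the isotypic decomposition. The $H$-stability of each $F_j \cap E_i$ is immediate (the intersection of two $H$-stable subspaces is $H$-stable), the chain of inclusions $F_j \cap E_i \subseteq F_{j+1} \cap E_i$ is trivial, and the equality $F_t \cap E_i = V \cap E_i = E_i$ is tautological. Hence the only real content in the lemma is the direct sum decomposition $F_j = \bigoplus_i (F_j \cap E_i)$ for each $j$, which when taken levelwise gives $\mathcal{F} = \bigoplus_i \mathcal{F}|_{E_i}$.

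The key auxiliary claim I would isolate is: if $V$ is a semisimple $H$-module with isotypic decomposition $V = \bigoplus_{i=1}^r E_i$ and $W \subseteq V$ is an $H$-submodule, then $W = \bigoplus_{i=1}^r (W \cap E_i)$. First I would note that $W$ is itself a semisimple $H$-module, being a submodule of a semisimple module. Decompose $W$ according to its own isotypic components $W = \bigoplus_{i=1}^r W^{(i)}$ (using the same irreducibles $V_1,\dots,V_r$, allowing $W^{(i)}$ to be zero). By Schur's lemma applied to the $H$-equivariant inclusion $W^{(i)} \hookrightarrow V$, the image of $W^{(i)}$ cannot have any component in $E_j$ for $j \neq i$ (since $\mathrm{Hom}_H(V_i, V_j) = 0$), so $W^{(i)} \subseteq E_i$. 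Conversely, if $w \in W \cap E_i$, then as an element of $W$ it decomposes into isotypic components $w = \sum_k w^{(k)}$ with $w^{(k)} \in W^{(k)} \subseteq E_k$; uniqueness of the decomposition $V = \bigoplus_k E_k$ forces $w^{(k)} = 0$ for $k \neq i$, so $w = w^{(i)} \in W^{(i)}$. This gives $W^{(i)} = W \cap E_i$, proving the claim.

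Applying the claim with $W = F_j$ yields $F_j = \bigoplus_i (F_j \cap E_i)$ for every $j$, which is exactly the decomposition $\mathcal{F} = \bigoplus_i \mathcal{F}|_{E_i}$ on the level of flags. I do not expect any serious obstacle: the proof is really just assembling Schur's lemma with the canonicity of isotypic decompositions. The only minor subtlety to be careful about is that one must invoke semisimplicity of $W$ (a standard consequence of the semisimplicity of $V$) before speaking of the isotypic decomposition of $W$; this is where the $G$-complete reducibility hypothesis on $H$ recalled in Remark~\ref{Rem:G-cr} is doing all the work.
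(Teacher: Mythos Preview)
Your proposal is correct and follows exactly the same approach as the paper: the paper's proof is the one-line observation that any subrepresentation $W$ of $V$ satisfies $W = \bigoplus_i (W \cap E_i)$ by complete reducibility, and you have simply unpacked that fact via Schur's lemma and the isotypic decomposition of $W$. There is no substantive difference.
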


\begin{proof}
This follows from the fact that any subrepresentation $W$ of $V$ has the property that $W = \oplus_i (W \cap E_i)$ (which follows from complete reducibility). 
\end{proof}

The crucial point that comes from the above lemma is that to understand flags of $H$-stable subspaces of $V$, we can study each isotypic component separately and the following corollary is immediate.

\begin{corollary} \label{cor:flag-basis-combine-component}
Let $\mathcal{F} = 0 \subseteq F_1 \subseteq F_2 \subseteq \dots \subseteq F_t = V$ be a flag of $H$-stable subspaces. 
For each $i$, let $\mathcal{B}_i$ be a compatible basis for each $\mathcal{F}|_{E_i}$. Then $\cup_i \mathcal{B}_i$ is a compatible basis for $\mathcal{F}$. 
\end{corollary}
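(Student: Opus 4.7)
The plan is to combine two facts: first, that the decomposition $V = \bigoplus_i E_i$ into $H$-isotypic components induces a direct sum decomposition of each $H$-stable subspace $F_j$, which is precisely the content of the preceding lemma; and second, that each $\mathcal{B}_i$, being a compatible basis for the restricted flag $\mathcal{F}|_{E_i}$, consists of vectors in $E_i$ such that every intersection $F_j \cap E_i$ is spanned by a subset of $\mathcal{B}_i$. Putting these two facts together should give the claim directly.

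First, I would check that $\mathcal{B} := \bigcup_i \mathcal{B}_i$ is a basis of $V$. Since each $\mathcal{B}_i$ is a compatible basis for a flag of $E_i$ whose top term is $E_i$ itself, $\mathcal{B}_i$ is in particular a basis of $E_i$. The direct sum decomposition $V = \bigoplus_i E_i$ then guarantees that the union $\bigcup_i \mathcal{B}_i$ is linearly independent and spans $V$, hence is a basis.

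Next, I would fix $j \in \{0,1,\dots,t\}$ and show that $F_j$ is spanned by a subset of $\mathcal{B}$. By the preceding lemma, we have $F_j = \bigoplus_i (F_j \cap E_i)$. By the compatibility hypothesis on each $\mathcal{B}_i$, there exists a subset $\mathcal{S}_i \subseteq \mathcal{B}_i$ that is a basis of $F_j \cap E_i$. Then $\bigcup_i \mathcal{S}_i \subseteq \mathcal{B}$ is a basis of $F_j$, which establishes that $\mathcal{B}$ is compatible with $\mathcal{F}$.

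Honestly, there is no real obstacle here: the corollary is a pure bookkeeping consequence of the preceding lemma together with the definition of a compatible basis. The only mild subtlety worth pinning down explicitly is that the two occurrences of the phrase \emph{compatible basis} (one at the level of the global flag in $V$, one at the level of each restricted flag in $E_i$) refer to the same notion and that they glue together correctly across the direct sum $V = \bigoplus_i E_i$; this is handled by the two paragraphs above.
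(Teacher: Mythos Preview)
Your proof is correct and is exactly the argument the paper has in mind: the paper simply declares the corollary ``immediate'' from the preceding lemma, and what you have written is precisely the two-line unpacking of that word---the union of the $\mathcal{B}_i$ is a basis because $V=\bigoplus_i E_i$, and each $F_j$ is a coordinate subspace because $F_j=\bigoplus_i(F_j\cap E_i)$ by the lemma and each summand is a coordinate subspace in $\mathcal{B}_i$ by hypothesis.
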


The following lemma investigates compatible bases for a single isotypic component. Let $\mathcal{I}_m \subseteq \GL_m(K)$ be a collection of invertible $m \times m$ matrices with the property that for any linearly independent collection $l_1,\dots,l_{m-1}$ of vectors in $K^m$, there exists $A \in \mathcal{I}_m$ such that every $l_i$ appears as some column of $A$ up to a non-zero scaling. For an $m \times m$ matrix $A$, we denote by $a_i \in K^m$ its $i^{th}$ column vector.

\begin{lemma} \label{lem:flag-basis-iso-component}
Let $W$ be an irreducible representation of $H$, and let $E = W \otimes K^m$. Let $\mathcal{F}$ be a flag of $H$-stable subspaces of $E$. Let $w_1,\dots,w_p$ be a basis of $W$. Then, for some $A \in \mathcal{I}_m$, the basis $\mathcal{B}(A) := \{w_i \otimes a_j\ |\ 1 \leq i \leq p, 1 \leq j \leq m\}$ is a compatible basis for $\mathcal{F}$.
\end{lemma}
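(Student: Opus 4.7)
The plan is to reduce the question to one about flags of subspaces of $K^m$ via Schur's lemma, and then apply the defining property of $\mathcal{I}_m$. Since $K$ is algebraically closed and $W$ is irreducible, $W$ is absolutely irreducible, so $\mathrm{End}_H(W) = K$. The module $E = W \otimes K^m$ is isotypic of type $W$, hence any $H$-stable subspace $F \subseteq E$ is again a sum of copies of $W$. The canonical evaluation map $W \otimes \mathrm{Hom}_H(W,F) \to F$, $w \otimes \phi \mapsto \phi(w)$, is an isomorphism, which identifies the lattice of $H$-stable subspaces of $E$ with the lattice of subspaces of $\mathrm{Hom}_H(W,E) \cong K^m$ via $F \leftrightarrow U(F)$, where $F = W \otimes U(F)$. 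Applied to the given flag $\mathcal{F}$, this produces a flag $\mathcal{U} = (0 = U_0 \subseteq U_1 \subseteq \dots \subseteq U_t = K^m)$ in $K^m$. A basis of $E$ of the form $\mathcal{B}(A) = \{w_i \otimes a_j\}$ is then compatible with $\mathcal{F}$ if and only if each $U_j$ is spanned by some subset of the columns $a_1, \ldots, a_m$ of $A$.

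Next, I would list the distinct dimensions $0 < e_1 < e_2 < \dots < e_s = m$ occurring among the subspaces in $\mathcal{U}$ and choose a basis $\ell_1, \ldots, \ell_m$ of $K^m$ such that $\ell_1, \ldots, \ell_{e_i}$ spans the $i$-th distinct subspace of $\mathcal{U}$. Applying the defining property of $\mathcal{I}_m$ to the linearly independent tuple $\ell_1, \ldots, \ell_{m-1}$ produces some $A \in \mathcal{I}_m$ whose columns, in some order, consist of nonzero scalar multiples $c_1 \ell_1, \ldots, c_{m-1} \ell_{m-1}$ together with one further column $v$. Because $A \in \GL_m(K)$, the vector $v$ is linearly independent from $\ell_1, \ldots, \ell_{m-1}$, so the columns of $A$ together span $K^m$.

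Finally, I would check compatibility subspace by subspace: for each $e_i < m$, the $e_i$ columns of $A$ that are scalar multiples of $\ell_1, \ldots, \ell_{e_i}$ form a basis of the $i$-th distinct subspace of $\mathcal{U}$, and in the case $e_s = m$ all $m$ columns of $A$ span $K^m$. Therefore every $U_j$ is spanned by a subset of $\{a_1, \ldots, a_m\}$, and correspondingly every $F_j = W \otimes U_j$ is spanned by a subset of $\mathcal{B}(A) = \{w_i \otimes a_j\}$, establishing compatibility. The only substantive step is the Schur-lemma reduction in the first paragraph; the remainder is linear-algebra bookkeeping. A minor but essential subtlety is that $\mathcal{I}_m$ controls only $m-1$ prescribed columns, which is exactly compensated by the fact that the top element of $\mathcal{U}$ equals $K^m$ and therefore requires no control beyond the invertibility of $A$.
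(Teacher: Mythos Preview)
Your proof is correct and follows essentially the same approach as the paper: both reduce to a flag $\mathcal{F}'$ of $K^m$ via the identification $F = W \otimes C$ of $H$-stable subspaces, then apply the defining property of $\mathcal{I}_m$ to a basis adapted to $\mathcal{F}'$ up through the largest proper subspace. Your version simply makes the Schur-lemma justification for that identification explicit and extends the adapted basis all the way to $m-1$ vectors before invoking $\mathcal{I}_m$, whereas the paper stops at $q = \dim F'_{t-1} \leq m-1$ vectors (which amounts to the same thing after a trivial extension).
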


\begin{proof}
Every $H$-stable subspaces of $E$ is of the form $W \otimes C$ for some subspace $C \subseteq K^m$. Thus, $\mathcal{F} = W \otimes \mathcal{F}'$ for some flag $\mathcal{F}'$ of $K^m$. Now, pick a compatible basis for $\mathcal{F}'$ step by step, i.e., pick a basis for $F'_1$, extend to a basis of $F'_2$ and so on until $F'_{t-1}$, where $t$ is such that $F'_{t-1} \subsetneq F'_t = K^m$. Note that $\dim(F'_{t-1}) = q \leq m-1$, and suppose $l_1,l_2,...,l_q$ is the basis of $F'_{t-1}$ that was picked so far. Now, there exists $A \in \mathcal{I}_m$ such that $l_i$ is a column of $A$ for all $i$ (up to a non-zero scaling). For such an $A$, it is clear that $\mathcal{B}(A)$ is compatible basis for $\mathcal{F}$.

\end{proof}

\begin{remark} \label{rem:I2}
One can take the set $\left\{\begin{pmatrix} 1 & 0 \\ a & 1 \end{pmatrix}\ |\ a \in K \right\}$ for $\mathcal{I}_2$. This is a simple but crucial observation that is needed in our computations for proving polystability of points required for exponential degree lower bounds. 
\end{remark}

\subsection{Polystability for polynomials}
In this section, we give a quick proof of Theorem~\ref{thm:evenpoly}.

\begin{proof} [Proof of Theorem~\ref{thm:evenpoly}]
Recall that $\kar(K) \neq 2$. Let $x_1,\dots,x_n$ denote the standard basis of $K^n$. Let $f \in W = \Sym^d(K^n)$ be an entirely even polynomial. We want to apply Theorem~\ref{thm:main3} for polystability and Theorem~\ref{thm:main.nullcone} for semistability. Let $G = \SL_n$ and $\widetilde{G} = \{A \in \Mat_{n,n}\ | \det(A) = \pm 1\} \subseteq \GL_n$ acting on $W$ in the natural way. Consider the action of the group $\{\pm 1\}^n$ (i.e., $(\Z/2)^n$) on $K^n$ given by $(t_1,\dots,t_n) \cdot (v_1,\dots,v_n) = (t_1v_1,\dots,t_nv_n)$. This action is given a map $(\Z/2)^n \rightarrow \widetilde{G}$. Let the image of this map be $H$. It is easy to see that $H \subseteq \widetilde{G}_f$.

We want to apply Theorem~\ref{thm:main3}. So, now we claim that $\mathcal{T} = \{\ST_n\}$ satisfies the hypothesis of Theorem~\ref{thm:main3}. Indeed, observe that if $P = P_{\mathcal{F}}$ is a parabolic such that $gPg^{-1} = p$ for all $g \in \widetilde{G}_f$, then we have $gPg^{-1} = P$ for all $g \in H$, which means that $\mathcal{F}$ is a flag of $H$-stable subspaces. Now, observe that $H$-stable subspaces are precisely coordinate subspaces.\footnote{This requires $\kar(K) \neq 2$. Note that if $\kar(K) = 2$, then $H$ is the trivial subgroup.} Thus, $\mathcal{F}$ is a flag of coordinate subspaces, which means that the standard basis is compatible with it, so $\ST_n \subseteq P = P_\mathcal{F}$.

%Now, an easy and elementary argument shows that $H$-stable subspaces of $K^n$ are precisely coordinate subspaces.\footnote{This requires $\kar(K) \neq 2$. Note that if $\kar(K) = 2$, then $H$ is the trivial subgroup.} Let $\mathcal{F}$ be a flag such that the parabolic $P_\mathcal{F} \supseteq \mathcal{G}_v \supseteq H$. \Hnote{Did you mean $\widetilde{G}_f$ instead of ${\mathcal G}_v$?}
%Then, $\mathcal{F}$ consists of $H$-stable subspaces. In particular, this means that we can take $\mathcal{T} = \{\ST_n\}$ in the hypothesis of Theorems~\ref{thm:main3} and ~\ref{thm:main.nullcone}, where $\ST_n$ denotes the subgroup of diagonal $n \times n$ matrices with determinant $1$. Note that $\ST_n$ is the standard maximal torus of $\SL_n$.

Thus, we apply Theorem~\ref{thm:main3} to get that $f$ is $G$-polystable if and only if $f$ is $\ST_n$-polystable and that $f$ is $G$-semistable if and only if $f$ is $\ST_n$-semistable. The Theorem now follows from Corollary~\ref{cor:symm-torus}.
\end{proof}

\section{Closed orbits for degree lower bound purposes} \label{sec:closed-orbits-degree-bounds}
Before we go into the computational details, we need one quick observation.

\begin{remark} \label{Rem:torus-basechange-polystable}
Let $W$ be a rational representation of a reductive group $G$ and let $T$ be a maximal torus. Let $w \in W$. Then $w$ is $gTg^{-1}$ polystable/semistable/stable if and only if $g^{-1}w$ is $T$-polystable/semistable/stable.

Let $G = \SL(V)$ with a preferred basis $\mathcal{E} = \{e_1,\dots,e_n\}$. For any basis $\mathcal{B} = (b_1,\dots,b_n)$, we associate a maximal torus $T_\mathcal{B}$ consisting of all matrices which are diagonal with respect to this basis. Equivalently $T_\mathcal{B} = \{g \in SL(V) \ |\  \forall i, \ b_i \text{ is an eigenvector for } g\}$. We denote by $T$, the maximal torus $T_\mathcal{E}$. Let $L_\mathcal{B}$ be the linear transformation that sends $e_i \mapsto b_i$. Then $T_\mathcal{B} = L_\mathcal{B} T L_\mathcal{B}^{-1}$. Finally for some representation $W$ of $\SL(V)$, we have that $w$ is $T_\mathcal{B}$ polystable/semistable/stable if and only if $L_\mathcal{B}^{-1} w$ is $T_\mathcal{E}$ polystable/semistable/stable.
\end{remark}

\subsection{Closed orbit for cubic forms}

Let $\mathcal{E} = \{x_i,y_i,z_i\}_{1 \leq i \leq n}$ be the preferred basis for a $3n$-dimensional vector space $V$. Let $W = \Sym^3(V)^{\oplus 2}$ with the natural action of $G = \SL(V)$. Let $w = (\sum_i x_i^2 z_i, \sum_i y_i^2 z_i) \in W$.

\begin{proposition} \label{cubic-polystable}
The point $w \in W$ is $\SL(V)$-polystable.
\end{proposition}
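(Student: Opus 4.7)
The plan is to apply Theorem~\ref{thm:main2} with a small, tractable family of maximal tori. First I would exhibit a rank-$n$ subtorus $H \subseteq G_w$: namely the torus $H \subseteq \SL(V)$ acting in the basis $\mathcal{E}$ by $\operatorname{diag}(a_1, a_1, a_1^{-2}, \ldots, a_n, a_n, a_n^{-2})$. This has determinant $1$ and fixes $w$ because $a_i^2 \cdot a_i^{-2} = 1$ scales both $x_i^2 z_i$ and $y_i^2 z_i$ trivially. Any parabolic containing $G_w$ contains $H$, so the collection of maximal tori of $G$ containing $H$ satisfies the hypothesis of Theorem~\ref{thm:main2}.

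Next I would invoke the isotypic decomposition and compatible-basis framework of Section~\ref{sec:rep-theory-background}. The $H$-eigenspace decomposition is $V = \bigoplus_{i=1}^n (\operatorname{span}(x_i, y_i) \oplus \operatorname{span}(z_i))$, in which $\operatorname{span}(x_i, y_i)$ is a $2$-dimensional isotypic component and $\operatorname{span}(z_i)$ is a line. By Corollary~\ref{cor:flag-basis-combine-component} and Lemma~\ref{lem:flag-basis-iso-component} with $\mathcal{I}_2$ as in Remark~\ref{rem:I2}, it suffices to take $\mathcal{T} = \{T_{\mathbf{a}} : \mathbf{a} \in K^n\}$, where $T_{\mathbf{a}}$ is the torus diagonal in the basis $\mathcal{B}_{\mathbf{a}} = \{x_i + a_i y_i,\, y_i,\, z_i\}_{i=1}^n$; this one-parameter-per-index family is enough because every $1$-dimensional subspace of each $\operatorname{span}(x_i, y_i)$ occurs as a column of some matrix in $\mathcal{I}_2$, so $\mathcal{B}_{\mathbf{a}}$ can be made compatible with any flag of $H$-stable subspaces.

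For each $T_{\mathbf{a}} \in \mathcal{T}$ I would verify that $w$ is $T_{\mathbf{a}}$-polystable; the resulting closedness of $O_{T_{\mathbf{a}}, w}$ then gives $\overline{O_{T_{\mathbf{a}}, w}} \subseteq O_{G, w}$ as demanded by Theorem~\ref{thm:main2}. Substituting $x_i = (x_i + a_i y_i) - a_i y_i$ into $w_1$ and leaving $w_2$ as is, the $\mathcal{B}_{\mathbf{a}}$-support of $w$ consists, for each $i$, of the monomials $(x_i + a_i y_i)^2 z_i$ and $y_i^2 z_i$, together with (only when $a_i \neq 0$) $(x_i + a_i y_i)\, y_i\, z_i$. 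If $\eta_i^1, \eta_i^2, \eta_i^3$ denote the characters dual to the three basis vectors indexed by $i$, these monomials have weights $2\eta_i^1 + \eta_i^3$, $2\eta_i^2 + \eta_i^3$, and $\eta_i^1 + \eta_i^2 + \eta_i^3$. To place $0$ in the relative interior of the weight polytope inside $\mathbb{Z}^{3n}/\mathbb{Z}(1,\ldots,1)$, I would exhibit an explicit positive convex combination: for each $i$, assign coefficients $(\tfrac{1}{2n}, \tfrac{1}{2n})$ to the two pure monomials when $a_i = 0$, and $(\tfrac{1}{4n}, \tfrac{1}{4n}, \tfrac{1}{2n})$ to all three monomials when $a_i \neq 0$. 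In either case the $i$-th contribution equals $\tfrac{1}{n}(\eta_i^1 + \eta_i^2 + \eta_i^3)$, and summing over $i$ yields $\tfrac{1}{n}(1, 1, \ldots, 1) \equiv 0$ with every coefficient strictly positive.

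The main obstacle I expect is precisely the enumeration step: naively, maximal tori of $\SL(V)$ containing $H$ form a much larger family parameterized by arbitrary bases of each two-dimensional $\operatorname{span}(x_i, y_i)$. What makes the argument feasible is Remark~\ref{rem:I2}, which collapses the set of tori we must test into the single affine parameter $a_i \in K$ per index. Once that reduction is in hand, the weight polytope computation is essentially dictated by the symmetry of $w$ between the $x$- and $y$-directions and requires no case analysis beyond the dichotomy $a_i = 0$ versus $a_i \neq 0$.
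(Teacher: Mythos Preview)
Your argument is correct and follows the same overall strategy as the paper---reduce to a family of maximal tori via the compatible-basis machinery and verify $T$-polystability for each---but with one meaningful difference in how the family is cut down. You take $H$ to be only the rank-$n$ subtorus of $G_w$, so the isotypic decomposition of $V$ has $2n$ pieces (the $n$ planes $\operatorname{span}(x_i,y_i)$ and the $n$ lines $\operatorname{span}(z_i)$), and you end up with an $n$-parameter family $\{T_{\mathbf{a}}:\mathbf{a}\in K^n\}$. The paper instead enlarges $H$ to include the $S_n$ permuting the index $i$ and passes to $\widetilde{G}=\GL(V)$ via Theorem~\ref{thm:main3}; with that extra symmetry the isotypic decomposition collapses to $P\oplus Q$ with $P=\operatorname{span}\{x_i,y_i\}$ and $Q=\operatorname{span}\{z_i\}$, yielding a \emph{single}-parameter family $\{T_{\mathcal{B}_a}:a\in K\}$. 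Both routes land on the same weight-polytope computation, and your explicit positive combination is essentially the paper's observation that the sum of all the weights is zero. What the paper's route buys is economy (one parameter instead of $n$), which matters more in the tensor case; what your route buys is that it avoids the passage to $\GL(V)$ and the semidirect product entirely, staying with Theorem~\ref{thm:main2}.

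One small imprecision: your dichotomy ``$a_i=0$ versus $a_i\neq 0$'' governing the presence of the cross term $(x_i+a_iy_i)y_iz_i$ should really be ``$2a_i=0$ versus $2a_i\neq 0$'', since the coefficient is $-2a_i$. In characteristic $2$ that term is always absent. This does not affect your conclusion, because the weight $\eta_i^1+\eta_i^2+\eta_i^3$ is the midpoint of $2\eta_i^1+\eta_i^3$ and $2\eta_i^2+\eta_i^3$ and hence never changes the polytope; your two-monomial combination already certifies the relative-interior condition in every characteristic.
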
 

Consider the action on an $n$-dimensional torus $(K^*)^n$ on $V$ given by $(t_1,\dots,t_n) \cdot x_i = t_i x_i$,  $(t_1,\dots,t_n) \cdot y_i = t_i y_i$, $(t_1,\dots,t_n) \cdot z_i = t_i^{-2} z_i$. There is also an action of $S_n$ on $V$ that permutes the $x_i$, $y_i$ and $z_i$, i.e, $\sigma \cdot x_i = x_{\sigma(i)}, \sigma \cdot y_i = y_{\sigma(i)}$ and $\sigma \cdot z_i = z_{\sigma(i)}$. Combining the two actions, we get a map $\rho: (K^*)^n \rtimes S_n \rightarrow \SL(V) \subseteq \GL(V)$. Let $H := \rho((K^*)^n \rtimes S_n) \subseteq \GL(V).$

 Let $X = \spa\{x_i: i \in [n]\}, Y = \spa\{y_i: i \in [n]\}$ and $Z = \spa\{z_i: i \in [n]\}$. 

\begin{lemma} \label{lem-H-cr-cubic}
$V$ is a semisimple $H$-module.
\end{lemma}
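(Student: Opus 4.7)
My plan is to exhibit an explicit decomposition of $V$ into $H$-irreducible summands. The natural candidate is the decomposition $V = X \oplus Y \oplus Z$ that is already defined in the text. First I would check that each of $X$, $Y$, $Z$ is $H$-stable. For the torus part of $H$, this is obvious since each basis vector $x_i,y_i,z_i$ is a torus weight vector and each of $X,Y,Z$ is spanned by such vectors of the same ``type''. For the $S_n$ part, the action merely permutes the $x_i$'s among themselves, the $y_i$'s among themselves, and the $z_i$'s among themselves, so $X$, $Y$, and $Z$ are stable. Thus the decomposition $V = X\oplus Y\oplus Z$ is a decomposition of $H$-modules.

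Next I would show that each of $X$, $Y$, $Z$ is an irreducible $H$-module. Consider $X$. Any $H$-stable subspace $U\subseteq X$ is in particular stable under the torus $(K^*)^n$. The torus weight of $x_i$ in $\mathcal{X}((K^*)^n)=\Z^n$ is the $i$-th standard basis vector $e_i$, and the weights $e_1,\dots,e_n$ are pairwise distinct. Hence any torus-stable subspace of $X$ is spanned by a subset of $\{x_1,\dots,x_n\}$, i.e., $U = \spa\{x_i\mid i\in I\}$ for some $I\subseteq [n]$. Since $U$ must also be $S_n$-stable and $S_n$ acts transitively on $\{x_1,\dots,x_n\}$, we conclude $I\in\{\emptyset,[n]\}$, so $U\in\{0,X\}$. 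The same argument works verbatim for $Y$ (with the identical weights $e_i$) and for $Z$ (whose weights $-2e_i$ are still pairwise distinct in $\Z^n$, regardless of the characteristic of $K$).

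Having written $V = X\oplus Y\oplus Z$ as a direct sum of $H$-irreducible submodules, we conclude that $V$ is semisimple as an $H$-module. I do not anticipate any real obstacle: the torus-weight argument separates the basis elements within each of $X,Y,Z$, and the transitivity of the $S_n$-action then collapses any proper torus-stable subspace. In particular, the proof goes through uniformly in all characteristics, which is important since this lemma will be used to apply the $G$-complete reducibility machinery of Section~\ref{sec:rep-theory-background} (cf.\ Remark~\ref{Rem:G-cr}) in the proof of Proposition~\ref{cubic-polystable}.
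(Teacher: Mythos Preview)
Your proposal is correct and follows essentially the same approach as the paper: decompose $V = X \oplus Y \oplus Z$ and show each summand is $H$-irreducible by first using the torus action to see that any $H$-stable subspace is spanned by a subset of the basis vectors, and then using the transitive $S_n$-action to rule out proper nontrivial subsets. Your version is slightly more explicit (you spell out why $X,Y,Z$ are $H$-stable and why the torus weights are pairwise distinct), but the argument is the same.
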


\begin{proof}
All we need to do is to write $V$ as a direct sum of irreducible $H$-modules. Clearly $X \oplus Y \oplus Z = V$, so it suffices to show that each of $X,Y$ and $Z$ are irreducible $H$-modules. Let us do this for $X$. The others are similar. Suppose $0 \neq U \subsetneq X$ was a $H$-submodule. So, $U$ must be stable under the action of the torus $(K^*)^n$ (which is linearly reductive), which means that $U$ must be $\spa \{x_i : i \in I\}$ for some $\emptyset \neq I \subsetneq [n]$. But then, $U$ must also be stable under the action of $S_n$, which is not possible. Thus, no such $U$ exists and $X$ is irreducible.
\end{proof}

In the above proof, observe that $X$ and $Y$ are isomorphic $H$-modules, so we conclude the following:

\begin{corollary}
Let $P = {\rm span}\{x_i,y_i : i \in [n]\}$ and $Q = {\rm span}\{z_i: i \in [n]\}$. Then $V = P \oplus Q$ is the isotypic decomposition of $V$ with respect to $H$. 
\end{corollary}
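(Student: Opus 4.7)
The plan is to build on Lemma~\ref{lem-H-cr-cubic}, which already gives the decomposition $V = X \oplus Y \oplus Z$ into irreducible $H$-submodules. To pin down the isotypic components, I need to identify which of $X, Y, Z$ are pairwise isomorphic as $H$-modules and which are not. The whole task reduces to two simple checks.

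First, I would construct an explicit $H$-equivariant isomorphism $\varphi \colon X \to Y$ by setting $\varphi(x_i) = y_i$ and extending linearly. To verify equivariance, observe that the torus $(K^*)^n$ acts on $x_i$ and on $y_i$ by the same character $t \mapsto t_i$, while the $S_n$-factor permutes the indices identically on both families. Hence $\varphi$ commutes with every generator of $H$, giving $X \cong Y$ as $H$-modules.

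Second, I would separate the class of $X$ (equivalently $Y$) from $Z$ by a weight argument for the torus $\rho((K^*)^n) \subseteq H$. On $X$ the weights that occur are $\{e_1,\dots,e_n\} \subseteq \Z^n$, while on $Z$ they are $\{-2e_1,\dots,-2e_n\}$; since these weight multisets are disjoint, $X$ and $Z$ are non-isomorphic even as tori-representations, hence a fortiori as $H$-modules. The same applies to $Y$ versus $Z$.

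Combining the two checks, the isomorphism class of $X$ contributes the isotypic component $X \oplus Y = P$, and the isomorphism class of $Z$ contributes the isotypic component $Z = Q$, yielding the claimed decomposition $V = P \oplus Q$. The only conceptual step is the weight comparison distinguishing $X$ from $Z$, and that is immediate from the definition of $\rho$; no genuine obstacle arises.
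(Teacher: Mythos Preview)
Your proposal is correct and follows the same approach as the paper, which simply observes (in the sentence preceding the corollary) that $X$ and $Y$ are isomorphic $H$-modules and leaves the rest implicit. You have filled in the details the paper omits, namely the explicit isomorphism $x_i \mapsto y_i$ and the weight argument distinguishing $X$ from $Z$; both are exactly what one would supply to make the paper's one-line justification rigorous.
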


We now turn to finding compatible basis for flags of $H$-stable subspaces. First, for $a \in K$, let us define $\mathcal{B}_a := \{x_i + a y_i, y_i, z_i : 1\leq i \leq n\}$.

\begin{lemma} \label{lem:cubic-compatible-basis}
Let $\mathcal{F}$ be a flag of $H$-stable subspaces of $V$. Then there exists $a \in K$ such that $\mathcal{B}_a$ is a compatible basis for $\mathcal{F}$.
\end{lemma}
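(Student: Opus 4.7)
The plan is to apply the general machinery from Lemma~\ref{lem:flag-basis-iso-component} and Corollary~\ref{cor:flag-basis-combine-component} to the isotypic decomposition $V = P \oplus Q$ established just above. By Lemma~\ref{lem-H-cr-cubic} (together with its corollary), $V$ is a semisimple $H$-module, $P$ is the $H$-isotypic component of multiplicity two, and $Q$ is an isotypic component of multiplicity one (with irreducible type different from that of $P$). Hence by Corollary~\ref{cor:flag-basis-combine-component} it suffices to produce, for each isotypic component, a basis compatible with the restricted flag, and then to take their union.

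For $Q = \operatorname{span}\{z_i : i\in[n]\}$, which is irreducible and has multiplicity one in its isotypic type, every $H$-stable subspace of $Q$ is either $0$ or all of $Q$. Therefore the restricted flag $\mathcal{F}|_Q$ is trivial, and the standard basis $\{z_1, \ldots, z_n\}$ is automatically compatible.

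For the isotypic component $P$ of dimension $2n$, I will identify $P \cong W \otimes K^2$ where $W$ is the underlying irreducible $H$-module (with basis $w_1, \ldots, w_n$) and $K^2$ has standard basis $f_1, f_2$, via the convention $x_i = w_i \otimes f_1$ and $y_i = w_i \otimes f_2$. Lemma~\ref{lem:flag-basis-iso-component} then produces some $A \in \mathcal{I}_2$ whose associated basis $\mathcal{B}(A) = \{w_i \otimes a_j\}$ is compatible with $\mathcal{F}|_P$, where $a_1, a_2$ are the columns of $A$. By Remark~\ref{rem:I2} we may take
\[
\mathcal{I}_2 = \left\{\begin{pmatrix} 1 & 0 \\ a & 1 \end{pmatrix} : a \in K\right\},
\]
so for some $a \in K$ the columns are $a_1 = f_1 + a f_2$ and $a_2 = f_2$. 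Translating through the identification gives $w_i \otimes a_1 = x_i + a y_i$ and $w_i \otimes a_2 = y_i$, so the compatible basis for $\mathcal{F}|_P$ is $\{x_i + a y_i, y_i : i \in [n]\}$.

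Combining with the compatible basis $\{z_i\}$ for $Q$ via Corollary~\ref{cor:flag-basis-combine-component} yields precisely $\mathcal{B}_a$, proving the lemma. There is no real obstacle here beyond bookkeeping; the substantive input is that the multiplicity of the $P$-isotype is exactly two, which is what makes the one-parameter family $\mathcal{I}_2$ of Remark~\ref{rem:I2} sufficient and gives the lemma its clean form.
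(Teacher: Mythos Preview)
Your proof is correct and follows essentially the same approach as the paper: reduce to the isotypic components via Corollary~\ref{cor:flag-basis-combine-component}, handle $Q$ trivially since it is irreducible, and handle $P \cong X^{\oplus 2}$ via Lemma~\ref{lem:flag-basis-iso-component} with the choice of $\mathcal{I}_2$ from Remark~\ref{rem:I2}. Your version is slightly more explicit about the identification $P \cong W \otimes K^2$, but the argument is otherwise identical.
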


\begin{proof}
By Corollary~\ref{cor:flag-basis-combine-component}, to find a compatible basis for $\mathcal{F}$, it suffices to find a compatible basis for $\mathcal{F}|_P$ and $\mathcal{F}|_Q$ separately. Since $Q$ itself is irreducible, the flag $\mathcal{F}|_Q$ must be trivial and any basis will do. We pick $\{z_1,\dots,z_n\}$. 

The isotypic component $P \cong X^{\oplus 2}$. So, by Lemma~\ref{lem:flag-basis-iso-component} and the choice of $\mathcal{I}_2$ in Remark~\ref{rem:I2}, we deduce that there is an $a \in K$ such that $\{x_i + a y_i, y_i : i \in [n]\}$ is a compatible basis. For this choice of $a$, we conclude that $\mathcal{B}_a = \{x_i + a y_i, y_i, z_i : i \in [n]\}$ is a compatible basis by Corollary~\ref{cor:flag-basis-combine-component}.
\end{proof}

\begin{lemma} \label{lem:w-T-E-polystable}
$w$ is $T_\mathcal{E}$-polystable.
\end{lemma}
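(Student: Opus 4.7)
The plan is to invoke the weight-polytope characterization of torus polystability from Section~\ref{sec:torus-actions}: a nonzero $v$ in a torus representation is polystable if and only if the origin lies in the relative interior of its weight polytope. Since $T_\mathcal{E} = \ST_{3n}$ has character lattice $\Z^{3n}/\Z(1,\dots,1)$, polystability of $w$ will reduce to showing that the center $(1/n,\dots,1/n)$ lies in the relative interior of the Newton polytope of $w$ inside the affine hyperplane $\{v \in \R^{3n}: \sum_j v_j = 3\}$, where the Newton polytope is taken with respect to the standard basis $\{\epsilon_{x_i}, \epsilon_{y_i}, \epsilon_{z_i}\}_{i=1}^n$ of $\Z^{3n}$ dual to $\mathcal{E}$.

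I will exhibit the center as a strictly positive convex combination of all $2n$ weights that appear in $w$. The monomial $x_i^2 z_i$ has weight $\mu_i := 2\epsilon_{x_i} + \epsilon_{z_i}$ and $y_i^2 z_i$ has weight $\nu_i := 2\epsilon_{y_i} + \epsilon_{z_i}$, so a direct calculation yields
\[
\sum_{i=1}^n \mu_i + \sum_{i=1}^n \nu_i = 2\sum_{i=1}^n (\epsilon_{x_i} + \epsilon_{y_i} + \epsilon_{z_i}) = 2(1,\dots,1).
\]
Assigning coefficient $\tfrac{1}{2n}$ to each of the $2n$ weights therefore writes $(1/n,\dots,1/n)$ as a convex combination in which every weight of $w$ appears with strictly positive coefficient. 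A standard fact from convex geometry---namely, that a strictly positive convex combination of a finite point set always lies in the relative interior of its convex hull---will then complete the argument.

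There is essentially no obstacle to this approach; the only small point to flag is the identification of the center $(1/n,\dots,1/n)$ with $0$ in the $\SL$-character lattice $\Z^{3n}/\Z(1,\dots,1)$, which is what justifies the reduction from the torus lemma's ``origin in the relative interior of the weight polytope'' to the ``center in the relative interior of the Newton polytope'' statement that I verify above. This is the analogue for two copies of $\Sym^3(V)$ of the second bullet of Corollary~\ref{cor:symm-torus}, and requires no new ideas beyond the finite-sum identity displayed above.
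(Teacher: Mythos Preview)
Your proposal is correct and follows essentially the same route as the paper's proof: both verify that the sum of all $2n$ weights of $w$ vanishes in the $\SL$-character lattice (equivalently, equals a multiple of $(1,\dots,1)$ in $\Z^{3n}$), and deduce from this strictly positive convex combination that $0$ lies in the relative interior of the weight polytope. Your write-up is simply a more detailed version of the paper's one-line ``easy check.''
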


\begin{proof}
Write $w = (w_1,w_2)$. Then $w_1 = \sum_i x_i^2z_i$ and $w_2 = \sum_i y_i^2 z_i$ are both weight decompositions. Further, it's an easy check to see that the sum of weights $\sum_i {\rm wt}(x_i^2z_i) + {\rm wt}(y_i^2 z_i) = 0$, which means that $0$ is in the relative interior of the weight polytope, and so $w$ is $T_\mathcal{E}$-polystable.
\end{proof}

\begin{lemma} \label{torus-polystable-cubic}
For all $a \in K$, $w$ is $T_{\mathcal{B}_a}$-polystable. 
\end{lemma}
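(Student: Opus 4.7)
The plan is to apply Remark~\ref{Rem:torus-basechange-polystable} to reduce $T_{\mathcal{B}_a}$-polystability of $w$ to $T_\mathcal{E}$-polystability of $w_a := L_{\mathcal{B}_a}^{-1}\cdot w$, and then run the weight polytope criterion from Section~\ref{sec:torus-actions}. Since $L_{\mathcal{B}_a}^{-1}$ fixes each $y_i$ and $z_i$ and sends $x_i\mapsto x_i - ay_i$, expansion in $\Sym^3(V)$ gives
\[
w_a \;=\; \Bigl(\,\sum_i \bigl(x_i^2 z_i - 2a\, x_iy_iz_i + a^2\, y_i^2 z_i\bigr),\ \sum_i y_i^2 z_i\,\Bigr).
\]

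Next, let $e_i, f_i, g_i \in \mathcal{X}(T_\mathcal{E})$ denote the weights of $x_i, y_i, z_i$, subject only to the relation $\sum_i(e_i+f_i+g_i) = 0$ (coming from $\det = 1$ on $\SL(V)$). The weight set of $w_a$ is $\{2e_i + g_i,\ 2f_i + g_i : i \in [n]\}$ when $a = 0$, and (using $\kar(K)\neq 2$, so that the cross-coefficient $-2a$ is nonzero) enlarges to $\{2e_i + g_i,\ e_i + f_i + g_i,\ 2f_i + g_i : i \in [n]\}$ when $a \neq 0$. By the torus criterion, polystability of $w_a$ amounts to expressing $0$ as a strictly positive combination of every element of this set.

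For $a \neq 0$ this reduces to the single identity
\[
\sum_i \bigl[(2e_i + g_i) + (e_i + f_i + g_i) + (2f_i + g_i)\bigr] \;=\; 3\sum_i(e_i + f_i + g_i) \;=\; 0,
\]
which assigns coefficient $1$ to every weight; for $a = 0$ we simply drop the middle summands, recovering Lemma~\ref{lem:w-T-E-polystable}. The conceptual point is that each new weight $e_i + f_i + g_i$ is the midpoint of the existing pair $\{2e_i + g_i,\ 2f_i + g_i\}$, so adjoining it cannot push $0$ out of the relative interior of the weight polytope. The main (and only) obstacle is bookkeeping: computing $L_{\mathcal{B}_a}^{-1}\cdot w$ correctly and verifying that the new cross weights are genuinely realized, both of which use $\kar(K)\neq 2$.
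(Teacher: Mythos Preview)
Your proof is correct and follows essentially the same route as the paper: reduce via Remark~\ref{Rem:torus-basechange-polystable} to $T_\mathcal{E}$-polystability of $L_{\mathcal{B}_a}^{-1}w$, expand, and observe that any new weight $e_i+f_i+g_i$ is the midpoint of $2e_i+g_i$ and $2f_i+g_i$, so the weight polytope is unchanged from the $a=0$ case handled in Lemma~\ref{lem:w-T-E-polystable}. One small remark: your closing claim that the argument ``uses $\kar(K)\neq 2$'' is not quite right. As you yourself note, the cross weights are midpoints and therefore irrelevant to the weight polytope; whether or not $-2a$ vanishes, the polytope of $w_a$ coincides with that of $w$, and the paper's proof accordingly makes no characteristic assumption here.
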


\begin{proof}
Let $L_a$ be the linear transformation that takes $x_i \mapsto x_i + a y_i$ and keeps $y_i,z_i$ invariant for all $i$.
It suffices to prove that $L_a^{-1}(w) = L_{-a}(w)$ is $T_\mathcal{E}$-polystable by Remark~\ref{Rem:torus-basechange-polystable}.
Observe that $L_{-a}$ sends $x_i^2z_i \mapsto x_i^2 z_i - 2a x_iy_iz_i + a^2y_i^2z_i$ and $y_i^2 z_i \mapsto y_i^2 z_i$. Observe that ${\rm wt}(x_iy_iz_i) = \frac{1}{2} {\rm wt}(x_i^2 z_i) + \frac{1}{2}{\rm wt}(y_i^2z_i)$. This means that the weight polytope of $L_{-a}(w)$ is the same as the weight polytope of $w$ (even though the weights occuring in their weight decompositions may not be the same). Since weight polytopes determine polystability, see Lemma~\ref{lem:polystable-wp}, we conclude that $L_{-a}(w)$ is $T_\mathcal{E}$-polystable since $w$ is $T_\mathcal{E}$-polystable.
\end{proof}

Now, we combine all the results to prove Proposition~\ref{cubic-polystable}

\begin{proof}[Proof of Proposition~\ref{cubic-polystable}]
We want to use Theorem~\ref{thm:main3}. Take $G = \SL(V)$ and $\widetilde{G} = \GL(V)$. Then clearly we have $H \subseteq \widetilde{G}_v$ where $H$ is defined as in the beginning of this section. Now, suppose we have a parabolic $P_\mathcal{F}$ that is fixed by all elements of $\widetilde{G}_v$. In particular, it is fixed by all elements of $H$, so $\mathcal{F}$ must be a flag of $H$-stable subspaces. Hence, for some $a$, the basis $\mathcal{B}_a$ is compatible with $\mathcal{F}$  by Lemma~\ref{lem:cubic-compatible-basis}. In short this means that the collection $\mathcal{T} = \{T_{\mathcal{B}_a}\ |\ a \in K\}$ satisfies the hypothesis of Theorem~\ref{thm:main3}. Since $w$ is $T_{\mathcal{B}_a}$-polystable for all $a \in K$ by Lemma~\ref{torus-polystable-cubic}, we get that $\overline{O_{T,w}} = O_{T,w} \subseteq O_{G,w}$ for all $T \in \mathcal{T}$. Thus, by Theorem~\ref{thm:main3}, we conclude that $w$ is $G$-polystable.
\end{proof}

\subsection{Closed orbits for tensor actions} \label{closedorbittensorsubsection}
The idea is very much similar to the one on cubic forms, but the computations get a little bit cumbersome. Yet, spotting certain patterns will make the computation much easier. For this section, let $U,V,W$ be a $3n$-dimensional spaces with basis $\{u_i^{(k)} \ |\ 1\leq i \leq 3, 1 \leq k \leq n\}$, $\{v_i^{(k)} \ |\ 1\leq i \leq 3, 1 \leq k \leq n\}$, and $\{w_i^{(k)} \ |\ 1\leq i \leq 3, 1 \leq k \leq n\}$ respectively. Consider the action of $\SL(U) \times \SL(V) \times \SL(W)$ on $(U \otimes V \otimes W)^{\oplus 4}$. Let $F = (F_1,F_2,F_3,F_4) \in (U \otimes V \otimes W)^{\oplus 4}$, where 

\begin{align*}
F_1 &= \sum_{k=1}^n u_1^{(k)} v_2^{(k)} w_3^{(k)} + u_2^{(k)} v_3^{(k)} w_1^{(k)} + u_3^{(k)}  v_1^{(k)} w_2^{(k)} \\
F_2 &= \sum_{k=1}^n u_2^{(k)} v_1^{(k)} w_3^{(k)} + u_1^{(k)} v_3^{(k)} w_2^{(k)} + u_3^{(k)}  v_2^{(k)} w_1^{(k)} \\
F_3 &= \sum_{k=1}^n u_1^{(k)} v_1^{(k)} w_3^{(k)} + u_2^{(k)} v_3^{(k)} w_2^{(k)} + u_3^{(k)}  v_1^{(k)} w_1^{(k)} \\
F_4 &= \sum_{k=1}^n u_2^{(k)} v_2^{(k)} w_3^{(k)} + u_1^{(k)} v_3^{(k)} w_1^{(k)} + u_3^{(k)}  v_2^{(k)} w_2^{(k)}
\end{align*}

\begin{proposition}\label{tensor-polystable}
The point $F \in (U \otimes V \otimes W)^{\oplus 4}$ is $\SL(U) \times \SL(V) \times \SL(W)$-polystable.
\end{proposition}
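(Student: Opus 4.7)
I would closely parallel the proof of Proposition~\ref{cubic-polystable}. Set $G=\SL(U)\times\SL(V)\times\SL(W)$ and $\widetilde G=\GL(U)\times\GL(V)\times\GL(W)$. The plan is: (i) produce a subgroup $H\subseteq\widetilde G_F$ making each of $U,V,W$ a semisimple $H$-module with easy isotypic decomposition; (ii) apply Lemma~\ref{lem:flag-basis-iso-component} on each factor to parameterize all maximal tori of $G$ lying in parabolics fixed under $\widetilde G_F$-conjugation by a family $\mathcal T$; (iii) verify $F$ is polystable under every $T\in\mathcal T$, whereupon Theorem~\ref{thm:main3} delivers $G$-polystability.

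For (i), let $T_n=(K^*)^n$ act on the preferred bases by $(\lambda_1,\dots,\lambda_n)\cdot u_i^{(k)}=\lambda_k u_i^{(k)}$, $\cdot v_i^{(k)}=\lambda_k v_i^{(k)}$, $\cdot w_i^{(k)}=\lambda_k^{-2}w_i^{(k)}$, and let $S_n$ permute the superscript $k$ simultaneously across $U,V,W$. Set $H:=T_n\rtimes S_n$; each summand of $F$ has total $T_n$-weight $\lambda_k\cdot\lambda_k\cdot\lambda_k^{-2}=1$ and $S_n$ permutes summands within each $F_j$, so $H\subseteq\widetilde G_F$. For (ii), let $M$ (resp.\ $N$) denote the irreducible $n$-dimensional $H$-representation with $T_n$-weight $\lambda_k$ (resp.\ $\lambda_k^{-2}$) on $e_k$ and $S_n$ permuting $\{e_k\}$; irreducibility follows because $S_n$ is transitive on torus weight lines. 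Then $U\cong M\otimes K^3$, $V\cong M\otimes K^3$, $W\cong N\otimes K^3$, each with a single isotypic component. By Lemma~\ref{lem:flag-basis-iso-component}, every flag of $H$-stable subspaces of $U$ admits a compatible basis $\{\sum_i (a_j)_i u_i^{(k)}\}_{k,j}$ for some $A\in\mathcal I_3$, and analogously for $V,W$ with matrices $B,C$. Combined with Corollary~\ref{cor:flag-basis-combine-component} and the product structure of parabolics of $G$, the family $\mathcal T:=\{T_{A,B,C}:(A,B,C)\in\mathcal I_3^{\,3}\}$ satisfies the hypothesis of Theorem~\ref{thm:main3}.

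The main step, and what I expect to be the main obstacle, is (iii). By Remark~\ref{Rem:torus-basechange-polystable} this reduces to $T_{\text{std}}$-polystability of $L^{-1}F$ for each $(A,B,C)$, where $L=L_A\otimes L_B\otimes L_C$ and $T_{\text{std}}$ is the standard diagonal maximal torus of $G$; by Lemma~\ref{lem:polystable-wp} it suffices to show that $0$ lies in the relative interior of the weight polytope $\WP(L^{-1}F)$ inside the $(9n-3)$-dimensional character lattice of $T_{\text{std}}$. The base case $A=B=C=I$ is a direct centroid calculation: each of $1,2,3$ appears $4$ times as a $u$-, $v$- and $w$-index among the $12$ supporting triples of $F|_k$, so $0$ is the centroid; and the $U$-projection of the $12n$ summand-weights hits every basis character $\bar e_{p,k}^U$, so they affinely span the full character lattice and hence $0\in\WP(F)^\circ$ (the full interior).

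For general $(A,B,C)$, the key structural fact I would exploit is that for each $m\in\{1,2,3,4\}$, the three supporting triples of $F_m$ form an \emph{antichain} under the componentwise partial order on $\{1,2,3\}^3$. Choosing $\mathcal I_3$ to consist of matrices whose inverses are lower triangular with diagonal $(1,1,1)$ (after possibly taking a union over coordinate permutations to ensure the richness hypothesis of Lemma~\ref{lem:flag-basis-iso-component}), the coefficient of each original supporting monomial of $F$ in the appropriate $L^{-1}F_m$ reduces to exactly the diagonal contribution $(A^{-1})_{ii}(B^{-1})_{jj}(C^{-1})_{ll}=1$, because the antichain property forbids any other triple in $\supp(F_m)$ from being componentwise $\leq$ it. Hence $\supp(L^{-1}F)\supseteq\supp(F)$, so $\WP(L^{-1}F)\supseteq\WP(F)$, and the base case gives $0\in\WP(L^{-1}F)^\circ$. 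The sharpest technical step is to pin down a specific $\mathcal I_3$ that simultaneously satisfies the richness hypothesis of Lemma~\ref{lem:flag-basis-iso-component} and the triangularity needed for the antichain argument; once this is done, Theorem~\ref{thm:main3} concludes the proof.
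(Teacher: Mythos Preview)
Your overall architecture (Theorem~\ref{thm:main3} with a subgroup of $\widetilde G_F$ forcing semisimplicity, then torus polystability for a family $\mathcal T$) matches the paper. The gap is in your choice of $H$ and, consequently, in step (iii).

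You take $H=T_n\rtimes S_n$ with $T_n=(K^*)^n$ acting on each of $u_i^{(k)},v_i^{(k)}$ by $\lambda_k$ and on $w_i^{(k)}$ by $\lambda_k^{-2}$. This makes $U\cong M\otimes K^3$ a single isotypic block of multiplicity $3$, forcing you to use $\mathcal I_3$. You then want $\mathcal I_3$ to consist of matrices with lower-triangular unipotent inverse (perhaps after coordinate permutation), so that the antichain property of each $\supp(F_m)$ yields $\supp(F)\subseteq\supp(L^{-1}F)$. But these two requirements are incompatible. The richness hypothesis of Lemma~\ref{lem:flag-basis-iso-component} demands that any two independent vectors $l_1,l_2\in K^3$ appear as columns of some $A\in\mathcal I_3$; taking e.g.\ $l_1=(1,1,1)$, $l_2=(1,2,3)$, neither lies in a coordinate plane, so no completion to a matrix triangular in any reordered standard basis exists. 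Moreover, even allowing different permutations $\sigma_A,\sigma_B,\sigma_C$ on the three factors destroys the antichain property: with $\sigma_A=(1\ 3)$ and $\sigma_B=\sigma_C=\mathrm{id}$, the triples $(1,2,3)$ and $(3,1,2)$ in $\supp(F_1)$ become comparable. So step (iii) cannot be completed as written, and you correctly flagged this as the ``sharpest technical step'' without resolving it.

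The paper avoids this by enlarging the torus in $H$ to $((K^*)^3)^n$, with $u_1^{(k)},u_2^{(k)}$ carrying weight $p_k$ and $u_3^{(k)}$ carrying weight $(q_kr_k)^{-1}$ (and cyclically for $V,W$). This separates $X_3=\spa(u_3^{(k)})$ into its own isotypic component, so the isotypic decomposition of $U$ is $(X_1\oplus X_2)\oplus X_3$ with multiplicities $2$ and $1$. Now only $\mathcal I_2$ is needed, and the one-parameter family of Remark~\ref{rem:I2} works cleanly. For the torus check the paper partitions $\tsupp(F)$ into $4$-element sets $\mathcal C_{i,k}$ each preserved by the relevant $L^{a,b,c}$, with every $F_j$ hitting exactly one monomial per $\mathcal C_{i,k}$; this gives $\tsupp(L(F))=\tsupp(F)$ directly (Lemma~\ref{lem:tsupp}), which is a cleaner substitute for your antichain argument in this restricted setting. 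The moral: invest more in $H$ up front so that the downstream torus family is as small as possible.
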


Let us define a map $\phi_U:  ((\C^*)^3)^n \rightarrow \GL(U)$. To define such a map it suffices to understand the action of $t = (p_1,q_1,r_1,p_2,q_2,r_2,\dots,p_n,q_n,r_n)$ on each basis vector $b \in \B_u$. The map $\phi_U$ is defined by 
$$
\phi_U(t) u_1^{(k)} = p_k u_1^{(k)}, \phi_U(t) u_2^{(k)} = p_k u_2^{(k)} \text{ and } \phi_U(t) u_3^{(k)} = (q_kr_k)^{-1} u_3^{(k)}.
$$ 
Similarly define $\phi_V: ((\C^*)^3)^n \rightarrow \GL(V)$ by 
$$
\phi_V(t) v_1^{(k)} = q_k v_1^{(k)}, \phi_V(t) v_2^{(k)} = q_k v_2^{(k)} \text{ and } \phi_V(t) v_3^{(k)} = (p_kr_k)^{-1} v_3^{(k)}.
$$
 Finally, define $\phi_W: ((\C^*)^3)^n \rightarrow \GL(W)$ by 
 $$
 \phi_W(t) w_1^{(k)} = r_k w_1^{(k)}, \phi_W(t) w_2^{(k)} = r_k w_2^{(k)} \text{ and } \phi_W(t) w_3^{(k)} = (p_kq_k)^{-1} w_3^{(k)}.
 $$
 
 Let $\phi = (\phi_U,\phi_V,\phi_W): ((\C^*)^3)^n \rightarrow \GL(U) \times \GL(V) \times \GL(W)$. There is also an action of $S_n$ on $U, V$ and $W$ defined by $\sigma(u_i^{(k)}) = u_i^{(\sigma(k))}$, $\sigma(v_i^{(k)}) = v_i^{(\sigma(k))}$, and $\sigma(w_i^{(k)}) = w_i^{(\sigma(k))}$ respectively. That action gives a map $\psi: S_n \rightarrow \GL(U) \times \GL(V) \times \GL(W)$. Put together, we get a map $\phi \rtimes \psi : ((\C^*)^3)^n \rtimes S_n \rightarrow \GL(U) \times \GL(V) \times \GL(W)$. Let $H$ denote the image of $\phi \rtimes \psi$.

\begin{lemma}
$U,V,W$ are all semisimple $H$-modules.
\end{lemma}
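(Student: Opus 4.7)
The plan is to exhibit an explicit $H$-module decomposition of each of $U, V, W$ into irreducibles, following the blueprint of Lemma~\ref{lem-H-cr-cubic}. Because the definitions of $\phi_U, \phi_V, \phi_W$ and $\psi$ are symmetric in the three roles of $U, V, W$, it suffices to carry out the argument for $U$; the cases of $V$ and $W$ are obtained by permuting the labels $(p, q, r)$.

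First I would read off the torus weights on $U$. From the definition of $\phi_U$, the vectors $u_1^{(k)}$ and $u_2^{(k)}$ are both weight vectors of weight $t \mapsto p_k$, while $u_3^{(k)}$ has the distinct weight $t \mapsto (q_kr_k)^{-1}$; as $k$ ranges over $[n]$, these $2n$ characters are pairwise distinct. Setting $U_{\mathrm{top}} := \spa\{u_1^{(k)}, u_2^{(k)} : k \in [n]\}$ and $U_{\mathrm{bot}} := \spa\{u_3^{(k)} : k \in [n]\}$ therefore assembles full weight spaces, so both are torus-stable. Since $\psi(\sigma)$ sends $u_i^{(k)}$ to $u_i^{(\sigma(k))}$ and thus preserves the label $i \in \{1,2,3\}$, both subspaces are $S_n$-stable and hence $H$-stable, giving the splitting $U = U_{\mathrm{top}} \oplus U_{\mathrm{bot}}$.

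Next I would check that each summand is itself semisimple. For $U_{\mathrm{bot}}$, any $H$-submodule is first torus-stable, and since the $n$ torus weights on $U_{\mathrm{bot}}$ are distinct, such a submodule must be spanned by some $\{u_3^{(k)} : k \in I\}$; the $S_n$-action then forces $I \in \{\emptyset, [n]\}$, so $U_{\mathrm{bot}}$ is irreducible. For $U_{\mathrm{top}}$, the identification $u_i^{(k)} \longleftrightarrow e_i \otimes f_k$ realises $U_{\mathrm{top}} \cong K^2 \otimes K^n$ as $H$-modules, with $H$ acting trivially on $K^2 = \spa\{e_1,e_2\}$ and acting on $K^n = \spa\{f_1,\dots,f_n\}$ via $f_k \mapsto p_k f_k$ together with the permutation action of $S_n$. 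The same torus-plus-permutation argument shows this action on $K^n$ is irreducible, whence $U_{\mathrm{top}} \cong (e_1 \otimes K^n) \oplus (e_2 \otimes K^n)$ is a direct sum of two copies of a single irreducible $H$-module, establishing the semisimplicity of $U$.

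I do not anticipate any serious obstacle: once the weight bookkeeping under $\phi_U$ is in place, the irreducibility of each factor reduces to the elementary observation that the only $S_n$-stable coordinate subspaces of $K^n$ are $\{0\}$ and $K^n$, exactly as in the cubic-forms case. The arguments for $V$ and $W$ are verbatim the same after cyclically permuting the roles of $(p,q,r)$.
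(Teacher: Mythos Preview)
Your proof is correct and follows essentially the same approach as the paper: decompose $U$ into the three $n$-dimensional subspaces $X_i = \spa\{u_i^{(k)} : k \in [n]\}$ and show each is irreducible via the torus-weight-plus-$S_n$ argument from Lemma~\ref{lem-H-cr-cubic}. Your only difference is organizational---you first pass through the isotypic decomposition $U_{\mathrm{top}} \oplus U_{\mathrm{bot}}$ and then split $U_{\mathrm{top}} \cong K^2 \otimes K^n$ into two copies of the irreducible $K^n$, whereas the paper writes down $X_1 \oplus X_2 \oplus X_3$ directly---but the content is the same, and your extra step of identifying the isotypic pieces is exactly what the paper does immediately after the lemma anyway.
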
 

\begin{proof}
We will only prove this for $U$, the others are similar. For $i = 1,2,3$, let $X_i = \spa( u_i^{(k)} : k \in [n])$. Then, $X_i$ is an irreducible representation of $H$, which can be seen by an argument similar to the one in the proof of Lemma~\ref{lem-H-cr-cubic}. Clearly $X_1 \oplus X_2 \oplus X_3 = U$, so $U$ is semisimple.
\end{proof}

Moreover, observe that in the proof of the above lemma, $X_1 \cong X_2 \ncong X_3$. In particular, the isotypic decomposition of $U = P \oplus X_3$ where $P = X_1 \oplus X_2$. For $a \in K$, define the basis $\mathcal{B}_{U,a} = \{u_1^{(k)} + a u_2^{(k)}, u_2^{(k)}, u_3^{(k)} \ |\ k \in [n]\}$ of $U$. In studying flags of $H$-stable subspaces of $U$, the situation is similar to that of Lemma~\ref{lem:cubic-compatible-basis}. Hence, we can conclude that for any flag of $H$-stable subspaces of $U$, there is a compatible basis of the form $\mathcal{B}_{U,a}$ for some $a \in K$. Similar arguments hold for $V$ and $W$ where $\mathcal{B}_{V,b}$ and $\mathcal{B}_{W,c}$ for $b,c \in K$ are defined analogously. 
We define $\mathcal{B}_{a,b,c} = \mathcal{B}_{U,a} \otimes \mathcal{B}_{V,b} \otimes \mathcal{B}_{W,c} = \{z \otimes z' \otimes z''\ | z \in \mathcal{B}_{U,a}, z' \in \mathcal{B}_{V,b}, z'' \in \mathcal{B}_{W,c}\}$. 

From the above discussion, we conclude:
\begin{lemma}
Suppose $(\mathcal{F}_1,\mathcal{F}_2, \mathcal{F}_3)$ is a $3$-tuple of $H$-stable of flags of $U,V$ and $W$ respectively. Then, there exists a compatible basis of the form $\mathcal{B}_{a,b,c}$
\end{lemma}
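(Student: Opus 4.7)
The plan is to reduce the statement about triples of flags and the tensor basis $\mathcal{B}_{a,b,c}$ to three independent single-factor statements that have essentially been established in the preceding paragraph. Recall that any parabolic subgroup of $G=\SL(U)\times\SL(V)\times\SL(W)$ is a product $P_{\mathcal{F}_1}\times P_{\mathcal{F}_2}\times P_{\mathcal{F}_3}$, and its maximal tori are precisely the products $T_1\times T_2\times T_3$ where each $T_i$ is a maximal torus of the corresponding factor. Thus it suffices to produce, for each flag $\mathcal{F}_i$ separately, a basis of the required form whose associated torus sits inside $P_{\mathcal{F}_i}$; the product of these tori is then the torus associated to $\mathcal{B}_{a,b,c}$ via the tensor product structure.

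First I would handle $U$. The preceding lemma establishes that $U$ is a semisimple $H$-module with isotypic decomposition $U = P \oplus X_3$, where $P = X_1 \oplus X_2$ is the isotypic component with multiplicity $2$ and $X_3$ is the irreducible component of multiplicity $1$. Given an $H$-stable flag $\mathcal{F}_1$, Corollary~\ref{cor:flag-basis-combine-component} reduces the construction of a compatible basis to finding compatible bases of $\mathcal{F}_1|_P$ and $\mathcal{F}_1|_{X_3}$ separately. The flag $\mathcal{F}_1|_{X_3}$ is trivial by irreducibility, so $\{u_3^{(k)}\}$ works. For the isotypic component $P \cong X_1^{\oplus 2}$ (with $X_1 \cong X_2$ via the $H$-action), Lemma~\ref{lem:flag-basis-iso-component} combined with the explicit choice of $\mathcal{I}_2$ in Remark~\ref{rem:I2} produces some $a \in K$ for which $\{u_1^{(k)} + a u_2^{(k)}, u_2^{(k)}\}_k$ is a compatible basis for $\mathcal{F}_1|_P$. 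Concatenating, $\mathcal{B}_{U,a}$ is a compatible basis for $\mathcal{F}_1$.

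Next I would repeat the identical argument, verbatim, for $V$ to obtain $b \in K$ with $\mathcal{B}_{V,b}$ compatible with $\mathcal{F}_2$, and for $W$ to obtain $c \in K$ with $\mathcal{B}_{W,c}$ compatible with $\mathcal{F}_3$. The maximal torus of $G$ defined by the tensor basis $\mathcal{B}_{a,b,c} = \mathcal{B}_{U,a} \otimes \mathcal{B}_{V,b} \otimes \mathcal{B}_{W,c}$ is exactly $T_{\mathcal{B}_{U,a}} \times T_{\mathcal{B}_{V,b}} \times T_{\mathcal{B}_{W,c}}$, and by the single-factor compatibilities this is a maximal torus contained in $P_{\mathcal{F}_1} \times P_{\mathcal{F}_2} \times P_{\mathcal{F}_3}$, which is what compatibility of $\mathcal{B}_{a,b,c}$ with the triple $(\mathcal{F}_1,\mathcal{F}_2,\mathcal{F}_3)$ means.

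There is no real obstacle here, since the lemma is essentially a bookkeeping statement assembling the single-factor constructions into a tensor-product basis for $U\otimes V\otimes W$. The only point worth stressing explicitly in the write-up is the identification of maximal tori of a product parabolic with triples of maximal tori coming from compatible bases of the three factor spaces; once this is invoked, applying the preceding analysis three times finishes the proof.
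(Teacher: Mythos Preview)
Your proposal is correct and follows essentially the same approach as the paper: the paper states the lemma as an immediate consequence of the preceding discussion, which handles each factor $U$, $V$, $W$ separately via the isotypic decomposition and the analog of Lemma~\ref{lem:cubic-compatible-basis}, exactly as you do. Your write-up is in fact more explicit than the paper's, spelling out the product structure of parabolics and maximal tori that the paper leaves implicit.
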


Let $L_{U}^a$ be the linear transformation that sends $u_1^{(k)} \mapsto u_1^{(k)} + a u_2^{(k)}$ and leaves $u_2^{(k)}$ and $u_3^{(k)}$ invariant. Note that $(L_U^a)^{-1} = L_U^{-a}$. Similarly, define $L_V^b$ and $L_W^c$. 
For $a,b,c \in K$, define:
$$
L^{a,b,c} = L_{U}^a \otimes L_{V}^b \otimes L_{W}^c : U \otimes V \otimes W \rightarrow U \otimes V \otimes W.
$$

For $S = (S_1,\dots,S_r) \in (U \otimes V \otimes W)^{\oplus r}$, write each $S_i = \sum d(i)_{a,b,c}^{k_a,k_b,k_c}  u_a^{(k_a)} v_b^{(k_b)} w_c^{(k_c)}$. Define the support $\supp(S_i) := \{u_a^{(k_a)} v_b^{(k_b)} w_c^{(k_c)} \ | d(i)_{a,b,c}^{k_a,k_b,k_c} \neq 0\}$ and define the total support $\tsupp(S) = \cup_i \supp(S_i)$.

Now, consider $F = (F_1,F_2,F_3,F_4) \in (U \otimes V \otimes W)^{\oplus 4}$. 

\begin{lemma} \label{lem:tsupp}
Fix $a,b,c \in K$, let $L = L^{a,b,c}$ and let $F' = L(F)$. Then we have $\tsupp(F) = \tsupp(F')$.
\end{lemma}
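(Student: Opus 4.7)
The plan is to reduce to a single index $k$ and then track coefficients via a partial order on the twelve triples $(\alpha,\beta,\gamma) \in \{1,2,3\}^3$. Since $L_U^a$, $L_V^b$, $L_W^c$ each preserve the summand indexed by $k$, the action of $L$ splits as a direct sum over $k \in [n]$ and has the same form within each summand; so it suffices to analyse $L$ on the basis tensors $T_{\alpha\beta\gamma}^{(k)} := u_\alpha^{(k)} v_\beta^{(k)} w_\gamma^{(k)}$ for a fixed $k$. Define a partial order $\preceq$ on $\{1,2,3\}^3$ coordinatewise by declaring $x \preceq y$ iff $x = y$ or $(x,y) = (1,2)$. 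Directly from the definitions of $L_U^a, L_V^b, L_W^c$,
\[
L(T_{\alpha\beta\gamma}^{(k)}) = \sum_{(\alpha',\beta',\gamma') \succeq (\alpha,\beta,\gamma)} \epsilon_{(\alpha',\beta',\gamma')}\, T_{\alpha'\beta'\gamma'}^{(k)},
\]
where $\epsilon_{(\alpha,\beta,\gamma)} = 1$ and the remaining coefficients are nonzero monomials in $a, b, c$ (one factor of $a$, $b$, or $c$ for each coordinate that is bumped from $1$ to $2$).

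Next, I would verify two finite combinatorial facts about the twelve triples appearing in $F_1,\dots,F_4$:
\begin{itemize}
\item[(a)] The four sets $\supp(F_i)$ are pairwise disjoint, and their union $\tsupp(F)$ is upward closed under $\preceq$ (whenever $T' \in \tsupp(F)$ and $T' \preceq T$, we have $T \in \tsupp(F)$).
\item[(b)] Within each $\supp(F_i)$, no two distinct triples are $\preceq$-comparable.
\end{itemize}
Both (a) and (b) follow from inspecting the triples listed in the definition of $F$; concretely, each $\supp(F_i)$ is an antichain of size $3$ in $(\{1,2,3\}^3,\preceq)$, and every $1 \to 2$ expansion of a triple from one $\supp(F_i)$ lands in $\tsupp(F)$ (possibly in a different $\supp(F_j)$).

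Granting (a) and (b), both inclusions fall out. For $\tsupp(F') \subseteq \tsupp(F)$: every monomial appearing in any $L(F_i)$ arises from an upward $\preceq$-move off some triple in $\supp(F_i) \subseteq \tsupp(F)$, and $\tsupp(F)$ is closed under such moves by (a). Conversely, fix $T \in \tsupp(F)$; by disjointness in (a) there is a unique $i$ with $T \in \supp(F_i)$, and by (b) the only triple in $\supp(F_i)$ satisfying $T' \preceq T$ is $T$ itself. Therefore the coefficient of $T$ in $F'_i = L(F_i)$ equals $\epsilon_T = 1 \neq 0$, independently of the values of $a,b,c$, so $T \in \supp(F'_i) \subseteq \tsupp(F')$.

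The only real content is the case-check in (a) and (b). The specific definition of $F$ is engineered so that each $F_i$'s three triples occupy incomparable positions under $\preceq$ (preventing any cancellation of the constant term $1$), while the closure of $\tsupp(F)$ under upward $\preceq$-moves (spread across the four $F_i$'s) ensures that no new basis tensors are introduced by $L$. This antichain-within-upward-closed structure is precisely what makes $\tsupp$ an $L$-invariant of $F$, and the main labour is simply confirming the twelve-triple pattern satisfies these two properties.
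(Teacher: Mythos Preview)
Your proof is correct and follows essentially the same strategy as the paper's. The paper packages the same observations slightly differently: it groups the twelve monomials (per $k$) into three four-element blocks $\mathcal{C}_{1,k},\mathcal{C}_{2,k},\mathcal{C}_{3,k}$ (according to which coordinate equals $3$), notes that $L$ preserves the span of each block and acts unipotently there, and that each $F_i$ meets each block in exactly one monomial---which is precisely your upward-closure plus antichain conditions recast in block form.
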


\begin{proof}
One way to prove this lemma is by brute force computation, for example, with the use of a computer. However, we will give a proof by spotting key patterns. Let 
\begin{align*}
\mathcal{C}_{1,k} & = \{u_1^{(k)} v_2^{(k)} w_3^{(k)},u_2^{(k)} v_1^{(k)} w_3^{(k)}, u_1^{(k)} v_1^{(k)} w_3^{(k)}, u_2^{(k)} v_2^{(k)} w_3^{(k)}\} = \{u_i^{(k)} v_j^{(k)} w_3^{(k)} \ |\ i,j \in \{1,2\}\},\\
\mathcal{C}_{2,k} & = \{u_i^{(k)} v_3^{(k)} w_j^{(k)} \ |\ i,j \in \{1,2\}\}, \text{ and} \\
\mathcal{C}_{3,k} & = \{u_3^{(k)} v_i^{(k)} w_j^{(k)}\ |\ i,j \in \{1,2\}\}.
\end{align*}

Observe that each $F_i$ is a sum of monomials, exactly one from each $\mathcal{C}_{i,k}$. In particular, $\tsupp(F) = \cup_{i,k} \mathcal{C}_{i,k}$. Also, observe that $L$ keeps the span of each $\mathcal{C}_{i,k}$ invariant. Moreover, observe that for a monomial $m \in \mathcal{C}_{i,k}$, we have $L(m) = m + \sum_{n \in \mathcal{C}_{i,k} \setminus \{m\}} \lambda_n n$ for some scalars $\lambda_n \in K$. Now, suppose $m$ occurs in $F_j$, then as observed above, none of the monomials in $\mathcal{C}_{i,k} \setminus \{m\}$ occur in $F_j$. This means that $\supp(F_j) \subseteq \supp(L(F_j)) \subseteq \tsupp(F)$. Since this holds for arbitrary $j$, we have $\tsupp(L(F)) = \tsupp(F)$ as required.
\end{proof}

\begin{lemma}
The point $F$ is $T_{\mathcal{E}}$-polystable.
\end{lemma}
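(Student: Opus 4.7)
The plan is to apply Lemma~\ref{lem:polystable-wp}: $F$ is $T_\mathcal{E}$-polystable if and only if, for every coordinate in the support of $F$, there exists a $T_\mathcal{E}$-invariant monomial supported in $\supp(F)$ in which that coordinate appears with positive exponent. Rather than handling each coordinate separately, I would exhibit a single invariant monomial that involves every element of $\supp(F)$ simultaneously with exponent $1$; this at once certifies polystability.

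First, I would enumerate $\tsupp(F)$ explicitly. For each fixed $k \in [n]$, inspection of the four expressions shows that the monomials with that value of $k$ occurring across $F_1,F_2,F_3,F_4$ are precisely the $12$ tensors $u_a^{(k)} v_b^{(k)} w_c^{(k)}$ for which exactly one of $a,b,c$ equals $3$ and the other two lie in $\{1,2\}$. These twelve are pairwise distinct, and each lies in exactly one $F_i$, so $|\tsupp(F)| = 12n$ and each element of $\tsupp(F)$ corresponds to a unique coordinate of $(U \otimes V \otimes W)^{\oplus 4}$ in $\supp(F)$.

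The main step is then a direct counting: each basis vector $u_a^{(k)}$ appears as a factor in exactly four of the twelve monomials indexed by $k$ (one in each $F_i$). Since the construction of $F$ is cyclically symmetric under the rotation $u \mapsto v \mapsto w \mapsto u$ paired with the cyclic shift $1 \mapsto 2 \mapsto 3 \mapsto 1$ of the subscripts (each $F_i$ is cyclic in this sense, and the four $F_i$ are permuted among themselves by this rotation), the same count of $4$ holds for every $v_b^{(k)}$ and every $w_c^{(k)}$. Consequently, the $\SL(U)$-weight of the product $P = \prod_{m \in \tsupp(F)} m$ is
\[
4 \sum_{k=1}^{n} \bigl(e_{1,k} + e_{2,k} + e_{3,k}\bigr) \;=\; 4 \cdot (1,1,\ldots,1) \in \Z^{3n},
\]
which vanishes in the character lattice $\Z^{3n}/\Z(1,1,\ldots,1)$ of the standard diagonal torus of $\SL(U)$; analogously for $\SL(V)$ and $\SL(W)$.

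Therefore $P$, interpreted as a monomial in the coordinate functions dual to $\supp(F)$, is a $T_\mathcal{E}$-invariant monomial in which every variable in $\supp(F)$ has exponent exactly $1$, and Lemma~\ref{lem:polystable-wp} yields $T_\mathcal{E}$-polystability. There is no real obstacle: the only mild subtlety is that the $T_\mathcal{E}$-weight of the coordinate attached to a monomial $m$ in the $i$-th summand depends only on $m$ and not on $i$, which is precisely why the single product $P$ suffices across all four copies without any further bookkeeping.
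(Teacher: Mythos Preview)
Your approach—exhibiting a single invariant monomial supported on all of $\supp(F)$—is essentially the paper's approach of showing that the (equally weighted) sum of all weights is zero, just phrased through Lemma~\ref{lem:polystable-wp} rather than the weight-polytope formulation used in Lemma~\ref{lem:w-T-E-polystable}.

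One correction, however: your claimed cyclic symmetry fails for $F_3$ and $F_4$. Under $u \mapsto v \mapsto w \mapsto u$ paired with $1 \mapsto 2 \mapsto 3 \mapsto 1$, the term $u_1^{(k)} v_1^{(k)} w_3^{(k)}$ of $F_3$ is sent to $u_1^{(k)} v_2^{(k)} w_2^{(k)}$, which lies outside $\tsupp(F)$ altogether; so $F_3$ and $F_4$ are neither fixed nor permuted among the $F_i$. The count of $4$ is nonetheless correct, and the right justification is already implicit in your description of $\tsupp(F)$: for fixed $k$ it is the set of $u_a^{(k)} v_b^{(k)} w_c^{(k)}$ with exactly one of $a,b,c$ equal to $3$, a condition manifestly symmetric under permuting the three tensor factors, so each of the nine basis vectors appears $12\cdot 3/9 = 4$ times. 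With this fix your argument goes through.
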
 
 
\begin{proof}
The argument is similar to the one in the proof of Lemma~\ref{lem:w-T-E-polystable} since a convex combination of weights in the weight space decomposition is $0$, see e.g., the computation in the proof of \cite[Proposition~8.1]{DM-exp}. 
\end{proof} 
 
 \begin{lemma} \label{lem:F-ps-alltorus}
 The point $F$ is $T_{\mathcal{B}_{a,b,c}}$-polystable for all $a,b,c \in K$.
 \end{lemma}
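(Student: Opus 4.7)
The plan is to reduce the statement to $T_{\mathcal{E}}$-polystability of $F$ (which is the content of the previous lemma) via a base-change, and then observe that the weight polytope is unchanged under $L^{a,b,c}$ thanks to Lemma~\ref{lem:tsupp}.

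First, I would invoke Remark~\ref{Rem:torus-basechange-polystable}: the linear map $L^{a,b,c} = L_U^a \otimes L_V^b \otimes L_W^c$ sends the standard basis $\mathcal{E}$ to the basis $\mathcal{B}_{a,b,c}$, and hence conjugates $T_{\mathcal{E}}$ to $T_{\mathcal{B}_{a,b,c}}$. Consequently, $F$ is $T_{\mathcal{B}_{a,b,c}}$-polystable if and only if $(L^{a,b,c})^{-1}(F) = L^{-a,-b,-c}(F)$ is $T_{\mathcal{E}}$-polystable. So it suffices to prove the latter.

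Next, I recall the basic polystability criterion for torus actions from Section~\ref{sec:torus-actions}: a vector is polystable for a torus if and only if $0$ lies in the relative interior of its weight polytope. For the representation $(U \otimes V \otimes W)^{\oplus 4}$ acted on diagonally by $T_{\mathcal{E}}$, the weight polytope of a $4$-tuple $G = (G_1, G_2, G_3, G_4)$ is precisely the convex hull of the weights of the monomials occurring in the total support $\tsupp(G)$. Applying Lemma~\ref{lem:tsupp} to the map $L = L^{-a,-b,-c}$, we obtain
\[
\tsupp\bigl(L^{-a,-b,-c}(F)\bigr) = \tsupp(F),
\]
so $F$ and $L^{-a,-b,-c}(F)$ have identical weight polytopes with respect to $T_{\mathcal{E}}$.

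Finally, the preceding lemma already establishes that $F$ is $T_{\mathcal{E}}$-polystable, i.e.\ $0$ lies in the relative interior of this common weight polytope. Therefore $L^{-a,-b,-c}(F)$ is also $T_{\mathcal{E}}$-polystable, and unwinding the base-change yields $T_{\mathcal{B}_{a,b,c}}$-polystability of $F$. There is no real obstacle here, since the entire computational burden has been absorbed into Lemma~\ref{lem:tsupp}; the proof is essentially a two-line combination of that lemma with the weight polytope criterion.
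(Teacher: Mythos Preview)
Your proof is correct and follows essentially the same approach as the paper: reduce to $T_{\mathcal{E}}$-polystability of $L^{-a,-b,-c}(F)$ via the base-change in Remark~\ref{Rem:torus-basechange-polystable}, then use Lemma~\ref{lem:tsupp} to conclude that the weight polytope is unchanged, so the $T_{\mathcal{E}}$-polystability of $F$ from the previous lemma transfers. The paper's argument is the same two-step combination, phrased slightly more tersely.
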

 
 \begin{proof}
 To check that $F$ is $T_{\mathcal{B}_{a,b,c}}$-polystable, it suffices to check that $L(F)$ is $T_\mathcal{E}$-polystable, where $L = L^{-a,-b,-c}$. Lemma~\ref{lem:tsupp} shows that both $F$ and $L(F)$ have the same weight sets and hence the same weight polytopes, so $L(F)$ is $T_{\mathcal{E}}$-polystable since $F$ is by the above lemma.
 \end{proof}
 
 \begin{proof} [Proof of Proposition~\ref{tensor-polystable}]
 This is very similar to the proof of Proposition~\ref{cubic-polystable}. Using similar arguments, we see that the collection $\mathcal{T} = \{T_{\mathcal{B}_{a,b,c}} : a,b,c \in K\}$ satisfies the hypothesis of Theorem~\ref{thm:main3} and so by Lemma~\ref{lem:F-ps-alltorus}, we conclude that $F$ is $G$-polystable. We leave the details to the reader.
 \end{proof}

\section{Degree lower bounds via Grosshans principle} \label{sec:Grosshans}
In this section, we discuss our method to prove lower bounds, in particular we give a proof of Theorem~\ref{thm:liftingbounds}. Then, using Theorem~\ref{thm:liftingbounds} along with the results on polystability from the previous section, we give a proof of Theorems~\ref{thm:cubicbounds} and ~\ref{thm:tensorbounds}.

 The following lemma is crucial for our purposes, see 
\cite[Lemma~3.3]{BHM}.

\begin{lemma} \label{lem:BHM}
Let $V$ be a rational representation of a reductive group $G$ and let $v \in V$ and let $H = G_v := \{g \in G \ | \ gv = v \}$.
\begin{itemize}
\item The natural map $G/H \rightarrow G \cdot v$ is a homeomorphism, and an isomorphism of varieties if and only if the orbit map $G \rightarrow O_v$ is separable.
\item $O_v$ is affine if and only if $G/H$ is affine if and only if $H$ is reductive.
\end{itemize}
\end{lemma}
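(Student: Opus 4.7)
The plan is to assemble the lemma from three classical ingredients: Chevalley's construction of the quotient $G/H$ as a quasi-projective variety, the analysis of orbit maps and separability in positive characteristic, and Matsushima's theorem characterizing when $G/H$ is affine.

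For the first item, I would begin by noting that $H=G_v$ is closed as the fiber of $v$ under the orbit map $\pi\colon G\to V,\ g\mapsto gv$, so by Chevalley's theorem $G/H$ exists as a quasi-projective variety and $\pi$ factors through a $G$-equivariant bijective morphism $\bar\pi\colon G/H\to O_v$. Both sides are $G$-homogeneous, hence smooth and irreducible of the same dimension $\dim G-\dim H$, and $\bar\pi$ is quasi-finite; by Zariski's Main Theorem a separated quasi-finite morphism that is bijective onto an integral target is finite, so $\bar\pi$ is a finite bijective morphism and, in particular, a homeomorphism. To decide when $\bar\pi$ is an isomorphism, $G$-equivariance reduces the question to the behavior of the differential at the distinguished point $\bar 1\in G/H$; by equality of dimensions, this differential is an isomorphism iff it is surjective, which is exactly the condition that the orbit map $\pi\colon G\to O_v$ be separable.

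For the second item, I would invoke Matsushima's theorem (valid because $G$ is reductive), which gives that $G/H$ is affine iff $H$ is reductive. Since $\bar\pi$ is a finite morphism of varieties, it is an affine morphism, and affineness of the base is equivalent to affineness of the source; thus $G/H$ affine iff $O_v$ affine, which closes the loop.

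The main obstacle is the subtlety inherent to positive characteristic: $\bar\pi$ may fail to be an isomorphism even though it is a homeomorphism, because it can be a purely inseparable universal homeomorphism. This is precisely what the separability clause in the statement captures, and it is the reason Matsushima's theorem must be phrased via $G/H$ rather than $O_v$ itself. The only genuinely deep input is Matsushima's theorem, which would be used as a black box.
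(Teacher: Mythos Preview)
The paper does not prove this lemma at all; it simply quotes it as \cite[Lemma~3.3]{BHM} and moves on. So there is no ``paper's own proof'' to compare against, and your sketch is essentially the standard argument one finds in the references (Borel, Springer, Richardson, Haboush for Matsushima in positive characteristic).

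One point deserves a bit more care. Your invocation of Zariski's Main Theorem---``a separated quasi-finite morphism that is bijective onto an integral target is finite''---is not the usual formulation, and bijectivity alone does not give finiteness from Grothendieck's ZMT without further argument. The clean way to get the homeomorphism is to exploit homogeneity directly: $\bar\pi$ is $G$-equivariant, bijective, and its image of any open set is constructible by Chevalley; since $G$-translates of that constructible image cover the irreducible variety $O_v$, it must contain a nonempty open set, and then equivariance forces $\bar\pi$ to be open. Alternatively one checks finiteness locally using that both sides are smooth of the same dimension and the map has singleton fibers. Either route then feeds into your second item: once $\bar\pi$ is finite surjective, Chevalley's affineness criterion (a Noetherian separated scheme admitting a finite surjection from an affine scheme is affine) gives $O_v$ affine $\Leftrightarrow$ $G/H$ affine, and Matsushima--Richardson--Haboush gives $G/H$ affine $\Leftrightarrow$ $H$ reductive. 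Your overall architecture is correct.
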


Moreover, observe that when $G/H$ is affine, it is clearly a categorical quotient and hence its coordinate ring is  equal to $K[G]^H$.

Let $V,W$ be rational representations of a reductive group $G$. Let $v \in V$ such that $O_v$ is closed. Let $H = G_v$. Since the orbit of $v$ is closed,  $H$ is a closed reductive subgroup.
Consider the following three morphisms of affine varieties. The first map is 
\begin{align*}
\iota: W &\hookrightarrow G/H \times W\\
w & \mapsto (eH,w)
\end{align*}

The second map is 
\begin{align*}
\pi: G/H \times W & \rightarrow O_v \times W \\
(gH,w) &\mapsto (gv,w)
\end{align*}

The last map is just the closed embedding
$$
j: O_v \times W \hookrightarrow V \times W
$$

Composing the three maps, we get $\phi := j \circ \pi \circ \iota : W \rightarrow V \times W$ given by $w \mapsto (v,w)$. For any morphism $\psi$ between affine varieties, we denote by $\psi^*$ the corresponding map on coordinate rings in the other direction.

\begin{lemma} \label{lem:degreenonincreasing}
The map $\phi^*$ is degree non-increasing, i.e., if $f \in K[V \times W]$, then $\deg(f) \geq \deg(\phi^*(f))$.
\end{lemma}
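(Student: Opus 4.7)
The plan is to unwind what $\phi^{*}$ actually does and then observe that evaluating at a fixed point on the $V$-side can only decrease polynomial degree. Since $\phi$ is the composition $j \circ \pi \circ \iota$ and each of these maps has a transparent description on coordinate rings, the cleanest route is to handle the composite directly rather than step-by-step, because $K[G/H]$ carries no canonical grading and getting degree bounds through $\pi^{*}$ would force us to pull a grading back along $\pi^{*}$. By contrast, $\phi$ itself is just the closed immersion $w \mapsto (v,w)$, and the induced map on rings is transparent.

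First I would identify $K[V \times W] = K[V] \otimes_K K[W]$ as bigraded $K$-algebras and check, from the definition, that $\phi^{*}(a \otimes b) = a(v)\cdot b$ for $a \in K[V]$ and $b \in K[W]$; that is, $\phi^{*}$ is evaluation at $v$ in the first tensor factor, tensored with the identity on the second. Next, given $f \in K[V \times W]$ of total degree $d$, I would decompose $f$ into bihomogeneous components $f = \sum_{i+j\leq d} f_{i,j}$ with $f_{i,j} \in K[V]_i \otimes K[W]_j$. Writing each piece as $f_{i,j} = \sum_k a_{i,j,k} \otimes b_{i,j,k}$ with $\deg a_{i,j,k} = i$ and $\deg b_{i,j,k} = j$, we get
\[
\phi^{*}(f) \;=\; \sum_{i+j\leq d} \sum_k a_{i,j,k}(v)\cdot b_{i,j,k} \;\in\; \bigoplus_{j\leq d} K[W]_j,
\]
whose degree is at most $d$. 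This is the whole argument.

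I do not foresee a real obstacle: the factorization through $G/H \times W$ and $O_v \times W$ is invoked later in the paper for the \emph{surjectivity} half of Grosshans' principle (where it is crucial that $\pi^{*}$ identifies $K[O_v \times W]$ with $(K[G]^H \otimes K[W])^G$ under appropriate equivariance), but it is not needed here. The only mild subtlety is that although the map $\phi = j \circ \pi \circ \iota$ is defined as a composition and so on the level of rings one might hope to control $j^{*}$, $\pi^{*}$, and $\iota^{*}$ separately, the intermediate coordinate rings have no preferred grading; evaluating the composite on bihomogeneous pieces of $f$ sidesteps this issue completely.
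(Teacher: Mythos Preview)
Your argument is correct and is exactly the ``straightforward'' computation the paper has in mind: the map $\phi^{*}$ is evaluation at $v$ in the $K[V]$-factor tensored with the identity on $K[W]$, so it can only lower total degree. The paper's own proof consists of the single sentence ``This is straightforward,'' so you have simply written out the details that were left implicit.
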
 

\begin{proof}
This is straightforward.
\end{proof}

\begin{proposition}
The map $\phi^*$ restricts to a map on invariant rings $K[V \times W]^G \rightarrow K[W]^H$. 
\end{proposition}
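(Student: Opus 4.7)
The plan is to observe that $\phi$ is $H$-equivariant with respect to the diagonal $H$-action on $V \times W$ obtained by restricting the $G$-action, and then combine this with the obvious inclusion $K[V \times W]^G \subseteq K[V \times W]^H$.

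First I would verify the equivariance directly. Equip $V \times W$ with the diagonal $H$-action coming from the restriction of the $G$-action to the subgroup $H = G_v$, and equip $W$ with its $H$-action inherited from $G$. For any $h \in H$ and $w \in W$, the defining property $h \cdot v = v$ gives
$$
\phi(h \cdot w) = (v, h \cdot w) = (h \cdot v, h \cdot w) = h \cdot \phi(w),
$$
so $\phi$ is $H$-equivariant. This is essentially a one-line check; alternatively, one could observe that each of the three maps $\iota$, $\pi$, $j$ is $H$-equivariant for appropriate $H$-actions on the intermediate spaces (with $H$ acting on $G/H$ by left translation), but the direct route is cleaner.

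Second, the $H$-equivariance of $\phi$ immediately implies that $\phi^*$ sends $H$-invariants to $H$-invariants: for any $f \in K[V \times W]^H$, $h \in H$, and $w \in W$, we compute
$$
\phi^*(f)(h \cdot w) = f(\phi(h \cdot w)) = f(h \cdot \phi(w)) = f(\phi(w)) = \phi^*(f)(w).
$$
Finally, since $H$ is a subgroup of $G$, any $G$-invariant polynomial on $V \times W$ is in particular $H$-invariant, so $K[V \times W]^G \subseteq K[V \times W]^H$, and composing with the previous step yields the desired restriction $\phi^*: K[V \times W]^G \to K[W]^H$.

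There is no serious obstacle here; this is a bookkeeping step that sets up the pullback needed for Theorem~\ref{thm:liftingbounds}. The substantive content lies in combining this proposition with Lemma~\ref{lem:degreenonincreasing} and with Grosshans' principle (Theorem~\ref{gross.princ}) to produce a degree-bounded comparison between $G$-invariants on $V \oplus W$ and $H$-invariants on $W$; the surjectivity (or density of the image) that ultimately yields the bound for $\beta_{\mathrm{sep}}$ is presumably handled in subsequent lemmas, not in the current statement.
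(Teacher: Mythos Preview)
Your proof is correct and essentially the same as the paper's: both use that $h\cdot(v,w)=(hv,hw)=(v,hw)=\phi(hw)$ since $H=G_v$, so $\phi$ sends $H$-orbits into $G$-orbits and hence $G$-invariants pull back to $H$-invariants. The paper phrases this directly as ``$\phi$ maps $H$-orbits into $G$-orbits'' rather than factoring through $H$-equivariance and the inclusion $K[V\times W]^G\subseteq K[V\times W]^H$, but the underlying computation is identical.
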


\begin{proof}
For $h \in H$ and $w \in W$, we see that $\phi(hw) = (v,hw)$ and $\phi(w) = (v,w)$ are in the same $G$-orbit because $h \cdot (v,w) = (hv,hw) = (v,hw)$, which follows because $H = G_v$. Thus $\phi$ maps $H$-orbits into $G$-orbits. Hence, any $G$-invariant function pulls back under $\phi^*$ to a $H$-invariant function.

%Let $f \in K[V \times W]^G$, i.e., $f$ is a $G$-invariant function on $V \times W$. Then $f' = (\pi^*\circ j^*)(f)$ because it is the pull-back along $G$-equivariant morphisms. Now, we need to prove that $\phi^*(f) = \iota^*(f')$ is $H$-invariant, but this follows from Grosshans principle as follows. First, observe that $K[G/H \times W] = K[G/H] \otimes K[W] = K[G]^H \otimes K[W]$, where the last equality follows since $G/H$ is affine, see Lemma~\ref{lem:BHM}. Now, the Grosshans principle tells us that $\iota^*: K[G]^H \otimes K[W] \rightarrow K[W]$ restricts to a map on invariant rings $\iota^*: (K[G]^H \otimes K[W])^G \rightarrow K[W]^H$. Thus, since $f' \in K[G/H \times W]^G = (K[G]^H \otimes K[W])^G$, we conclude that $\iota^*(f') = \phi^*(f) \in K[W]^H$, i.e., $\phi^*(f)$ is $H$-equivariant.
\end{proof}

\begin{proposition} \label{prop:septosep}
Let $\{f_i : i \in I\}$ be a separating subset of invariants for the action of $G$ on $V \times W$. Then $\{\phi^*(f_i) : i \in I\}$ is a separating subset of invariants for the action of $H$ on $W$.
\end{proposition}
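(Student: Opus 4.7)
My plan is to translate the separation property into orbit closure intersections via Mumford's theorem (Theorem~\ref{thm:Mumford}), applied in both directions, and to use Grosshans principle (Theorem~\ref{gross.princ}) combined with a Reynolds-operator lift to construct suitable $G$-invariants on $V \times W$. Specifically, I would argue via the contrapositive: suppose $w_1, w_2 \in W$ satisfy $\phi^*(f_i)(w_1) = \phi^*(f_i)(w_2)$, i.e., $f_i(v, w_1) = f_i(v, w_2)$, for every $i \in I$. Since $\{f_i\}$ is separating for $G$ on $V \times W$, Mumford's theorem yields $\overline{O_{G, (v, w_1)}} \cap \overline{O_{G, (v, w_2)}} \neq \emptyset$, and the goal reduces to showing $\overline{O_{H, w_1}} \cap \overline{O_{H, w_2}} \neq \emptyset$.

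For the latter, it suffices (again by Mumford, now for $H$ on $W$) to show $h(w_1) = h(w_2)$ for every $h \in K[W]^H$. The plan is, given any such $h$, to produce a $G$-invariant $F \in K[V \times W]^G$ with $\phi^*(F) = h$, that is, $F(v, w) = h(w)$; the $G$-invariance of $F$ then forces $F(v, w_1) = F(v, w_2)$, because $F$ is constant on each of the intersecting $G$-orbit closures. The construction of $F$ proceeds in three stages. Grosshans principle provides $\widetilde F \in (K[G]^H \otimes K[W])^G$, viewed as a $G$-invariant regular function on $G/H \times W$, with $\widetilde F(eH, w) = h(w)$. The $G$-equivariant morphism $G/H \times W \to O_v \times W$, $(gH, w) \mapsto (gv, w)$, is a bijection (and an isomorphism in characteristic zero), so $\widetilde F$ descends to a $G$-invariant regular function $\bar F$ on $O_v \times W$ with $\bar F(v, w) = h(w)$. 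Finally, since $O_v \times W \hookrightarrow V \times W$ is a $G$-equivariant closed embedding and $G$ is reductive, the Reynolds operator makes the restriction $K[V \times W]^G \twoheadrightarrow K[O_v \times W]^G$ surjective, so $\bar F$ lifts to the required $F$.

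The main obstacle will be the descent step in positive characteristic, where the orbit map $G \to O_v$ may be inseparable and $G/H \to O_v$ is only a bijective universal homeomorphism rather than a scheme-theoretic isomorphism; then $\widetilde F$ need not directly correspond to a regular function on $O_v \times W$. To handle this, I would replace $\widetilde F$ by a sufficiently high Frobenius power $\widetilde F^{p^k}$: the inclusion $K[O_v] \hookrightarrow K[G]^H$ is a finite purely inseparable extension, so for $k$ large enough $\widetilde F^{p^k}$ does descend to $K[O_v \times W]^G$, and the lift $F \in K[V \times W]^G$ satisfies $F(v, w) = h(w)^{p^k}$. The orbit-closure argument then yields $h(w_1)^{p^k} = h(w_2)^{p^k}$, and injectivity of the Frobenius on the algebraically closed field $K$ recovers $h(w_1) = h(w_2)$, so the proof carries through uniformly in all characteristics.
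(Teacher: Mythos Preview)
Your argument shares the essential ingredients with the paper's (Mumford's theorem, Grosshans' principle, and the homeomorphism $G/H \to O_v$ from Lemma~\ref{lem:BHM}), but the detour through lifting $h$ all the way back to an $F \in K[V \times W]^G$ introduces a genuine gap in positive characteristic. You invoke the Reynolds operator to obtain surjectivity of $K[V \times W]^G \to K[O_v \times W]^G$; however, in positive characteristic a reductive group is typically not linearly reductive, so no Reynolds operator is available and this restriction map on invariants need not be surjective. You carefully anticipate and treat the inseparability of $\pi: G/H \to O_v$ with Frobenius powers, but you do not address the parallel failure one step later, and it is not clear that the same Frobenius trick repairs it.

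In fact both the descent to $O_v \times W$ and the subsequent lift to $V \times W$ are unnecessary. Once Grosshans hands you $\widetilde F \in K[G/H \times W]^G$ with $\widetilde F(eH, w) = h(w)$, you can finish immediately: the $G$-orbit closures of $(v, w_1)$ and $(v, w_2)$ lie inside the closed $G$-stable set $O_v \times W$ and therefore intersect there; the $G$-equivariant homeomorphism $\pi \times {\rm id}_W$ transports this intersection to $G/H \times W$; hence $\widetilde F(eH, w_1) = \widetilde F(eH, w_2)$, i.e.\ $h(w_1) = h(w_2)$, with no Frobenius and no Reynolds needed. This is precisely the paper's proof, run forward along the factorization $\phi^* = \iota^* \circ \pi^* \circ j^*$: restriction $j^*$ to a closed $G$-stable subset preserves the separating property, pullback along the $G$-equivariant homeomorphism $\pi$ preserves it because orbit closures correspond, and the Grosshans isomorphism $\iota^*$ identifies the categorical quotients $(G/H \times W) \sslash G \cong W \sslash H$, carrying a separating set on one side to a separating set on the other.
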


\begin{proof}
Observe that $\phi^* = \iota^* \circ \pi^* \circ j^*$. First, $j$ is just the restriction to a closed $G$-stable subset and $\pi$ is a homeomorphism, so $\{\pi^* \circ j^* (f_i) : i \in I\}$ is a separating subset for the action of $G$ on $G/H \times W$. But now, Grosshans principle gives us an isomorphism $\iota^*: K[G/H \times W]^G \xrightarrow{\sim} K[W]^H$, which means that the categorical quotients $(G/H \times W) \!\!\gitsymbol\! G \cong W \!\!\gitsymbol\! H$, so a separating subset for the action of $G$ on $G/H \times W$ gives a separating subset for the action of $H$ on $W$ via the map $\iota^*$.
\end{proof}

\begin{proof} [Proof of Theorem~\ref{thm:liftingbounds}]
From Lemma~\ref{lem:degreenonincreasing} and Proposition~\ref{prop:septosep}, we get $\beta_{\rm sep}(G, V \times W) \geq \beta_{\rm sep}(H,W)$. The other two inequalities are straightforward, see Equation~\ref{eq:deg.bds.ineq}.
\end{proof}
	
\subsection{Null cone bounds for non-connected reductive groups}
In order to prove degree lower bounds for one action, our strategy is essentially to reduce it to bounds for invariants defining the null cone of a related action. However, for this strategy to work, we need to start somewhere, i.e., be able to prove exponential lower bounds for invariants defining the null cone for some action. As mentioned in the introduction, it is relatively easier to prove null cone bounds for torus actions. Hence, we want to look for a point with a closed orbit whose stabilizer is a torus. On the other hand, having a significant finite group in the stabilizer can greatly simplify and ease the computations needed to prove that the point in question has a closed orbit. Thus, we find points with closed orbits whose stabilizers are not a torus, but the extension of a torus by a finite group. However, this brings a new problem, i.e., we now need to understand null cone bounds for groups that are a little more general than tori. In this subsection, we will show that the finite group part does not affect the null cone bound adversely.

\begin{proposition} \label{prop:fingp.nullcone}
Let $V$ be a rational representation of a reductive group $G$. Let $G^\circ$ denote the identity component of $G$. Then $\sigma(G, V) \geq \sigma(G^\circ,V)$.
\end{proposition}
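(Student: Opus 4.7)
The plan is to show two things: first, that the null cones for $G$ and $G^\circ$ coincide as subsets of $V$, and second, that this equality combined with the inclusion $K[V]^G \subseteq K[V]^{G^\circ}$ forces the stated inequality.

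For the first step, I would use the fact that $G^\circ$ is a normal subgroup of finite index in $G$. Picking coset representatives $g_1,\dots,g_k$ for $G/G^\circ$, every $G$-orbit decomposes as a finite union $G \cdot v = \bigcup_{i=1}^k g_i (G^\circ \cdot v)$. Taking closures, $\overline{G \cdot v} = \bigcup_{i=1}^k g_i \overline{G^\circ \cdot v}$. Since $g_i \cdot 0 = 0$ for all $i$, one sees that $0 \in \overline{G \cdot v}$ if and only if $0 \in \overline{G^\circ \cdot v}$. Hence the set of $G$-unstable points equals the set of $G^\circ$-unstable points, i.e., $\nullcone_G = \nullcone_{G^\circ}$.

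For the second step, set $D = \sigma(G,V)$. By definition, the non-constant homogeneous $G$-invariants of degree at most $D$ cut out $\nullcone_G$. Every $G$-invariant is also $G^\circ$-invariant, so these same functions lie in $\bigoplus_{d=1}^D K[V]_d^{G^\circ}$. Because they already define $\nullcone_G = \nullcone_{G^\circ}$ set-theoretically, the (possibly larger) space $\bigoplus_{d=1}^D K[V]_d^{G^\circ}$ also has zero locus equal to $\nullcone_{G^\circ}$. This exhibits $D$ as an admissible value in the minimum defining $\sigma(G^\circ, V)$, so $\sigma(G^\circ, V) \leq D = \sigma(G,V)$, which is the desired inequality.

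There is no real obstacle here; the only subtlety is the first step, where one must remember that $G^\circ$ being reductive is automatic (since $G$ is reductive, the unipotent radical of $G^\circ$ equals that of $G$, which is trivial), so that $\sigma(G^\circ,V)$ is a well-defined finite quantity via Hilbert's finiteness theorem applied to $G^\circ$. Everything else is a direct unwinding of definitions.
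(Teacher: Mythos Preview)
Your proof is correct. The second step, passing from equality of null cones to the degree inequality via the inclusion $K[V]^G \subseteq K[V]^{G^\circ}$, is exactly what the paper does. The difference lies in how you establish $\nullcone_G = \nullcone_{G^\circ}$: you argue directly from the finite-index decomposition $\overline{G\cdot v} = \bigcup_i g_i\,\overline{G^\circ\cdot v}$ together with $g_i\cdot 0 = 0$, whereas the paper invokes the Hilbert--Mumford criterion in the form ``the null cone is the union of the null cones of all maximal tori'' and observes that $G$ and $G^\circ$ share the same maximal tori. Your route is more elementary and self-contained (no appeal to Hilbert--Mumford needed), while the paper's route emphasizes the torus picture that is thematic elsewhere in the article. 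Either way the argument goes through without issue.
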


\begin{proof}
The Hilbert--Mumford criterion,  (see e.g., \cite{Mumford} or \cite[Theorem~2.5.3]{DK-book}) can be formulated in the following way -- the null cone for the action of a reductive group is the union of null cones for all of its maximal tori. Since maximal tori for $G$ are precisely the maximal tori for $G^\circ$, we conclude that the null cone for the action of $G$ and $G^\circ$ on $V$ are the same. Let $\mathcal{N} := \mathcal{N}(G,V) = \mathcal{N}(G^\circ, V)$. By definition of $\sigma(G,V)$, there exist $f_1,\dots,f_r \in K[V]^G$ such that $\mathbb{V}(f_1,\dots,f_r) = \mathcal{N}$ with $\max \{\deg(f_i)\} = \sigma(G,V)$. This means that $f_1,\dots,f_r$ are $G^\circ$-invariant functions that cut out the null cone $\mathcal{N}(G^\circ,V)$. Thus, $\sigma(G^\circ,V) \leq \max \{\deg(f_i)\} = \sigma(G,V)$. 
\end{proof}

\subsection{Cubic forms}
Assume $\kar(K) \neq 2$ for this subsection. Let $V$ be a $3n$-dimensional vector space with preferred basis $\mathcal{E} = \{x_i,y_i,z_i\}_{1 \leq i \leq n}$. Consider the action of $G = \SL(V)$ on $\Sym^3(V)$, and the diagonal action of $\SL(V)$ on $W = \Sym^3(V)^{\oplus 2}$.
Consider $w = (\sum_i x_i^2 z_i, \sum_i y_i^2z_i) \in W^{\oplus 2}$. Then, by Proposition~\ref{cubic-polystable}, we know that $O_w$ is closed. So, in order to use Theorem~\ref{thm:liftingbounds}, we need to compute $G_w$. Consider the action of the $n$-dimensional torus on $V$ as follows. For $t = (t_1,\dots,t_n) \in (\C^*)^n$, we have $t x_i = t_i x_i$, $ty_i = t_i y_i$, and $tz_i = t_i^{-2}z_i$. This action gives a map $\psi: (\C^*)^n \rightarrow \SL(V)$. Let $L := \psi((\C^*)^n)$.

\begin{lemma}
Let $g \in G_w$. Then $gx_i = c_i x_\sigma(i), gy_i = \pm c_i y_{\sigma(i)}$ and $gz_i = c_i^{-2} z_{\sigma(i)}$ for some scalars $c_i \in K$ and $\sigma \in S_n$.
\end{lemma}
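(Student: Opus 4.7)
The plan is to use the invariances $gF_1 = F_1$ and $gF_2 = F_2$ (where $F_1 := \sum_i x_i^2 z_i$ and $F_2 := \sum_i y_i^2 z_i$) in two stages: first to pin down a coarse block structure for $g$ with respect to the decomposition $V = X \oplus Y \oplus Z$ with $X := \spa\{x_i\}$, $Y := \spa\{y_i\}$, $Z := \spa\{z_i\}$; then to refine this to the stated monomial form.

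For the coarse structure, I would consider the $\SL(V)$-equivariant partial derivative map $\partial F_1 \colon V^* \to \Sym^2(V)$, $v^* \mapsto \partial F_1/\partial v^*$. On the dual basis $\{x_i^*, y_i^*, z_i^*\}$ it sends $x_i^* \mapsto 2 x_i z_i$, $z_i^* \mapsto x_i^2$, $y_i^* \mapsto 0$, with nonzero images linearly independent in $\Sym^2(V)$, so $\ker(\partial F_1) = \spa\{y_i^*\}$; its annihilator in $V$ is $X + Z$. Since $gF_1 = F_1$ forces $g$ to preserve this kernel in $V^*$ (under the contragredient action), it preserves $X + Z$ in $V$; symmetrically $g(Y + Z) = Y + Z$, and intersecting gives $gZ = Z$. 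Consequently $g$ admits a block form with respect to $V = X \oplus Y \oplus Z$ whose nonzero blocks are the three diagonal ones $A \colon X \to X$, $B \colon Y \to Y$, $C \colon Z \to Z$ (all invertible), together with two off-diagonal blocks $P \colon X \to Z$ and $Q \colon Y \to Z$; thus $g x_i = A x_i + P x_i$, $g y_i = By_i + Qy_i$, and $g z_i = C z_i$.

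For the refinement, expand $gF_1 = \sum_i (A x_i + P x_i)^2 (C z_i)$; its three summand types live respectively in $\Sym^2(X) \otimes Z$, $X \otimes \Sym^2(Z)$, and $\Sym^3(Z)$. Since $F_1 \in \Sym^2(X) \otimes Z$, this yields $\sum_i (Ax_i)^2 (Cz_i) = \sum_j x_j^2 z_j$ together with the vanishings $\sum_i (Ax_i)(Px_i)(Cz_i) = 0$ and $\sum_i (Px_i)^2 (Cz_i) = 0$. Writing $A x_i = \sum_j a_{ji} x_j$, $C z_i = \sum_k \gamma_{k i} z_k$, the first equation gives the matrix identities $A D_k A^T = E_k$ for each $k$, where $D_k$ is the diagonal matrix with entries $\gamma_{k,1}, \dots, \gamma_{k,n}$ and $E_k := e_k e_k^T$ is rank one. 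As $A$ is invertible, each $D_k$ has rank one, so exactly one entry $\gamma_{k, \iota(k)}$ is nonzero; a quick case analysis (a collision $\iota(k_1) = \iota(k_2)$ forces a column of $A$ to vanish) shows $\iota$ is a permutation. Setting $\sigma := \iota^{-1}$ and unpacking the rank-one identification forces $A x_i = c_i x_{\sigma(i)}$ and $C z_i = c_i^{-2} z_{\sigma(i)}$ for some nonzero scalars $c_i$.

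Plugging this into $\sum_i (Ax_i)(Px_i)(Cz_i) = 0$ and writing $P x_i = \sum_j p_{ji} z_j$ gives $\sum_i c_i^{-1} x_{\sigma(i)} \otimes \bigl(\sum_j p_{ji} z_j z_{\sigma(i)}\bigr) = 0$ in $X \otimes \Sym^2(Z)$; since $\{z_j z_{\sigma(i)} : j = 1, \dots, n\}$ are linearly independent in $\Sym^2(Z)$ for each fixed $i$, every $p_{ji}$ vanishes, so $P = 0$ and $gX = X$. Repeating the entire argument for $gF_2 = F_2$ yields $Q = 0$ (so $gY = Y$) together with $By_i = c'_i y_{\tilde \sigma(i)}$ and $Cz_i = (c'_i)^{-2} z_{\tilde \sigma(i)}$; comparison with the formula for $C$ already obtained forces $\tilde \sigma = \sigma$ and $(c'_i)^2 = c_i^2$, so $c'_i = \pm c_i$ and $gy_i = \pm c_i y_{\sigma(i)}$ as claimed. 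I expect the main technical obstacle to be the rank-one analysis in the third paragraph, which converts the identities $A D_k A^T = E_k$ into the generalized-permutation structure of $A$ and $C$; the remaining work is essentially bookkeeping or symmetry.
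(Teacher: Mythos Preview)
Your argument is correct and self-contained. The paper itself does not give an independent proof here: it simply cites \cite[Corollary~7.11]{DM-exp} (which in turn rests on \cite[Lemma~7.8]{DM-exp}), noting only that the characteristic $\neq 2$ hypothesis is needed. Your proof supplies exactly such an argument from scratch: you use the $\GL(V)$-equivariant contraction $V^* \to \Sym^2(V)$ associated to each $F_j$ to pin down the invariant subspaces $X+Z$, $Y+Z$, and hence $Z$; then the multigraded decomposition of $gF_1 \in \Sym^3(V)$ yields the matrix identities $A D_k A^T = E_k$, whose rank-one analysis forces $A$ and $C$ to be generalized permutation matrices with the stated scalar relations; the vanishing of the mixed term then kills $P$, and symmetry in $F_2$ handles $B$ and $Q$ and matches the permutation and scalars. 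The two places you implicitly use $\kar(K)\neq 2$ (nonvanishing of $2x_iz_i$ in the kernel computation, and cancelling the factor $2$ in the cross term) are exactly where the paper's cited lemma needs it, so your restriction is sharp and consistent with the paper's remark.
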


\begin{proof}
The proof is the same as \cite[Corollary~7.11]{DM-exp}, the proof of which uses \cite[Lemma~7.8]{DM-exp} (a result which holds precisely when characteristic $\neq 2$ as can be easily seen from the proof). 
%\Vnote{Harm, please verify above proof}
\end{proof}

\begin{corollary}
We have $G_w^\circ = L$.
\end{corollary}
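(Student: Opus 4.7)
The plan is to show two inclusions: $L \subseteq G_w^\circ$ and $G_w^\circ \subseteq L$, the latter by verifying that $L$ has finite index in $G_w$.

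For $L \subseteq G_w^\circ$, I would first check directly that $L$ stabilizes $w$. If $t = (t_1,\dots,t_n) \in (K^*)^n$, then $\psi(t)$ sends $x_i^2 z_i$ to $t_i^2 \cdot t_i^{-2}\, x_i^2 z_i = x_i^2 z_i$, and similarly $y_i^2 z_i$ is fixed; summing over $i$ shows $\psi(t)$ fixes both components of $w$. Since $L$ is the image of a torus, it is connected, so $L \subseteq G_w^\circ$. (One also verifies that $\psi$ is injective so that $L \cong (K^*)^n$ is an $n$-dimensional torus, though this is not strictly needed for the inclusion.)

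For the reverse inclusion, I would invoke the previous lemma to parameterize every $g \in G_w$ by a permutation $\sigma \in S_n$, a sign vector $(\epsilon_1,\dots,\epsilon_n) \in \{\pm 1\}^n$, and scalars $(c_1,\dots,c_n) \in (K^*)^n$, via $gx_i = c_i x_{\sigma(i)}$, $gy_i = \epsilon_i c_i y_{\sigma(i)}$, $gz_i = c_i^{-2} z_{\sigma(i)}$. Then I would define the map
$$
\Phi : G_w \longrightarrow S_n \times \{\pm 1\}^n, \qquad g \longmapsto (\sigma, (\epsilon_1,\dots,\epsilon_n)),
$$
which is a group homomorphism once one checks that composition of such monomial transformations corresponds to an honest (semidirect) product on the combinatorial data. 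The kernel of $\Phi$ consists exactly of those $g$ with $\sigma = e$ and all $\epsilon_i = 1$, i.e., diagonal maps $x_i \mapsto c_i x_i,\ y_i \mapsto c_i y_i,\ z_i \mapsto c_i^{-2} z_i$ — and these are precisely the elements of $L$ (the $\SL$ condition $\prod c_i \cdot \prod c_i \cdot \prod c_i^{-2} = 1$ is automatic). Thus $\ker \Phi = L$.

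Since the codomain of $\Phi$ is finite, $L$ has finite index in $G_w$. Combined with $L \subseteq G_w^\circ$ from the first step, this forces $G_w^\circ = L$, completing the proof. The only subtlety I anticipate is checking that the parametrization $(c_i,\epsilon_i,\sigma)$ really gives a well-defined group homomorphism $\Phi$ rather than a mere set map, but this is a routine bookkeeping verification using the transformation rules, not a genuine obstacle.
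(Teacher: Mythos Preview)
Your proposal is correct and follows essentially the same approach as the paper. The only cosmetic difference is packaging: the paper first maps $G_w \to S_n$ via the permutation $\sigma$, observes that the kernel contains $L$ with quotient a subgroup of $(\Z/2)^n$, and concludes in two steps; you collapse this into a single homomorphism $\Phi: G_w \to S_n \ltimes \{\pm 1\}^n$ (note that the target must carry the semidirect, not direct, product structure for $\Phi$ to be a homomorphism, as your parenthetical already hints) and identify $\ker\Phi = L$ directly.
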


\begin{proof}
The above lemma associates a permutation $\sigma$ to each $g \in G_w$. That gives a map which can easily be seen to be a group homomorphism, which we call $\phi: G_w \rightarrow S_n$. The kernel of $\phi$ is precisely all the elements of $G_w$ that keep $x_i, y_i, z_i$ invariant up to scalars. Moreover, by the previous lemma, for $g \in {\rm ker}(\phi)$, the action is given by $gx_i = c_i x_i, gy_i = \pm c_i y_i$ and $gz_i = c_i^{-2} z_i$ for some scalars $c_i \in K$. Now, it is easy to see that ${\rm ker}(\phi)$ contains $L$ and the quotient is a finite group (indeed just a subgroup of $(\Z/2)^n$). Since $L$ is connected, we conclude that ${\rm ker}(\phi)^{\circ} = L$. Clearly, since $G_w$ is a finite extension of ${\rm ker}(\phi)$, we deduce that $G_w^\circ = L$. 
\end{proof}

\begin{proof}[Proof of Theorem~\ref{thm:cubicbounds}]
Since $w = (\sum_i x_i^2 z_i, \sum_i y_i^2 z_i)$ has a closed $\SL(V)$-orbit, by Theorem~\ref{thm:liftingbounds}, we get that
$$
\beta(\SL(V), \Sym^3(V)^{\oplus 3}) \geq \sigma(G_w,\Sym^3(V)).
$$

But now, since $G_w^\circ = L$, we get that $\sigma(G_w, \Sym^3(V)) \geq \sigma(L,\Sym^3(V))$ by Proposition~\ref{prop:fingp.nullcone}. Since $L$ is a torus, the degree bound on generators as well as the bound on the degree of invariants defining the null cone only depend on the weight set (and in particular is oblivious to the characteristic). The computation one needs to do to obtain a lower bound on $\sigma(L,\Sym^3(V))$ is already done, see \cite[Corollary~7.4]{DM-exp} (where $L$ for us is denoted $H$). Thus, we conclude

$$
\beta(\SL(V), \Sym^3(V)^{\oplus 3}) \geq \sigma(G_w,\Sym^3(V)) \geq \sigma(L,\Sym^3(V)) \geq \frac{2}{3} (4^n-1).
$$
\end{proof}

\subsection{Tensor actions}
Let $U,V,W$ be $3n$-dimensional spaces with a preferred basis $\{u_i^{(k)}\}, \{v_i^{(k)}\}, $ and $\{w_i^{(k)}\}$ respectively as in Section~\ref{closedorbittensorsubsection}. Consider the action of $G = \SL(U) \times \SL(V) \times \SL(W)$ on $(U \otimes V \otimes W)^{\oplus 4}$. Let $F = (F_1,F_2,F_3,F_4) \in (U \otimes V \otimes W)^{\oplus 4}$ as in Section~\ref{closedorbittensorsubsection}, which has a closed $G$-orbit. 

We consider a slightly different group 
$$
J := \{(g_1,g_2,g_3) \in \GL(U) \times \GL(V) \times \GL(W)\ | \det(g_1) \det(g_2) \det(g_3) = 1\}.
$$

Both $G$ and $J$ are subgroups of $\GL(U) \times \GL(V) \times \GL(W)$ and act naturally on $(U \otimes V \otimes W)^{\oplus r}$ for $r \in \Z_{>0}$. As shown in \cite[Section~8]{DM-exp}, the orbits with respect to both groups are precisely the same and hence so are the invariant rings. Moreover, $J$ is a reductive group by Matsushima's criterion.

%Let us define a map $\phi_U:  ((\C^*)^3)^n \rightarrow \GL(U)$. To define such a map it suffices to understand the action of $t = (p_1,q_1,r_1,p_2,q_2,r_2,\dots,p_n,q_n,r_n)$ on each basis vector $b \in \B_u$. The map $\phi_U$ is defined by 
%$$
%\phi_U(t) u_1^k = p_k u_1^k, \phi_U(t) u_2^k = p_k u_2^k \text{ and } \phi_U(t) u_3^k = (q_kr_k)^{-1} u_3^k.
%$$ 
%Similarly define $\phi_V: ((\C^*)^3)^n \rightarrow \GL(V)$ by 
%$$
%\phi_V(t) v_1^k = q_k v_1^k, \phi_V(t) v_2^k = q_k v_2^k \text{ and } \phi_V(t) v_3^k = (p_kr_k)^{-1} v_3^k.
%$$
% Finally, define $\phi_W: ((\C^*)^3)^n \rightarrow \GL(W)$ by 
% $$
% \phi_W(t) w_1^k = r_k w_1^k, \phi_W(t) w_2^k = r_k w_2^k \text{ and } \phi_W(t) w_3^k = (p_kq_k)^{-1} w_3^k.
% $$

Now, we turn to computing the stabilizer $J_F$, or rather its identity component. Recall the map $\phi$ defined in Section~\ref{closedorbittensorsubsection}. Let $L := \phi( ((\C^*)^3)^n)$. 
\begin{lemma}
The subgroup $H = J_F^\circ$, the identity component of the stabilizer of $F$.
\end{lemma}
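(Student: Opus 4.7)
The plan is to prove the two inclusions $L \subseteq J_F^\circ$ and $J_F^\circ \subseteq L$, which together identify the identity component of $J_F$ with the torus $L$. The first inclusion is a direct computation: for any $t = (p_k, q_k, r_k)_k$ and any monomial appearing in any $F_i$, the element $\phi(t)$ scales that monomial by $1$. For instance, the monomial $u_1^{(k)} v_2^{(k)} w_3^{(k)}$ in $F_1$ is scaled by $p_k \cdot q_k \cdot (p_k q_k)^{-1} = 1$, and analogous cancellations occur for every other monomial in $F_1, F_2, F_3, F_4$ because of the way the weights in the definition of $\phi$ were chosen. Since $L$ is a connected subgroup of $J_F$, we obtain $L \subseteq J_F^\circ$.

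The reverse inclusion $J_F^\circ \subseteq L$ follows the same blueprint as the cubic form case worked out earlier in this section. The key step is a structural classification of elements of $J_F$: every $g = (g_U, g_V, g_W) \in J_F$ must send each basis vector $u_i^{(k)}$ to a scalar multiple of some $u_{i'}^{(\sigma(k))}$ (and similarly for $V$ and $W$), with a single underlying permutation $\sigma \in S_n$ governing the parameter index $k$ across all three tensor factors and with the subindex $i$ constrained to respect the block decomposition $\{1,2\}$ versus $\{3\}$ coming from the $H$-isotypic decomposition. I would prove this by exploiting the rigidity of $\tsupp(F) = \bigcup_{i,k} \mathcal{C}_{i,k}$ established in Lemma~\ref{lem:tsupp}: writing out the equation $g \cdot F = F$ and comparing coefficients of distinct basis tensors forces $g_U, g_V, g_W$ to be monomial in the preferred basis, while the common index $k$ shared among the three factors in each summand of each $F_i$ ties their permutation actions together.

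Once this classification is in hand, $J_F$ sits inside the image of a semidirect product of $((\C^*)^3)^n$ with a finite group (namely $S_n$ acting on parameter indices, together with a finite abelian group of discrete sign or index-swap symmetries coming from the indistinguishability of certain basis vectors in the $H$-isotypic decomposition). The induced homomorphism from $J_F$ onto this finite quotient has kernel that is a finite extension of $L$, and passing to identity components yields $J_F^\circ = L$.

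The main obstacle is the structural classification step. Since we are working in arbitrary characteristic, Lie-algebraic tangent-space computations at the identity are unavailable, and the constraints imposed by $g \cdot F = F$ have to be extracted directly at the group level by comparing coefficients of basis monomials. The specific sparsity pattern of $F$, and in particular the fact that $\tsupp(F)$ saturates $\bigcup_{i,k}\mathcal{C}_{i,k}$, is precisely what rigidifies the stabilizer enough to rule out any exotic non-monomial stabilizing elements; unlike the cubic form setting, no characteristic-dependent auxiliary lemma should be required here.
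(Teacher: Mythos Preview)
Your overall architecture matches the paper's: verify $L\subseteq J_F$ by direct weight cancellation, establish that every $g\in J_F$ acts monomially in the preferred basis (up to a permutation of the index $k$), and then pass to the identity component. The disagreement is entirely in how you propose to carry out the structural classification step.

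You suggest obtaining monomiality of $g_U,g_V,g_W$ by ``writing out $g\cdot F=F$ and comparing coefficients,'' leaning on the rigidity of $\tsupp(F)$ from Lemma~\ref{lem:tsupp}. This does not work as stated. Lemma~\ref{lem:tsupp} concerns only the very special upper-triangular family $L^{a,b,c}$; it says nothing about a general $(g_U,g_V,g_W)$. More substantively, the equation $g\cdot F_i=F_i$ by itself does not force $g$ to be monomial through naive coefficient matching: a tensor can admit many distinct rank decompositions, so cross-terms produced by a non-monomial $g$ could in principle cancel. Ruling this out is exactly the content of a uniqueness theorem for CP decompositions, and that is what the paper invokes. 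Specifically, the paper appeals to Kruskal's uniqueness theorem: for $F_1$ (and $F_2$) the three factor matrices each have full Kruskal rank $3n$, so $3n+3n+3n\ge 2(3n)+2$, and hence the rank-$3n$ decomposition of $F_1$ is essentially unique. Any $g\in J_F$ fixing $F_1$ must therefore permute its rank-one summands up to scaling, which forces $g_U,g_V,g_W$ to be monomial in the preferred basis. From that point your outline and the paper's coincide: a permutation-valued homomorphism $J_F\to S_n$ with finite cokernel inside a torus-by-finite group, and the connected component is $L$.

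So the gap is not fatal to the strategy, but it is a genuine missing idea: you need Kruskal (or an equivalent tensor-uniqueness argument) rather than $\tsupp$ and coefficient comparison.
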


\begin{proof}
By Kruskal's uniqueness theorem \cite{Kruskal} (see also \cite{Landsberg-Kruskal}), any $g \in J_F$ permutes the terms in each of the $F_i$'s. By a similar argument to the one in the case of cubic forms, the subgroup of $J_F$ that fixes all monomials is of finite index in $J_F$. But this is precisely $L$ by the same arguments in \cite[Lemma~8.11]{DM-exp}. Since $L$ is connected, we must have $J_F^\circ = L$.
\end{proof}

\begin{proof} [Proof of Theorem~\ref{thm:tensorbounds}]
This is similar to the proof of Theorem~\ref{thm:cubicbounds}. We have
\begin{align*}
\beta(G,(U \otimes V \otimes W)^{\oplus 5}) &= \beta(J, (U \otimes V \otimes W)^{\oplus 5}) \\
& \geq \sigma(J_F, U \otimes V \otimes W)\\
& \geq \sigma(L, U \otimes V \otimes W) \\
& \geq 4^n-1
\end{align*}

The first equality follows from the fact that $G$-orbits and $J$-orbits are the same, so the corresponding invariant rings are also the same. The first inequality follows from applying Theorem~\ref{thm:liftingbounds} to the fact that $F$ has a closed orbit (by Proposition~\ref{tensor-polystable}). The second inequality follows from Proposition~\ref{prop:fingp.nullcone}. The last follows from the computation in \cite[Corollary~8.5]{DM-exp} (where $L$ for us is denoted $H$).
\end{proof}

\section{Polystability for symmetric polynomials} \label{sec:symm-polys}

%\Vnote{Check this section for any necessary conditions like $n \geq 2$?}

In this section, we discuss stability notions for symmetric polynomials, in particular we give an algorithm to determine whether a symmetric polynomial is unstable, semistable, polystable, or stable. The techniques in this section go beyond the results stated in Section~\ref{sec:kempf}. Roughly speaking, in Section~\ref{sec:kempf}, the high-level idea was to check polystability (or similar) for a collection of maximal tori that covers all possible optimal parabolic subgroups. In this section, we will take a closer look at the parabolic itself and leverage that for a parabolic subgroup to be optimal, the associated optimal one-parameter subgroup must take a very specific form. So, we first discuss some generalities on one-parameter subgroups and their associated parabolics and then proceed to study the case of symmetric polynomials.

Let $\lambda: K^* \rightarrow \SL(V)$ be a $1$-parameter subgroup. Such a $1$-parameter subgroup is diagonalizable, i.e., we have a basis $v_1,\dots,v_n$ of $V$ (say $V$ is $n$-dimensional) such that $\lambda(t) v_i = t^{\beta_i} v_i$ for some $\beta_i \in \Z$. Without loss of generality, we can take $\beta_1 \geq \beta_2 \geq \dots \geq \beta_n$. Of course, some of the inequalities can be equalities. So, we must have $1 = k_0 < k_1 < k_2 < \dots < k_r = n+1$ such that $\beta_i = \beta_j$ for all $i,j \in \{k_{a-1}, k_{a-1} + 1,\dots, k_{a} - 1\}$ for any $a \in \{1,\dots,r\}$. Let $F_a$ denote the linear span of $v_1,v_2,\dots,v_{k_{a}-1}$. Let $\mathcal{F} = 0 \subseteq F_1 \subset F_2 \subset \dots \subset F_r = V$. Then, the parabolic associated to $\lambda$ is $P(\lambda) = P_\mathcal{F}$. An illustrative example is the following:

\begin{example} \label{eg:lambda-to-flag}
Let $x_1,x_2,x_3$ denote the standard basis of $K^3$. Consider the one-parameter subgroup $\lambda$ of $\SL_3$ given by $\lambda(t) = \begin{pmatrix} t^3 & 0 & 0 \\ 0 & t^3 & 0 \\ 0 & 0 & t^{-6} \end{pmatrix}$. Consider the flag $\mathcal{F} = 0 \subseteq \spa\{x_1,x_2\} \subseteq K^3$. Then $P(\lambda) = P_\mathcal{F} = \left(\begin{array}{cc|c} \ast & \ast & \ast \\ \ast & \ast & \ast \\ \hline 0 & 0 & \ast \end{array}\right) \subseteq \SL_3$.

\end{example}

\begin{definition}
Let $\mathcal{F} = (0 = F_0)  \subseteq F_1 \subseteq \dots \subseteq (F_r = V)$ be a flag. We call a tuple of subspaces $\underline{G} = (G_1,\dots,G_r)$ a splitting of $\mathcal{F}$ if $F_{i-1} \oplus G_i = F_i$ for all $i$. Observe that $\oplus_i G_i = V$. Denote by $\mathcal{S}_\mathcal{F}$ the set of splittings of $\mathcal{F}$.

Further, let $\underline{c} \in \Z^r$ be such that $\sum_i c_i \dim(G_i) = 0$. Then, we call $(\underline{G}, \underline{c})$ a decorated splitting of $\mathcal{F}$. We call $\underline{c}$ a decoration for the splitting $\underline{G}$.

Finally, for a decorated splitting $(\underline{G}, \underline{c})$, we define an associated $1$-parameter subgroup $\lambda = \lambda_{(\underline{G},\underline{c})}$ by $\lambda(t) v = t^{c_i} v$ for $v \in G_i$. 
\end{definition}

\begin{lemma} \label{lem:lambda-to-decoration}
Let $\mathcal{F}$ be a flag in $V$. Suppose $\lambda$ is a $1$-parameter subgroup such that $P(\lambda) = P_\mathcal{F}$. Then, there is a decorated splitting $(\underline{G},\underline{c})$ of $\mathcal{F}$ such that $\lambda = \lambda_{(\underline{G},\underline{c})}$.
\end{lemma}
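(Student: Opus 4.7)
The plan is to extract the decorated splitting directly from the weight decomposition of $\lambda$ and then use that parabolics determine flags to identify it with the given $\mathcal{F}$.

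First, I would diagonalize $\lambda$: since $\lambda \colon K^* \to \SL(V)$ is a $1$-parameter subgroup, it is diagonalizable, so $V$ decomposes as a direct sum of weight spaces $V = \bigoplus_{a \in \Z} V_a$, where $\lambda(t)$ acts on $V_a$ by the scalar $t^a$. Let $c_1 > c_2 > \cdots > c_r$ be the distinct integers $a$ for which $V_a \neq 0$, and set $G_i := V_{c_i}$. Define the auxiliary flag $\mathcal{F}' = (0 = F'_0 \subseteq F'_1 \subseteq \cdots \subseteq F'_r = V)$ by $F'_i := G_1 \oplus G_2 \oplus \cdots \oplus G_i$.

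Next, I would show $\mathcal{F}' = \mathcal{F}$. By the construction recalled in the paragraph preceding Example~\ref{eg:lambda-to-flag} (applied to any ordered weight basis of the $G_i$), the parabolic associated to $\lambda$ is precisely $P_{\mathcal{F}'}$. By hypothesis it is also $P_{\mathcal{F}}$. Since a parabolic subgroup of $\SL(V)$ uniquely determines the flag of subspaces that it stabilizes, we conclude $\mathcal{F} = \mathcal{F}'$. In particular, $F_{i-1} \oplus G_i = F_i$ for every $i$, so $\underline{G} = (G_1,\dots,G_r)$ is a splitting of $\mathcal{F}$ in the sense of the definition.

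Finally, I would verify the decoration and the identification with $\lambda_{(\underline{G},\underline{c})}$. Setting $\underline{c} = (c_1,\dots,c_r)$, we note that $\det(\lambda(t)) = \prod_i t^{c_i \dim G_i} = t^{\sum_i c_i \dim G_i}$; since $\lambda$ lands in $\SL(V)$ this determinant is identically $1$, forcing $\sum_i c_i \dim G_i = 0$. Thus $(\underline{G},\underline{c})$ is a decorated splitting of $\mathcal{F}$. By construction, the $1$-parameter subgroup $\lambda_{(\underline{G},\underline{c})}$ acts on $G_i = V_{c_i}$ by $t^{c_i}$, which agrees with $\lambda$ on each weight space, hence on all of $V$. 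The main (and essentially only) subtle point is the uniqueness assertion that $P_{\mathcal{F}} = P_{\mathcal{F}'}$ implies $\mathcal{F} = \mathcal{F}'$, which is a standard fact about parabolic subgroups of $\SL(V)$ and their stabilized flags; once that is invoked the remaining verifications are routine.
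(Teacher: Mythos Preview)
Your proof is correct and follows essentially the same approach as the paper's: diagonalize $\lambda$, group the weight spaces by distinct weights in decreasing order to build an auxiliary flag $\mathcal{F}'$, and then use that $P_{\mathcal{F}} = P_{\mathcal{F}'}$ forces $\mathcal{F} = \mathcal{F}'$. Your write-up is in fact slightly more explicit than the paper's in verifying the decoration condition $\sum_i c_i \dim G_i = 0$ via $\det(\lambda(t)) = 1$.
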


\begin{proof}
Take a basis $v_1,\dots,v_n$ of $V$ such that $\lambda(t) v_i = t^{\beta_i} v_i$ for some $\beta_i \in \Z$ and assume w.l.o.g. that $\beta_1 \geq \beta_2 \geq \dots \geq \beta_n$. Let $1 = k_0 < k_1 < k_2 < \dots < k_r = n+1$ such that $\beta_i = \beta_j$ for all $i,j \in \{k_{a-1}, k_{a-1} + 1,\dots, k_{a} - 1\}$ for any $a \in \{1,\dots,r\}$. Then $P(\lambda) = P_\mathcal{F}$ means that $F_a$ is the linear span of $v_1,v_2,\dots,v_{k_a-1}$ as explained above (just before Example~\ref{eg:lambda-to-flag}.

So, now let $G_a$ to be the linear span of $v_{k_{a-1}}, v_{k_{a-1} + 1}, \dots, v_{k_a - 1}$ and let $c_a = \beta_{k_{a-1}} = \beta_{k_{a-1}+1} = \dots = \beta_{k_a-1}$. It is now straightforward to check that $\lambda = \lambda_{(\underline{G},\underline{c})}$.
\end{proof}

Perhaps the most important result for this section is the following lemma.

\begin{lemma} \label{lem:technical-symm}
Suppose $W$ is a representation of $G = \SL(V)$ and $w \in W$ such that $O_w$ is not closed. Let $S$ be a closed $G$-stable subset such that $S \cap O_w = \emptyset$ and $S \cap \overline{O_w} \neq \emptyset$. Let $\mathcal{F}$ be a flag of $V$ such that the optimal parabolic subgroup $P_{S,w}  = P_\mathcal{F}$. Then, there exists unique indivisible $\underline{c} = (c_1,\dots,c_r) \in \Z^r$ with $\sum_i c_i \dim(F_i/F_{i-1}) = 0$ and $c_1 \geq c_2 \geq \dots \geq c_r$ such that the map
\begin{align*}
\mathcal{S}_\mathcal{F} & \longrightarrow \Lambda(S,v) \\
\underline{G} &\longmapsto \lambda_{(\underline{G}, \underline{c})}
\end{align*}
is a bijection between splittings and optimal $1$-parameter subgroups.
\end{lemma}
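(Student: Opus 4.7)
The plan is to combine Theorem~\ref{thm:Kempf} with Lemma~\ref{lem:lambda-to-decoration}, then establish that the decoration $\underline{c}$ is both intrinsic to the pair $(\mathcal{F},w)$ and realized by every splitting. By Theorem~\ref{thm:Kempf}, $\Lambda(S,w)\neq\emptyset$ and $P(\lambda)=P_\mathcal{F}$ for every $\lambda\in\Lambda(S,w)$. By Lemma~\ref{lem:lambda-to-decoration}, each such $\lambda$ equals $\lambda_{(\underline{G}^\lambda,\underline{c}^\lambda)}$ for some decorated splitting, with the $c_i^\lambda$ strictly decreasing (otherwise $P(\lambda)$ would correspond to a flag coarser than $\mathcal{F}$), $\sum_i c_i^\lambda\dim G_i^\lambda=0$ since $\lambda\in\SL(V)$, and $\gcd(c_1^\lambda,\dots,c_r^\lambda)=1$ by indivisibility.

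The core step is to show that, for any fixed $\underline{c}$ of the above form, the quantity $a_{S,w}(\lambda_{(\underline{G},\underline{c})})/\|\lambda_{(\underline{G},\underline{c})}\|$ is independent of $\underline{G}\in\mathcal{S}_\mathcal{F}$. The norm $\|\lambda_{(\underline{G},\underline{c})}\|^2=\sum_i c_i^2\dim G_i$ visibly depends only on $\underline{c}$ and $\dim(F_i/F_{i-1})$, so the real work is on $a_{S,w}$. The unipotent radical $U_\mathcal{F}$ of $P_\mathcal{F}$ acts simply transitively on $\mathcal{S}_\mathcal{F}$, and under this action $\lambda_{(u\underline{G},\underline{c})}=u\,\lambda_{(\underline{G},\underline{c})}u^{-1}$ for the unique $u\in U_\mathcal{F}$ sending $\underline{G}$ to $u\underline{G}$. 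Setting $\eta(t):=u\cdot\lambda(t)\,u^{-1}\lambda(t)^{-1}$, I would show $M(u\lambda u^{-1})(t)=\eta(t)\cdot M(\lambda)(t)$; since $u^{-1}$ lies in the unipotent radical of $P(\lambda)=P_\mathcal{F}$, the map $t\mapsto\eta(t)$ extends to a morphism $\mathbb{A}^1\to U_\mathcal{F}$ (with $\eta(0)=u$). Because $S$ is $G$-stable and $\eta(t)\in U_\mathcal{F}\subseteq G$, the pullbacks of $I_S$ via $M(u\lambda u^{-1})$ and $M(\lambda)$ agree as ideal sheaves on $\mathbb{A}^1$, giving $a_{S,w}(u\lambda u^{-1})=a_{S,w}(\lambda)$.

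From this invariance, optimality depends only on $\underline{c}$: if some $\lambda_{(\underline{G}_0,\underline{c})}$ is optimal, then $\lambda_{(\underline{G},\underline{c})}$ is optimal for every $\underline{G}$. For uniqueness of $\underline{c}$, suppose $\lambda,\lambda'\in\Lambda(S,w)$ had decorations $\underline{c}\neq\underline{c}'$. Choose any maximal torus $T\subseteq P_\mathcal{F}$ and let $\underline{G}_T$ be the splitting it determines via $G_i^T:=\bigoplus\{V_\chi:V_\chi\subseteq F_i,\ V_\chi\not\subseteq F_{i-1}\}$ on the $T$-weight spaces. Both $\lambda_{(\underline{G}_T,\underline{c})}$ and $\lambda_{(\underline{G}_T,\underline{c}')}$ are central in the Levi $\prod_i\GL(G_i^T)\cap\SL(V)$ and hence lie in $T$, and both are optimal. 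Theorem~\ref{thm:Kempf}(3) forces them to coincide, and comparing eigenvalues on each $G_i^T$ (the $c_i$'s being strictly decreasing and indivisible) yields $\underline{c}=\underline{c}'$, a contradiction.

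With $\underline{c}$ pinned down, the map $\underline{G}\mapsto\lambda_{(\underline{G},\underline{c})}$ lands in $\Lambda(S,w)$ by the invariance, is injective because strictly decreasing $c_i$'s let one recover $\underline{G}$ as the eigenspace decomposition of $\lambda_{(\underline{G},\underline{c})}$, and is surjective since every optimal $\lambda$ is $\lambda_{(\underline{G}^\lambda,\underline{c})}$ by the uniqueness step. I expect the main technical obstacle to be verifying the scheme-theoretic equality $M(u\lambda u^{-1})^{-1}(S)=M(\lambda)^{-1}(S)$ (i.e.\ the $U_\mathcal{F}$-conjugation invariance of $a_{S,w}$); once this is in place, the rest is formal consequences of Kempf's theorem and the standard parabolic-Levi-splitting dictionary.
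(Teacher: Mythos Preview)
Your argument is correct, but it takes a different route from the paper's. The paper never touches $a_{S,w}$ directly. Instead, given a splitting $\underline{G}$, it chooses a compatible basis $\mathcal{B}$, invokes Theorem~\ref{thm:Kempf}(3) to get the \emph{unique} optimal $\lambda\in T_\mathcal{B}$, and then argues (via conjugation by block matrices $L(A)$ with $A\in\SL(G_i)$, which lie in the Levi of $P_\mathcal{F}$) that this $\lambda$ must act by a single scalar $t^{c_i}$ on each $G_i$; this forces $\lambda=\lambda_{(\underline{G},\underline{c})}$ without ever computing $a_{S,w}$. Independence of $\underline{c}$ from $\underline{G}$ then follows because any two splittings are $P_{S,w}$-conjugate, and conjugating an optimal $1$-parameter subgroup by an element of $P_{S,w}$ yields another optimal one with the same decoration. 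Your approach instead re-derives the $P_\mathcal{F}$-conjugation invariance of $a_{S,w}(\lambda)/\|\lambda\|$ from scratch, using the explicit morphism $\eta:\mathbb{A}^1\to U_\mathcal{F}$ and the $G$-stability of $S$ as a subscheme; this is more hands-on and essentially reproves part of what is already packaged inside Kempf's theorem. The paper's route is shorter because it treats Theorem~\ref{thm:Kempf}(3) as a black box, while yours is more self-contained at the cost of the scheme-theoretic verification you flagged.

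One small imprecision: your formula $\|\lambda_{(\underline{G},\underline{c})}\|^2=\sum_i c_i^2\dim G_i$ holds only for the standard trace-form length function on $\SL(V)$, whereas the setup permits an arbitrary length function. This does not affect your argument, since the independence of the norm from $\underline{G}$ that you actually need follows immediately from $G$-conjugation invariance of $\|\cdot\|$ together with $\lambda_{(u\underline{G},\underline{c})}=u\,\lambda_{(\underline{G},\underline{c})}\,u^{-1}$; you should phrase it that way rather than via the explicit formula.
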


\begin{proof}
Let $\underline{G}$ be a splitting. Then, take a basis $\mathcal{B}$ such that each $G_i$ is a coordinate subspace (i.e., span of a subset in $\mathcal{B}$). Then, by Theorem~\ref{thm:Kempf}, part $(3)$, there is an optimal $1$-parameter subgroup contained in $T_\mathcal{B}$. Let us call that $\lambda$. Fix $1 \leq i \leq r$. Let $A : G_i \rightarrow G_i$ be a linear transformation with determinant $1$. Let $L(A) : W\rightarrow W$ be the linear transformation that is identity on $G_j$ for $j \neq i$ and agrees with $A_i$ on $G_i$. It is easy to see that $L(A) \lambda L(A)^{-1}$ is also an optimal $1$-parameter subgroup in $T_\mathcal{B}$. Thus, we must have $L(A) \lambda L(A)^{-1} = \lambda$ for all $A \in \SL(G_i)$. It is straightforward to argue that this means there is $c_i \in \Z$ such that $\lambda(t) v = t^{c_i} v$ for all $v \in G_i$. In particular, this means that $\lambda$ does not depend on the choice of $\mathcal{B}$ or $T_\mathcal{B}$ but on just $G$ itself. Thus, to each splitting $\underline{G}$, we can associate a unique $\lambda = \lambda_{\underline{G},\underline{c}} \in \Lambda(S,w)$ (where apriori $\underline{c}$ depends on $\underline{G}$). Note that $\underline{c}$ is indivisible simply because $\lambda$ is optimal. 

To show that $\underline{c}$ does not depend on the choice of $\underline{G}$, we note that for any other splitting $\underline{G'}$, we have $p \in P_{S,w}$ such that $p \underline{G} = \underline{G'}$. This means that $p \lambda_{\underline{G}, \underline{c}} p^{-1} = \lambda_{\underline{G'},\underline{c}} \in \Lambda(S,w)$. This means that the choice of $\underline{c}$ is independent of the choice of $\underline{G}$. 

To summarize, we have shown that the existence of the map $\mathcal{S}_\mathcal{F} \rightarrow \Lambda(S,v)$. Injectivity is clear because you can recover $G_i$ from $\lambda$ uniquely as the subspace of $V$ on which $\lambda(t)$ acts by $t^{c_i}$. To show surjectivity is to show that any optimal $1$-parameter subgroup $\lambda \in \Lambda(S,w)$ arises as $\lambda_{\underline{G},\underline{c}}$ for some splitting $\underline{G}$ and $\underline{c}$. But this follows from Lemma~\ref{lem:lambda-to-decoration}.

\end{proof}

\begin{remark} \label{rem:2stepflag}
Suppose $\mathcal{F}$ is a 2-step flag, i.e., $\mathcal{F} = 0 \subseteq F_1 \subseteq F_2 = V$ and let $\underline{c}$ be as in Lemma~\ref{lem:technical-symm} above. Then $\underline{c}$ must be the indivisible integral vector that is a multiple of $(\dim(V) - \dim(F_1), -\dim(F_1))$.
\end{remark}

\begin{lemma}
Assume $n \geq 2$. Let $x_1,\dots,x_n$ denote the standard basis for $K^n$ and consider the natural action of $S_n$ on $K^n$ by permutation of $x_1,\dots,x_n$. Then $L = {\rm span}(x_1 + x_2 + \dots + x_n)$ and $M = \{\sum_i \alpha_i x_i \ |\ \sum_i \alpha_i = 0\}$ are the only non-trivial $S_n$-stable subspaces of $K^n$.
\end{lemma}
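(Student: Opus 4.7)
The plan is to exploit the $2$-transitivity of the $S_n$-action on indices. I would argue that any non-trivial $S_n$-stable subspace $W \subseteq K^n$ must either lie entirely inside $L$ or contain $M$, and then use $\dim L = 1$ and $\dim M = n-1$ to conclude that $W$ equals $L$ or $M$.

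First I would split on whether $W \subseteq L$. If $W \subseteq L$, non-triviality forces $W = L$ since $\dim L = 1$. Otherwise I would pick some $w = \sum_i \alpha_i x_i \in W$ with $\alpha_i \neq \alpha_j$ for a pair of distinct indices $i,j$, apply the transposition $(i\,j) \in S_n$, and subtract to obtain the non-zero scalar multiple $(\alpha_j - \alpha_i)(x_i - x_j) \in W$, so $x_i - x_j \in W$. By $S_n$-transitivity on ordered pairs of distinct indices, every $x_k - x_l$ will then lie in $W$, and since these vectors span $M$, we get $M \subseteq W$. Since $W$ is a proper subspace and $\dim M = n-1$, this yields $W = M$.

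The hard part, to the extent there is one, will be purely bookkeeping: the argument must be genuinely characteristic-free, and in particular must survive the case $p \mid n$, where $L \subseteq M$ and $K^n$ does not decompose as $L \oplus M$. The sketch above deliberately avoids any use of such a splitting: only the dimensions of $L$ and $M$, together with the action of transpositions, enter -- all of which are insensitive to $\kar(K)$.
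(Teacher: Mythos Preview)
Your proposal is correct and is essentially the paper's own argument: the paper also picks a vector in $W$ with two unequal coordinates, applies a transposition and subtracts to land $x_i - x_j$ in $W$, and then concludes $M \subseteq W$. The only cosmetic difference is that the paper packages this as a proof by contradiction (assuming $W \neq L$ and $W \neq M$ and deriving $W = K^n$), whereas you do the equivalent direct case split on $W \subseteq L$ versus $W \not\subseteq L$.
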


\begin{proof}
We will prove it by contradiction. Suppose $W$ is a non-trivial $S_n$-stable subspace that is neither $L$ nor $M$. Take $\alpha = (\alpha_1,\dots,\alpha_n) \in W$ such that it is not a multiple of $(1,1,\dots,1)$ (such an $\alpha$ exists because $W \neq L$). Without loss of generality, we can assume $\alpha_1 \neq \alpha_2$, so $\alpha - (12) \alpha = (d,-d,0,\dots,0)$ where $d = \alpha_1 - \alpha_2 \neq 0$. This means that $(1,-1,0,\dots,0) \in W$. It is easy to see then that $M \subseteq W$. Since $M$ has codimension $1$ and $W \neq M$, we must have $W = K^n$, which is a contradiction since we assumed $W$ is non-trivial.  
\end{proof}

\begin{corollary} \label{cor:sym.flags}
Assume $n \geq 2$. Let $\kar(K) = p$. Let $x_1,\dots,x_n$ denote the standard basis for $K^n$ and consider the natural action of $S_n$ on $K^n$ by permutation of $x_1,\dots,x_n$. Let $\mathcal{F}$ be a flag of $S_n$ stable subspaces. If $p \nmid n$, then $\mathcal{F}$ must be one of:
\begin{itemize}
\item $0 \subset K^n$;
\item $0 \subset L \subset K^n$;
\item $0 \subset M \subset K^n$.
\end{itemize} 

If $p \mid n$, then $\mathcal{F}$ must be one of:
\begin{itemize}
\item $0 \subset K^n$;
\item $0 \subset L \subset K^n$;
\item $0 \subset M \subset K^n$;
\item $0 \subset L \subset M \subset K^n$.
\end{itemize} 
Note that when $\kar(K) = p = 2$ and $n = 2$, then $L = M$. So, in this case, we only have two possible flags instead of four.
\end{corollary}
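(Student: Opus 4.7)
The plan is to combine the preceding lemma, which classifies the non-trivial $S_n$-stable subspaces of $K^n$ as exactly $L$ and $M$, with a short enumeration of admissible nested chains. Since every $S_n$-stable flag can only involve the subspaces $0$, $L$, $M$, $K^n$, it suffices to decide which chains among $\{L, M\}$ (beyond a single term or none) are actually valid. This reduces the problem to checking the two possible inclusions $L \subseteq M$ and $M \subseteq L$.

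I would first check $L \subseteq M$. The vector $(1,1,\dots,1)$ spans $L$, and it lies in $M$ precisely when its coordinate sum $n$ vanishes in $K$, i.e., if and only if $p \mid n$. Next I would check $M \subseteq L$: this would force $n-1 = \dim M \leq \dim L = 1$, hence $n \leq 2$. For $n = 2$ both subspaces are one-dimensional, and the condition $M \subseteq L$ becomes equality $M = L$, which in turn is equivalent to $L \subseteq M$, and therefore to $p \mid 2$, i.e., $p = 2$.

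Putting these computations together yields the stated classification. When $p \nmid n$, neither inclusion holds and $L \neq M$, so no flag can contain both $L$ and $M$; the three flags listed are the only possibilities. When $p \mid n$, we have $L \subseteq M$, which is a proper containment unless $L = M$, i.e., unless $n = 2$ and $p = 2$; in the non-degenerate case this produces the additional flag $0 \subset L \subset M \subset K^n$. In the degenerate case $(p,n) = (2,2)$ the subspaces $L$ and $M$ coincide, and the four flags listed collapse into just the two distinct flags noted in the statement.

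The argument is essentially a finite case check once the preceding lemma is in hand, so there is no genuine obstacle; the only point requiring care is bookkeeping in the small edge case $n = 2$ with $p = 2$, which I would verify directly by inspecting the single non-trivial stable subspace $\{(a,a) : a \in K\} = \{(a,-a) : a \in K\}$.
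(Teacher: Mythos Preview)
Your proposal is correct and follows exactly the natural argument the paper has in mind; the corollary is stated without proof in the paper, as an immediate consequence of the preceding lemma, and your enumeration of chains together with the check of when $L\subseteq M$ (iff $p\mid n$) and when $M\subseteq L$ (only if $n=2$, forcing $L=M$ and hence $p=2$) is precisely the intended reasoning.
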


It is quite crucial to realize that Corollary~\ref{cor:sym.flags} is key to giving an algorithm for detecting polystability. Indeed, this shows that one has very few choices for an optimal parabolic subgroup, which narrows the search for an optimal one-parameter subgroup (if it exists). The rest of this section is devoted to discussing the algorithm to detect polystability of symmetric polynomials.

\subsection{The case $p \nmid n$}
Assume $n \geq 2$ for this subsection. Let $L,M \subseteq K^n$ be the two non-trivial $S_n$-stable subspaces as defined above. Then, it is easy to see that since $p \nmid n$, we have $L \oplus M = K^n$. In particular, this means that for the flag $0 \subset L \subset K^n$, a splitting is $(L,M)$ and for the flag $0 \subset M \subset K^n$, a splitting is $(M,L)$. Since both are $2$-step flags, the decoration is uniquely determined, it is $(n-1,-1)$ in the first instance and $(1,-(n-1))$ in the second instance. Let $\lambda_{\rm can}$ be the $1$-parameter subgroup of $\SL_n$ defined by 
\begin{equation}
\lambda_{\rm can}(t) \cdot v = \begin{cases} t^{n-1} v & \text{ if } v \in L \\ t^{-1} v & \text{ if } v \in M \end{cases}
\end{equation}
We call $\lambda_{\rm can}$ the canonical $1$-parameter subgroup for symmetric polynomials (in the case $p \nmid n$).

\begin{lemma} \label{pnmidn-1-onepsg}
Let $\kar(K) = p \nmid n$. Let $f \in K[x_1,\dots,x_n]^{S_n}_d$ be a degree $d$ symmetric polynomial. Then, $f$ is not polystable if and only if one of the two conditions hold:
\begin{itemize}
\item $\lim_{t \to 0} \lambda_{\rm can}(t) \cdot f$ exists and is not in $O_f$;
\item $\lim_{t \to \infty} \lambda_{\rm can}(t) \cdot f$ exists and is not in $O_f$;
\end{itemize}

Further, $f$ is unstable if and only if $\lim_{t \to 0} \lambda_{\rm can}(t) \cdot f = 0$ or $\lim_{t \to \infty} \lambda_{\rm can}(t) \cdot f = 0$.
\end{lemma}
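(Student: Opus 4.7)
The plan is to combine Kempf's machinery with the classification of $S_n$-stable flags in Corollary~\ref{cor:sym.flags}. One direction of each assertion is immediate: if $\lim_{t\to 0}\lambda_{\rm can}(t)\cdot f$ exists and lies outside $O_f$ then $O_f$ is not closed, so $f$ fails to be polystable; if it equals $0$ then $f$ is unstable; and the statements for $\lim_{t\to\infty}$ reduce to the above by considering the $1$-parameter subgroup $t\mapsto\lambda_{\rm can}(t^{-1})$. So the substantive content is the forward direction.

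For the polystability case, suppose $f$ is not polystable, and let $S\subseteq\overline{O_f}$ be the unique closed $\SL_n$-orbit, which is a proper subset of $\overline{O_f}$. Theorem~\ref{thm:Kempf} produces an optimal parabolic $P:=P_{S,f}$; since $f\notin S$, the members of $\Lambda(S,f)$ are nontrivial $1$-parameter subgroups, so $P$ is a proper parabolic. Because $f$ is symmetric, $S_n$ (viewed as permutation matrices) embeds in $\widetilde{G}_f$ where $\widetilde{G}=\GL_n$ has $G=\SL_n$ as a normal subgroup. Proposition~\ref{prop:improve-kempf} then implies that $P$ is normalized by $S_n$. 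Since a parabolic of $\SL_n$ determines and is determined by its flag, the corresponding flag $\mathcal{F}$ is $S_n$-stable and nontrivial. The hypothesis $p\nmid n$ together with Corollary~\ref{cor:sym.flags} therefore pins $\mathcal{F}$ down to one of the two $2$-step flags $0\subset L\subset K^n$ or $0\subset M\subset K^n$.

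Finally, apply Lemma~\ref{lem:technical-symm} together with Remark~\ref{rem:2stepflag}. The hypothesis $p\nmid n$ ensures $L\oplus M=K^n$, so $(L,M)$ and $(M,L)$ are genuine splittings of the two flags. The decorations forced by Remark~\ref{rem:2stepflag} are $(n-1,-1)$ and $(1,-(n-1))$ respectively. The optimal $1$-parameter subgroup associated to the splitting $(L,M)$ is precisely $\lambda_{\rm can}$, so $\lim_{t\to 0}\lambda_{\rm can}(t)\cdot f$ exists and lies in $S\subseteq\overline{O_f}\setminus O_f$; the one associated to $(M,L)$ is $t\mapsto\lambda_{\rm can}(t^{-1})$, whose $t\to 0$ limit equals $\lim_{t\to\infty}\lambda_{\rm can}(t)\cdot f$, again in $S$. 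The unstable claim follows from the identical argument with $S=\{0\}$ throughout. The main delicate point is making Proposition~\ref{prop:improve-kempf} apply; this requires exhibiting $S_n$ inside a group $\widetilde{G}$ in which $\SL_n$ is normal, and $\widetilde{G}=\GL_n$ does the job. All other ingredients are bookkeeping once the flag has been constrained by Corollary~\ref{cor:sym.flags}.
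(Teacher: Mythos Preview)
Your proof is correct and follows essentially the same route as the paper: reduce to identifying the optimal parabolic, use the classification of $S_n$-stable flags from Corollary~\ref{cor:sym.flags}, and then read off the optimal $1$-parameter subgroup via Lemma~\ref{lem:technical-symm} and Remark~\ref{rem:2stepflag}. The only notable difference is that you take $S$ to be the unique closed orbit and explicitly invoke Proposition~\ref{prop:improve-kempf} with $\widetilde{G}=\GL_n$ to force the flag to be $S_n$-stable, whereas the paper takes $S=\overline{O_f}\setminus O_f$ and appeals to Corollary~\ref{cor:sym.flags} directly; your version is arguably more careful on this point, since permutation matrices for odd permutations do not lie in $\SL_n$, so part~(4) of Theorem~\ref{thm:Kempf} alone only gives $A_n$-stability of the flag.
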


\begin{proof}
Clearly if $f$ is polystable, then the two limits either do not exist or must be in $O_f$.

Now, suppose $f$ is not polystable, let $S = \overline{O_f} \setminus O_f$. Consider the optimal parabolic subgroup $P_{S,v}$. First, we claim that $P_{S,v}$ is not all of $\SL_n$. This is because then $P_{S,v} = P_\mathcal{F}$ where $\mathcal{F} = 0 \subseteq K^n$. Hence, the only possible splitting is $\underline{G} = (G_1) = (V)$ (i.e., $\mathcal{S}_\mathcal{F}$ is a singleton set). Now, consider an optimal one-parameter subgroup $\lambda = \lambda_{(\underline{G}, \underline{c})}$ as in Lemma~\ref{lem:technical-symm}. We must have $\underline{c} = (c_1) = 0$ because $ 0 = \sum_i c_i \dim G_i = c_1 \cdot n$, so $\lambda(t)$ is the trivial one-parameter subgroup, i.e., $\lambda(t)$ is the identity matrix for all $t$. Thus $\lim_{t \to 0} \lambda(t) \cdot f = f$, which contradicts the assumption that $f$ is not polystable and $\lambda$ is optimal.  

Thus $P_{S,v} = P_\mathcal{F}$ where $\mathcal{F}$ is either $0 \subset L \subset K^n$ or $0 \subset M \subset K^n$ by Corollary~\ref{cor:sym.flags}. In the former case $(L,M)$ is a splitting and by Remark~\ref{rem:2stepflag}, we see that $\lambda_{\rm can}$ is an optimal one-parameter subgroup. In the latter case, $(M,L)$ is a splitting and by Remark~\ref{rem:2stepflag}, $\lambda_{\rm can}^{-1}$ is an optimal one-parameter subgroup. Thus, one of these two one-parameter subgroups must drive $f$ out of its orbit in the limit, as required.

The argument for unstable is analogous, where you replace $S = \overline{O_f} \setminus O_f$ with $S = \{0\}$.
\end{proof}

Suppose that $p\nmid n$. For $f \in K[x_1,\dots,x_n]_d^{S_n}$, we can rewrite $f$ as a polynomial in $l = \sum_i x_i$ and $b_1,\dots,b_{n-1}$ where $b_i = x_i - x_{i+1}$. In other words, we have
\begin{equation} \label{eq:can-rewrite}
f = \sum_{i=0}^d l^i p_i,
\end{equation}
where $p_i$ is a polynomial in $b_1,b_2,\dots,b_{n-1}$.

\begin{remark}
We point out to the reader that we use $p$ for characteristic and $p_i$ to denote polynomials obtained by decomposing $f$ in a specific way as indicated above. These polynomials always come with a subscript which indicates their degree, so there is no scope for confusion. 
\end{remark}

\begin{theorem} \label{theo:pnmidn}
Let $\kar(K) = p \nmid n$. Let $f \in K[x_1,\dots,x_n]_d^{S_n}$. Write $f = \sum_i l^i p_i$ as described above. Then,  $f$ is unstable if and only if either $l^{\lfloor d/n \rfloor + 1} \mid f$ or $f = \sum_{i = 0}^{\lceil d/n \rceil - 1} l^i p_i$. Further, if $f$ is not unstable, then it is polystable unless the following conditions hold:
\begin{itemize}
\item $n \mid d$;
\item $l^{d/n} f_{d/n} \notin O_f$;
\item $l^{d/n} \mid f$ or $f = \sum_{i = 0}^{d/n} l^i p_i$
\end{itemize}
\end{theorem}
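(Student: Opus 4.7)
The plan is to apply Lemma~\ref{pnmidn-1-onepsg} and reduce the entire question to tracking the limits of $\lambda_{\rm can}(t)\cdot f$ at $t=0$ and $t=\infty$. Since $l\in L$ has weight $n-1$ and every $b_j \in M$ has weight $-1$ under $\lambda_{\rm can}$, each piece $l^i p_i$ (with $p_i$ homogeneous of degree $d-i$ in the $b_j$'s) has $\lambda_{\rm can}$-weight $(n-1)i-(d-i)=ni-d$. Hence
$$
\lambda_{\rm can}(t)\cdot f \;=\; \sum_{i=0}^{d} t^{\,ni-d}\, l^i p_i,
$$
and the whole analysis is governed by the signs of $\{ni-d : p_i\neq 0\}$.

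For the unstable half, I would note that $\lim_{t\to 0}\lambda_{\rm can}(t)\cdot f=0$ iff every surviving weight is strictly positive, i.e.\ $p_i=0$ for $i\leq \lfloor d/n\rfloor$, which is precisely the divisibility $l^{\lfloor d/n\rfloor+1}\mid f$. Symmetrically, $\lim_{t\to\infty}\lambda_{\rm can}(t)\cdot f=0$ iff every surviving weight is strictly negative, i.e.\ $p_i=0$ for $i\geq \lceil d/n\rceil$, which is the truncation $f=\sum_{i=0}^{\lceil d/n\rceil-1}l^i p_i$. The unstable characterization then follows directly from the second assertion of Lemma~\ref{pnmidn-1-onepsg}.

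For the polystable half, the key observation is that whenever $\lim_{t\to 0}\lambda_{\rm can}(t)\cdot f$ (respectively $\lim_{t\to\infty}$) exists, the strictly positive (respectively negative) weight terms collapse to $0$, so the limit equals the sum of the weight-zero contributions, namely $\sum_{i:\,ni=d} l^i p_i$. When $n\nmid n$, no such $i$ exists and any existing limit is $0$; so polystable $\Leftrightarrow$ not unstable, and the three listed conditions cannot all hold (the first already fails). When $n\mid d$, the only weight-zero index is $i=d/n$, so when a limit exists its common value is $l^{d/n} p_{d/n}$. The limit at $0$ exists iff $l^{d/n}\mid f$, and the limit at $\infty$ exists iff $f=\sum_{i=0}^{d/n}l^i p_i$. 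Combining with Lemma~\ref{pnmidn-1-onepsg}, $f$ fails to be polystable but is not unstable exactly when $n\mid d$, one of these two boundary conditions holds, and $l^{d/n} p_{d/n}\notin O_f$, which is the claimed description.

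The only genuine point of care — not really an obstacle so much as a bookkeeping step — is to check consistency at the seam between the two parts of the theorem: if $n\mid d$ and one of the "or'' conditions holds but $p_{d/n}=0$, then $l^{d/n}\mid f$ promotes to $l^{d/n+1}\mid f=l^{\lfloor d/n\rfloor +1}\mid f$, and $f=\sum_{i=0}^{d/n}l^i p_i$ collapses to $f=\sum_{i=0}^{\lceil d/n\rceil -1}l^i p_i$; either way $f$ is already unstable, so the second clause of the theorem has not misclassified it. Everything else is the essentially mechanical weight computation carried out above.
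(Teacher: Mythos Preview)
Your proposal is correct and follows essentially the same route as the paper's proof: both reduce to Lemma~\ref{pnmidn-1-onepsg}, compute $\lambda_{\rm can}(t)\cdot f=\sum_i t^{ni-d}l^i p_i$, and read off the unstable and polystable conditions from the signs of the exponents $ni-d$. Your bookkeeping remark about the case $p_{d/n}=0$ is a nice addition not made explicit in the paper; note the typo ``$n\nmid n$'' should be ``$n\nmid d$''.
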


\begin{proof} 
Write $f = \sum_i l^i p_i$ as in Equation~\ref{eq:can-rewrite}.  $f$ is unstable precisely when $\lim_{t \to 0} \lambda_{\rm can}(t) f = 0$ or $\lim_{t \to \infty} \lambda_{\rm can}(t) f = 0$

 Observe that $\lambda_{\rm can}(t) f = \sum_i t^{(n-1)i - (d-i)} l^i p_i = \sum_i t^{ni-d} l^i p_i$. This limit as $t \to 0$ is $0$ if and only if $ni-d > 0$ for all $i$ such that $p_i \neq 0$. In other words, $p_i \neq 0 \implies i > d/n$, i.e., $i \geq {\lfloor d/n \rfloor + 1}$ since $i$ must be an integer. Thus $l^{\lfloor d/n \rfloor + 1} \mid f$. Similarly, the limit as $t \to \infty$ is $0$ if and only if $f = \sum_{i = 0}^{\lceil d/n \rceil - 1} l^i p_i$. 

If $f$ is not unstable, then it is semistable. Suppose $f$ is not polystable, then one of $\lim_{t \to 0} \lambda_{\rm can} (t) f$ and $\lim_{t \to \infty} \lambda_{\rm can}(t) f$ exists and is not in $O_f$.

Let's first suppose $\lim_{t \to 0} \lambda_{\rm can}(t) f$ exists and is not in $O_f$. If $n \nmid d$, for any $i$, we must have $ni - d < 0$ or $ni-d>0$. For $\lim_{t \to 0} \lambda_{\rm can}(t) f$ to exist, we must have $p_i = 0$ whenever $ni-d < 0$. Further, if $ni-d > 0$, then $t^{ni-d} l^i p_i$ will go to $0$ in the limit, i.e., $0 \in \overline{O_f}$, so $f$ is unstable, which is a contradiction. Hence, we must have $n \mid d$ and that $l^{d/n} \mid f$ (since $p_i = 0$ whenever $ni-d < 0$). Further, in this case, the limit is precisely $l^{d/n} f_{d/n}$.

The case $\lim_{t \to \infty} \lambda_{\rm can}(t) f$ exists and is not in $O_f$ is similar except we replace $l^{d/n} \mid f$ by $f = \sum_{i = 0}^{d/n} l^i p_i$. 
\end{proof}

The above results translate into the following algorithm:

\begin{algorithm} \label{algo:pnmidn}
 Now we give an algorithm that decides whether a symmetric polynomial is unstable/semistable/polystable/stable in the case $p \nmid n$.\\
\noindent{\bf Input:}  $f \in K[x_1,\dots,x_n]_d$
\begin{description}
\setlength\itemsep{.5em}
\item [Step 1] Write $f = \sum_i l^i p_i$.

\item[Step 2] If $l^{\lfloor d/n \rfloor + 1} \mid f$ or $f = \sum_{i = 0}^{\lceil d/n \rceil - 1} l^i p_i$, then $f$ is unstable. Else, proceed to Step 3.

\item[Step 3] If $n \nmid d$, then $f$ is polystable. Further, in this case, if $\dim(\SL(V)_f) = 0$, then $f$ is stable. If $n \mid d$, proceed to Step 4.

\item[Step 4] Check if $l^{d/n} \mid f$ or $f = \sum_{i = 0}^{d/n} l^i p_i$. If neither holds, then $f$ is polystable. Further, in this case, if $\dim(\SL(V)_f) = 0$, then $f$ is stable. If one of  $l^{d/n} \mid f$ or $f = \sum_{i = 0}^{d/n} l^i p_i$ hold, then go to Step 5.

\item[Step 5] Let $f' = l^{d/n} f_{d/n}$. If $\dim(\SL(V)_{f'}) = \dim(\SL(V)_f)$, then $f$ is polystable and in this case if $\dim(\SL(V)_f) = 0$, then $f$ is stable. If $\dim(\SL(V)_{f'}) \neq \dim(\SL(V)_f)$, then $f$ is semistable, but not polystable.

\end{description}
\end{algorithm}

Most of the steps in the above algorithm are fairly straightforward from an algorithmic perspective, especially since we do not worry about complexity issues. The only non-trivial step is the computation of $\dim(\SL(V)_f)$ and $\dim(\SL(V)_{f'})$. These can be computed by Gr\"obner basis techniques, see \cite[Chapter~9]{Cox-Little-Oshea}. In chacteristic $0$, these can actually be computed by computing the dimensions of their Lie algebras, which is a linear algebraic computation.

\subsection{The case $p \mid n$:}
Recall from Corollary~\ref{cor:sym.flags} that there are essentially four possible flags of $S_n$-stable subspaces. It is easy to observe that $0 \subset L \subset M \subset K^n$ refines all such flags. We can take advantage of this fact to reduce the problem of testing polystability for a symmetric polynomial to a problem on a $2$-dimensional torus (and a computation of the stabilizer).

\begin{lemma} \label{lem:pmidn}
Suppose that $\kar(K) = p \mid n$. Let $0 \neq f \in V = K[x_1,\dots,x_n]_d^{S_n}$. Let $\mathcal{B} = (l,b_1,b_2,\dots,b_{n-2},c)$ be a basis of $K^n$, where $l = x_1 + \dots + x_n$, let $b_i = x_i - x_{i+1}$ for $i = 1,2,\dots,n-2$ and let $c = x_n$. Let $T_2 = (K^*)^2$ denote the two-dimensional torus acting on $K^n$ by $t \cdot l = t_1 l$, $t \cdot b_i = t_2 b_i$ for all $i$ and $t \cdot c = t_1^{-1} t_2^{-(n-2)} c$. Let $w = {\rm ess}(f)$ denote a point in the unique closed orbit of $\overline{O_{T_2,f}}$. Then 
\begin{itemize}
\item $f$ is polystable if and only if $\dim((\SL_n)_f) = \dim((\SL_n)_w)$;
\item $f$ is semistable if and only if $w \neq 0$.
\end{itemize}
\end{lemma}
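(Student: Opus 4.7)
The plan is to combine the classification of $S_n$-stable flags in Corollary~\ref{cor:sym.flags} with the explicit description of optimal one-parameter subgroups in Lemma~\ref{lem:technical-symm} to show that any obstruction to $\SL_n$-semistability or $\SL_n$-polystability of $f$ is already witnessed by a one-parameter subgroup inside the two-dimensional torus $T_2$. The conclusion then follows from standard torus theory applied to $\overline{O_{T_2, f}}$.

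Concretely, set $G = \SL_n$ and $\widetilde{G} = \GL_n$, and observe that since $f$ is symmetric, the permutation matrices give an inclusion $S_n \subseteq \widetilde{G}_f$. Suppose $f$ is not $\SL_n$-polystable (respectively not $\SL_n$-semistable), and take $S = \overline{O_{\SL_n, f}} \setminus O_{\SL_n, f}$ (respectively $S = \{0\}$). By Theorem~\ref{thm:Kempf} together with Proposition~\ref{prop:improve-kempf}, the optimal parabolic $P_{S,f}$ is normalized by $\widetilde{G}_f$, hence by $S_n$, so $P_{S,f} = P_\mathcal{F}$ for an $S_n$-stable flag $\mathcal{F}$. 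Corollary~\ref{cor:sym.flags} then forces $\mathcal{F}$ to be a subflag of $\mathcal{F}_{\max} := 0 \subset L \subset M \subset K^n$.

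The heart of the argument is to upgrade this to an optimal one-parameter subgroup lying in $T_2$. The flag $\mathcal{F}$ pins down the indivisible weight vector $\underline{c}$ via Lemma~\ref{lem:technical-symm}, leaving only the splitting $\underline{G}$ to choose. For each of the three nontrivial subflags of $\mathcal{F}_{\max}$, I would read off a splitting directly from the basis $\mathcal{B}$: use $\spa(b_1, \dots, b_{n-2}, c)$ as a complement of $L$ in $K^n$, use $\spa(c)$ as a complement of $M$ in $K^n$, and use $\spa(b_1, \dots, b_{n-2})$ as a complement of $L$ in $M$. A short case-by-case check, matching $\underline{c}$ against the $T_2$-weights of $l$, the $b_i$, and $c$ (and using $p \mid n$ to ensure $L \subseteq M$), then confirms that $\lambda_{(\underline{G}, \underline{c})}$ lies in $T_2$ in every case. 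I expect this verification to be the main technical nuisance of the proof, although it amounts to straightforward bookkeeping.

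Having secured $\lambda \in T_2$ with $\lim_{t \to 0} \lambda(t) f \in S$, we conclude $S \cap \overline{O_{T_2, f}} \neq \emptyset$. For the semistability half, $S = \{0\}$ forces $0 \in \overline{O_{T_2, f}}$ and hence $w = {\rm ess}(f) = 0$; the converse implication $w \neq 0 \Rightarrow f$ semistable is immediate from $\overline{O_{T_2, f}} \subseteq \overline{O_{\SL_n, f}}$. For the polystability half, the same step produces some $w' \in \overline{O_{T_2, f}} \setminus O_{\SL_n, f}$; since $w$ lies in the unique closed $T_2$-orbit in $\overline{O_{T_2, f}}$, we have $w \in \overline{O_{T_2, w'}} \subseteq \overline{O_{\SL_n, w'}}$, and since $\dim O_{\SL_n, w'} < \dim O_{\SL_n, f}$ we conclude $\dim (\SL_n)_w > \dim (\SL_n)_f$. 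Conversely, if $O_{\SL_n, f}$ is closed, then $w \in \overline{O_{T_2, f}} \subseteq O_{\SL_n, f}$, so the stabilizer dimensions agree.
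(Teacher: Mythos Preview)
Your proposal is correct and follows essentially the same route as the paper's proof: both reduce to showing that any optimal one-parameter subgroup already lands in $T_2$ by classifying the $S_n$-stable flags, choosing an explicit splitting from the basis $\mathcal{B}$, and then finishing with the torus/stabilizer-dimension argument. One small omission worth patching: you pass directly to ``the three nontrivial subflags of $\mathcal{F}_{\max}$'' without ruling out the trivial flag $0 \subset K^n$, which Corollary~\ref{cor:sym.flags} does list; the paper handles this by observing (as in Lemma~\ref{pnmidn-1-onepsg}) that the corresponding decorated splitting forces $\underline{c} = (0)$ and hence a trivial one-parameter subgroup, contradicting optimality.
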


\begin{proof}
Let $S$ be a non-empty closed $\SL_n$-stable subset of $V$ such that $S \cap O_{\SL_n,f} = \emptyset$ and $S \cap \overline{O_{\SL_n,f}} \neq \emptyset$. Then, let $P_{S,f}$ be the optimal parabolic subgroup. Now $P_{S,f} = P_\mathcal{F}$ for some flag $\mathcal{F}$ of $S_n$-stable subspaces of $K^n$. By Corollary~\ref{cor:sym.flags}, there are four possibilities:\begin{itemize}
\item $0 \subset K^n$;
\item $0 \subset L \subset K^n$;
\item $0 \subset M \subset K^n$;
\item $0 \subset L \subset M \subset K^n$.
\end{itemize} 

The first is ruled out by the same argument as in Lemma~\ref{pnmidn-1-onepsg}. For the second flag, a splitting of $K^n$ is given by $\underline{G} = (L, B \oplus C)$, where $L$ is the span of $l$, $B$ is the span of $b_i$ for $1 \leq i \leq n-2$ and $C$ is the span of $c$. Let $\lambda$ be the optimal $1$-parameter subgroup associated to the splitting $\underline{G}$. Then $\lambda(t) v = t^{c_1}v$ for $v \in L$ and $\lambda(t) v = t^{c_2} v$ for $v \in B \oplus C$ such that $c_1 + (n-1) c_2 = 0$. In particular, $\lambda \in T_2$, so this means that $S \cap \overline{O_{T_2,f}} \neq \emptyset$. A similar argument holds for the other two possibilities of flags. Hence, in any case, we must have $S \cap \overline{O_{T_2,f}} \neq \emptyset$.

To summarize, suppose we have a closed $\SL_n$-stable subset $S$ such that $S \cap O_{\SL_n,f} = \emptyset$ and $S \cap \overline{O_{\SL_n,f}} \neq \emptyset$, then $S \cap \overline{O_{T_2,f}} \neq \emptyset$. Now, since $T_2$ is a torus, we get that $S \cap \overline{O_{T_2,f}} \neq \emptyset$ if and only if $w \in S$.

Now, take $S = \overline{O_{\SL_n, f}} \setminus O_{\SL_n,f}$. Thus, $S \neq \emptyset \Leftrightarrow w \in S$. Clearly, $w \in \overline{O_{\SL_n,f}}$, so $\dim((\SL_n)_f) \geq \dim((\SL_n)_w)$. Thus $w \in S \Leftrightarrow \dim((\SL_n)_f) > \dim((\SL_n)_w) \Leftrightarrow \dim((\SL_n)_f) \neq \dim((\SL_n)_w)$.  Thus $f$ is polystable $\Leftrightarrow S = \emptyset \Leftrightarrow \dim((\SL_n)_f) = \dim((\SL_n)_w)$.

The argument for semistability is analogous where you take $S = \{0\}$ instead of $\overline{O_{\SL_n, f}} \setminus O_{\SL_n,f}$.

\end{proof}

\begin{algorithm} \label{algo:pmidn}
 Now we give an algorithm that decides whether a symmetric polynomial is unstable/semistable/polystable/stable in the case $p \mid n$.\\
\noindent{\bf Input:}  $f \in K[x_1,\dots,x_n]_d$
\begin{description}
\setlength\itemsep{.5em}
\item [Step 1] Compute $w = {\rm ess}(f)$ as in Lemma~\ref{lem:pmidn}. If $w = 0$, then $f$ is unstable. Else, proceed to Step 2

\item[Step 2] If $\dim((\SL_n)_f) \neq \dim((\SL_n)_w)$, then $f$ is semistable, not polystable. Else $f$ is polystable. Moreover, in the case that $f$ is polystable, $\dim(\SL_n)_f = 0$ if and only if $f$ is stable. 

\end{description}
\end{algorithm}

\begin{proof} [Proof of Theorem~\ref{thm:symm-algo}]
Thus follows from Algorithms~\ref{algo:pnmidn} and ~\ref{algo:pmidn}.
\end{proof}

\section{Polystability for interesting classes of symmetric polynomials} \label{sec:symm-poly-class} 
We first briefly recall important results on symmetric polynomials, using the opportunity to introduce notation. While symmetric polynomials in characteristic zero is widely studied, the case of positive characteristic receives far less attention, so we will be particularly careful about characteristic assumptions.

First, we define elementary symmetric functions. For each $1 \leq k \leq n$, we define the $k^{th}$ elementary symmetric polynomial
$$
e_k(x_1,\dots,x_n) = \sum_{1 \leq i_1 < i_2 < \dots < i_k \leq n} x_{i_1} x_{i_2} \cdots x_{i_k}
$$

We also define the $k^{th}$ homogeneous symmetric polynomial
$$
h_k(x_1,\dots,x_n) = \sum_{1 \leq i_1 \leq i_2 \leq \dots \leq i_k \leq n} x_{i_1} x_{i_2} \cdots x_{i_k}
$$

Let $\Lambda(n) = K[x_1,\dots,x_n]^{S_n}$ denote the ring of symmetric polynomials. The collection $\{e_k(x_1,\dots,x_n)\ |\ 1 \leq k \leq n\}$ forms an algebraically independent set of generators for $\Lambda(n)$ as does the collection $\{h_k(x_1,\dots,x_n)\ | \ 1 \leq k \leq n\}$. In characteristic zero $\{p_k(x_1,\dots,x_n)\ | \ 1 \leq k \leq n\}$ forms an algebraically independent set of generators as well where $p_k$ denotes the power sum symmetric polynomial
$$
p_k(x_1,\dots,x_n) = x_1^k + x_2^k + \dots + x_n^k.
$$

However, power sum symmetric polynomials do not form a generating set if $\kar(K) < n$.

For each partition $\lambda = (\lambda_1,\dots,\lambda_l) \vdash d$, we define
\begin{align*}
e_\lambda &= e_{\lambda_1} e_{\lambda_2} \cdots e_{\lambda_l} \\
h_\lambda & = h_{\lambda_1} h_{\lambda_2} \cdots h_{\lambda_l} \\
p_\lambda & = p_{\lambda_1} p_{\lambda_2} \cdots p_{\lambda_l}
\end{align*}

The collection $\{e_\lambda(x_1,\dots,x_n)\ | \lambda \vdash d\}$ forms a linear basis for $\Lambda(n)_d$, the space of degree $d$ symmetric polynomials as does $\{h_\lambda(x_1,\dots,x_n)\ | \lambda \vdash d\}$ and in characteristic $0$ $\{p_\lambda(x_1,\dots,x_n)\ | \lambda \vdash d\}$ forms a basis as well. In particular, $\dim_K(\Lambda(n)_d)$ is equal to the number of partitions of $d$. A very straightforward way to see this is to define monomial symmetric functions. We say an exponent vector $e = (e_1,\dots,e_n)$ is of type $\lambda$ if it is a permutation of $(\lambda_1,\dots,\lambda_n)$ where we add trailing zeros to $\lambda$ if it does not have sufficient parts. 
$$
m_\lambda(x_1,\dots,x_n) = \sum_{e = (e_1,\dots,e_n) \text{ of type } \lambda} x^e.
$$

It is entirely obvious that $\{m_\lambda\ |\ \lambda \vdash d\}$ is a linear basis of $\Lambda(n)_d$.

Another interesting collection of symmetric polynomials are the Schur polynomials whose importance comes from the representation theory of the symmetric group (or equivalently the general linear group). For $\lambda = (\lambda_1,\dots,\lambda_l) \vdash d$, we define the Schur polynomial

$$
s_\lambda(x_1,\dots,x_n) = \det \begin{pmatrix} h_{\lambda_1} & h_{\lambda_1 + 1} & \dots  & \dots  & h_{\lambda_1 + l-1} \\ h_{\lambda_2 - 1} & h_{\lambda_2} & h_{\lambda_2 + 1} &\dots  & h_{\lambda_2 + l - 2} \\    && \ddots &&\vdots  \\ &&& \ddots &\vdots   \\  && \dots & h_{\lambda_l - 1} &  h_{\lambda_l} \end{pmatrix},
$$
where $h_d = 0$ for $d < 0$ and $h_0 = 1$.
In particular, $s_{(1^d)}(x_1,\dots,x_n) = e_d (x_1,\dots,x_n)$ and $s_{d}(x_1,\dots,x_n) = h_d(x_1,\dots,x_n)$. There are other equivalent definitions of Schur functions and we will recall them as and when we need them.

Recall that $f \in K[x_1,\dots,x_n]_d^{S_n}$, we write $f$ as a polynomial in $l = \sum_i x_i$ and $b_1,\dots,b_{n-1}$ where $b_i = x_i - x_{i+1}$. In other words, we have $f = \sum_{i=0}^d l^i p_i$, where $p_i$ is a polynomial in $b_1,b_2,\dots,b_{n-1}$. Let $D = \sum_i \frac{\partial}{\partial_{x_i}}$ for this section.

\begin{lemma} \label{lem:diff-operator-p}
Assume $p \nmid n$. Let $f \in K[x_1,\dots,x_n]_d$. Then $f \in K[b_1,\dots,b_{n-1}]_d$ if and only if $\frac{D^k}{k!} f = 0$ for all $k \in \Z_{>0}$.
\end{lemma}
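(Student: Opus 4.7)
The plan is to reduce the problem to a statement about a single partial derivative via a linear change of coordinates, and the hypothesis $p \nmid n$ is precisely what makes this change of coordinates available. At a conceptual level, $D$ is the infinitesimal generator of the translation action $x \mapsto x + t\mathbf{1}$, and $K[b_1, \ldots, b_{n-1}]$ is the ring of invariants of this action; the lemma is the elementary statement that an element is invariant if and only if every divided power of $D$ kills it.

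First I would introduce the coordinates $y_1 := l/n$ and $y_j := b_{j-1}$ for $2 \leq j \leq n$. Since $p \nmid n$, the forms $y_1, \ldots, y_n$ are linearly independent, giving an identification $K[x_1, \ldots, x_n] = K[y_1, \ldots, y_n]$ under which $K[b_1, \ldots, b_{n-1}]$ corresponds exactly to $K[y_2, \ldots, y_n]$. A direct computation shows $D(y_1) = 1$ and $D(y_j) = 0$ for $j \geq 2$, so in these coordinates $D$ acts as $\partial/\partial y_1$. I would then interpret $\frac{D^k}{k!}$ via Taylor's formula: by definition, $\frac{D^k f}{k!}$ is the coefficient of $t^k$ in the formal expansion of $f(x_1+t, \ldots, x_n+t) \in K[x_1, \ldots, x_n][t]$. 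This interpretation makes sense in any characteristic because the divided power operator $\frac{1}{k!}\partial^k/\partial y_1^k$ acts on monomials by binomial coefficients. In the new coordinates, the translation $x \mapsto x + t\mathbf{1}$ is exactly $y_1 \mapsto y_1 + t$ with $y_j$ fixed for $j \geq 2$.

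Both directions of the equivalence then follow immediately. For the ``only if'' direction, if $f \in K[b_1, \ldots, b_{n-1}] = K[y_2, \ldots, y_n]$, then $f$ is constant in $t$ under the substitution $y_1 \mapsto y_1 + t$, so every coefficient $\frac{D^k f}{k!}$ with $k \geq 1$ vanishes. Conversely, if all these coefficients vanish for $k \geq 1$, then $f(x+t\mathbf{1}) = f(x)$ as a polynomial identity in $t$, which forces $f$ (written in the $y$-basis) to be independent of $y_1$; hence $f \in K[y_2, \ldots, y_n] = K[b_1, \ldots, b_{n-1}]$. I do not anticipate a real obstacle once the Taylor/divided power interpretation is set up; the role of $p \nmid n$ is essential only in making $y_1 = l/n$ available as a coordinate complementary to the $b_j$'s, and this hypothesis is genuinely needed, since when $p \mid n$ one has the relation $l = \sum_{j=1}^{n-1} j \, b_j$ in $K$, so $\{l, b_1, \ldots, b_{n-1}\}$ fails to be a basis of the linear forms.
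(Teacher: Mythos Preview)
Your proposal is correct and takes essentially the same approach as the paper: both exploit that $\{l, b_1,\dots,b_{n-1}\}$ is a basis of linear forms (this is where $p\nmid n$ enters) so that $D$ becomes, up to the harmless scalar $n$, the partial derivative in the $l$-direction. The paper simply writes $f=\sum_i l^i p_i$, notes $Db_i=0$, and observes that if some $p_j\neq 0$ with $j>0$ then $\frac{D^j}{j!}f\neq 0$; your normalization $y_1=l/n$ and the Taylor/translation-invariance framing make the same computation more transparent but do not change the underlying argument.
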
 
 
\begin{proof}
Suppose $f \in K[b_1,\dots,b_{n-1}]_d$, then $Db_i = 0$, so $\frac{D^k}{k!} f = 0$ for all $k \in \Z_{>0}$. Conversely, suppose $f \notin K[b_1,\dots,b_{n-1}]_d$. Then, write $f = \sum_i l^i p_i$, and there exists $j > 0$ is such that $p_j \neq 0$. Then, clearly $\frac{D^j}{j!} f \neq 0$.
\end{proof}

\begin{remark}
In the above lemma, dividing by $k!$ may not make sense in characteristic $p$ if $k$ is large enough. Yet, the differential operator $\frac{D^k}{k!}$ is well-defined. This is standard and we leave the details to the reader.
\end{remark}

In characteristic zero, we have a stronger statement.

\begin{lemma} \label{lem:diff-operator-0}
Let $\kar(K) = 0$. Let $f \in K[x_1,\dots,x_n]_d$. Then $f \in K[b_1,\dots,b_{n-1}]_d$ if and only if $D f = 0$.
\end{lemma}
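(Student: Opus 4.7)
The plan is to follow the same decomposition strategy as Lemma~\ref{lem:diff-operator-p} but leverage the characteristic zero hypothesis to collapse the infinite family of conditions $\{D^k f/k! = 0\}_{k \geq 1}$ down to the single condition $Df = 0$. The forward direction is identical and essentially instantaneous: since $Db_i = D(x_i - x_{i+1}) = 1 - 1 = 0$, any polynomial in $b_1, \dots, b_{n-1}$ is annihilated by $D$.

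For the reverse direction, I would use the fact that $\{l, b_1, \dots, b_{n-1}\}$ is a basis of the linear forms of $K[x_1,\dots,x_n]$ (this is where $\kar(K) = 0$, hence $p \nmid n$, is used implicitly to know $l$ is not in the span of the $b_i$'s), so $K[x_1,\dots,x_n] = K[l, b_1,\dots,b_{n-1}]$ and every $f$ admits a \emph{unique} expansion $f = \sum_{i \geq 0} l^i p_i$ with $p_i \in K[b_1,\dots,b_{n-1}]$. Applying $D$ to this expansion, using $Dl = n$ and $Dp_i = 0$, I get
\begin{equation*}
Df = \sum_{i \geq 1} i \cdot n \cdot l^{i-1} p_i = n \sum_{i \geq 1} i \, l^{i-1} p_i.
\end{equation*}

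Now assume $Df = 0$. Since $\kar(K) = 0$, we have $n \neq 0$, so $\sum_{i \geq 1} i \, l^{i-1} p_i = 0$ as an element of $K[l,b_1,\dots,b_{n-1}]$. By the uniqueness of the expansion (i.e.\ viewing this as a polynomial in $l$ with coefficients in $K[b_1,\dots,b_{n-1}]$), every coefficient vanishes: $i \, p_i = 0$ for all $i \geq 1$. Again using $\kar(K) = 0$, this forces $p_i = 0$ for all $i \geq 1$, hence $f = p_0 \in K[b_1,\dots,b_{n-1}]$, as desired.

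There is no real obstacle here; the argument is essentially a one-line calculation once the $(l, b_1, \dots, b_{n-1})$ coordinate change is in place. The only subtlety — and the reason the characteristic $p$ version requires all higher derivatives $D^k/k!$ — is that in positive characteristic $p \mid n$ the leading coefficient $n$ in $Dl = n$ vanishes, destroying the ability to recover $p_i$ from $Df$ by reading off a single coefficient; one then needs successive divided-power derivatives to access the higher terms $l^i p_i$ with $i \geq p$.
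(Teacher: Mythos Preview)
Your proof is correct and follows exactly the approach the paper intends (the paper itself just says ``similar to Lemma~\ref{lem:diff-operator-p} and left to the reader''); your computation $Df = n\sum_{i\ge 1} i\, l^{i-1}p_i$ and the conclusion $p_i=0$ for $i\ge 1$ in characteristic~$0$ is precisely what was meant. One small correction to your closing commentary: Lemma~\ref{lem:diff-operator-p} already assumes $p\nmid n$, so $Dl=n$ does not vanish there either; the reason higher divided-power derivatives are needed in positive characteristic is that the factor $i$ (not $n$) in $i\,p_i$ can vanish when $p\mid i$.
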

 
\begin{proof}
This is similar to Lemma~\ref{lem:diff-operator-p} and we leave it to the reader.
\end{proof}

\begin{lemma} \label{lem:compute-D}
We have 
\begin{itemize}
\item $D e_k(x_1,\dots,x_n) = (n+1-k) e_{k-1} (x_1,\dots,x_n)$;
\item $D h_k(x_1,\dots,x_n) = (n+k-1) h_{k-1}(x_1,\dots,x_n)$;
\item $D p_k(x_1,\dots,x_n) =  k p_{k-1} (x_1,\dots,x_n)$.
\end{itemize}
\end{lemma}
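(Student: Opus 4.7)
The plan is to verify each of the three identities by direct, essentially combinatorial, computation, treating the three cases separately since the basis polynomials $e_k$, $h_k$, $p_k$ each admit a clean monomial expansion.

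For the power sums, the computation is immediate: since $p_k = \sum_{i=1}^n x_i^k$, we have $\frac{\partial p_k}{\partial x_j} = k x_j^{k-1}$ and therefore $D p_k = \sum_{j=1}^n k x_j^{k-1} = k p_{k-1}$.

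For the elementary symmetric polynomials, I would first observe that $\frac{\partial e_k}{\partial x_j} = e_{k-1}\bigl(x_1, \dots, \widehat{x_j}, \dots, x_n\bigr)$, the elementary symmetric polynomial of degree $k-1$ in the $n-1$ variables obtained by omitting $x_j$. This is because the monomials of $e_k$ containing the factor $x_j$ are exactly in bijection with monomials of $e_{k-1}$ in the remaining variables. Summing over $j$ and collecting monomials, each squarefree degree $k-1$ monomial $x_{i_1} \cdots x_{i_{k-1}}$ appears in $e_{k-1}(x_1,\dots,\widehat{x_j},\dots,x_n)$ precisely when $j \notin \{i_1, \dots, i_{k-1}\}$, giving exactly $n-(k-1) = n-k+1$ occurrences. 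Therefore $D e_k = (n-k+1) e_{k-1} = (n+1-k) e_{k-1}$, as claimed.

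For the complete homogeneous symmetric polynomials, I would use the monomial expansion $h_k = \sum_{|\alpha|=k} x^\alpha$, where the sum ranges over all multi-exponents $\alpha = (\alpha_1,\dots,\alpha_n) \in \N^n$ of total degree $k$. Then $\frac{\partial h_k}{\partial x_j} = \sum_{|\alpha|=k,\, \alpha_j \geq 1} \alpha_j\, x^{\alpha - e_j}$, and after the substitution $\beta = \alpha - e_j$ we get
$$
D h_k = \sum_{j=1}^n \sum_{|\beta|=k-1} (\beta_j + 1)\, x^\beta = \sum_{|\beta|=k-1} \Bigl(\sum_{j=1}^n (\beta_j+1)\Bigr) x^\beta = (k-1+n)\, h_{k-1},
$$
using $\sum_j \beta_j = k-1$ in the last step. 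This gives $D h_k = (n+k-1)h_{k-1}$.

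There is no real obstacle here — each identity reduces to a bookkeeping exercise about how many monomials of a given combinatorial type contain a given variable. If a more uniform proof is desired, one can repackage all three identities via generating functions: applying $D$ to $E(t) = \prod_i(1+tx_i)$, $H(t) = \prod_i(1-tx_i)^{-1}$, and $P(t) = \sum_{k \geq 1} p_k t^{k-1}$, and then extracting coefficients, yields the same formulas; but the direct approach above is the most transparent and does not require any characteristic hypothesis.
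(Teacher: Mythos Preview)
Your proof is correct and is exactly the kind of direct computation the paper has in mind: the paper's own proof simply reads ``This is a straightforward computation and is left to the reader.'' You have filled in precisely those details, so there is nothing to compare or correct.
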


\begin{proof}
This is a straightforward computation and is left to the reader.
\end{proof}

\subsection{Elementary, homogeneous and power sum symmetric polynomials}
We first state a lemma
\begin{lemma} \label{lem:interesting-0}
Suppose $\kar(K) = 0$, $\lambda \vdash d$ and $d < n$. Let $f \in K[x_1,\dots,x_n]_d^{S_n}$. Then $f$ is either unstable or polystable. Further, $f$ is polystable if and only if $l \nmid f$ and $Df \neq 0$.
\end{lemma}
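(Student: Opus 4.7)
The plan is to derive this lemma as a direct specialization of Theorem~\ref{theo:pnmidn}, using the arithmetic simplifications forced by the hypothesis $d<n$. Since $\kar(K)=0$ we have $p\nmid n$ automatically, so Theorem~\ref{theo:pnmidn} applies without change. I will tacitly assume $d\ge 1$, which is implicit in the phrasing ``$\lambda\vdash d$''.

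First I would compute, for $0<d<n$, that $\lfloor d/n\rfloor=0$ and $\lceil d/n\rceil=1$. Thus the first unstability condition in Theorem~\ref{theo:pnmidn}, namely $l^{\lfloor d/n\rfloor+1}\mid f$, collapses to $l\mid f$; and the second condition $f=\sum_{i=0}^{\lceil d/n\rceil-1}l^{i}p_{i}$ collapses to $f=p_{0}$, i.e.\ to $f\in K[b_{1},\dots,b_{n-1}]$. By Lemma~\ref{lem:diff-operator-0} this last condition is equivalent to $Df=0$. Hence the unstability half of Theorem~\ref{theo:pnmidn} reads in our setting as: $f$ is unstable iff $l\mid f$ or $Df=0$. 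This is where characteristic zero is used in an essential way, since in positive characteristic only the weaker Lemma~\ref{lem:diff-operator-p} involving all divided powers $D^{k}/k!$ is available.

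To get the dichotomy between unstable and polystable, I would invoke the second half of Theorem~\ref{theo:pnmidn}: a non-unstable $f$ fails to be polystable only when $n\mid d$ (together with some auxiliary conditions on $f_{d/n}$). But $0<d<n$ forces $n\nmid d$, so this exceptional regime is vacuous, and every non-unstable $f$ is automatically polystable. Combining the two steps, $f$ is polystable iff $l\nmid f$ and $Df\neq 0$, which is exactly the claimed characterization.

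There is no real obstacle here; the lemma is a clean corollary once Theorem~\ref{theo:pnmidn} and Lemma~\ref{lem:diff-operator-0} are in hand. The only point that requires any care is to dispose of the ``semistable but not polystable'' case by the divisibility obstruction $n\nmid d$, rather than trying to verify each of its three internal conditions directly.
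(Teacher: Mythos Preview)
Your proposal is correct and follows precisely the route the paper takes: the paper's own proof is the single line ``Follows from Theorem~\ref{theo:pnmidn} and Lemma~\ref{lem:diff-operator-0},'' and your argument is exactly the unpacking of that sentence, using $d<n$ to collapse the floor/ceiling conditions and to rule out $n\mid d$.
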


\begin{proof}
Follows from Theorem~\ref{theo:pnmidn} and Lemma~\ref{lem:diff-operator-0}.
\end{proof}

\begin{proposition} \label{prop:interesting-0}
Let $\lambda \vdash d$ be a partition and let $d < n$. Assume $\kar(K) = 0$. Then $e_\lambda(x_1,\dots,x_n)$, $h_\lambda(x_1,\dots,x_n)$ and $p_\lambda(x_1,\dots,x_n)$ are either polystable or unstable. Further, they are polystable if and only if all non-zero parts of $\lambda$ are $\geq 2$.  
\end{proposition}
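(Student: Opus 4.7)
The plan is to apply Lemma~\ref{lem:interesting-0}, which under our hypotheses ($\kar(K) = 0$ and $d < n$) says that $f \in K[x_1,\dots,x_n]_d^{S_n}$ is either unstable or polystable, and that $f$ is polystable if and only if $l \nmid f$ and $Df \neq 0$, where $l = \sum_i x_i$ and $D = \sum_i \partial/\partial x_i$. So in each of the three cases it suffices to determine when these two conditions both hold. Let $f$ stand for any one of $e_\lambda, h_\lambda, p_\lambda$, and write $u_k$ for the corresponding $e_k, h_k, p_k$; note $u_1 = l$ in all three cases.

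For the ``only if'' direction, suppose some part $\lambda_j$ equals $1$. Then $f = u_{\lambda_1}\cdots u_{\lambda_k}$ is literally a product containing the factor $u_1 = l$, so $l \mid f$ in $K[x_1,\dots,x_n]$, and Lemma~\ref{lem:interesting-0} forces $f$ to be unstable.

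For the ``if'' direction, assume every non-zero part of $\lambda$ is $\geq 2$. I verify the two positive conditions in turn. For $l \nmid f$, I use that each of $\{e_1,\dots,e_n\}$, $\{h_1,\dots,h_n\}$, and (in characteristic zero) $\{p_1,\dots,p_n\}$ is an algebraically independent generating set of $\Lambda(n)$, so $\Lambda(n)$ is a polynomial ring in the relevant $u_k$'s. By hypothesis $f$ is a monomial in these $u_k$'s not involving $u_1$. If $l$ divided $f$ inside $K[x_1,\dots,x_n]$, then $f/l$ would automatically be $S_n$-invariant (since $l$ is), hence would lie in $\Lambda(n)$, contradicting that the monomial $f$ does not involve $u_1 = l$ in the polynomial ring $\Lambda(n)$. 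For $Df \neq 0$, the Leibniz rule and Lemma~\ref{lem:compute-D} give
\[
Df \;=\; \sum_{j=1}^k \alpha_j \, u_{\lambda_1}\cdots u_{\lambda_j - 1} \cdots u_{\lambda_k},
\]
where $\alpha_j$ equals $(n+1-\lambda_j)$, $(n+\lambda_j-1)$, or $\lambda_j$ in the three respective cases. Since $2 \leq \lambda_j \leq d < n$, each $\alpha_j$ is a strictly positive integer, hence nonzero in characteristic zero.

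To conclude $Df \neq 0$, I group summands that yield the same monomial in the polynomial generators: two indices $j,j'$ give the same monomial precisely when $\lambda_j = \lambda_{j'}$ (decrementing equal parts), and the combined coefficient is then a positive integer multiple of the common $\alpha_j$. Distinct values of $\lambda_j$ produce distinct monomials in $\Lambda(n) = K[u_1,\dots,u_n]$. Thus every grouped coefficient is strictly positive, and $Df \neq 0$. The only (mild) obstacle is checking no cancellation occurs among the Leibniz summands, and this is resolved immediately by the positivity of every $\alpha_j$ in characteristic zero. Combining both conditions with Lemma~\ref{lem:interesting-0} yields polystability, completing the proof.
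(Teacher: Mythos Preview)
Your proof is correct and follows essentially the same approach as the paper: apply Lemma~\ref{lem:interesting-0}, use algebraic independence of the relevant generators to show $l \nmid f$ when no part equals $1$, and use Lemma~\ref{lem:compute-D} with the Leibniz rule to show $Df \neq 0$. You are in fact slightly more careful than the paper in arguing that no cancellation occurs among the Leibniz summands (grouping by the value of $\lambda_j$), whereas the paper simply asserts nonvanishing from positivity of the coefficients.
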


\begin{proof}
First, let us consider $e_\lambda$'s. We see that $e_\lambda = e_{\lambda_1} e_{\lambda_2} \cdots e_{\lambda_l}$. Thus $l$ divides $e_\lambda$ if and only if $l$ divides $e_{\lambda_i}$ for some $i$. But now, we see that $l = e_1, e_2,\dots,e_n$ are algebraically independent, so $l = e_1$ divides $e_{\lambda_i}$ if and only if $\lambda_i = 1$. A similar argument holds for $h_\lambda$ and $p_\lambda$. Thus, to summarize, we conclude that $l$ does not divide $e_\lambda/h_\lambda/p_\lambda$ if and only if every non-zero part of $\lambda$ is at least $2$.

Now, consider the action of $D$ on $e_\lambda = e_{\lambda_1} e_{\lambda_2} \cdots e_{\lambda_l}$. We see that $De_\lambda = \sum_{i=1}^l (n + 1- \lambda_i) e_{\lambda_1} \cdots e_{\lambda_i - 1} \cdots e_{\lambda_l} \neq 0$. since $n+1 - \lambda_i > 0$ for all $\lambda_i$ since $\lambda_i \leq d < n$. Similarly, $D h_\lambda \neq 0$ and $D p_\lambda \neq 0$. 

Now, the proposition follows by applying Lemma~\ref{lem:interesting-0}.

\end{proof}

\begin{lemma} \label{lem:interesting-p}
Suppose $\kar(K) = p > 0$, $p \nmid n$, $\lambda \vdash d$ and $d < n$. Let $f \in K[x_1,\dots,x_n]_d^{S_n}$. Then $f$ is polystable if and only if $l \nmid f$ and $\frac{D^k}{k!} f \neq 0$ for some $k \in \Z_{>0}$.
\end{lemma}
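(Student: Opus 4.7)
The plan is to derive this directly from Theorem~\ref{theo:pnmidn} combined with Lemma~\ref{lem:diff-operator-p}. Since $1 \leq d < n$, we have $\lfloor d/n \rfloor = 0$ and $\lceil d/n \rceil = 1$, so the unstability criterion in Theorem~\ref{theo:pnmidn} collapses to the statement that $f$ is unstable if and only if $l \mid f$, or $f = p_0$ in the canonical decomposition $f = \sum_i l^i p_i$. The second alternative is precisely the condition that $f \in K[b_1,\dots,b_{n-1}]_d$, i.e., that $f$ has no $l$-dependence.

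Next I would observe that the three bullet conditions in Theorem~\ref{theo:pnmidn} that can cause a semistable symmetric polynomial to fail polystability all require $n \mid d$, which is incompatible with $1 \leq d < n$. Consequently every semistable $f$ in this degree range is automatically polystable, so $f$ is polystable if and only if $f$ is not unstable, which happens if and only if both $l \nmid f$ and $f \notin K[b_1,\dots,b_{n-1}]_d$ hold.

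Finally, I would invoke Lemma~\ref{lem:diff-operator-p} (whose hypothesis $p \nmid n$ matches ours) to rewrite the second of these conditions: $f \notin K[b_1,\dots,b_{n-1}]_d$ if and only if $\frac{D^k}{k!} f \neq 0$ for some $k \in \Z_{>0}$. Combining with $l \nmid f$ yields the claimed characterization. There is no serious obstacle; the lemma is a direct specialization of Theorem~\ref{theo:pnmidn} to the regime $d < n$, repackaged through the differential description of polynomials that lie purely in the $b_i$'s supplied by Lemma~\ref{lem:diff-operator-p}.
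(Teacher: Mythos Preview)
Your proposal is correct and follows exactly the same approach as the paper, which simply records that the lemma ``follows from Theorem~\ref{theo:pnmidn} and Lemma~\ref{lem:diff-operator-p}.'' Your write-up spells out the specialization in detail: with $1\le d<n$ one has $\lfloor d/n\rfloor=0$ and $\lceil d/n\rceil=1$, the exceptional $n\mid d$ clause is vacuous, and Lemma~\ref{lem:diff-operator-p} converts the condition $f\notin K[b_1,\dots,b_{n-1}]_d$ into the differential criterion.
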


\begin{proof}
Follows from Theorem~\ref{theo:pnmidn} and Lemma~\ref{lem:diff-operator-p}.
\end{proof}

\begin{proposition} \label{prop:elementary-p}
Assume $\kar(K) = p \nmid n$. Let $\lambda = k_1^{a_1} k_2^{a_2} \dots k_l^{a_l} \vdash d$ be a partition and let $d < n$. Then $e_\lambda(x_1,\dots,x_n)$ is polystable if the following conditions hold:
\begin{itemize}
\item Every non-zero part of $\lambda$ is $\geq 2$;
\item $p \nmid (n+1-k_i) a_i$ for some $i$. 
\end{itemize}
\end{proposition}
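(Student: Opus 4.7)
The plan is to invoke Lemma~\ref{lem:interesting-p}, which (since $p \nmid n$ and $d < n$) reduces polystability of $e_\lambda$ to the two conditions: (i) $l \nmid e_\lambda$, and (ii) $\tfrac{D^k}{k!}\,e_\lambda \neq 0$ for some $k \in \Z_{>0}$. For (i), note that $l = e_1$ and recall that $e_1, e_2, \dots, e_n$ are algebraically independent generators of $\Lambda(n)$; hence $e_1$ divides the monomial $e_\lambda = \prod_i e_{k_i}^{a_i}$ if and only if some $k_i = 1$, which is ruled out by the first hypothesis.

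For (ii) I would show the stronger statement that $De_\lambda \neq 0$, so the case $k=1$ suffices. Applying Leibniz to $e_\lambda = \prod_i e_{k_i}^{a_i}$ and using $De_{k_i} = (n+1-k_i)e_{k_i-1}$ from Lemma~\ref{lem:compute-D} gives
\[
De_\lambda \;=\; \sum_{i=1}^{l} a_i (n+1-k_i)\, e_{k_1}^{a_1} \cdots e_{k_i}^{a_i - 1} e_{k_i - 1} \cdots e_{k_l}^{a_l}.
\]
The key step—and the only substantive obstacle—is to rule out cancellation between the $l$ summands. This will follow from algebraic independence of $e_1,\dots,e_n$: each summand is a monomial in these variables, so it is enough to check that the exponent vectors on $(e_1,\dots,e_n)$ arising from different indices $i$ are distinct. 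The $i$-th term is obtained from $e_\lambda$ by decrementing the exponent at $e_{k_i}$ by $1$ and incrementing the exponent at $e_{k_i-1}$ by $1$; since $k_1 > k_2 > \dots > k_l$ are pairwise distinct, these moves are distinct, and in fact the $i$-th summand is uniquely identified by being the only one whose exponent at $e_{k_i}$ is strictly less than $a_i$ (even in the edge case $k_j = k_i - 1$, the term from index $j$ has exponent $a_i$ at $e_{k_i}$, not $a_i-1$). Hence the summands are $K$-linearly independent in $\Lambda(n)$, and $De_\lambda$ vanishes precisely when every coefficient $a_i(n+1-k_i)$ equals zero in $K$, i.e.\ when $p \mid a_i(n+1-k_i)$ for all $i$.

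Under the second hypothesis, at least one coefficient is nonzero in $K$, so $De_\lambda \neq 0$, establishing (ii). Together with (i), Lemma~\ref{lem:interesting-p} then yields polystability of $e_\lambda$.
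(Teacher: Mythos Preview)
Your proof is correct and follows essentially the same approach as the paper: invoke Lemma~\ref{lem:interesting-p}, deduce $l\nmid e_\lambda$ from the algebraic independence of $e_1,\dots,e_n$, and compute $De_\lambda$ via Leibniz and Lemma~\ref{lem:compute-D}. Your treatment of the non-cancellation step (showing the summands are distinct monomials in the $e_j$, including the edge case $k_j=k_i-1$) is in fact more explicit than the paper's, which simply refers back to the characteristic-zero computation.
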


\begin{proof}
As in the proof of Proposition~\ref{prop:interesting-0}, we can show that $l \nmid e_\lambda(x_1,\dots,x_n)$ if and only if every non-zero part of $\lambda$ is $\geq 2$ (since $e_1,\dots,e_n$ are algebraically independent even in positive characteristic). The condition $p \nmid (n+1-k_i) a_i$ for some $i$ ensures that $D e_\lambda \neq 0$ by the same computation as in the proof of Proposition~\ref{prop:interesting-0}. The proposition then follows from  Lemma~\ref{lem:interesting-p}.
\end{proof}

\begin{proposition}
Assume $\kar(K) = p \nmid n$. Let $\lambda = k_1^{a_1} k_2^{a_2} \dots k_l^{a_l} \vdash d$ be a partition and let $d < n$. Then $h_\lambda(x_1,\dots,x_n)$ is polystable if the following conditions hold:
\begin{itemize}
\item Every non-zero part of $\lambda$ is $\geq 2$;
\item $p \nmid (n + k_i - 1) a_i$ for some $i$. 
\end{itemize}
\end{proposition}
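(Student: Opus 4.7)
The plan is to mimic the proof of the preceding Proposition (for $e_\lambda$), substituting $h$'s for $e$'s and using the corresponding differentiation formula from Lemma~\ref{lem:compute-D}. The target is to verify the two hypotheses of Lemma~\ref{lem:interesting-p}, namely that $l \nmid h_\lambda$ and $\frac{D^k}{k!} h_\lambda \neq 0$ for some $k \in \Z_{>0}$; the polystability conclusion then follows immediately.

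For the first hypothesis, I would invoke the fact that $h_1, h_2, \dots, h_n$ are algebraically independent generators of $\Lambda(n)$, so the subring they generate is isomorphic to a polynomial ring in $n$ variables (with unique factorization). Since $l = h_1$ and, under the assumption that every nonzero part of $\lambda$ is at least $2$, no factor $h_{k_i}$ appearing in $h_\lambda = h_{k_1}^{a_1}\cdots h_{k_l}^{a_l}$ equals $h_1$, unique factorization gives $l \nmid h_\lambda$.

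For the second hypothesis, I would take $k = 1$ and show $D h_\lambda \neq 0$ directly. By the Leibniz rule together with Lemma~\ref{lem:compute-D},
$$
D h_\lambda \;=\; \sum_{i=1}^{l} a_i \,(n+k_i-1)\; h_{k_i}^{a_i-1} h_{k_i-1} \prod_{j \neq i} h_{k_j}^{a_j}.
$$
Each summand is a monomial in the algebraically independent generators $h_1, \dots, h_n$ (the condition $k_i \geq 2$ ensures $k_i - 1 \geq 1$, and $k_i \leq d < n$ ensures $h_{k_i-1}$ lies in the generating set). These monomials correspond to distinct multisets $(k_1^{a_1},\dots,k_i^{a_i-1},(k_i-1)^{1},\dots,k_l^{a_l})$, so they are pairwise distinct elements of the polynomial ring $K[h_1,\dots,h_n]$ and hence linearly independent. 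Therefore the sum is nonzero as soon as at least one coefficient $a_i(n+k_i-1)$ is nonzero in $K$, which is exactly the stated hypothesis $p \nmid (n+k_i-1)a_i$ for some $i$.

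Combining these two verifications with Lemma~\ref{lem:interesting-p} yields the desired polystability. I do not anticipate a serious obstacle: the only point requiring a little care is confirming the pairwise distinctness of the monomials appearing in $D h_\lambda$, but this is immediate from the description above. The argument is essentially identical in structure to the proof of Proposition~\ref{prop:elementary-p}, with the substitution $(n+1-k_i) \leadsto (n+k_i-1)$ dictated by Lemma~\ref{lem:compute-D}.
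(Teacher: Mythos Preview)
Your proposal is correct and follows exactly the approach the paper intends: the paper's own proof simply reads ``Similar to Proposition~\ref{prop:elementary-p} and left to the reader,'' and you have faithfully carried out that analogy, replacing $e$'s by $h$'s and the coefficient $(n+1-k_i)$ by $(n+k_i-1)$ from Lemma~\ref{lem:compute-D}. The only extra care you took---checking that the monomials in $Dh_\lambda$ are pairwise distinct in $K[h_1,\dots,h_n]$---is a detail the paper also glosses over in the $e_\lambda$ case, and your justification is sound.
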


\begin{proof}
Similar to Proposition~\ref{prop:elementary-p} and left to the reader.
\end{proof}

\begin{proposition}
Assume $\kar(K) = p \nmid n$. Let $\lambda = k_1^{a_1} k_2^{a_2} \dots k_l^{a_l} \vdash d$ be a partition and let $d < n$. Then $p_\lambda(x_1,\dots,x_n)$ is polystable if the following conditions hold:
\begin{itemize}
\item No part of $\lambda$ is of equal to $p^c$ for some $c \in \Z_{\geq 0}$;
\item $p \nmid a_i k_i$ for some $i$. 
\end{itemize}
\end{proposition}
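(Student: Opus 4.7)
The plan is to apply Lemma~\ref{lem:interesting-p}: $p_\lambda$ is polystable precisely when $l \nmid p_\lambda$ and $\frac{D^k}{k!} p_\lambda \neq 0$ for some $k \in \Z_{>0}$. I will verify each of these under the given hypotheses.

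For the divisibility condition, since $K[x_1, \ldots, x_n]$ is a UFD and $l$ is irreducible, it suffices to show $l \nmid p_{k_i}$ for each $i$. I would substitute $x_n = -(x_1 + \cdots + x_{n-1})$ and examine the resulting polynomial $\sum_{i=1}^{n-1} x_i^k + (-1)^k(x_1 + \cdots + x_{n-1})^k$, where $k = k_i$ is not a power of $p$. Writing $k = p^c m$ with $\gcd(m, p) = 1$ and $m \geq 2$, Frobenius gives $(x_1 + \cdots + x_{n-1})^k = (x_1^{p^c} + \cdots + x_{n-1}^{p^c})^m$. The monomial $x_1^{(m-1)p^c} x_2^{p^c}$ appears here with coefficient $m \not\equiv 0 \pmod p$, while it does not appear in $\sum x_i^k$. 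Since $k \leq d < n$ and $k \geq 2$ force $n \geq 3$, this supplies an uncancelled monomial, so $p_k \bmod l \neq 0$.

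For the derivative condition, by Lemma~\ref{lem:diff-operator-p} it is equivalent to show $p_\lambda \notin K[b_1, \ldots, b_{n-1}]$. I would decompose $p_\lambda = \sum_i l^i \tilde{p}_i$ with $\tilde{p}_i \in K[b_1, \ldots, b_{n-1}]$, and then specialize along the diagonal $x = t \mathbf{1}$ with $\mathbf{1} = (1, \ldots, 1)$. Since $l(t\mathbf{1}) = nt$ and each $b_j(t\mathbf{1}) = 0$, this substitution yields $p_\lambda(t\mathbf{1}) = \sum_i (nt)^i \tilde{p}_i(0)$. On the other hand, computing directly from $p_k(t\mathbf{1}) = nt^{k}$ gives $p_\lambda(t\mathbf{1}) = \prod_i (nt^{k_i})^{a_i} = n^{|\lambda|_*} t^d$ with $|\lambda|_* := \sum_i a_i$. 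Matching the coefficients of $t^d$ and using $p \nmid n$ yields $\tilde{p}_d(0) = n^{|\lambda|_* - d} \neq 0$, so $\tilde{p}_d \neq 0$ and hence $p_\lambda \notin K[b_1, \ldots, b_{n-1}]$.

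The main obstacle is the divisibility check $l \nmid p_k$: the Frobenius expansion requires handling $k = p^c m$ carefully and isolating an uncancelled monomial. I also observe that the second hypothesis $p \nmid a_i k_i$ for some $i$ is not actually needed for this argument — the diagonal evaluation uses only $p \nmid n$ — which suggests the proposition holds under the weaker hypothesis of the first condition alone.
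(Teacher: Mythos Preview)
Your argument is correct, and the second half takes a genuinely different route from the paper.

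For $l \nmid p_\lambda$, your approach and the paper's are essentially the same: reduce to a single power sum $p_k$, substitute to kill $l$, and isolate a surviving cross term via a binomial/multinomial coefficient not divisible by $p$. The paper first specializes $x_4 = \cdots = x_n = 0$ and then sets $x_3 = -(x_1+x_2)$, working with two variables; you substitute $x_n = -(x_1 + \cdots + x_{n-1})$ directly. Both are fine and give the same conclusion.

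For the derivative condition, the paper argues as in Proposition~\ref{prop:elementary-p}: it applies Leibniz together with $D p_k = k p_{k-1}$ (Lemma~\ref{lem:compute-D}) to write $D p_\lambda = \sum_i a_i k_i\, p_{\mu^{(i)}}$ with the $\mu^{(i)}$ distinct, and invokes the hypothesis $p \nmid a_i k_i$ for some $i$ to get a nonzero coefficient. Your diagonal evaluation $x = t\mathbf{1}$ bypasses this entirely: it shows directly that the coefficient of $l^d$ in the expansion $p_\lambda = \sum_j l^j \tilde p_j$ equals $n^{\sum a_i - d} \neq 0$, hence $p_\lambda \notin K[b_1,\dots,b_{n-1}]$ by Lemma~\ref{lem:diff-operator-p}. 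This is shorter and, as you note, does not use the second bullet at all—so you have actually proved the stronger statement that the first bullet alone suffices. For instance, $p_{(3,3)}$ in characteristic $2$ (with $n=7$, say) is covered by your argument but not by the paper's, since there $D p_{(3,3)} = 6\, p_{(3,2)} = 0$. The trade-off is that the paper's computation of $D p_\lambda$ fits the uniform template used for $e_\lambda$ and $h_\lambda$, whereas your diagonal trick is specific to the fact that $p_k(t\mathbf{1}) = n t^k$ has such a clean form.
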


%\Vnote{Harm, do you agree with the proof below}

\begin{proof}
This is also similar to Proposition~\ref{prop:elementary-p}. The only difference is that for $k \in \Z_{>0}$ such that $k \leq d < n$, we have $l \mid p_k(x_1,\dots,x_n)$ if and only if $k = p^c$ for some $c$, which one sees by the following brief argument.

First, if $n = 2$, then $d = k = 1$ is the only case to check. In this case $k = p^0 = 1$ and $l = p_1(x_1,\dots,x_n)$, so clearly $l \mid p_1(x_1,\dots,x_n)$. Now, we assume $n \geq 3$. Clearly if $k = p^c$, then $l \mid p_k(x_1,\dots,x_n)$. On the other hand, suppose $l = x_1 + \dots + x_n \mid p_k(x_1,\dots,x_n)$. Then, we have $x_1 + x_2 + x_3 \mid p_k(x_1,x_2,x_3)$ by setting $x_4 = x_5 = \dots = x_n = 0$ (this step requires $n \geq 3$). Since setting $x_3 = - (x_1 + x_2)$ kills a divisor of $p_k(x_1,x_2,x_3)$, namely $x_1 + x_2 + x_3$, we conclude that $p_k(x_1,x_2, - (x_1 + x_2)) = 0$. This means that $x_1^k + x_2^k + (-1)^k(x_1 + x_2)^k = 0$. Now, suppose $k$ is not a power of $p$, then there exists $1 \leq i \leq k-1$ such that ${k \choose i} \neq 0$,\footnote{Indeed, if we write $k = dp^e$ where $d \geq 2$ is coprime to $p$, then ${k \choose p^e} \neq 0$ in characteristic $p$.} so this means that when you expand out $x_1^k + x_2^k + (-1)^k(x_1 + x_2)^k$ as a sum of monomials, we have the non-zero term $(-1)^k {k \choose i} x_1^k x_2^{k-i}$, which contradicts $x_1^k + x_2^k + (-1)^k(x_1 + x_2)^k = 0$. Hence $k$ is a power of $p$.
\end{proof}

\begin{remark}
Since we know how to compute $D f$ when $f = e_\lambda, h_\lambda$ and $p_\lambda$, we can always compute $\frac{D^r}{r!} f$ and check if it is non-zero for some $r$. 
\end{remark}

\subsection{Schur polynomials}
The case of Schur polynomials is a little more tricky. We need a few preparatory lemmas.

\begin{lemma} \label{lem:l-schur}
Let $\kar(K) = p \nmid n$, let $\lambda \vdash d$, and suppose $1 < d < n$. Then, we have $l \nmid s_\lambda$.
\end{lemma}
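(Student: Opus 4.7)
The plan is to translate the divisibility question into polynomial algebra in the $h_k$'s. Since $K[x_1, \ldots, x_n]^{S_n} = K[h_1, \ldots, h_n]$ (a polynomial ring with algebraically independent $h_k$'s, valid in any characteristic) and $l = h_1$, and since any polynomial quotient of a symmetric polynomial by a symmetric divisor is itself symmetric, $l \nmid s_\lambda$ in $K[x_1, \ldots, x_n]$ is equivalent to $h_1 \nmid s_\lambda$ in $K[h_1, \ldots, h_n]$. Because $\deg s_\lambda = d < n$, I can further restrict to $K[h_1, \ldots, h_d]$; the question becomes whether the image of $s_\lambda$ in $K[h_2, \ldots, h_d]$ (setting $h_1 = 0$) is nonzero.

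Next, I would expand $s_\lambda = \sum_{\nu \vdash d} c_{\lambda \nu} h_\nu$ using the Jacobi--Trudi formula $s_\lambda = \det(h_{\lambda_i + j - i})$; the monomials $\{h_\nu : \nu \vdash d,\ \nu_i \geq 2 \text{ for all } i\}$ are linearly independent in $K[h_2, \ldots, h_d]$, so $s_\lambda|_{h_1 = 0} \neq 0$ iff some such $\nu$ has $c_{\lambda \nu} \neq 0$. A classical dominance-unitriangularity fact (provable from the Jacobi--Trudi determinant or equivalently from the Kostka matrix) says $c_{\lambda \lambda} = 1$ and $c_{\lambda \nu} \neq 0$ forces $\nu \trianglerighteq \lambda$. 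So when $\lambda_l \geq 2$, taking $\nu = \lambda$ does the job. The $\omega$-involution ($\omega(h_1) = h_1$ and $\omega(s_\lambda) = s_{\lambda'}$) ensures $h_1 \mid s_\lambda$ iff $h_1 \mid s_{\lambda'}$, reducing the case $\lambda_1 = \lambda_2$ (for which $\lambda'$ has all parts $\geq 2$) to the previous one.

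The main obstacle is the remaining case $\lambda_l = 1$ with $\lambda_1 > \lambda_2$. For a hook $\lambda = (a, 1^{d-a})$ with $a \geq 2$, direct expansion of the Jacobi--Trudi determinant shows that the cyclic permutation $\sigma(1) = l,\ \sigma(i) = i - 1$ for $i \geq 2$ is the unique contributor to the monomial $h_d$, giving $c_{\lambda, (d)} = (-1)^{d-a} \neq 0$, which is nonzero in any characteristic. For non-hook $\lambda$ with trailing $1$s, I plan to use the Pieri rule $h_1 \cdot s_\mu = \sum_{\nu = \mu + \square} s_\nu$ with $\mu = (\lambda_1, \ldots, \lambda_{l-1}) \vdash d - 1$. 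Modulo $h_1$ this yields $\sum_\nu s_\nu \equiv 0$; since $\lambda$ appears as the new-row box-addition, we get $s_\lambda \equiv -\sum_{\nu \neq \lambda} s_\nu \pmod{h_1}$, and each other $\nu$ has strictly fewer trailing $1$s than $\lambda$. A descending induction on the number of trailing $1$s, combined with a leading-monomial argument using the dominance-unitriangularity (so that the dominance-minimal $h_\nu$ on the right-hand side appears in only one $s_{\nu'}$ and hence cannot cancel), shows $s_\lambda|_{h_1 = 0} \neq 0$.

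The principal technical challenge lies in this inductive Pieri step: one must carefully track which monomials $h_\nu$ can occur across the various $s_{\nu'}|_{h_1 = 0}$ on the right-hand side to rule out cancellation. The key input is the constraint $c_{\nu' \nu} \neq 0 \implies \nu \trianglerighteq \nu'$, which limits which $s_{\nu'}$ contains a given $h_\nu$; identifying the dominance-minimal surviving monomial then pins down that it cannot be cancelled.
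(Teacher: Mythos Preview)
Your approach is different from the paper's, and the inductive step in the non-hook case contains a genuine gap.

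The paper's argument is a direct two-line Pieri computation in the Schur basis. Suppose $s_\lambda = l\cdot f$ and expand $f=\sum_{\nu\vdash d-1}a_\nu s_\nu$ (the $s_\nu$ form a basis of $\Lambda(n)_{d-1}$ since $d-1<n$, by unitriangularity against the $h_\mu$). Pieri gives $l\cdot s_\nu=\sum_{\mu=\nu+\square}s_\mu$. If $\widetilde\nu$ is a dominance-maximal and $\overline\nu$ a dominance-minimal element of $\{\nu:a_\nu\neq 0\}$, then $s_{\widetilde\nu+(1,0,\ldots,0)}$ and $s_{(\overline\nu,1)}$ each receive a contribution from exactly one $\nu$, hence survive in $l\cdot f$; since $d-1>0$ these two Schur functions are distinct, contradicting $l\cdot f=s_\lambda$. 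No case split, no induction on trailing $1$'s.

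The problem with your argument is the assertion that the dominance-minimal monomial $h_\nu$ appearing in $s_{\nu'_0}|_{h_1=0}$ occurs in only one $s_{\nu'}$. This is false. Take $\lambda=(3,2,1,1,1)$, so $\mu=(3,2,1,1)$ and the box-additions other than $\lambda$ form the chain $(4,2,1,1)\triangleright(3,3,1,1)\triangleright(3,2,2,1)=\nu'_0$. A Jacobi--Trudi computation gives
\[
s_{(3,2,2,1)}\big|_{h_1=0}=-h_3^2h_2-h_4^2+h_6h_2,
\]
whose dominance-minimal monomial is $h_{(3,3,2)}=h_3^2h_2$. But $(3,3,2)\trianglerighteq(3,3,1,1)$, and indeed $h_3^2h_2$ also appears in
\[
s_{(3,3,1,1)}\big|_{h_1=0}=-h_3^2h_2+h_3h_5+h_4h_2^2-h_6h_2,
\]
again with coefficient $-1$. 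In the full sum the coefficient of $h_3^2h_2$ is $-2$, which vanishes in characteristic $2$. (The sum is still nonzero there because $h_5h_3$ survives, but not for the reason you stated.) Repairing the induction would require much finer control over which monomial survives, or a strengthened inductive hypothesis; the paper's Schur-basis Pieri argument sidesteps this entirely.
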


\begin{proof}
Recall that $l = s_\mu$ where $\mu = (1)$. For $t$, let us denote by $P_t$ the collection of all partitions of size $t$. Then, for $t < n$, one checks that $\{s_\lambda\ | \ \lambda \vdash t\}$ is a basis for $K[x_1,\dots,x_n]_t^{S_n}$ as follows. First, it is clear that $\{h_\lambda\ | \ \lambda \vdash t\}$ is a basis. Now, by the definition of Schur polynomials, one sees that $s_\lambda = h_\lambda + \sum_{\mu \succ \lambda } c_{\mu,\lambda} h_\mu$ for some constants $c_{\mu,\lambda}$. Here $\succ$ denotes the lexicographic order. Thus, the linear transformation that sends $h_\lambda \mapsto s_\lambda$ is unipotent and hence invertible. Thus, we conclude $\{s_\lambda\ | \ \lambda \vdash t\}$ is also a basis. 

Now, if $l \mid s_\lambda$, then $s_\lambda = l \cdot f$ where $f \in K[x_1,\dots,x_n]_{d-1}^{S_n}$. Thus, we can write $f = \sum_{\nu \vdash d-1} a_\nu s_\nu$. But then we can compute $l \cdot f$ by the Pieri rule. We know that $l \cdot s_\nu = \sum_{\mu = \nu \cup \text{ one box}}  s_\mu$. Now, let $S = \{\nu\ |\ a_\nu \neq 0\}$. Then, under the dominance order, let $\widetilde{\nu}$ be a maximal element and $\overline{\nu} $ be a minimal element in $S$. Then, when we write $l \cdot f$ as a linear combination of Schur polynomials, we see by the Pieri rule that that the coefficient of $s_{\widetilde{\nu} + (1,0,\dots,0)}$ is $a_{\widetilde{\nu}} \neq 0$ and that the coefficent of $s_{(\overline{\nu}_1, \overline{\nu}_2, \dots, \overline{\nu}_r, 1)}$ is $a_{\overline{\nu}} \neq 0$ (where $r$ is the number of non-zero parts of $\overline{\nu}$). Thus $l \cdot f$ when written as a linear combination of Schur polynomials contains at least two terms, so we cannot have $l \cdot f = s_\lambda$. Thus $l \nmid s_\lambda$.

\end{proof}

We point out that in the above argument, it is crucial that $d-1 > 0$, since otherwise, we would have $\overline{\nu} = \widetilde{\nu} = \emptyset$ and ${\widetilde{\nu} + (1,0,\dots,0)} = (\overline{\nu}_1, \overline{\nu}_2, \dots, \overline{\nu}_r, 1) = (1)$, so we would not be able to get a contradiction. This is perfectly reasonable since if $d = 1$, we have $s_\lambda = s_{(1)} = l$, so of course $l \mid s_\lambda$. We also point out that if $d > n$, then some of the $s_\lambda$'s will be zero, so $\{s_\lambda\ | \ \lambda \vdash t\}$ will not be a linearly independent set anymore.

The next computation we need is to understand the action of the differential operator $D$ on $s_\lambda$. For a partition $\lambda$, we identify it with its Young diagram, where the boxes are indexed with matrix coordinates. Thus, we have $(i,j) \in \lambda$ if the $i^{th}$ row of $\lambda$ is at least $j$, i.e., $\lambda_i \geq j$. For $(i,j) \in \lambda$, we write $d_{(i,j)} = j-i$. When $\lambda, \mu$ are two partitions such that $\lambda$ is obtained from $\mu$ by adding a box in position $(i,j)$, then we write $d_{\lambda \setminus \mu} = d_{(i,j)} = j- i$.

\begin{proposition} \label{prop:D-schur}
Let $\lambda \vdash d$ be a partition, and let $d < n$. Then 
$$
D(s_\lambda(x_1,\dots,x_n)) = \sum_{\lambda = \mu \cup \text{ one box}} (n+ d_{\lambda \setminus \mu}) s_{\mu}(x_1,\dots,x_n),
$$
\end{proposition}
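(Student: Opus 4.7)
The plan is to differentiate the Weyl bialternant formula for Schur polynomials. Write $s_\lambda = a_{\lambda+\delta}/a_\delta$, where $\delta = (n-1, n-2, \ldots, 1, 0)$ and $a_\alpha(x_1, \ldots, x_n) = \det(x_i^{\alpha_j})_{1 \leq i, j \leq n}$ for $\alpha \in \Z_{\geq 0}^n$, padding $\lambda$ with trailing zeros to length $n$ (permissible since $d < n$). This is a universal polynomial identity, so it holds in every characteristic, and in particular $s_\lambda$ is a well-defined element of $K[x_1,\dots,x_n]$.

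The first key computation is that
\[
D a_\alpha \;=\; \sum_{j=1}^n \alpha_j \, a_{\alpha - e_j},
\]
where $e_j$ denotes the $j$-th standard basis vector and $a_{\alpha - e_j}$ is the determinant with $\alpha_j$ replaced by $\alpha_j - 1$. This follows from multilinearity of the determinant in its columns together with cofactor expansion: the entry $x_i^{\alpha_j}$ depends only on $x_i$, so differentiating with respect to $x_k$ hits only row $k$, and summing $\partial/\partial x_k$ over $k$ reassembles column $j$ into $(x_i^{\alpha_j - 1})_i$ scaled by $\alpha_j$. The case $\alpha_j = 0$ is automatically vacuous because the scalar $\alpha_j$ kills the would-be ill-defined exponent.

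Applying this with $\alpha = \delta$: for $j < n$ the exponent in column $j$ becomes $\delta_j - 1 = n - j - 1 = \delta_{j+1}$, so columns $j$ and $j+1$ of the bialternant coincide and $a_{\delta - e_j} = 0$; for $j = n$ the coefficient $\delta_n = 0$ kills the term. Hence $D a_\delta = 0$, and the quotient rule immediately gives $D s_\lambda = (D a_{\lambda + \delta})/a_\delta$.

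Finally, applying the same formula to $\alpha = \lambda + \delta$ yields
\[
D a_{\lambda + \delta} \;=\; \sum_{j=1}^n (\lambda_j + n - j) \, a_{\lambda + \delta - e_j}.
\]
When $\lambda_j = \lambda_{j+1}$ (equivalently, removing a box from row $j$ does not yield a partition), the $j$-th and $(j{+}1)$-st entries of $\lambda + \delta - e_j$ coincide, so the same two-equal-columns argument as for $\delta$ gives $a_{\lambda + \delta - e_j} = 0$. The surviving indices are exactly those $j$ for which $\mu := \lambda - e_j$ is a valid partition, i.e.\ $(j, \lambda_j)$ is a removable box of $\lambda$; for those, $a_{\lambda + \delta - e_j}/a_\delta = s_\mu$, and rewriting $\lambda_j + n - j = n + (\lambda_j - j) = n + d_{\lambda \setminus \mu}$ yields the claimed formula. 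The only delicate point I anticipate is the careful bookkeeping that matches non-removable boxes with the vanishing produced by coinciding columns, but this is entirely routine once the bialternant setup is in place.
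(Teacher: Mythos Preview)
Your proof is correct and follows essentially the same approach as the paper: both arguments use the bialternant formula $s_\lambda = a_{\lambda+\delta}/a_\delta$, compute $D a_\alpha = \sum_j \alpha_j\, a_{\alpha - e_j}$ via column multilinearity, observe $D a_\delta = 0$, and then identify the surviving terms of $D a_{\lambda+\delta}$ with removable boxes via the vanishing of alternants with two equal columns. Your write-up is in fact more detailed than the paper's, which simply asserts these steps.
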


\begin{proof}
For $\alpha = (\alpha_1,\dots,\alpha_n)$, we define $a_\alpha = \det(x_i^{\alpha_j + n-j})_{1 \leq i,j \leq n}$. Let $\delta = (n-1,n-2,\dots,1,0)$. Then, it is well known that 
$$
s_\lambda = \frac{a_{\lambda + \delta}}{a_\delta}.
$$

One easily checks that $D a_\alpha = \sum_i \alpha_i a_{\alpha - \mathbb{1}_i}$ where $\mathbb{1}_i$ is a vector with a $1$ in it's $i^{th}$ spot and $0$'s everywhere else. Moreover, one observes that $a_\beta  =0$ if and only if $\beta_i = \beta_j$ for some $i \neq j$. With these two observations, we compute
$$
D a_{\lambda + \delta} = \sum_{\lambda = \mu \cup \text{ one box}} (n + d_{\lambda \setminus \mu}) a_{\mu + \delta}.
$$

We also observe that $D a_\delta = 0$. These two computations, along with the formula for $s_\lambda$ yield
$$
D s_\lambda = \sum_{\lambda = \mu \cup \text{ one box}} (n + d_{\lambda \setminus \mu}) s_\mu.
$$

\end{proof}

If $\mu \subseteq \lambda$, we define $M(\lambda \setminus \mu, n) = \prod_{(i,j) \in \lambda \setminus \mu} (n + j -i)$. Further, let $f^{\lambda \setminus \mu}$ denote the number of standard Young tableau of skew shape $\lambda \setminus \mu$. Then, from the above proposition, one deduces:

\begin{corollary}
Let $\lambda \vdash d$ be a partition and $d < n$. Then 
$$
\frac{D^k}{k!} (s_\lambda(x_1,\dots,x_n)) = \sum_{\mu \subset \lambda, |\lambda \setminus \mu| = k} \frac{M(\lambda \setminus \mu)}{k!} \cdot f^{\lambda \setminus \mu} \cdot s_\mu(x_1,\dots,x_n).
$$
\end{corollary}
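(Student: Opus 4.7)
My plan is to derive this corollary by iterating Proposition~\ref{prop:D-schur} exactly $k$ times and then interpreting the resulting sum combinatorially via standard Young tableaux of skew shape.

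First I would apply $D$ repeatedly to $s_\lambda$ using Proposition~\ref{prop:D-schur}. Each application strips off a single corner box from the current partition, so by induction on $k$ we get
$$
D^k s_\lambda \;=\; \sum_{\lambda = \mu^{(0)} \supset \mu^{(1)} \supset \dots \supset \mu^{(k)}} \ \prod_{i=1}^{k}\bigl(n + d_{\mu^{(i-1)}\setminus \mu^{(i)}}\bigr)\, s_{\mu^{(k)}},
$$
where the outer sum runs over all chains with $|\mu^{(i-1)}\setminus\mu^{(i)}| = 1$ at each step. I would then group terms according to the terminal partition $\mu := \mu^{(k)}$, which ranges over $\mu \subset \lambda$ with $|\lambda \setminus \mu| = k$.

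The crucial simplification is that the coefficient product depends only on $\lambda\setminus\mu$, not on the chosen chain: for any such chain,
$$
\prod_{i=1}^{k}\bigl(n + d_{\mu^{(i-1)}\setminus \mu^{(i)}}\bigr) \;=\; \prod_{(i,j)\in \lambda\setminus\mu}(n+j-i) \;=\; M(\lambda\setminus\mu,n),
$$
since one is merely multiplying $(n + d_\square)$ over the same set of boxes in different orders. Hence the coefficient of $s_\mu$ in $D^k s_\lambda$ equals $M(\lambda\setminus\mu,n)$ times the number of such chains from $\lambda$ down to $\mu$.

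Finally I would identify the chain count with $f^{\lambda\setminus\mu}$. Labeling the box removed at step $i$ with the integer $k-i+1$, each chain gives a bijection with fillings of $\lambda\setminus\mu$ by $\{1,\dots,k\}$; the requirement that every intermediate $\mu^{(i)}$ be a genuine partition translates exactly to the condition that entries strictly increase along rows and columns, i.e., to standard Young tableaux of skew shape $\lambda\setminus\mu$. This bijection is classical and is the only mildly combinatorial step in the argument. Assembling everything and dividing by $k!$ yields the stated formula. I do not expect any serious obstacle: the result is a clean consequence of Proposition~\ref{prop:D-schur} combined with the standard chain-versus-skew-tableau correspondence, and the condition $d<n$ ensures that every partition $\mu$ appearing still has at most $n$ parts so that $s_\mu(x_1,\dots,x_n)$ is non-degenerate.
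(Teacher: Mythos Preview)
Your proposal is correct and is exactly the argument the paper has in mind: the corollary is stated immediately after Proposition~\ref{prop:D-schur} with only ``from the above proposition, one deduces'', so iterating the single-box formula $k$ times, pulling out the order-independent product $M(\lambda\setminus\mu,n)$, and counting the remaining chains by $f^{\lambda\setminus\mu}$ is precisely the intended deduction. There is nothing to add.
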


We can now prove Theorem~\ref{thm:Schur}.

\begin{proof} [Proof of Theorem~\ref{thm:Schur}]
We have $l \nmid s_\lambda$ by Lemma~\ref{lem:l-schur} and $Ds_\lambda \neq 0$ by Proposition~\ref{prop:D-schur}. Hence, the corollary follows from Lemma~\ref{lem:interesting-0}. 
\end{proof}

Finally, we note that in positive characteristic, we need to be able to check when $\frac{D^k}{k!} s_\lambda \neq 0$. This is equivalent to checking if $\frac{M(\lambda \setminus \mu,n)}{k!} \cdot f^{\lambda \setminus \mu} \neq 0$ for some $\mu \subseteq \lambda$. We know how to compute $M(\lambda \setminus \mu,n)$, so it suffices to know how to compute $f^{\lambda \setminus \mu}$. A formula for that was given by Aitken \cite{Aitken} (rediscovered by Feit \cite{Feit}]), see also \cite[Corollary~7.16.3]{Stanley2}.

\begin{theorem} [Aitken, Feit]
Let $\mu \subseteq \lambda$ be partitions and suppose $l(\lambda) \leq N$. Then, 
$$
f^{\lambda \setminus \mu} = N! \cdot \det \left[ \frac{1}{(\lambda_i - \mu_j - i + j)!}\right]_{i,j=1}^N
$$
\end{theorem}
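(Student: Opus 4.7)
The plan is to derive the formula from the Jacobi--Trudi identity for skew Schur functions combined with the \emph{exponential specialization}. The Jacobi--Trudi identity
$$
s_{\lambda/\mu} = \det\bigl[h_{\lambda_i - \mu_j - i + j}\bigr]_{i,j=1}^N
$$
holds in the ring $\Lambda$ of symmetric functions over $\Q$ (with $h_k = 0$ for $k < 0$ and $h_0 = 1$) for any $N \geq \ell(\lambda)$. Define the ring homomorphism $\Phi : \Lambda_\Q \to \Q[t]$ by $\Phi(p_1) = t$ and $\Phi(p_k) = 0$ for $k \geq 2$, where $p_k$ is the $k$-th power-sum symmetric function. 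From the generating-function identity $\sum_{k \geq 0} h_k z^k = \exp\bigl(\sum_{k \geq 1} p_k z^k/k\bigr)$ one reads off $\Phi(h_k) = t^k/k!$.

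The crucial input is the classical identity
$$
\Phi(s_{\lambda/\mu}) = f^{\lambda \setminus \mu} \cdot \frac{t^n}{n!}, \qquad n := |\lambda \setminus \mu|.
$$
This follows by writing $s_{\lambda/\mu}$ in the power-sum basis as $s_{\lambda/\mu} = \sum_\nu \chi^{\lambda/\mu}_\nu\, p_\nu / z_\nu$, where $\chi^{\lambda/\mu}_\nu$ denotes the character of the skew Specht module evaluated on the conjugacy class of cycle type $\nu \vdash n$. Under $\Phi$ the only surviving term is $\nu = (1^n)$, for which $\chi^{\lambda/\mu}_{(1^n)} = f^{\lambda \setminus \mu}$ (the dimension of the skew Specht module) and $z_{(1^n)} = n!$.

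With these in hand, the remaining computation is direct. Apply $\Phi$ entrywise to the Jacobi--Trudi determinant to obtain
$$
\Phi(s_{\lambda/\mu}) = \det\left[\frac{t^{\lambda_i - \mu_j - i + j}}{(\lambda_i - \mu_j - i + j)!}\right]_{i,j=1}^N.
$$
Pull the factor $t^{\lambda_i - i}$ out of row $i$ and $t^{j - \mu_j}$ out of column $j$; the total power of $t$ extracted equals $\sum_i (\lambda_i - i) + \sum_j (j - \mu_j) = |\lambda| - |\mu| = n$, leaving $t^n \det[1/(\lambda_i - \mu_j - i + j)!]$. Comparing with $\Phi(s_{\lambda/\mu}) = f^{\lambda \setminus \mu} t^n/n!$ and cancelling $t^n$ yields the formula.

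The main obstacle is establishing the identity $\Phi(s_{\lambda/\mu}) = f^{\lambda \setminus \mu} t^n/n!$: while classical, it requires the Frobenius characteristic framework for skew representations of the symmetric group, which is somewhat more subtle than the straight-shape case. An alternative route that avoids this input is to reinterpret SYT of shape $\lambda \setminus \mu$ as non-intersecting lattice paths between the points $A_i = (\mu_i - i, 0)$ and $B_i = (\lambda_i - i, n)$ and apply the Lindstr\"om--Gessel--Viennot lemma; the number of paths from $A_i$ to $B_j$ is $\binom{n}{\lambda_j - \mu_i + i - j}$, and extracting $n!$ via multilinearity gives the same determinant after transposing indices.
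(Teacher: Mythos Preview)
The paper does not actually prove this theorem; it is quoted as a classical result of Aitken and Feit, with a pointer to Stanley's \emph{Enumerative Combinatorics~2}, Corollary~7.16.3. Your argument via Jacobi--Trudi plus the exponential specialization is correct and is in fact precisely the proof Stanley gives at the cited reference, so there is nothing to compare.

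One point worth flagging: your computation produces the prefactor $n! = |\lambda\setminus\mu|!$, not $N!$. The statement as printed in the paper has $N!$ (with $N\ge \ell(\lambda)$ the matrix size), which is a typo; the correct constant is $|\lambda\setminus\mu|!$, exactly as your derivation shows and as Stanley states. You should say this explicitly rather than writing ``yields the formula,'' since taken literally the displayed formula with $N!$ is false whenever $N\neq |\lambda\setminus\mu|$.
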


Thus, even in positive characteristic, for any specific $\lambda$, using these techniques one should be able to determine whether $s_\lambda$ is polystable or not in the case $p \nmid n$ and $d < n$.

\begin{remark}
In this section, we presented a series of results on polystability of various interesting families of symmetric polynomials, in particular demonstrating the effectiveness of our approach for proving polystability. Our approach provides a systematic approach to proving many more such results, some of which might require interesting combinatorial results to establish.  
\end{remark}

\subsection{Acknowledgements}
The first author was partially supported by NSF grants IIS-1837985 and DMS-2001460. The second author was partially supported by the
University of Melbourne and by NSF grants DMS-1638352 and CCF-1900460. 

We would like to thank Michel Brion, Gurbir Dhillon, Pierre Deligne, Nate Harman, Victor Ginzburg, Robert Guralnick, Victor Kac, Allen Knutson, Daniel Litt, Ivan Losev, Rohit Nagpal, Abhishek Oswal, Alexander Premet, Peter Sarnak, and Akshay Venkatesh for interesting discussions.

\bibliographystyle{alpha}
\bibliography{references}

\appendix
\section{Proof of Theorem~\ref{thm:complete-homog-3vars}}
We now prove Theorem~\ref{thm:complete-homog-3vars}. One can simply implement the algorithms outlined in this paper on a computer to verify this result (although it needs a little bit more effort than naively implementing the algorithm because we want to determine polystability for all primes, which is apriori an infinite set of computations). However, we will not directly appeal to the algorithms and instead give an explicit argument. This has a few advantages. First, it demonstrates the flexibility we actually have in using the ideas developed in this paper. Second, we want to make the computations as manageable as possible, i.e., even though we omit many of the computational details, we intend for it to be hand checkable by the reader with sufficient (but not unearthly) patience. Indeed, we did these computations by hand. Finally, we want to illustrate the flavor of combinatorial computations one encounters, and we hope that a deeper analysis of the combinatorics involved can lead to a better understanding of polystability for interesting classes of symmetric polynomials, beyond what we discussed in Section~\ref{sec:symm-poly-class}.

\begin{proof} [Proof of Theorem~\ref{thm:complete-homog-3vars}]
First, recall that $h_3(x,y,z)$ is the sum of all degree $3$ monomials in $x,y$ and $z$, so
$$
h_3(x,y,z) = x^3 + y^3 + z^3 + x^2 y + x^2 z+ xy^2 +xz^2 + yz^2 + y^2z + xyz.
$$

\begin{itemize}

\item {\bf Case 1: $p \nmid (3 = n)$, i.e., $p \neq 3$:}

Suppose $h_3(x,y,z)$ is not polystable. Then, the optimal parabolic subgroup must either be $\mathcal{F} = 0 \subseteq L \subseteq K^3$ or $\mathcal{G} = 0 \subseteq M \subseteq K^3$, where $L = \spa (x+y+z)$ and $M = \spa (x-y, y-z)$ by Corollary~\ref{cor:sym.flags}.

Suppose $\mathcal{F}$ is the optimal parabolic. Then, a compatible basis is $\mathcal{B} = (t,q,r)$, where $t = x + y + z, q = y, r = z$. Let $T_\mathcal{B}$ be the corresponding torus. We compute the change of basis:
$$
h_3(x,y,z) = h_3(t-q-r, q,r) = t^3 - 2t^2q - 2t^2 r + 2tq^2 + 3tqr + 2tr^2 - q^2 r - qr^2.
$$
Recall that we are in the case $p \neq 3$. When $p \neq 2$, the Newton polytope for $h_3(x,y,z)$ with respect to the torus $T_{t,q,r}$ is:

$$
\xymatrix @C=1pc @R=1.732pc{
&&& *{\bullet}\ar@{-}[lldd]\ar@{-}[rrdd] &&&\\
&&*{\bullet} && *{\bullet} &&\\
&*{\bullet}\ar@{-}[rd] && *{\bullet} && *{\bullet}\ar@{-}[ld] &\\
*{\bullet} && *{\bullet}\ar@{-}[rr] && *{\bullet} && *{\bullet}}
\qquad\qquad
\xymatrix @C=0.4pc @R=1.3pc{
&&& *{t^3} &&&\\
&&*{t^2q} && *{t^2r} &&\\
&*{tq^2} && *{tqr} && *{tr^2} &\\
*{q^3} && *{q^2r} && *{qr^2} && *{r^3}}
$$

%\begin{center}
%\begin{tikzpicture}
%\path node (x1) at (0,0) {$ \bullet$};
%\path node (y1) at (-0.5,-1) {$ \bullet$};
%\path node (y2) at (0.5,-1) {$ \bullet$};
%\path node (z1) at (-1,-2) {$ \bullet$};
%\path node (z2) at (0,-2) {$ \bullet$};
%\path node (z3) at (1,-2) {$ \bullet$};
%\path node (w1) at (-1.5,-3) {$ \bullet$};
%\path node (w2) at (-0.5,-3) {$ \bullet$};
%\path node (w3) at (0.5,-3) {$ \bullet$};
%\path node (w4) at (1.5,-3) {$ \bullet$};
%
%\draw[fill= blue]  (x1) -- (z1) -- (w2) -- (w3) -- (z3) -- (x1)-- cycle;
%
%
%\path node (ax1) at (5,0) {$ p^3$};
%\path node (ay1) at (4.5,-1) {$ p^2 q$};
%\path node (ay2) at (5.5,-1) {$ p^2 r$};
%\path node (az1) at (4,-2) {$ pq^2$};
%\path node (az2) at (5,-2) {$ pqr$};
%\path node (az3) at (6,-2) {$ pr^2$};
%\path node (aw1) at (3.5,-3) {$q^3$};
%\path node (aw2) at (4.5,-3) {$q^2 r$};
%\path node (aw3) at (5.5,-3) {$ qr^2$};
%\path node (aw4) at (6.5,-3) {$ r^3$};
%
%\end{tikzpicture}
%\end{center}

The picture on the right gives the dictionary between the monomials and their weights. Note that the weight of a monomial is just its exponent vector, so weight of $t^3$ is $(3,0,0)$, weight of $tr^2$ is $(1,0,2)$, etc. Now, by Corollary~\ref{cor:symm-torus}, we conclude that $h_3(x,y,z)$ is $T_\mathcal{B}$-polystable since $(1,1,1)$ is in the relative interior of the Newton polytope.

When $p = 2$, the above simplifies to:
$$
h_3(x,y,z) = h_3(t-q-r, q,r) = t^3 + tqr + q^2 r + qr^2.
$$

Now, the Newton polytope is a convex hull of 4 points, and it is easily seen that $(1,1,1)$ is again in the interior of the Newton polytope, picture below:
$$
\xymatrix @C=1pc @R=1.732pc{
&&& *{\bullet}\ar@{-}[lddd]\ar@{-}[rddd] &&&\\
&&*{\bullet} && *{\bullet} &&\\
&*{\bullet}&& *{\bullet} && *{\bullet} &\\
*{\bullet} && *{\bullet}\ar@{-}[rr] && *{\bullet} && *{\bullet}}
$$

%\begin{center}
%\begin{tikzpicture}
%\path node (x1) at (0,0) {$ \bullet$};
%\path node (y1) at (-0.5,-1) {$ \bullet$};
%\path node (y2) at (0.5,-1) {$ \bullet$};
%\path node (z1) at (-1,-2) {$ \bullet$};
%\path node (z2) at (0,-2) {$ \bullet$};
%\path node (z3) at (1,-2) {$ \bullet$};
%\path node (w1) at (-1.5,-3) {$ \bullet$};
%\path node (w2) at (-0.5,-3) {$ \bullet$};
%\path node (w3) at (0.5,-3) {$ \bullet$};
%\path node (w4) at (1.5,-3) {$ \bullet$};

%\draw[fill= blue]  (x1) -- (w2) -- (w3) -- (x1)-- cycle;

%\path node (ax1) at (5,0) {$ \bullet$};
%\path node (ay1) at (4.5,-1) {$ \bullet$};
%\path node (ay2) at (5.5,-1) {$ \bullet$};
%\path node (az1) at (4,-2) {$ \bullet$};
%\path node (az2) at (5,-2) {$ \bullet$};
%\path node (az3) at (6,-2) {$ \bullet$};
%\path node (aw1) at (3.5,-3) {$ \bullet$};
%\path node (aw2) at (4.5,-3) {$ \bullet$};
%\path node (aw3) at (5.5,-3) {$ \bullet$};
%\path node (aw4) at (6.5,-3) {$ \bullet$};

%\end{tikzpicture}
%\end{center}

Hence, we get that the $T_\mathcal{B}$ orbit of $h_3(x,y,z)$ is closed. Thus for all $p$ such that $p \nmid n$, $h_3(x,y,z)$ is $T_\mathcal{B}$-polystable, so $\mathcal{F}$ cannot be an optimal parabolic subgroup by Theorem~\ref{thm:Kempf}.

Now, suppose $\mathcal{G}$ is the optimal parabolic subgroup. Then, a compatible basis is $t = x - y$, $q = y-z$ and $r = z$. Write $\mathcal{B} = (t,q,r)$ and let $T_\mathcal{B}$ be the corresponding torus. We compute the change of basis:
\begin{align*}
h_3(x,y,z) & = h_3(t+q+r, q+r, r) \\
& = t^3 + 4t^2 q + 5t^2r + 6tq^2 + 15tqr + 10tr^2 + 4 q^3 + 15 q^2r + 20 qr^2 + 10 r^3.
\end{align*}

Unless $p = 2$ or $p = 5$, it is easy to conclude that $h_3(x,y,z)$ is polystable with respect to $T_\mathcal{B}$ by computing its Newton polytope and we leave the details to the reader. On the other hand, when $p = 2$, we have:
$$
h_3(x,y,z) = t^3 +  t^2r + tqr + q^2r.
$$

The Newton polyope is:

$$
\xymatrix @C=1pc @R=1.732pc{
&&& *{\bullet}\ar@{-}[lddd]\ar@{-}[rd] &&&\\
&&*{\bullet} && *{\bullet} &&\\
&*{\bullet}&& *{\bullet} && *{\bullet} &\\
*{\bullet} && *{\bullet}\ar@{-}[rruu] && *{\bullet} && *{\bullet}}
$$

%\begin{center}
%\begin{tikzpicture}
%\path node (x1) at (0,0) {$ \bullet$};
%\path node (y1) at (-0.5,-1) {$ \bullet$};
%\path node (y2) at (0.5,-1) {$ \bullet$};
%\path node (z1) at (-1,-2) {$ \bullet$};
%\path node (z2) at (0,-2) {$ \bullet$};
%\path node (z3) at (1,-2) {$ \bullet$};
%\path node (w1) at (-1.5,-3) {$ \bullet$};
%\path node (w2) at (-0.5,-3) {$ \bullet$};
%\path node (w3) at (0.5,-3) {$ \bullet$};
%\path node (w4) at (1.5,-3) {$ \bullet$};
%
%\draw[fill= blue]  (x1) -- (y2) -- (z2) -- (w2)--(x1) -- cycle;
%\end{tikzpicture}
%\end{center}

As is evident, the point $(1,1,1)$ is not in the relative interior, so $h_3(x,y,z)$ is not $T_\mathcal{B}$-polystable. Thus, it suffices to check if $w = {\rm ess}(h_3(x,y,z))$ (with respect to $T_\mathcal{B}$) is in the $\SL_3$-orbit of $h_3(x,y,z)$. One easily computes $w = t^2r + tqr + q^2r = r(t^2 + tq + q^2)$, which is reducible. But $h_3(x,y,z) = t^3 +  t^2r + tqr + q^2r$ is irreducible -- think of it as a polynomial in the variable $t$ with coefficients in the PID $K(q)[r]$ and apply Eisenstein's criterion with the prime $r$. Thus $h_3(x,y,z)$ and $w$ are not in the same orbit.

% Since $p^2 + pq + q^2$ can be written as a product of two distinct linear factors, we see that $w$ is a product of three distinct linear factors.

%Suppose $h_3(x,y,z)$ is a product of three distinct linear factors. Since all three cannot be $x+y+z$, we must have a linear factor of the form $\alpha x + \beta y + \gamma z$, where $\alpha, \beta$ and $\gamma$ are not all equal. Now, if $\alpha,\beta,\gamma$ are all different, then by symmetry we would be forced to have six distinct linear factors which is not possible. Thus, we must have factors of the form $\alpha x + \alpha y + \gamma z$, $\alpha x+ \gamma y + \alpha z$ and $\gamma x + \alpha y + \alpha z$ where $\alpha = \gamma$. But now, one checks explicitly that the product of these three cannot be $h_3(x,y,z)$ for any choice of $\alpha$ and $\gamma$. We leave the details to the reader. Thus $h_3(x,y,z)$ cannot be a product of three linear factors and hence $w$ is not in the $\SL_3$-orbit of $h_3(x,y,z)$.

Thus, to summarize, for $p = 2$, $h_3(x,y,z)$ is not $\SL_3$ polystable, $\mathcal{G}$ is an optimal parabolic subgroup and $w = t^2r + tqr + q^2r = r(t^2 + tq + q^2)$ is a point in the boundary of the $\SL_3$ orbit of $h_3(x,y,z)$.

Now, the case of $p = 5$. In this case, we have
$$
h_3(x,y,z) = t^3 + 4t^2 q + 6tq^2 +  4 q^3.
$$

We omit the details, but one can check by a similar analysis as above that $t^3 + 4t^2 q + 6tq^2 +  4 q^3$ is actually unstable with respect to $T_\mathcal{B}$. So, $h_3(x,y,z)$ is $\SL_3$ unstable (and in particular not polystable) when $p = 5$! \\
 
\item {\bf Case 2: The case $p = 3$:}
We will be brief with this case. Suppose $h_3(x,y,z)$ is not polystable, then there are three possible choices for optimal parabolic. However, the basis $t = x+y+z, q = y-z, r = z$ is compatible with all possible optimal parabolics. Thus, if we check that $h_3(x,y,z)$ is $T_\mathcal{B}$ polystable, where $\mathcal{B} = (t,q,r)$, then we get a contradiction, so $h_3(x,y,z)$ must be polystable. 

We compute the change of basis
\begin{align*}
h_3(x,y,z) &= h_3(t+2q+r, q+r, r) \\
& = t^3 + 7t^2 q + 5t^2r + 17tq^2 + 25tqr + 10tr^2 + 15q^3 + 35qr^2 + 30 qr^2 + 10r^3 \\
& = t^3 + t^2 q + 2t^2r + 2tq^2 + tqr + tr^2 + 2qr^2 +  r^3.
\end{align*}

We leave it to the reader to check that $h_3(x,y,z)$ is $T_\mathcal{B}$-polystable by drawing the Newton polytope. \\
\end{itemize}

Thus, we conclude that $h_3(x,y,z)$ is $\SL_3$ polystable unless $p = 2$ or $p = 5$. When $p = 2$, it is 
$\SL_3$ semistable, not $\SL_3$ polystable and perhaps most surpisingly, when $p = 5$, it is $\SL_3$ unstable!
\end{proof}

\end{document}